\DeclareFontFamily{U}{wncy}{}
\DeclareFontShape{U}{wncy}{m}{n}{<->wncyr10}{}
\DeclareSymbolFont{mcy}{U}{wncy}{m}{n}
\DeclareMathSymbol{\Sh}{\mathord}{mcy}{"58} 
\newcommand\restr[2]{{
  #1 
  |_{#2} 
  }}
\newtheorem{thm}{Theorem}[section]
\newtheorem{cor}[thm]{Corollary}
\newtheorem{prop}[thm]{Proposition}
\newtheorem{lem}[thm]{Lemma}
\newtheorem{claim}[thm]{Claim}
\newtheorem{assump}[thm]{Assumptions}
\newtheorem{theorem}{Theorem}
\theoremstyle{definition}
\newtheorem{defn}[thm]{Definition}
\theoremstyle{remark}
\newtheorem{rem}[thm]{Remark}
\DeclareMathOperator{\GL}{GL}
\DeclareMathOperator{\Gal}{Gal}
\DeclareMathOperator{\der}{der}
\DeclareMathOperator{\ad}{ad}
\DeclareMathOperator{\Ad}{Ad}
\DeclareMathOperator{\Lift}{Lift}
\DeclareMathOperator{\Def}{Def}
\DeclareMathOperator{\Hom}{Hom}
\DeclareMathOperator{\im}{im}
\DeclareMathOperator{\Spec}{Spec}
\DeclareMathOperator{\dR}{dR}
\newcommand\sqr{\scaleobj{0.65}{\square}}
\let\c@equation\c@thm
\numberwithin{equation}{section}
\title{Forcing Ramification in the Relative Deformation Method}
\author{Stefan Nikoloski}
\address{Department of Mathematics, The Ohio State University, Columbus, OH 43210, USA}
\email{nikoloski.1@buckeyemail.osu.edu}
\thanks{I would like to thank Najmuddin Fakhruddin and Chandrashekhar Khare for their useful feedback on an earlier draft of this paper. I would like to thank Stefan Patrikis for bringing this project to my attention and for helpful conversations about it.  \linebreak This work was suppored by NSF grant DMS-2120325.}
\begin{document}

\begin{abstract}

    In this paper we generalize the forcing ramification argument of Khare-Ramakrishna to the setting of the lifting result by Fakhruddin-Khare-Patrikis. In particular, we show that in the relative deformation method the finite set of primes can be chosen so that the geometric characteristic $0$ lift will be ramified at each of those primes. Moreover, we show that the restrictions of this lift to the local absolute Galois groups will correspond to formally smooth points in the generic fibers of the local universal lifting rings at each prime. Eventually, we prove that the local formal smoothness at each of the primes implies the vanishing of the geometric Bloch-Kato Selmer group associated to the adjoint representation of that characteristic $0$ lift.
    
\end{abstract}

\maketitle
\tableofcontents

\section{Introduction}

The deformation theory of Galois representations traces its origin back to Mazur's fundamental paper \cite{Maz89}. In it under certain conditions Mazur establishes the existence of universal lifting rings that will govern the lifting properties of a residual representation $\overline{\rho}$. Ever since then deformation theory has become an indispensable tool for proving modularity conjectures, specifically via various $R = \mathbb T$ theorems. A major step forward in this field was made by Ravi Ramakrishna in \cite{Ram99} and \cite{Ram02}. In those papers, he develops a purely Galois theoretic method which annihilates certain Selmer groups by allowing ramification at finitely many primes. Eventually, this allows us to produce a geometric characteristic $0$ lift $\rho$ of a given residual representation $\overline{\rho}$. At the time, this result has provided support to Serre's modularity conjecture by showing that the residual representations in question are indeed reductions of geometric characteristic $0$ representations. Eventually, Serre's modularity conjecture was proven by Khare and Wintenberger in \cite{Kha06}, \cite{KW09a}, \cite{KW09b} and \cite{KW09b} by a different method which is automorphic in nature, but also uses some Galois theoretic lifting results. In comparison to the method of Ramakrishna, the method of Khare and Wintenberger produces stronger results. Indeed, one of the main drawbacks of Ramakrishna's method is that we need to allow extra ramification at a finite set of prime and therefore the produced lifts will not be minimal in general. On the other hand, the ideas in Ramakrishna's method can be applied in more general settings and over time this approach has inspired many more results (\cite{KLR05}, \cite{HR08}, \cite{Pat16}, \cite{FKP22}), vastly generalizing the original one. The advancements of the method have allowed us to make conclusions about representations valued in reductive groups and not just $\GL_2$. Additionally, we can relax some of the constraints on the residual respresentations $\overline{\rho}$, mainly about the size of its image, and the local lifting rings, in particular by working around their singularities.

One of the peculiarities of Ramakrishna's method is that even though we may need to allow extra ramification at some primes, there is no guarantee that the lift $\rho$ will be ramified at those primes. The main difficulty lies in the fact that being ramified is not a deformation condition, so the standard methods of controlling the local properties of $\rho$ are not applicable in this case. The question of whether the characteristic $0$ lift $\rho$ will be ramified at the new primes is interesting from multiple points of view. On one hand, this question conjecturally can be seen as a Galois theoretic analogue to cases of Ramanujan and the weight-monodromy conjectures. On the other hand, the same question is closely connected to formally smooth points in the generic fibers of the local lifting rings. The lifting methods allow us to exert great deal of control over the local behavior of the lift $\rho$ at the new set of primes, in particular over its inertial type at those primes. The restriction to the local absolute Galois group at such a prime will yield a point that lies in a prescribed irreducible component of the generic fiber of the local lifting ring. However, if the lift isn't ramified at that prime the point will lie in the intersection of that irreducible component and the unramified irreducible component. Hence, we can't expect that point to be formally smooth. Therefore, our initial question can be rephrased in terms of formally smooth points and we ask whether we can produce a geometric characteristic $0$ lift $\rho$, whose restrictions to the local absolute Galois groups will correspond to formally smooth points in the generic fibers of the local lifting rings. 

The question of whether we can produce a lift $\rho$ which will be ramified at all the extra primes was first considered by Khare and Ramakrishna in \cite{KR03}. In that article, the authors develop a method by which every prime in the finite set at which $\rho$ is unramified will be replaced by two primes and we then produce a new characteristic $0$ lift which will be ramified at the two new primes. Eventually, this forcing ramification argument has been generalized in \cite{Pat17} to more than just $\GL_2$-representations. However, both of these results work only under the fundamental restrictions of the Ramakrishna's result. The goal of this paper is to generalize the forcing ramification argument to the setting of \cite{FKP21}. The main difficulty comes from the fact that the lifting argument in \cite{FKP21}, for which we give a quick overview in \S 3, relies on the vanishing of a relative Selmer group (see Definition \ref{3.11}) and not a Selmer group attached to the residual representation. One of the implications is that the na\"ive generalization of the methods in \cite{KR03} and \cite{Pat17} wouldn't give us what we want. In particular, by applying the idea of Khare and Ramakrishna directly to the setting of \cite{FKP21} we can replace each prime at which the lift $\rho$ is unramified with a finite set of primes, but we can only force the ramification of the newly-produced lift at two primes of those primes and not at all of them. Because of this we need to come up with a better way of choosing our new primes and hence modify the lifting method. The lifting method in \cite{FKP21} studies modulo $\varpi^M$ Selmer groups for a fixed $M$ and their reductions modulo $\varpi$. In \S $4.2$ we will consider larger and larger values for $M$, while also taking the reductions modulo higher powers of $\varpi$ and not just modulo $\varpi$. By varying these parameters and exploiting the relations between the relative Selmer groups we are able to replace each prime at which the lift $\rho$ is unramified with exactly two primes and not just a finite set of primes.

We now let $F$ be a totally real number field and $S$ a finite set of finite places in $F$ containing the places above $p$. Let $\mathcal O$ be the ring of integers of $E$, a finite extension  of $\mathbb Q_p$ with residue field $k$. Let $G$ be a smooth group scheme over $\mathcal O$ such that $G^0$ is split connected reductive with $G/G^0$ finite \'etale of order prime to $p$. Let $\overline{\rho}:\Gamma_{F,S} \to G(k)$ be a continuous representation unramified away from $S$. We write $\overline{\rho}(\mathfrak g^{\der})$ for the $\mathcal O$-module $\mathfrak g^{\der} \otimes_{\mathcal O} k$ with $\Gamma_{F}$-action given by $\Ad \circ \overline{\rho}$. We now fix a lift $\mu:\Gamma_{F,S} \to G/G^{\der}(\mathcal O)$ of $\overline{\mu} \coloneqq \overline{\rho} \pmod{G^0}$. We suppose that $\overline{\rho}$ satisfies the following conditions:

\begin{assump}
\label{1.1}

\text{ }

    \begin{itemize}
    \item $H^1(\Gal(K/F),\overline{\rho}(\mathfrak g^{\der})^*) = 0$, where $K = F(\overline{\rho},\mu_p)$.
    \item $\overline{\rho}(\mathfrak g^{\der})$ and $\overline{\rho}(\mathfrak g^{\der})^*$ are semi-simple $\mathbb F_p[\Gamma_F]$-modules having no common $\mathbb F_p[\Gamma_F]$-subquotients, and neither contains the trivial representation.
    \item For all $\nu \in S$, there is some lift $\rho_\nu:\Gamma_{F_\nu} \to G(\mathcal O)$ with multiplier type $\mu$, of $\restr{\overline{\rho}}{\Gamma_{F_\nu}}$; and if $\nu \mid p$, this lift can be chosen to be de Rham and Hodge-Tate regular.
    \item $\overline{\rho}$ is odd, i.e. for every infinite place $\nu$ of $F$, $\dim H^0(\Gamma_{F_{\nu}},\overline{\rho}(\mathfrak g^{\der})) = \dim(\mathrm{Flag}_{G^{\der}})$.
\end{itemize}
    
\end{assump}

We emphasize that these are the same assumptions as in \cite{FKP21} and not in \cite{FKP22}. The conditions in the latter are slightly weaker. Our results do not directly generalize to this setting, since we are not able to establish the linear disjointness of fields that is crucial in the selection of the new primes. However, we will still use some of the technical improvements of the lifting method made in \cite{FKP22}. This will occur mostly in the minor changes we introduce during the doubling method. We prove the following result:

\begin{theorem}[See Theorem \ref{4.21}]
\label{A}

    Let $p \gg_G 0$ be a prime. Then there exists a finite set $T \supseteq S$, a finite extension $\mathcal O'$ of $\mathcal O$ and a geometric lift $\rho:\Gamma_{F,T} \to G(\mathcal O')$ of $\overline{\rho}$ with multiplier type $\mu$ such that for each prime $\nu \in T$ the local restriction $\restr{\rho}{\Gamma_{F_\nu}}$ corresponds to a formally smooth point of
    
    \begin{itemize}
        \item the generic fiber of the local lifting ring $R_{\restr{\overline{\rho}}{\Gamma_{F_\nu}}}^{\sqr,\mu}[1/\varpi]$ for $\nu \nmid p$.
        \item the inertial type $\tau$ and Hodge type $\textup{\textbf{v}}$ lifting ring $R^{\sqr,\mu,\tau,\textup{\textbf{v}}}_{\restr{\overline{\rho}}{\Gamma_{F_\nu}}}[1/\varpi]$ (see \cite[Proposition $3.0.12$]{Bal12} for the construction of this ring) for $\nu \mid p$.
    \end{itemize}

\end{theorem}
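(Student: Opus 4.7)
The plan is to run the relative deformation method of \cite{FKP21} to obtain a geometric lift $\rho_0:\Gamma_{F,T_0}\to G(\mathcal O')$ ramified on a finite set $T_0\supseteq S$, and then modify it by a refined doubling argument in the spirit of Khare-Ramakrishna \cite{KR03} and Patrikis \cite{Pat17} so as to eliminate every auxiliary prime at which the lift is unramified. At each $\nu\in T_0\setminus S$ the local deformation condition will be chosen through a specific inertial type whose corresponding irreducible component of the local lifting ring is ramified, so that $\restr{\rho_0}{\Gamma_{F_\nu}}$ is a formally smooth point on the component precisely when $\rho_0$ is actually ramified at $\nu$; the task is therefore to replace each unramified auxiliary prime by a configuration of new primes at which ramification can be forced.

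The key step is the doubling construction in its refined form. For each unramified auxiliary $\nu$ I want to produce, via a Chebotarev density argument, a pair $\{w_1,w_2\}$ of new primes such that (a) the residual representation $\restr{\overline{\rho}}{\Gamma_{F_{w_i}}}$ admits a local lift lying on a ramified irreducible component of $R^{\sqr,\mu}_{\restr{\overline{\rho}}{\Gamma_{F_{w_i}}}}[1/\varpi]$, (b) the associated global cohomology classes kill the obstruction inherited from $\nu$ in the relative Selmer group, and (c) the splitting fields produced are linearly disjoint from $F(\overline{\rho},\mu_p)$, as needed to rerun the FKP21 lifting theorem over $T\coloneqq(T_0\setminus\{\nu\})\cup\{w_1,w_2\}$. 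A naive transcription of \cite{KR03} and \cite{Pat17} to the modulo $\varpi$ setting only forces ramification at two of the new primes while leaving the rest of a doubling unramified, since the obstruction being killed lives in a relative Selmer group $H^1_{\mathcal L_M/\mathcal L_M^\perp}$ attached to modulo $\varpi^M$ deformations rather than in an ordinary Selmer group. I will therefore simultaneously vary $M$ and the exponent $n$ at which classes are reduced, using the compatibilities between the groups $H^1_{\mathcal L_M/\mathcal L_M^\perp}(\Gamma_{F,T_0},\overline{\rho}(\mathfrak g^{\der})\otimes\mathcal O/\varpi^M)$ and their modulo $\varpi^n$ reductions to produce exactly two new primes per iteration. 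Iterating over the finite set $T_0\setminus S$ yields the desired $T$ and lift $\rho$.

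Once $\rho$ is ramified at every $\nu\in T$ with the prescribed inertial-Hodge type, formal smoothness reduces to a local computation. For $\nu\nmid p$ the chosen irreducible component of $R^{\sqr,\mu}_{\restr{\overline{\rho}}{\Gamma_{F_\nu}}}[1/\varpi]$ is of the expected dimension, and any characteristic zero point lying off its intersection with other components is formally smooth; ramification is exactly what guarantees we are not on such an intersection. For $\nu\mid p$ the analogous argument on Balaji's lifting ring $R^{\sqr,\mu,\tau,\textup{\textbf{v}}}_{\restr{\overline{\rho}}{\Gamma_{F_\nu}}}[1/\varpi]$, combined with Kisin-type smoothness of deformation rings of fixed Hodge type, yields the claim, since the FKP21 method already produces lifts of the prescribed type.

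The main obstacle is the refined Chebotarev selection of step two: one must track how the modulo $\varpi^M$ relative Selmer classes interact with their modulo $\varpi^n$ reductions and verify that enlarging $M$ does not introduce obstruction components which cannot be killed by a single pair of primes, while also preserving the linear disjointness of auxiliary fields guaranteed by Assumption \ref{1.1}. The hypothesis $p\gg_G 0$ enters precisely to make the relevant cohomological dimensions behave uniformly across these enlargements. Once the bookkeeping is in place, splicing the new primes back into the FKP21 lifting argument and checking the smoothness claims prime-by-prime is routine.
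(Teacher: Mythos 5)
Your high-level strategy is aligned with the paper's (run the FKP21 lifting method, then force ramification by replacing unramified auxiliary primes with pairs of new primes, varying the ``level'' $M$ and the reduction exponent to make two primes per replacement suffice), but there are two substantive gaps.

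First, you treat every auxiliary prime $\nu\in T_0\setminus S$ uniformly under the dichotomy ``formally smooth on the chosen ramified component iff ramified.'' This does not cover the primes that the doubling method must add in order to kill $\Sh^2$ and to guarantee that the image contains $\widehat{G^{\der}}(\mathcal O/\varpi^n)$. At those primes the lift is unramified, it sits on the \emph{unramified} component (no ramified inertial type is ever prescribed there), and the prime cannot be discarded or replaced because it is structurally required for the doubling method to run. The paper's treatment of those primes is entirely different: one constrains the choice of Frobenius image during the doubling method (Lemma~\ref{3.8}), asking that $\rho_n(\sigma_\nu)$ be regular semisimple with $\beta(\rho_n(\sigma_\nu))\not\equiv N(\nu)$ for every root $\beta$, and then checks by hand (Lemma~\ref{4.2}) that $\varprojlim_n H^0(\Gamma_{F_\nu},\rho_n(\mathfrak g^{\der})^*)=0$, yielding formal smoothness via the Bellovin--Gee criterion (Proposition~\ref{3.1}) without any ramification. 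Your proposal has no mechanism for these primes. Similarly, primes of the ``third type'' in $S_N$ (where the lift ramifies during doubling rather than during the Selmer step) are handled by imposing a stronger root condition (Lemma~\ref{3.7}) so that $H^0(\Gamma_{F_\nu},\rho_n(\mathfrak g^{\der})^*)$ eventually dies; the paper never needs to re-select or replace those primes.

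Second, the claim that varying $M$ and the reduction exponent ``produces exactly two new primes per iteration'' is precisely the hard point, and the proposal gives no mechanism for it. The paper's argument depends on a new structural lemma (Lemma~\ref{4.14}): if the $m$-th and $s$-th relative Selmer groups coincide and have common $k$-dimension $d$, then the $(m,t)$-relative Selmer group $H^1_{(m,t)}$ is a \emph{free} $\mathcal O/\varpi^t$-module of rank $d$ for $t\le m-s+1$. It is this freeness, together with a careful pair of Chebotarev selections (Propositions~\ref{4.10} and~\ref{4.16}) and Greenberg--Wiles size comparisons (Propositions~\ref{4.11},~\ref{4.17},~\ref{4.18}), that rules out the possibility that adding the second prime $\nu_2$ fails to drop the dimension of the $2r$-relative dual Selmer group. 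Without this, the standard obstruction noted in \cite[Remark~6.10]{FKP21} remains: one prime may not decrease the relative Selmer dimension at all, and you could need an unbounded finite set rather than a pair. Your ``compatibilities between reductions'' hand-wave does not rule this out; the freeness lemma and the choice of $\nu_2$ with $\restr{\phi_r}{\Gamma_{F_{\nu_2}}}\notin L^\alpha_{r,\nu_2}$ are the missing ingredients that make the two-prime replacement go through.
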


Using known modularity results, in certain situations we can show that the lift $\rho$ will be modular. In those cases we can give the statement of the theorem a different interpretation and also relate it to some other known results or conjectures. For instance, when dealing with modular lifts we can view the result of Theorem \ref{A} as some sort of a level raising result, since the set of primes where the lift ramifies divides the level of the newform associated to the lift $\rho$. This is especially true in light of Proposition \ref{4.22} which tells us that the set $T$ in Theorem \ref{A} can be chosen to properly include $S$. This means that we have indeed increased the set of primes where a lift of $\overline{\rho}$ ramifies, something which is not immediately guaranteed by the lifting results of \cite{FKP21}, nor by Theorem \ref{A}.

For a lift $\rho$ of $\overline{\rho}$ produced by Theorem \ref{A} the formal smoothness at each prime in $T$ will imply the finiteness of a certain Selmer group attached to the adjoint representation $\rho(\mathfrak g^{\der})$. Equivalently, the tangent space at $\rho$, when viewed as a point in a quotient of the global deformation ring will be finite. We spend the majority of \S 5 proving this implication. This can be seen as a version of a result implied by applying the Bloch-Kato conjecture (\cite[Conjecture $5.15$]{BK90}) to the adjoint representation $\rho(\mathfrak g^{\der})$. In our setting the Bloch-Kato conjecture (see \linebreak \cite[II, \S $3.4.5$]{FPR94} for the restatement in terms of L-funtions) will predict the equality of the dimension of the geometric Bloch-Kato Selmer group $H_{g}^1(\Gamma_{F,T},\rho(\mathfrak g^{\der}))$ and the order of vanishing at $s=1$ of the L-function attached to the representation $\rho(\mathfrak g^{\der})$. If $\rho$ pure of weight $w$, then $\rho(\mathfrak g^{\der})$ is pure of weight $0$. It is then conjectured that $L(\rho(\mathfrak g^{\der}),s)$ will not vanish at $s=1$ and so the Bloch-Kato conjecture predicts that $H_{g}^1(\Gamma_{F,T},\rho(\mathfrak g^{\der})) = 0$. In view of this we prove:

\begin{theorem}[See Theorem \ref{5.13}]
\label{B}

    Let $\rho$ be a lift of $\overline{\rho}$ produced by Theorem \ref{A}. Then, the geometric Bloch-Kato Selmer group $H^1_{g}(\Gamma_{F,T},\rho(\mathfrak g^{\der}))$ is trivial.

\end{theorem}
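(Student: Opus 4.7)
For each $\nu \in T$, let $L_\nu \subseteq H^1(\Gamma_{F_\nu}, \rho(\mathfrak g^{\der}))$ denote the image of the tangent space at $\rho|_{\Gamma_{F_\nu}}$ of the appropriate local lifting ring---the irreducible component of $R^{\sqr,\mu}_{\restr{\overline{\rho}}{\Gamma_{F_\nu}}}[1/\varpi]$ containing $\rho|_{\Gamma_{F_\nu}}$ for $\nu \nmid p$, and $R^{\sqr,\mu,\tau,\textup{\textbf{v}}}_{\restr{\overline{\rho}}{\Gamma_{F_\nu}}}[1/\varpi]$ for $\nu \mid p$. The plan is to use the formal smoothness guaranteed by Theorem \ref{A} to identify each $L_\nu$ with the Bloch--Kato local condition $H^1_g(\Gamma_{F_\nu}, \rho(\mathfrak g^{\der}))$, and then to prove the associated global Selmer group $H^1_{\mathcal L}(\Gamma_{F,T}, \rho(\mathfrak g^{\der}))$ vanishes.

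For the local identification at $\nu \mid p$, the discrete nature of the Hodge type $\textup{\textbf{v}}$ and inertial type $\tau$ among de Rham representations implies that the tangent space at a de Rham point of $R^{\sqr,\mu,\tau,\textup{\textbf{v}}}_{\restr{\overline{\rho}}{\Gamma_{F_\nu}}}[1/\varpi]$ is exactly the de Rham tangent space, which is $H^1_g(\Gamma_{F_\nu}, \rho(\mathfrak g^{\der}))$ by a standard $p$-adic Hodge-theoretic computation. At $\nu \nmid p$, an unramified cocycle $c$ has $c|_{I_{F_\nu}}$ a coboundary, so the first-order deformation $(1+\varepsilon c)\rho|_{\Gamma_{F_\nu}}$ is conjugate to $\rho|_{I_{F_\nu}}$ on inertia and thus stays on the same irreducible component; this gives $H^1_{\mathrm{unr}}(\Gamma_{F_\nu}, \rho(\mathfrak g^{\der})) \subseteq L_\nu$, and the dimension equality $\dim L_\nu = \dim H^0(\Gamma_{F_\nu}, \rho(\mathfrak g^{\der})) = \dim H^1_{\mathrm{unr}}$ forced by formal smoothness upgrades the inclusion to an equality.

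With these identifications, $H^1_g(\Gamma_{F,T}, \rho(\mathfrak g^{\der})) = H^1_{\mathcal L}(\Gamma_{F,T}, \rho(\mathfrak g^{\der}))$. The Greenberg--Wiles formula then gives
\[
\dim H^1_{\mathcal L} - \dim H^1_{\mathcal L^\perp} = \sum_{\nu \in T} \bigl(\dim L_\nu - \dim H^0(\Gamma_{F_\nu}, \rho(\mathfrak g^{\der}))\bigr) - \sum_{\nu \mid \infty}\dim H^0(\Gamma_{F_\nu}, \rho(\mathfrak g^{\der})) + \dim H^0(\Gamma_F, \rho(\mathfrak g^{\der})) - \dim H^0(\Gamma_F, \rho(\mathfrak g^{\der})^*(1)).
\]
The oddness hypothesis, the assumption that neither $\overline{\rho}(\mathfrak g^{\der})$ nor its dual contains the trivial subrepresentation, and the local Euler--Poincar\'e computation at primes above $p$ combine to make the right-hand side vanish, yielding $\dim H^1_{\mathcal L} = \dim H^1_{\mathcal L^\perp}$. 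The main obstacle is then the vanishing of the dual Selmer group $H^1_{\mathcal L^\perp}(\Gamma_{F,T}, \rho(\mathfrak g^{\der})^*(1))$. This is where the explicit construction of $T$ from Theorem \ref{A} must be exploited: each prime in $T \setminus S$ was added in \S 4 via the doubling variant of the forcing ramification argument precisely to annihilate classes in the relative dual Selmer group at higher and higher finite levels, and one needs to propagate this mod $\varpi^M$ annihilation to the characteristic zero dual Selmer group in order to conclude $H^1_g(\Gamma_{F,T}, \rho(\mathfrak g^{\der})) = 0$.
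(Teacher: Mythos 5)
Your proposal takes a genuinely different route from the paper, and it stops short of an actual proof precisely at the step you flag as ``the main obstacle.''

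The local identifications you set up are essentially sound. At $\nu\mid p$ the tangent space of $R^{\sqr,\mu,\tau,\textbf{v}}_{\restr{\overline{\rho}}{\Gamma_{F_\nu}}}[1/\varpi]$ at a formally smooth point is indeed $H^1_g(\Gamma_{F_\nu},\rho(\mathfrak g^{\der}))$ (though ``a standard computation'' here unpacks into Allen's lemma on de Rham deformations, Kisin's representability of $(R^{\sqr,\mu,\tau,\textbf{v}}[1/\varpi])\widehat{{}_x}$, and the regularity observation of Lemma \ref{A.5}). At $\nu\nmid p$ your inclusion-plus-dimension argument works, and in fact at a formally smooth point one has $H^2(\Gamma_{F_\nu},\rho(\mathfrak g^{\der}))=0$ (by duality from $H^0$ of the Tate twist vanishing) and hence $\dim H^1=\dim H^0=\dim H^1_{\mathrm{unr}}$, so $L_\nu=H^1_{\mathrm{unr}}=H^1$ and the condition at $\nu\nmid p$ is vacuous; this is what makes $H^1_{\mathcal L}=H^1_g$, since $H^1_g$ carries no local condition at $\nu\nmid p$. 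The Greenberg--Wiles balance $\dim H^1_{\mathcal L}=\dim H^1_{\mathcal L^\perp}$ then comes out as you say.

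The genuine gap is everything after that. You need the characteristic-zero dual Selmer group to vanish, but the only vanishing the construction hands you is of the \emph{relative} dual Selmer group $\overline{H^1_{\mathcal L^\perp_{2^dM,T}}(\Gamma_{F,T},\rho_{2^dM}(\mathfrak g^{\der})^*)}$, i.e.\ the image of a reduction map between mod-$\varpi^n$ groups. There is no direct implication from that to the vanishing of $H^1_{\mathcal L^\perp}(\Gamma_{F,T},\rho(\mathfrak g^{\der})^*(1))$ over $E$; the mod-$\varpi^n$ dual Selmer groups themselves can stay nonzero at every finite level. The paper avoids this pitfall entirely by never touching the characteristic-zero dual Selmer group. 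Instead it works with direct limits of torsion Selmer groups: it shows $\varinjlim_n H^1_{\mathcal L_{n,T}}$ is a quotient of a single finite mod-$\varpi$ group (Lemma \ref{4.14}), then bounds the effect of dropping local conditions at $\nu\nmid p$ uniformly in $n$ using formal smoothness (Proposition \ref{5.7}) and the effect of the torsion of $\Omega_{\overline{R^{\sqr}_\nu}/\mathcal O}\otimes\mathcal O$ at $\nu\mid p$ (Lemma \ref{5.6}), to conclude $\varinjlim_n H^1_{\mathcal P_{n,p}}$ is finite, hence its characteristic-zero analogue vanishes. Note that in the paper's scheme the role of formal smoothness at $\nu\nmid p$ is to give the uniform bound in Proposition \ref{5.7} via $|H^2|$, not to match local conditions; your approach reuses formal smoothness only to identify $L_\nu$, which is not enough by itself. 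To complete your approach you would need an independent argument lifting the relative mod-$\varpi$ vanishing to $H^1_{\mathcal L^\perp}$ over $E$, which would amount to re-deriving much of \S5 anyway.
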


To prove this we first use the vanishing of the relative Selmer group which was used to produce the lift $\rho$ in order to reduce the problem to computing the sizes of the local lifting conditions. Then using the formal smoothness of $\rho$ at primes $\nu \in T$ we show that replacing the local conditions at primes not dividing $p$ with the whole local cohomology group $H^1(\Gamma_{F_\nu},\rho(\mathfrak g^{\der}))$ will not contribute to any extra global cohomology classes. On the other side, at primes $\nu \mid p$ we show the equality of the local Bloch-Kato groups $H^1_g(\Gamma_{F_\nu},\rho(\mathfrak g^{\der}))$ with the local lifting conditions we are using throughout the lifting and the forcing ramification methods.

\section{Notation}

We let $F$ be a totally real number field. We fix an algebraic closure $\overline{F}$ of $F$ and we write $\Gamma_F$ for the absolute Galois group $\Gal(\overline{F}/F)$. For each place $\nu$ of $F$ we fix an embedding $\overline{F} \hookrightarrow \overline{F_\nu}$ which gives us an embedding $\Gamma_{F_\nu} \hookrightarrow \Gamma_F$ of local absolute Galois groups into global absolute Galois groups. For a finite set of primes $S$ of $F$ we let $F(S)$ be the maximal extension of $F$ inside $\overline{F}$ that is unramified at primes outside of $S$. We then denote the Galois group $\Gal(F(S)/F)$ by $\Gamma_{F,S}$. In what follows we will often consider tamely ramified lifts, so for a prime $p$ we will work with $\Gal(F^{\mathrm{tame},p}_\nu/F_\nu)$, which is the maximal tamely ramified with \linebreak $p$-power ramification quotient of $\Gamma_{F_\nu}$. It's a well-known fact that this group is generated by a lift of the Frobenius $\sigma_\nu$ and a generator of the $p$-part of the tame inertia group $\tau_\nu$, whose interaction is given by the fundamental relation $\sigma_\nu\tau_\nu\sigma_\nu^{-1} = \tau_\nu^{N(\nu)}$.

Let $\mathcal O$ be the ring of integers of a finite extension $E$ of $\mathbb Q_p$, with uniformizer $\varpi$ and a maximal ideal $\mathfrak m = (\varpi)$. We write $k = \mathcal O/\mathfrak m$ for its residue field and we let $e$ be the ramification index of $\mathcal O/\mathbb Z_p$. We let $G$ be a smooth scheme over $\mathcal O$ such that $G^0$ is split connected reductive and $G/G^0$ is finite \'etale of order prime to $p$. We write $\mathfrak g$ for its Lie algebra. We let $G^{\der}$ be the derived group of $G^0$ and we write $\mathfrak g^{\der}$ for its Lie algebra. We also let $Z_{G^0}$ and $\mathfrak z_G$ be the center of $G^0$ and its Lie algebra, resectively. We will also work with maximal torii $T$ inside of $G^0$ and we label their Lie algebras with $\mathfrak t$. We will often abuse the notation and write $\mathfrak g^{\der}$ for $\mathfrak g^{\der} \otimes_\mathcal O k$, but that will be obvious from the context. For a complete local $\mathcal O$-algebra $O'$ with residue field $k'$ we write $\widehat{G}(\mathcal O')$ for the kernel of the reduction map $G(\mathcal O') \to G(k')$.

For a split maximal torus $T$ of $G^0$ (over $\mathcal O$) and a root $\alpha \in \Phi(G^0,T)$, we let $U_\alpha \subset G^0$ denote the root subgroup that is the image of the root homomorphism ("exponential mapping") $u_\alpha: \mathfrak g_\alpha \to G$. The homomorphism $u_\alpha$ is a $T$-equivariant isomorphism $\mathfrak g_\alpha \to U_\alpha$. See \cite[Theorem $4.1.4$]{Con14}.

Given a group homomorphism $\rho:\Gamma \to H$ and an $H$-module $V$ we will write $\rho(V)$ for the associated $\Gamma$-module. More specifically, we will use this whenever we have a continuous representation $\rho_s:\Gamma \to G(\mathcal O/\varpi^s)$ of some topological group $\Gamma$. We will then write $\rho_s(\mathfrak g^{\der})$ for the $\mathcal O$-module $\mathfrak g^{\der} \otimes_\mathcal O \mathcal O/\varpi^s$ with the action of $\Gamma$ given by $\Ad \circ \rho_s$. For $1 \le r \le s$, our choice of a uniformizer of $\mathcal O$ allows us view $\mathcal O/\varpi^r$ as a submodule of $\mathcal O/\varpi^s$ by the multiplication by $\varpi^{r-s}$ map. More generally, for $\rho_r \coloneqq \rho_s \pmod{\varpi^r}$ we get an inclusion of $\rho_r(\mathfrak g^{\der})$ into $\rho_s(\mathfrak g^{\der})$.

We fix an isomorphism between $\mathbb Q_p/\mathbb Z_p(1)$ and $\mu_{p^{\infty}}(\overline{F})$, which amounts to choosing a compatible collection of $p$-power roots of unity. Using this, for a $\Gamma_F$-module $V$ we can identify the Tate dual $V^* = \Hom(V,\mu_{p^{\infty}}(\overline{F}))$ with $\Hom(V,\mathbb Q_p/\mathbb Z_p(1))$. Moreover, for a number field $K$ we denote by $K_\infty$ the extension $K(\mu_{p^\infty})$.

We assume that $p > 3$ is very good for $G^{\der}$ (\cite[\S $1.14$]{Car85}). We also assume that the canonical central isogeny $G^{\der} \times Z^0_G \to G^0$ has kernel of order coprime to $p$. Then, in particular we have a $G$-equivariant direct sum decomposition $\mathfrak g = \mathfrak g^{\der} \oplus \mathfrak z_G$. Also, by \cite[\S $1.16$]{Car85} we have a non-degenerate $G$-invariant trace from $\mathfrak g^{\der} \times \mathfrak g^{\der} \to k$. For a continuous homomorhism $\overline{\rho}:\Gamma_F \to G(k)$ we set $\overline{\mu} \coloneqq \overline{\rho} \pmod{G^{\der}}:\Gamma_F \to G/G^{\der}(k)$. We will fix a lift $\mu:\Gamma_F \to G/G^{\der}(\mathcal O)$ of $\overline{\mu}$, which is easily seen to exists since $G/G^{\der}(k)$ has an order coprime to $p$. We will be interested in lifts $\rho$ of $\overline{\rho}$ to $G(\mathcal O)$ of type $\mu$, i.e. lifts whose reduction modulo $G^{\der}$ is equal to $\mu$.

\section{The lifting method}

We start with a residual representation $\overline{\rho}:\Gamma_{F,S} \to G(k)$, which satisfies Assumptions \ref{1.1}. We denote by $D$ the largest integer such that $\mu_{p^D}$ is contained in $K = F(\overline{\rho},\mu_p)$. We may assume that $\overline{\rho}$ surjects onto $\pi_0(G)$. If not, we can replace $G$ by the preimage in $G$ of the image of $\overline{\rho}$ in $\pi_0(G)$, which will not affect the deformation theory of $\overline{\rho}$. Then we let $\widetilde{F}/F$ be the unique finite Galois extension such that $\overline{\rho}$ induces an isomorphism $\Gal(\widetilde{F}/F) \to \pi_0(G)$. 

As shown in \cite[Theorem $5.2$]{FKP22} we can produce a characteristic $0$ lift $\rho:\Gamma_{F,T} \to G(\mathcal O)$ of $\overline{\rho}$, which is unramified outside of a finite set $T$, containing $S$. Although, we'll go in more details later on we give a brief outline of the proof here. A key technical ingredient in the proof is a result from Lazard in \cite{Laz65}, which tells us that for all large enough $M$, the reduction map $\mathcal O/\varpi^M \to \mathcal O/\varpi$ induces a zero map between certain cohomology groups (see \cite[Corollary B$.2$]{FKP21}). Then, as in the first part of the paper, by using the so called doubling method, for any $n \in \mathbb N$ we can produce a lift $\rho_n:\Gamma_{F,S_n} \to G(\mathcal O/\varpi^n)$ of $\overline{\rho}$. We stop this process at a large enough $N$. The integer $M$ plays a role in the lower bound of $N$, among many other factors. As explained in the relative deformation section of the paper we produce a finite set of primes $Q$ that will annihilate an $M$-relative Selmer group (see Definition \ref{3.11}). Using this we produce the desired lift $\rho$. 

This method guarantees that $\rho$ will be unramified outside $S_N \cup Q$. On the other side, even though we have to allow ramification at primes in these sets for the method to work it is possible that $\rho$ will not be ramified at some primes in $S_N \cup Q$. For most of these primes, especially the primes in $Q$, the question of whether $\rho$ is ramified at them is closely related to whether its restrictions to the local Galois group $\restr{\rho}{\Gamma_{F_\nu}}:\Gamma_{F_\nu} \to G(\mathcal O)$ correspond to formally smooth points of the generic fiber of the local lifting rings $R_{\restr{\overline{\rho}}{\Gamma_{F_\nu}}}^{\sqr,\mu}[1/\varpi]$. The goal of this paper is to answer this more general question and produce $\rho$ whose local restrictions will be formally smooth points in the fibers of the corresponding local lifting rings. A key ingredient in the proof will be the following formal smoothness criterion by Bellovin and Gee:

\begin{prop}[{\cite[Remark $3.3.7$]{BG19}}]

\label{3.1}

Suppose that $\nu \nmid p$. A point $x$ is a formally smooth point in $R_{\restr{\overline{\rho}}{\Gamma_{F_\nu}}}^{\sqr,\tau}[1/p]$ if and only if for the corresponding representation $\rho_x$ we have $H^0(\Gamma_{F_\nu},\rho_x(\mathfrak g^{\der})^*) = 0$.

\end{prop}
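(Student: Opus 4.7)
The plan is to translate formal smoothness of $x$ in $R^{\sqr,\tau}_{\restr{\overline{\rho}}{\Gamma_{F_\nu}}}[1/p]$ into a dimension count for the tangent space at $x$, and then use the local Euler characteristic formula together with Tate local duality to convert this into the stated cohomological condition. First I would recall that for $\nu \nmid p$ each irreducible component of the generic fiber $R^{\sqr,\tau}_{\restr{\overline{\rho}}{\Gamma_{F_\nu}}}[1/p]$ is equidimensional of dimension $\dim G^{\der}$ (by the structural results for local lifting rings at primes not dividing $p$ in the spirit of Bellovin-Gee and Booher-Patrikis). Consequently $x$ is formally smooth precisely when the tangent space $T_x R^{\sqr,\tau}[1/p]$ also has dimension $\dim G^{\der}$.

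Next I would compute that tangent space. Since $\nu \nmid p$ and $\rho_x$ is a characteristic $0$ representation, its restriction to wild inertia is trivial and its restriction to the prime-to-$p$ tame inertia factors through a fixed finite quotient. Therefore the inertial-type condition cuts out an open locus among characteristic $0$ framed deformations, so the tangent space at $x$ of the inertial-type lifting ring agrees with the full framed tangent space, which is canonically identified with $Z^1(\Gamma_{F_\nu}, \rho_x(\mathfrak g^{\der}))$. Writing $h^i = \dim_{E_x} H^i(\Gamma_{F_\nu}, \rho_x(\mathfrak g^{\der}))$, the usual short exact sequence identifying $1$-coboundaries with $\mathfrak g^{\der}/H^0$ gives
\[ \dim Z^1 = h^1 + \dim \mathfrak g^{\der} - h^0. \]

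Finally I would invoke the local Euler characteristic formula at a prime $\nu \nmid p$, namely $h^0 - h^1 + h^2 = 0$, to conclude that $\dim Z^1 = \dim \mathfrak g^{\der} + h^2$, so formal smoothness of $x$ is equivalent to $h^2(\Gamma_{F_\nu}, \rho_x(\mathfrak g^{\der})) = 0$. By Tate local duality, using the non-degenerate $G$-invariant trace pairing on $\mathfrak g^{\der}$ (which exists because $p$ is very good for $G^{\der}$), this is in turn equivalent to the vanishing of $H^0(\Gamma_{F_\nu}, \rho_x(\mathfrak g^{\der})^*)$, as claimed. The main obstacle in making this rigorous is the openness of the inertial-type condition; one checks that the inertial type is determined by the restriction of $\rho_x$ to a finite-index subgroup of inertia whose image (being a quotient of a pro-prime-to-$p$ group landing in a $p$-adic Lie group) cannot be deformed infinitesimally, which is the content of the argument underlying Bellovin-Gee's remark.
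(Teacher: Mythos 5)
The paper does not prove this proposition at all; it is quoted verbatim from \cite[Remark 3.3.7]{BG19}, so there is no in-paper argument to compare against. Your reconstruction, via equidimensionality of the generic fiber of dimension $\dim G^{\der}$, identification of the framed tangent space with $Z^1(\Gamma_{F_\nu},\rho_x(\mathfrak g^{\der}))$, the vanishing local Euler characteristic at $\ell\neq p$, and Tate local duality to turn $h^2$ into $h^0$ of the Tate dual, is correct and is almost certainly the argument implicit in the cited remark.

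One detail in your final paragraph is slightly off. The image of wild inertia under $\rho_x$ is finite (being the image of a pro-$\ell$ group in a $p$-adic Lie group with $\ell\neq p$) but need not be trivial, and tame inertia carries a nontrivial $\mathbb Z_p$-factor, so its image can be infinite; it is only after passing to an open finite-index subgroup of inertia, via Grothendieck's monodromy theorem, that the image becomes unipotent. The precise input you want, and what the Bellovin--Gee machinery supplies, is that the inertial type is locally constant on the generic fiber of the full framed lifting ring, so that $R^{\sqr,\tau}_{\restr{\overline{\rho}}{\Gamma_{F_\nu}}}[1/p]$ is a union of irreducible components of that ring. Since every irreducible component passing through $x$ must carry the inertial type of $\rho_x$ itself, the formal completion at $x$, and with it the tangent space, is unchanged by imposing the $\tau$-condition. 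With that substitution your dimension count is airtight.
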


To achieve our goal we will take a closer look at the lifting method in \cite{FKP22} and see how the primes in $S_N \cup Q$ and the local conditions at them are chosen. Motivated by Proposition \ref{3.1} we will also make some technical adjustments along the way. As the primes in $S_N$ and $Q$ are obtained by using completely distinct methods, we will have to establish the smoothness at primes in those sets separately. This new lift will not necessarily satisfy the wanted properties at all the primes, yet. However, it will serve the purpose of an auxiliary lift and using it we will produce new lifts, possibly changing the initial ramifying set. Eventually, we will end up with the desired lift.

\subsection{Doubling method}

We will now look at the doubling method (\cite[\S$3$ and \S $4$]{FKP22}) into more details. As explained above, we will need to run the doubling method $N$ times and produce a lift $\rho_N: \Gamma_{F,S_N} \to G(\mathcal O/\varpi^N)$ of $\overline{\rho}$. After that we can continue with the relative deformation argument and produce a characteristic $0$ lift. The set $S_N$ will be a finite enlargement of $S$ by primes which are split in $K$. We now define a special type of local lifts and we'll ask for the local lifts at each new prime we add in the ramifying set to be of that type.

\begin{defn}

    \label{3.2}

    Let $\nu$ be a prime split in $K$. Fix a split maximal torus $T$ of $G^0$ (over $\mathcal O$) and a root $\alpha \in \Phi(G^0,T)$. Define $\Lift_{\restr{\overline{\rho}}{\Gamma_{F_\nu}}}^{\mu,\alpha}(R)$ to be the set of lifts with multiplier $\mu$ which are $\widehat{G}(R)$-conjugate to a lift $\rho:\Gamma_{F_\nu} \to G(R)$ satisfying

    \begin{itemize}
        \item $\rho(\sigma_\nu) \in T(R)$.
        \item $\alpha(\rho(\sigma_\nu)) = q$ in $R^\times$.
        \item $\rho(\tau_\nu) \in U_\alpha(R)$.
    \end{itemize}
    
\end{defn}

\begin{rem}

    \label{3.3}

    This definition differs slightly from \cite[Definition $3.1$]{FKP21}. However, the modification simplifies some of the computations in \cite[\S $3$]{FKP21} and the relevant results from that section still hold. Most of the details are worked out in \cite{KP24}.
    
\end{rem}

We now restate the main result of this method:

\begin{thm}[{\cite[Theorem $4.4$]{FKP22}}]

    \label{3.4}

    There exists a sequence of finite sets of primes of $F$, $S \subseteq S_1 \subseteq S_2 \subseteq \dots \subseteq S_n \subseteq \dots$, and for each $n \ge 1$ a lift $\rho_n:\Gamma_{F,S_n} \to G(\mathcal O/\varpi^n)$ of $\overline{\rho}$ with multiplier $\mu$, such that $\rho_n = \rho_{n+1} \pmod{\varpi^n}$ for all $n$. This system of lifts $(\rho_n)_{n \ge 1}$ satisfies the following properties:

    \begin{enumerate}
        \item If $\nu \in S_n \setminus S$ is ramified in $\rho_n$, then there is a split maximal torus and a root $(T_\nu,\alpha_\nu)$ such that $\rho_n(\sigma_\nu) \in T_\nu(\mathcal O/\varpi^n)$, $\alpha_\nu(\rho_n(\sigma_\nu)) \equiv N(\nu) \pmod {\varpi^n}$ and $\restr{\rho_n}{\Gamma_{F_\nu}} \in \Lift_{\overline{\rho}}^{\mu,\alpha_\nu}(\mathcal O/\varpi^n);$ in addition one of the following holds:

        \begin{enumerate}[(a)]
            \item For some $s \le eD$, $\rho_s(\tau_\nu)$ is a non-trivial element of $U_{\alpha_\nu}(\mathcal O/\varpi^s)$ and for all $n' \ge s$, $\restr{\rho_{n'}}{\Gamma_{F_\nu}}$ is a $\widehat{G}(\mathcal O)$-conjugate to the reduction modulo $\varpi^{n'}$ of a fixed lift $\rho_\nu:\Gamma_{F_\nu} \to G(\mathcal O)$ of $\restr{\rho_s}{\Gamma_{F_\nu}}$. We may choose this $\rho_\nu$ to be a formally smooth point of the generic fiber of the local lifting ring of $\restr{\overline{\rho}}{\Gamma_{F_\nu}}$.
            \item For $s = eD$, $\restr{\rho_s}{\Gamma_{F_\nu}}$ is trivial modulo the center, while $\alpha_\nu(\rho_{s+1}(\sigma_\nu)) \equiv N(\nu) \not \equiv 1 \pmod{\varpi^{s+1}}$, and $\beta(\rho_{s+1}(\sigma_\nu)) \not \equiv 1 \pmod{\varpi^{s+1}}$ for all roots $\beta \in \Phi(G^0,T_\nu)$.
        \end{enumerate}

        \item For all $\nu \in S$, $\restr{\rho_n}{\Gamma_{F_\nu}}$ is strictly equivalent to $\rho_\nu \pmod{\varpi^n}$.
        \item The image $\rho_n(\Gamma_F)$ contains $\widehat{G^{\der}}(\mathcal O/\varpi^n)$.
        
    \end{enumerate}
    
\end{thm}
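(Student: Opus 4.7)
The plan is to argue by induction on $n$, building $S_n$ and $\rho_n$ simultaneously. The base case $n = 1$ is immediate: take $S_1 = S$ and $\rho_1 = \overline{\rho}$, so that conditions (1) and (2) are vacuous and (3) follows from the standing assumption that $\overline{\rho}$ surjects onto $\pi_0(G)$. For the inductive step, I would analyse the obstruction to lifting $\rho_n$ to $G(\mathcal O/\varpi^{n+1})$ while keeping $S_n$ fixed; this obstruction lives in $H^2(\Gamma_{F,S_n}, \rho_n(\mathfrak g^{\der}))$. Taking the local conditions at $\nu \in S$ to be the fixed $\rho_\nu \pmod{\varpi^n}$ and at the Ramakrishna primes in $S_n \setminus S$ to be $\Lift^{\mu,\alpha_\nu}_{\restr{\overline{\rho}}{\Gamma_{F_\nu}}}(\mathcal O/\varpi^n)$, Poitou--Tate duality translates this obstruction into a dual Selmer subgroup of $H^1(\Gamma_{F,S_n}, \rho_n(\mathfrak g^{\der})^*)$. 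The goal is then to adjoin finitely many new primes to $S_n$, equipped with Ramakrishna-type local lifts, whose local tangent contributions annihilate every non-zero dual class.

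To kill a given non-zero dual class $\phi$, I would seek a prime $\nu$, split in an appropriate finite extension of $K \cdot F(\rho_n, \phi)$, whose Frobenius lies in a split torus $T_\nu$ with $\alpha_\nu(\rho_n(\sigma_\nu)) \equiv N(\nu) \pmod{\varpi^n}$ for some root $\alpha_\nu$, and at which the local pairing of $\phi$ with the Ramakrishna tangent line in $H^1(\Gamma_{F_\nu}, \rho_n(\mathfrak g^{\der}))$ is non-zero. The existence of such $\nu$ is a Chebotarev density statement applied to a carefully chosen Galois extension; the non-vanishing of the local pairing relies on the linear disjointness of $K$, $F(\rho_n)$, and the fixed field of $\phi$, which follows from the semisimplicity and no-common-subquotient hypotheses in Assumptions \ref{1.1}. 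The \emph{doubling} in the name reflects the fact that primes typically have to be introduced in pairs in order to preserve the multiplier $\mu$ while annihilating $\phi$ without disturbing dual classes already killed.

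The dichotomy (a)/(b) would arise from tracking when the tame inertia at a newly added prime first becomes non-scalar. If this happens at some $s \le eD$, I would fix once and for all a characteristic-zero lift $\rho_\nu:\Gamma_{F_\nu} \to G(\mathcal O)$ of $\restr{\rho_s}{\Gamma_{F_\nu}}$, formally smooth by Proposition \ref{3.1} together with the explicit cohomology computation at a Ramakrishna prime, and I would force $\restr{\rho_{n'}}{\Gamma_{F_\nu}}$ for all subsequent $n' \ge s$ to be $\widehat{G}(\mathcal O)$-conjugate to its reduction; this produces case (a). Otherwise the local image remains central up to $s = eD$, and at the next stage a Chebotarev-type genericity condition on the chosen Frobenius yields case (b). Surjectivity onto $\widehat{G^{\der}}(\mathcal O/\varpi^n)$ in (3) propagates inductively because Ramakrishna local lifts contribute root-subgroup generators to the global image, and $\widehat{G^{\der}}(\mathcal O/\varpi^n)$ is topologically generated by its root subgroups when $p$ is very good for $G^{\der}$.

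The hardest part will be maintaining the stability of case (a) across all later stages: the same $\rho_\nu$ fixed at step $s$ must continue to model $\restr{\rho_{n'}}{\Gamma_{F_\nu}}$ up to $\widehat{G}(\mathcal O)$-conjugation, uniformly in $n' \ge s$, despite all further prime-adding operations performed to clear obstructions arising from the larger coefficient module $\rho_{n'}(\mathfrak g^{\der})$. This rests on a careful study of the Ramakrishna local lifting ring and the smoothness of its stratum containing $\rho_\nu$; the required computations are carried out in \cite{FKP22} and \cite{KP24}, which I would invoke rather than redo here.
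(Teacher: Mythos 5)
This theorem is not proved in the paper at all; it is quoted verbatim from \cite[Theorem $4.4$]{FKP22} with the paper adding only a brief gloss (see the paragraph following Remark \ref{3.5}) about the four types of primes and the refinements of Lemmas \ref{3.7} and \ref{3.8}. So there is no ``paper's own proof'' to compare against, and your sketch must be assessed against the actual argument in FKP.

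Your sketch has the broad contours of a Ramakrishna-style inductive lifting argument, and some ingredients (Chebotarev selection of trivial primes, the role of $eD$ in the (a)/(b) dichotomy, linear disjointness from Assumptions~\ref{1.1}) are correctly placed. But there are genuine gaps that take you off the FKP route. First, the obstruction to lifting $\rho_n$ modulo $\varpi^{n+1}$, and the dual Selmer group you pair against, live in cohomology with coefficients in $\overline{\rho}(\mathfrak g^{\der})$ and $\overline{\rho}(\mathfrak g^{\der})^*$ (mod $\varpi$), not in $\rho_n(\mathfrak g^{\der})$; the kernel of $\mathcal O/\varpi^{n+1}\to\mathcal O/\varpi^n$ is a one-dimensional $k$-vector space. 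Second, and more seriously, the interpretation of ``doubling'' as introducing primes in pairs to preserve the multiplier is wrong: the name refers to doubling the modulus, i.e.\ passing from lifts mod $\varpi^n$ to lifts mod $\varpi^{2n}$ and manipulating cocycles valued in $\rho_n(\mathfrak g^{\der})$ rather than in $\overline{\rho}(\mathfrak g^{\der})$. This is exactly what makes the method work under Assumptions~\ref{1.1}, which are strictly weaker than what a one-step-at-a-time Ramakrishna argument needs: under these hypotheses one cannot in general annihilate the mod-$\varpi$ dual Selmer group once and for all. The missing ingredient in your sketch is the Lazard-type vanishing result (cited in \S 3 as \cite[Corollary B.2]{FKP21}), which forces the reduction map $H^1(\Gamma,\mathfrak g^{\der}\otimes\mathcal O/\varpi^M)\to H^1(\Gamma,\mathfrak g^{\der}\otimes k)$ to vanish for $M$ large; that vanishing is what lets the doubling argument escape the obstructions your step-by-step scheme cannot clear. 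Without invoking it, the inductive step you propose is not guaranteed to close.
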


\begin{rem}

    \label{3.5}
    
    The theorem allows us to prescribe the image of $\sigma_\nu$ under $\rho_n$ in each of the inductive steps. Later, we will use this property to impose even more conditions on $\rho_n(\sigma_\nu)$. Because of that property, we will be able to achieve this by just finding an element inside a maximal torus that will satisfy the requirements.
    
\end{rem}

Based on their local behavior we split the primes in $S_N$ into $4$ different types:

\begin{enumerate}
    \item First, the primes in the original set $S$, the ramifying set of $\overline{\rho}$. By part $(2)$ of the theorem for $\nu \in S$ at each step of the doubling method we make sure that $\restr{\rho_n}{\Gamma_{F_\nu}}$ is strictly equivalent to $\rho_{\nu,n} \coloneqq \rho_\nu \pmod{\varpi^n}$, where $\rho_\nu$ are the lifts given in Assumptions \ref{1.1}.
    \item The second type are the primes added in the first $eD$ steps of the doubling method and at which $\rho_{eD}$ is ramified. The local behaviour at these primes is controled by part $(a)$ of the theorem. More precisely, at those primes $\nu$ using \cite[Lemma $3.7$]{FKP21} we produce characteristic $0$ local lifts $\rho_\nu$ and as above we make sure that at each step $\restr{\rho_n}{\Gamma_{F_\nu}}$ is strictly equivalent to $\rho_{\nu,n}$.
    \item The next type of primes consists of the primes added after the $eD$-th step of the doubling method and at which $\rho_n$ is ramified. At such primes $\nu$, as explained in part $(b)$ of the theorem and Remark \ref{3.5} we prescribe the images of $\sigma_\nu$ and $\tau_\nu$ under $\rho_n$.
    \item Finally, we have the primes in $S_N$ at which $\rho_N$ isn't ramified. These primes ensure that the doubling method can run. In particular, some of them are added to $S_N$ in order to make sure that $\Sh_{S_N}^2(\Gamma_{F,S_N},\overline{\rho}(\mathfrak g^{\der})) = 0$, while others to guarantee that at each step the lift $\rho_n$ has a maximal image, i.e. part $(3)$ of the theorem is satisfied. At such primes $\nu$ we produce characteristic $0$ unramified local lifts $\rho_\nu$ with a prescribed image of $\sigma_\nu$ and we ask for $\restr{\rho_n}{\Gamma_{F_\nu}}$ to be strongly equivalent to $\rho_{\nu,n}$ during each step of the doubling method.
    
\end{enumerate}

It turns out that just from the nature of the doubling method and the choice of local conditions, any such lift of $\rho_N$ will give us a formally smooth point in $R_{\restr{\overline{\rho}}{\Gamma_{F_\nu}}}^{\sqr,\mu}[1/\varpi]$ for most of $\nu \in S_N$ ($R_{\restr{\overline{\rho}}{\Gamma_{F_\nu}}}^{\sqr,\mu,\tau,\mathbf{v}}[1/\varpi]$ if $\nu \mid p$). We will now introduce some small technical refinements that will guarantee that the characterstic $0$ lift $\rho$ will be formally smooth at the rest of primes in $S_N$. We will not make any changes at the first two types of primes. We will show in \S $4.1$ that $\rho$ will be formally smooth at those primes with the conditions as they are currently. On the other hand, for the primes of the third type, in order to annihilate the cohomology group in Proposition \ref{3.1} we need to impose an extra condition on the roots. In addition to requiring such primes $\nu$ to satisfy the conditions in part $(b)$ we will ask for $\beta(\rho_{s+1}(\sigma_\nu)) \not \equiv N(\nu) \pmod{\varpi^{s+1}}$ for all  $\beta \neq \alpha \in \Phi(G^0,T_\nu)$ to hold, as well. As explained in Remark \ref{3.5} it is enough to show that we can choose a suitable element in the associated torus $T_\nu$.

\begin{rem}

\label{3.6}

In all of the result where we prove the existence of specific group elements that satisfy certain properties, in particular the next two lemmas, we will make use of the exponential isomorphism of groups

$$\exp: \mathfrak g \otimes_{\mathcal O} I \longrightarrow \ker(G(R) \to G(R/I))$$

\vspace{2 mm}

\noindent where $R$ is a Noetherian local $\mathcal O$-algebra with residue field $k$, $I$ is a square-zero ideal in $R$ and we view $\mathfrak g \otimes_{\mathcal O} I$ lying inside $\mathfrak g \otimes_{\mathcal O} R$. Moreover, this isomorphism is $G(R/I)$-equivariant, where the action on the source is the adjoint action after identifying $\mathfrak g \otimes_{\mathcal O} I \simeq \mathfrak g_{R/I} \otimes_{R/I} I$, while the action on the target is given by lifting to $G(R)$ and conjugating. The most common application will be in inductive proofs where $R = \mathcal O/\varpi^{n+1}$ and $I$ is the ideal consisting of $\varpi^n$ multiples inside $R$. We will use the generic notation $\exp$, leaving the integer $n$ implicit in it.
    
\end{rem}

\begin{lem}

    \label{3.7}

    Let $s$ and $q$ be positive integers with $q \equiv 1 \pmod{\varpi^s}$, but $q \not \equiv 1 \pmod{\varpi^{s+1}}$. Then, for any split maximal torus $T$ of $G^0$ and for any $n > s$ we can find an element $t_n \in T(\mathcal O/\varpi^n)$, which is trivial modulo $\varpi^s$, $\alpha(t_n) \equiv q \pmod{\varpi^n}$ and $\beta(t_{s+1}) \not \equiv 1,q \pmod{\varpi^{s+1}}$ for any $\beta \neq \alpha \in \Phi(G^0,T)$, where $t_{s+1} \coloneqq t_n \pmod{\varpi^{s+1}}$.
    
\end{lem}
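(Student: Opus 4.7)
The plan is to use the exponential isomorphism on the torus $T$ (Remark \ref{3.6}) to translate the multiplicative conditions on $t_n$ into linear conditions on a Lie algebra element. Since we want $t_n \equiv 1 \pmod{\varpi^s}$, I would parametrize $t_n = \exp(\varpi^s X)$ with $X \in \mathfrak{t} \otimes_{\mathcal O} \mathcal O/\varpi^{n-s}$. For any root $\beta \in \Phi(G^0, T)$, the functoriality of $\exp$ applied to the character $\beta:T \to \mathbb{G}_m$ gives
\[
    \beta(t_n) = \exp(\varpi^s d\beta(X)) \equiv 1 + \varpi^s d\beta(X) \pmod{\varpi^{2s}},
\]
hence in particular modulo $\varpi^{s+1}$, since $s \geq 1$. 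Writing $q = 1 + \varpi^s v$ with $v \in (\mathcal O/\varpi^{n-s})^\times$ (which is possible because $q \not\equiv 1 \pmod{\varpi^{s+1}}$) and setting $u \coloneqq \varpi^{-s} \log(q) \in \mathcal O/\varpi^{n-s}$, which satisfies $u \equiv v \pmod{\varpi}$ and is therefore a unit, the three requirements on $t_n$ translate to the single equation $d\alpha(X) \equiv u$ in $\mathcal O/\varpi^{n-s}$, together with, for every root $\beta \neq \alpha$, the inequations $d\beta(X_{s+1}) \notin \{0, \bar u\}$ in $k$, where $X_{s+1} \coloneqq X \bmod \varpi \in \mathfrak{t}_k$ and $\bar u \coloneqq u \bmod \varpi \in k^\times$.

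I would then construct $X$ in two stages. First, produce $X_{s+1} \in \mathfrak{t}_k$ satisfying $d\alpha(X_{s+1}) = \bar u$ and $d\beta(X_{s+1}) \notin \{0, \bar u\}$ for every $\beta \neq \alpha$. Second, lift $X_{s+1}$ to an element $X \in \mathfrak{t} \otimes \mathcal O/\varpi^{n-s}$ with $d\alpha(X) = u$, by a routine successive approximation: any lift $X^{(0)}$ of $X_{s+1}$ satisfies $d\alpha(X^{(0)}) = u + \varpi c$ for some $c$, and then I use surjectivity of $d\alpha$ on $\mathfrak{t}$ to subtract off $\varpi$ times a correction. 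The surjectivity of $d\alpha$ holds because $d\alpha \neq 0$ on $\mathfrak{t}_k$ in very good characteristic, whence surjectivity lifts from $k$ to $\mathcal O/\varpi^{n-s}$ by Nakayama.

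The main obstacle is the first stage, which is a combinatorial existence statement in $\mathfrak{t}_k$. The affine subspace $H_\alpha \coloneqq \{Y \in \mathfrak{t}_k : d\alpha(Y) = \bar u\}$ has codimension one in $\mathfrak{t}_k$. For each $\beta \neq \alpha$ the forbidden locus $\{d\beta \in \{0, \bar u\}\} \cap H_\alpha$ is at most a union of two proper affine subspaces of $H_\alpha$, because in very good characteristic distinct roots induce distinct nonzero differentials on $\mathfrak{t}_k$, so $d\beta$ is neither the zero form nor equal to $d\alpha$, and in particular it is not constantly $0$ or $\bar u$ on $H_\alpha$. For $|k|$ larger than a constant depending only on $|\Phi(G^0,T)|$, the union of the at most $2(|\Phi(G^0,T)|-1)$ codimension-one affine subspaces obtained this way cannot cover $H_\alpha$, yielding a suitable $X_{s+1}$. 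This implicit largeness of $p$ is consistent with the $p \gg_G 0$ hypothesis of Theorem \ref{A}, and the very good characteristic assumption from Section \ref{2} is precisely what makes the root differentials well-behaved on $\mathfrak{t}_k$.
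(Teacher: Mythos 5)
Your strategy is essentially the paper's: translate the conditions on $t_n$ into linear conditions on a Lie algebra element, find a good element of $\mathfrak t_k$ by avoiding a union of proper affine subspaces, then lift. The formulations differ: the paper factors the candidate as $t_b = \exp(\varpi^s b)\,\alpha^\vee(q^{1/2})$ with $b \in \ker(\restr{\alpha}{\mathfrak t^{\der}})$, so that only the square-zero $\exp$ of Remark \ref{3.6} is ever invoked and the passage from level $n$ to $n+1$ is done by lifting in the smooth torus and correcting by $\alpha^\vee(1 - \tfrac{a}{2}\varpi^n)$. You instead parametrize $t_n$ by a single $\exp$ on all of $\mathfrak t \otimes \varpi^s\mathcal O/\varpi^n$ and encode the $\alpha$-value as $d\alpha(X) = \varpi^{-s}\log q$, which is cleaner to state but tacitly requires the $p$-adic exponential and logarithm to converge integrally on this ideal (a constraint on $p$ that depends on the ramification index $e$ of $\mathcal O$ rather than only on $G$); the paper's version avoids that dependence entirely.

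There is one genuine gap in your hyperplane-avoidance step. You argue that because $d\beta$ is nonzero and $\ne d\alpha$ for $\beta \ne \alpha$, the function $d\beta$ cannot be constantly $0$ or $\bar u$ on $H_\alpha = \{d\alpha = \bar u\}$. That inference fails exactly for $\beta = -\alpha$: here $d\beta = -d\alpha$ \emph{is} constant on $H_\alpha$ with value $-\bar u$, despite being a distinct nonzero form. The conclusion still holds because $-\bar u \ne 0$ (as $\bar u \in k^\times$) and $-\bar u \ne \bar u$ (as $p > 2$), so the forbidden locus for $\beta=-\alpha$ is actually empty — but this needs to be checked rather than deduced from distinctness of differentials, and the paper's proof treats $\beta = -\alpha$ as a separate case (showing $-\alpha \notin \Phi_1 \cup \Phi_2$) for precisely this reason.
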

\begin{proof}

    We will first show that we can do this for $n = s+1$. We note that $q$ is a square modulo $\varpi^{s+1}$. Indeed:
    
    $$\left(1 + \frac{q-1}{2}\right)^2 = 1 + (q-1) + \frac{(q-1)^2}{4} = 1 +(q-1) = q$$

    \vspace{2 mm}

    \noindent where we used that $2$ is invertible modulo $\varpi^{s+1}$, as $p$ is odd, and also $q-1$ is divisible by $\varpi^s$, so the last term vanishes modulo $\varpi^{s+1}$. We set $q^{1/2} \coloneqq 1 + \frac{q-1}2$. We will now consider elements $t_b = \exp(\varpi^sb)\alpha^\vee(q^{1/2})$, where $\alpha^\vee$ is the coroot associated to $\alpha$ and $b \in \ker(\restr{\alpha}{\mathfrak t})$. As $b \in \mathfrak t$, we get that $t_b \in T(\mathcal O/\varpi^{s+1})$. On the other hand,

    $$\alpha(t_b) = \alpha(\exp(\varpi^sb))\alpha(\alpha^\vee(q^{1/2})) = \exp(\varpi^s\alpha(b)) \cdot q^{\frac 12 \langle \alpha,\alpha^\vee\rangle} = q$$

    \vspace{2 mm}

    \noindent where we interchangeably view $\alpha$ as a root of the Lie group and the associated Lie algebra. Here, we used that $\alpha(b) = 0$ and $\langle \alpha,\alpha^\vee \rangle = 2$. Using the decomposition $\mathfrak g = \mathfrak g^{\der} \oplus \mathfrak z_G$ we can write $b = b' + z$ and so $\exp(\varpi^sb) = \exp(\varpi^sb')\exp(\varpi^sz)$. The second factor will lie in the center, so it will not affect the value of $t_b$ under the roots. Hence, we can assume that $b \in \mathfrak t^{\der}$. Moreover, we are free to choose any value for $z$, so in particular we can use this to prescribe the value of $t_b$ in $T(\mathcal O/\varpi^{s+1})/T^{\der}(\mathcal O/\varpi^{s+1})$.

    Now, it remains to choose $b \in \ker(\restr{\alpha}{\mathfrak t^{\der}})$ in such a manner that the conditions for $\beta \neq \alpha$ are satisfied. We first check what are the conditions on $b$ that will yield $\beta(t_b) = 1$ or $q$. Suppose that $\beta(t_b) = 1$. Then we have:

    $$1 = \beta(t_b) = \beta(\exp(\varpi^sb))\beta(\alpha^\vee(q^{1/2}))= \exp(\varpi^s\beta(b))q^{\frac 12 \langle \beta,\alpha^\vee\rangle}$$

    \vspace{2 mm}
    
    Taking logarithms from both sides, we get:

    $$0 = \varpi^s\beta(b) + \frac 12 \langle \beta,\alpha^\vee\rangle(q-1) \implies \beta(b) = \frac{1-q}{2\varpi^s} \langle \beta,\alpha^\vee \rangle$$

    \vspace{2 mm}

    \noindent where we used $\log(q) = q-1$, as $(q-1)^m = 0$ modulo $\varpi^{s+1}$ for $m\ge 2$. A similar computation shows us that if $\beta(t_b) = q$, then 
    
    $$q-1 = \varpi^s\beta(b) + \frac 12 \langle \beta,\alpha^\vee\rangle(q-1) \implies \beta(b) = \frac{1-q}{2\varpi^s} \left(\langle \beta,\alpha^\vee \rangle - 2 \right)$$ 

    \vspace{2 mm}

    Now, let $\Phi_1$ be the subset of $\Phi(G^0,T) \setminus \{\alpha\}$ consisting of roots $\beta$ for which there exists $b_\beta \in \ker(\restr{\alpha}{\mathfrak t^{\der}})$ such that $\beta(b_\beta) = \frac{1-q}{2\varpi^s}\langle \beta,\alpha^\vee \rangle$. Analogously, let $\Phi_2$ be the subset of $\Phi(G^0,T)  \setminus \{\alpha\}$ consisting of roots $\beta$ for which there exists $b_\beta' \in \ker(\restr{\alpha}{\mathfrak t^{\der}})$ such that $\beta(b_\beta') = \frac{1-q}{2\varpi^s}(\langle \beta,\alpha^\vee \rangle - 2)$. Now, we want to choose $b \in \ker(\restr{\alpha}{\mathfrak t^{\der}})$ that will lie in the complement of the union of hyperplanes:

    $$\bigcup_{\beta \in \Phi_1} (b_\beta + \ker(\restr{\beta}{\ker(\restr{\alpha}{\mathfrak t^{\der}})})) \cup \bigcup_{\beta \in \Phi_2} (b_\beta' + \ker(\restr{\beta}{\ker(\restr{\alpha}{\mathfrak t^{\der}})}))$$

    \vspace{2 mm}

    We note that $\ker(\restr{\beta}{\ker(\restr{\alpha}{\mathfrak t^{\der}})})$ will be a hyperplane in $\ker(\restr{\alpha}{\mathfrak t^{\der}})$, as long as $\beta \neq \pm \alpha$. We've already discraded $\alpha$ from both sets, while also $-\alpha$ is not an element of neither $\Phi_1$ nor $\Phi_2$. Indeed, if such $b_{- \alpha}$ or $b_{- \alpha}'$ existed, as they would be chosen from $\ker(\restr{\alpha}{\mathfrak t^{\der}})$, the conditions above boil down to $q-1$ being divisible by $\varpi^{s+1}$, which is not true by our initial assumption. Hence, the union above is indeed a union of hyperplanes inside $\ker(\restr{\alpha}{\mathfrak t^{\der}})$. But, the number of such hyperplanes is bounded in terms of the size of $\Phi(G^0,T)$, which in turn depends only on the Dynkin type of $G^{\der}$. Hence, for $p \gg_G 0$ the union can't be equal to the whole of $\ker(\restr{\alpha}{\mathfrak t^{\der}})$. We choose $b$ in the complement and set $t_{s+1} = t_b$, which will satisfy the desired conditions.   

    Having, found such element for $n=s+1$, we will construct $t_n$ modulo higher powers of $\varpi$ inductively. Suppose that for the given torus $T$ and some $n \ge s+1$ there is an element $t_n \in T(\mathcal O/\varpi^n)$ that satisfies the conditions. As the torus is formally smooth we can find a lift $t_{n+1}' \in T(\mathcal O/\varpi^{n+1})$ of $t_n$. Clearly, the conditions for roots $\beta \neq \alpha$ will be satisfied for any lift of $t_n$, as they are checked modulo $\varpi^{s+1}$. Therefore, it remains to show that we can choose $t_{n+1}'$ such that $\alpha(t_{n+1}') \equiv q \pmod{\varpi^n}$. As $t_{n+1}'$ is a lift of $t_n$ from the inductive hypothesis we have $\alpha(t_{n+1}') \equiv q(1 + a\varpi^n) \pmod {\varpi^{n+1}}$. As $p \neq 2$ we can set $t_{n+1} \coloneqq t_{n+1}'\alpha^\vee\left(1 - \frac a2 \varpi^n\right)$. We then have:

    $$\alpha(t_{n+1}) = \alpha(t_{n+1}') \cdot \alpha\left(\alpha^\vee\left(1 - \frac a2 \varpi^n\right)\right) \equiv q(1+a\varpi^n)\left(1 - \frac a2 \varpi^n\right)^2 \equiv q(1+a\varpi^n)(1-a\varpi^n) \equiv q \pmod{\varpi^{n+1}}$$

    \vspace{2 mm}

    Hence, $t_{n+1}$ will an element of $T(\mathcal O/\varpi^{n+1})$ as we wanted. Additionally, multiplying by an element of the form $\exp(\varpi^nz)$ for $z \in \mathfrak z_G$ we can prescribe the value of $t_{n+1}$ in $T(\mathcal O/\varpi^{n+1})/T^{\der}(\mathcal O/\varpi^{n+1})$, given that we have already done that modulo $\varpi^n$.
    
\end{proof}

Since the primes of the third type are chosen such that $N(\nu) \equiv 1 \pmod{\varpi^{eD}}$, but $N(\nu) \not \equiv 1 \pmod{\varpi^{eD+1}}$ we can feed this lemma with $s=eD$ into the proof of the doubling method and therefore impose the extra condition on the primes added after the first $eD$ steps. As remarked in the proof of the lemma, for these primes we can prescribe the image of $\rho_n(\sigma_\nu)$ modulo $T^{\der}(\mathcal O/\varpi^n)$. We will ask for these images to be equal to $\mu(\sigma_\nu)$ in order to ensure that all the lifts will be with multiplier $\mu$.

\vspace{2 mm}

Now, we take care of the fourth type of primes in $S_N$. As explained above, there are two reasons why we have to include these extra primes. On one side, we need a certain second Tate-Shafarevich group to be trivial. This is done at the beginning, before we even run the doubling method, by adding some extra prime to $S$. The only condition imposed at these primes in the original argument is that the local lifts are unramified (with multiplier $\mu$). After this initial enlargment of $S$, no more primes are added for this reason. On the other hand, at each step in the doubling method we might need to add extra primes to $S_n$ to make sure that part $(3)$ of Theorem \ref{3.4} is satisfied. At these primes we choose characteristic $0$ unramified local lifts $\rho_\nu$ such that $\rho_{\nu,n}(\sigma_\nu)$ generate $\ker(G^{\der}(\mathcal O/\varpi^n) \to G^{\der}(\mathcal O/\varpi^{n-1}))$ for $n \ge 2$. In view of Proposition \ref{3.1} we need to impose extra conditions on $\rho_{\nu,n}(\sigma_\nu)$. Again, by Remark \ref{3.5} it suffices to show that we can always find elements in $G(\mathcal O/\varpi^n)$ that will satisfy the requirements. The following lemma tells us that these generators can be chosen in a suitable manner. 

\begin{lem}

    \label{3.8}

    Let $q$ be an integer such that $p \mid q-1$. For $n \ge 2$, we can find elements $t_1,\dots,t_{r_n} \in G^{\der}(\mathcal O)$ such that their restrictions $t_{i,n}$ generate the group $\ker(G^{\der}(\mathcal O/\varpi^n) \to G^{\der}(\mathcal O/\varpi^{n-1}))$, each $t_i$ is contained in a maximal torus $T_i(\mathcal O)$ that splits over an \'etale extension $\mathcal O_i/\mathcal O$ and $\beta(t_i) \not \equiv q \pmod{\varpi^{n+1}}$ for all $\beta \in \Phi(G^0,T_i)$.
    
\end{lem}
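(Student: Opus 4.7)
The plan is to use the exponential isomorphism of Remark \ref{3.6} to identify
\[
\ker\!\bigl(G^{\der}(\mathcal O/\varpi^n)\to G^{\der}(\mathcal O/\varpi^{n-1})\bigr) \;\xrightarrow{\sim}\; \mathfrak g^{\der}\otimes_{\mathcal O}(\varpi^{n-1}\mathcal O/\varpi^n\mathcal O) \;\simeq\; \mathfrak g^{\der}(k),
\]
so that the problem reduces to realising each element of an $\mathbb F_p$-generating set of $\mathfrak g^{\der}(k)$ by the mod-$\varpi^n$ reduction of an element of a maximal torus of $G^0$ over $\mathcal O$; one may take $r_n=\dim_{\mathbb F_p}\mathfrak g^{\der}(k)$.

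First I would choose an $\mathbb F_p$-basis $\bar X_1,\dots,\bar X_{r_n}$ of $\mathfrak g^{\der}(k)$ consisting entirely of regular semisimple elements. Since the regular semisimple locus $\mathfrak g^{\der,\mathrm{rs}}$ is the complement of a proper closed subvariety of $\mathfrak g^{\der}$ whose degree depends only on the Dynkin type of $G^{\der}$, for $p\gg_G 0$ its $k$-points $\mathbb F_p$-span $\mathfrak g^{\der}(k)$, and such a basis is produced by perturbing any basis by generic nearby regular semisimple elements. Lift each $\bar X_i$ to $\tilde X_i\in\mathfrak g^{\der}(\mathcal O)$; openness of $\mathfrak g^{\der,\mathrm{rs}}$ forces $\tilde X_i$ to remain regular semisimple, so $T_i:=Z_{G^0}(\tilde X_i)^{\circ}$ is a maximal torus of $G^0$ over $\mathcal O$ with $\tilde X_i\in\mathfrak t_i:=\mathrm{Lie}(T_i)$, and $T_i$ splits over some finite étale extension $\mathcal O_i/\mathcal O$.

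Next, form $t_i^{(0)}:=\exp(\varpi^{n-1}\tilde X_i)\in (T_i\cap G^{\der})(\mathcal O/\varpi^{n+1})$; its reduction modulo $\varpi^n$ is the prescribed class corresponding to $\bar X_i$. Every other lift of this mod-$\varpi^n$ class to $(T_i\cap G^{\der})(\mathcal O/\varpi^{n+1})$ has the form $t_i=t_i^{(0)}\cdot\exp(\varpi^n\delta)$ with $\delta\in(\mathfrak t_i\cap \mathfrak g^{\der})\otimes_{\mathcal O} k$, and for each root $\beta\in\Phi(G^0,T_i)$ (defined over $\mathcal O_i$) the equation $\beta(t_i)\equiv q\pmod{\varpi^{n+1}}$ cuts out one affine hyperplane in this $k$-vector space. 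Since $|\Phi(G^0,T_i)|$ is bounded by the Dynkin type of $G^{\der}$ and $\dim((\mathfrak t_i\cap\mathfrak g^{\der})\otimes k)=\mathrm{rank}(G^{\der})\ge 1$, for $p\gg_G 0$ the union of these hyperplanes is a proper subset; pick $\delta$ in its complement and lift the resulting class to $t_i\in(T_i\cap G^{\der})(\mathcal O)\subset G^{\der}(\mathcal O)$ using the smoothness of $T_i\cap G^{\der}$. By construction the reductions $t_{i,n}$ correspond under the exponential to $\bar X_1,\dots,\bar X_{r_n}$ and hence generate the kernel.

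The main obstacle is the combined bound on $p$: one has to produce a regular-semisimple $\mathbb F_p$-basis of $\mathfrak g^{\der}(k)$ and simultaneously carry out the hyperplane-avoidance inside every $T_i$ using the \emph{same} prime $p$. Both reduce to the same type of statement — an exceptional subvariety whose complexity depends only on the Dynkin type of $G^{\der}$ is a proper subset of a $k$-vector space of positive dimension — which is exactly what the hypothesis $p\gg_G 0$ provides. Once this is arranged, the rest is a routine application of Remark \ref{3.6} and of the smoothness of the tori $T_i$.
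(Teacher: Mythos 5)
Your proof is correct and follows essentially the same path as the paper's: reduce via the exponential of Remark \ref{3.6} to $\mathfrak g^{\der}\otimes_{\mathcal O}k$, pick a regular semisimple generating set, take the centralizer tori, then adjust the lift modulo $\varpi^{n+1}$ by an element $\exp(\varpi^n\delta)$ of the torus to avoid the finitely many affine hyperplanes $\beta(\delta)=c_\beta$, and finally lift to $T_i(\mathcal O)$ by smoothness. The one place where you diverge is the justification that regular semisimple elements $\mathbb F_p$-span $\mathfrak g^{\der}(k)$: the paper observes that this span is a $G(k)$-stable subspace and invokes irreducibility of $\mathfrak g^{\der}\otimes_{\mathcal O}k$ as an $\mathbb F_p[G(k)]$-module for $p\gg_G 0$, whereas you use a counting estimate on the complement of the discriminant locus. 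Both are valid; the module-theoretic argument is perhaps tidier and is what the paper actually reuses elsewhere, while your counting argument would bear tightening (``perturbing any basis by generic nearby regular semisimple elements'' should be replaced by the greedy argument that the non-rs locus together with any proper $\mathbb F_p$-subspace cannot exhaust $\mathfrak g^{\der}(k)$ once $p$ exceeds the degree of the discriminant by $1$). A further small point: for $n=2$ the element $\exp(\varpi^{n-1}\tilde X_i)$ needs to be interpreted as a truncated power series rather than via the square-zero exponential of Remark \ref{3.6}, since $(\varpi)^2\not\subset(\varpi^{3})$; this is harmless because $\tilde X_i$ lies in the commutative Lie algebra $\mathfrak t_i$, but the paper avoids the issue altogether by lifting $t_{i,n}$ one step to $t'_{i,n+1}$ using smoothness of $T_i$ and only exponentiating the $\varpi^n$-correction term, which is cleaner.
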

\begin{proof}

    We will first show that this kernel is generated by regular semisimple elements. As in Remark \ref{3.6} the exponential map gives us an isomorphism $\mathfrak g^{\der} \simeq \ker(G^{\der}(\mathcal O/\varpi^n) \to G^{\der}(\mathcal O/\varpi^{n-1}))$, where we identify $\mathfrak g^{\der} \otimes_{\mathcal O} \varpi^{n-1}\mathcal O/\varpi^n$ with $\mathfrak g^{\der} \otimes_{\mathcal O} k$. Thus, we can work with the Lie algebra instead. The claim translates to showing the $\mathbb F_p$-span of regular semisimple elements spans $\mathfrak g^{\der}$. Clearly, regularity and semisimplicity is preserved under the adjoint action of $G(k)$, so this $\mathbb F_p$-span is in fact a $\mathbb F_p[G(k)]$-submodule of $\mathfrak g^{\der}$. For $p \gg_G 0$, $\mathfrak g^{\der}$ is an irreducible $\mathbb F_p[G(k)]$-module, which gives us what we want. 

    Let $t_{1,n},\dots, t_{r,n}$ be regular semisimple generators of $\ker(G^{\der}(\mathcal O/\varpi^n) \to G^{\der}(\mathcal O/\varpi^{n-1}))$. The centralizer of each $t_{i,n}$ is a maximal torus in $G^{\der}(\mathcal O/\varpi^n)$. This torus can be lifted to an $\mathcal O$-torus $T_i$, which will split over an \'etale extension $\mathcal O_i/\mathcal O$ that corresponds to a finite extension of residue fields $k_i/k$. We want to lift each $t_{i,n}$ modulo $\varpi^{n+1}$ such that the conditions on the roots are satisfied. We do this in a similar manner as we did in Lemma \ref{3.7}. We lift each $t_{i,n}$ to $t_{i,n+1}'$. We are interested in elements of the form $t_{b_i} = \exp(\varpi^nb_i)t_{i,n+1}'$ for $b_i \in \mathfrak t_i$. We want to check what conditions $\beta(t_{b_i}) = q$ implies on $b_i$. If that is the case we have:
    
    $$q = \beta(t_{b_i}) = \beta(\exp(\varpi^nb_i))\beta(t_{i,n+1}') = \exp(\varpi^n\beta(b_i))\beta(t_{i,n+1}')$$
    
    \vspace{2 mm}
    
    Taking logarithms, which is possible, as $q \equiv 1 \pmod p$, while $t_{i,n+1}$ is trivial modulo $\varpi^{n-1}$ we get:

    $$q-1 = \varpi^n\beta(b_i) + \log(\beta(t_{i,n+1}')) \implies \beta(b_i) = \frac{1}{\varpi^n}(\log(\beta(t_{i,n+1}'))+1-q)$$

    \vspace{2 mm}

    Now, let $\Phi'$ be the subset of $\Phi$ consisting of roots $\beta$ for which there exists $b_{i,\beta} \in \mathfrak t_i$ that satisfies the above equation. We want to choose $b_i$ that will lie in the complement of the union of hyperplanes $\bigcup_{\beta \in \Phi'} \left(b_{i,\beta} + \ker(\restr{\beta}{\mathfrak t_i})\right)$. For $p \gg_G 0$ this can't equal the whole of $\mathfrak t_i$, so we can find such $b_i$ for each $i$. We set $t_{i,n+1} = t_{b_i}$. Finally, using the smoothness of the torus we lift these to elements $t_i \in T_i(\mathcal O)$.

\end{proof}

\begin{rem}

    \label{3.9}

    If $q \not \equiv 1 \pmod{\varpi^{n-1}}$, then the condition on the roots is automatically satisfied for any element of $t_n \in \ker(G^{\der}(\mathcal O/\varpi^n) \to G^{\der}(\mathcal O/\varpi^{n-1})) $, as $\beta(t_n) \equiv 1 \pmod{\varpi^{n-1}}$ for all $\beta \in \Phi(G^0,T)$.
    
\end{rem}

As the Galois group of the maximal unramified extension at these primes is a free profinite group generated by $\sigma_\nu$ we are free to prescribe its image under $\rho_\nu$. In particular, we'll ask for it to be equal to one of the elements $t_i$ coming from the lemma. More precisely, in the doubling method the extra primes added in the $n$-th step at which we have unramified local lifts are chosen to be split in $K(\rho_{n-1}(\mathfrak g^{\der}))$. This means that $p \mid N(\nu)-1$ and so we can apply the lemma with $q = N(\nu)$ and that particular $n$. Hence, in the $(n-1)$-th step of the doubling method we add $r_n$ extra primes $\nu_i$ with associated characteristic $0$ unramified lifts $\rho_{\nu_i}$ such that $\rho_{\nu_i}(\sigma_{\nu_i}) = t_i$. To the primes responsible for annihilating the Tate-Shafarevich group we assign characteristic $0$ unramified lifts $\rho_\nu$, where $\rho_\nu(\sigma_\nu)$ satisfies the root conditions in Lemma \ref{3.8} with $q=N(\nu)$ and $n=2$, for example, where we can even ignore the condition of the elements being generators of the given kernel.

We remark that by \cite[Lemma $6.15$]{FKP21} adding these primes at which we prescribe unramified characteristic $0$ lifts needs to be done for only finitely many steps of the doubling method. Additionally, as $K(\rho_{n-1}(\mathfrak g^{\der}))$ and $K_\infty$ are linearly disjoint over $K$, as shown in in Lemma \ref{3.19}, we can moreover choose $N(\nu)$ such that it's non-trivial modulo $\varpi^{eD+1}$, since $D$ is the highest power of $p$-th roots of unity contained in $K$. Then as mentioned in Remark \ref{3.9} for $n \ge eD+2$  in the $(n-1)$-th step we are free to choose any set of generators of $\ker(G^{\der}(\mathcal O/\varpi^n) \to G^{\der}(\mathcal O/\varpi^{n-1}))$.

\subsection{Relative deformation theory}

In this subsection we will take a closer look at the relative deformation argument (\cite[\S $6$]{FKP21}) and make some necessary technical modifications. Because of this and to make things easier to follow we will restate here most of the results and proofs of the original argument.

\begin{assump}

\label{3.10}

For any integers $n \ge 1$ we let $\rho_n: \Gamma_{F} \to G(\mathcal O/\varpi^n)$ be a lift of $\overline{\rho}$. Let $S$ be a finite set of primes of $F$. We assume that for each $\nu \in S$ and $1 \le r \le n$ we have subgroups $Z_{r,\nu} \subseteq Z^1(\Gamma_{F_\nu},\rho_r(\mathfrak g^{\der}))$ which satisfy:

\begin{itemize}
    \item Each $Z_{r,\nu}$ contains the boundaries $B^1(\Gamma_{F_\nu},\rho_r(\mathfrak g^{\der}))$.
    \item For $a,b \ge 1$ and $a+b = r$ we have a short exact sequence

    $$0 \longrightarrow Z_{a,\nu} \longrightarrow Z_{a+b,\nu} \longrightarrow Z_{b,\nu} \longrightarrow 0$$

    \vspace{2 mm}

    \noindent where the first map is multiplication by $\varpi^b$, while the second one is reduction modulo $\varpi^b$. 
    
\end{itemize}
    
\end{assump}

We now write $L_{r,\nu} \subseteq H^1(\Gamma_{F_\nu},\rho_r(\mathfrak g^{\der}))$ for the image of $Z_{r,\nu}$. We let $L_{r,\nu}^\perp \subseteq H^1(\Gamma_{F_\nu},\rho_r(\mathfrak g^{\der})^*)$ be the annihilator of $L_{r,v}$ under the local duality pairing. We label the collection $\{L_{r,\nu}\}_{\nu \in S}$ by $\mathcal L_{r,S}$. Analogously, we write $\mathcal L_{r,S}^\perp$ for $\{L_{r,\nu}^\perp\}_{v \in S}$. As mentioned above, the lifting process relies heavily on our ability to annihilate an $M$-relative Selmer group. We now give the definition of a relative Selmer group, as well as its generalization which we will use later.

\begin{defn}

    \text{ }

    \label{3.11}

    \begin{enumerate}[a)]
        \item For $1 \le r \le n$ we define the $r$-th Selmer group $H_{\mathcal L_{r,S}}(\Gamma_{F,S},\rho_r(\mathfrak g^{\der}))$ to be

        $$\ker \left( H^1(\Gamma_{F,S},\rho_r(\mathfrak g^{\der}) \longrightarrow \bigoplus_{v \in S} \frac{H^1(\Gamma_{F_\nu},\rho_r(\mathfrak g^{\der}))}{L_{r,\nu}}\right)$$

        \vspace{2 mm}

        \noindent with the $r$-th dual Selmer group $H^1_{\mathcal L_{r,S}^\perp}(\Gamma_{F,S},\rho_r(\mathfrak g^{\der})^*)$ defined analogously.

        \item For $1 \le s \le r \le n$ we define the $(r,s)$-relative Selmer group  $H_{(r,s)}^1(\Gamma_{F,S},\rho_r(\mathfrak g^{\der}))$ to be

        $$\im \left(H^1_{\mathcal L_{r,S}}(\Gamma_{F,S},\rho_r(\mathfrak g^{\der})) \longrightarrow H^1_{\mathcal L_{s,S}}(\Gamma_{F,S},\rho_s(\mathfrak g^{\der}))\right)$$

        \vspace{2 mm}

        \noindent where the map is induced from the reduction modulo $\varpi^s$ map. We analogously define the $(r,s)$-relative dual Selmer group $H_{(r,s)}^1(\Gamma_{F,S},\rho_r(\mathfrak g^{\der})^*)$.

    \end{enumerate}
    
\end{defn}

\begin{rem}

    \label{3.12}

    When $s=1$ in the definition of the $(r,s)$-relative Selmer group we use the alternative notation $\overline{H^1_{\mathcal L_{r,S}}(\Gamma_{F,S},\rho_r(\mathfrak g^{\der}))}$ and we call it the $r$-th relative Selmer group. We use the analogous convention for the $r$-th relative dual Selmer group, too.
    
\end{rem}

\begin{rem}

    \label{3.13}

    The local conditions are left implicit in the notation for the $(r,s)$-relative Selmer group. We do this in order to simplify the notation. This will not cause any confusion as we will be mostly using the same local conditions throughout the paper. We will make sure to point it out whenever there is a possibility of confusion or the local conditions are altered. 
    
\end{rem}

We have the following important exact sequences of Selmer groups

\begin{lem}[{\cite[Lemma $6.1$]{FKP21}}]

    \label{3.14}
    
    For $a,b \ge 1$ such that $a + b \le n$ there are exact sequences:

    $$H^1_{\mathcal L_{a,S}}(\Gamma_{F,S},\rho_a(\mathfrak g^{\der})) \longrightarrow H^1_{\mathcal L_{a+b,S}}(\Gamma_{F,S},\rho_{a+b}(\mathfrak g^{\der})) \longrightarrow H^1_{\mathcal L_{b,S}}(\Gamma_{F,S},\rho_b(\mathfrak g^{\der}))$$

    \vspace{2 mm}

    \noindent and

    $$H^1_{\mathcal L_{a,S}^\perp}(\Gamma_{F,S},\rho_a(\mathfrak g^{\der})^*) \longrightarrow H^1_{\mathcal L_{a+b,S}^\perp}(\Gamma_{F,S},\rho_{a+b}(\mathfrak g^{\der})^*) \longrightarrow H^1_{\mathcal L_{b,S}^\perp}(\Gamma_{F,S},\rho_b(\mathfrak g^{\der})^*)$$

    \vspace{2 mm}
    
\end{lem}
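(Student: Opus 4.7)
The plan is to reduce to the three-term exact sequence of unrestricted Galois cohomology and then verify that the local conditions propagate correctly. I would start by recording the short exact sequence of $\Gamma_F$-modules
$$0 \longrightarrow \rho_a(\mathfrak g^{\der}) \xrightarrow{\varpi^b} \rho_{a+b}(\mathfrak g^{\der}) \longrightarrow \rho_b(\mathfrak g^{\der}) \longrightarrow 0,$$
obtained by identifying $\rho_a(\mathfrak g^{\der})$ with $\varpi^b\rho_{a+b}(\mathfrak g^{\der})$. The associated long exact sequence in global Galois cohomology, and its analogue at each $\nu \in S$, produces three-term exact sequences on $H^1(\Gamma_{F,S},\rho_r(\mathfrak g^{\der}))$ and on $H^1(\Gamma_{F_\nu},\rho_r(\mathfrak g^{\der}))$, compatible with the restriction maps.

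Next, I would promote these to a sequence on Selmer groups using Assumption \ref{3.10}. Well-definedness of the two composed maps follows because multiplication by $\varpi^b$ and reduction modulo $\varpi^b$ carry $Z_{a,\nu}$ into $Z_{a+b,\nu}$ and $Z_{a+b,\nu}$ onto $Z_{b,\nu}$, so the local images $L_{r,\nu}$ are respected. For exactness at the middle, let $\beta \in H^1_{\mathcal L_{a+b,S}}$ die in $H^1_{\mathcal L_{b,S}}$; this in particular forces $\beta$ to vanish in the ambient $H^1(\Gamma_{F,S},\rho_b(\mathfrak g^{\der}))$, so the global long exact sequence produces a class $\gamma \in H^1(\Gamma_{F,S},\rho_a(\mathfrak g^{\der}))$ with $\varpi^b \gamma = \beta$. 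Choose cocycle representatives with $\varpi^b \tilde\gamma = \tilde\beta + d\xi$. At each $\nu \in S$ the class $\mathrm{res}_\nu(\beta)$ lies in $L_{a+b,\nu}$, so after absorbing a coboundary (permitted because $B^1 \subseteq Z_{a+b,\nu}$) we obtain $\mathrm{res}_\nu(\tilde\beta) \in Z_{a+b,\nu}$, and hence $\varpi^b\,\mathrm{res}_\nu(\tilde\gamma) \in Z_{a+b,\nu}$. Its reduction modulo $\varpi^b$ is zero in $Z_{b,\nu}$, so the short exact sequence in Assumption \ref{3.10} furnishes $z' \in Z_{a,\nu}$ with $\varpi^b z' = \varpi^b\,\mathrm{res}_\nu(\tilde\gamma)$; injectivity of multiplication by $\varpi^b$ on $Z^1(\Gamma_{F_\nu},\rho_a(\mathfrak g^{\der}))$ then forces $\mathrm{res}_\nu(\tilde\gamma) = z' \in Z_{a,\nu}$, so $\gamma \in H^1_{\mathcal L_{a,S}}$, as required.

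For the dual sequence I would run the same argument starting from the Tate-dualized short exact sequence
$$0 \longrightarrow \rho_a(\mathfrak g^{\der})^* \longrightarrow \rho_{a+b}(\mathfrak g^{\der})^* \longrightarrow \rho_b(\mathfrak g^{\der})^* \longrightarrow 0,$$
and using that local Tate duality identifies $L_{r,\nu}^\perp$ with the image of the annihilator of $Z_{r,\nu}$ inside $Z^1(\Gamma_{F_\nu},\rho_r(\mathfrak g^{\der})^*)$, so that the short exact sequences of $Z_{r,\nu}$ dualize to analogous short exact sequences of annihilator subgroups. The main obstacle is the local chase in the middle-exactness step: without the short exactness of $0 \to Z_{a,\nu} \to Z_{a+b,\nu} \to Z_{b,\nu} \to 0$ built into Assumption \ref{3.10}, knowing $\varpi^b\,\mathrm{res}_\nu(\tilde\gamma) \in Z_{a+b,\nu}$ would not suffice to deduce $\mathrm{res}_\nu(\tilde\gamma) \in Z_{a,\nu}$, and this compatibility across levels is precisely what that assumption is designed to enforce.
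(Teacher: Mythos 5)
The paper does not give a proof of this lemma; it is quoted directly from \cite[Lemma 6.1]{FKP21}, so I can only evaluate your argument on its own terms. Your treatment of the first (primal) exact sequence is correct and complete: reducing to the long exact sequence of unrestricted cohomology, checking well-definedness using Assumption \ref{3.10}, and then running the local cocycle chase — the key step being that $\varpi^b \mathrm{res}_\nu(\tilde\gamma)$ lands in $Z_{a+b,\nu}$ and reduces to zero, so the middle exactness of the cocycle-space short exact sequence plus injectivity of multiplication by $\varpi^b$ force $\mathrm{res}_\nu(\tilde\gamma) \in Z_{a,\nu}$. That argument is sound.

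The dual sequence is where there is a genuine gap. You propose to "run the same argument" after asserting that the short exact sequences of the $Z_{r,\nu}$ "dualize to analogous short exact sequences of annihilator subgroups," but this is not established and, taken literally, is not automatic: there is no pairing at the cocycle level that mechanically transports Assumption \ref{3.10}. (A side issue: the sequence $0 \to \rho_a^* \xrightarrow{\varpi^b} \rho_{a+b}^* \to \rho_b^* \to 0$ is not the Tate dual of the original short exact sequence — the Tate dual has the maps $\varpi^a$ and reduction mod $\varpi^a$, in the opposite arrangement — it is rather the same construction carried out on the dual modules, which happens to be the sequence you want.) What actually makes the middle-exactness argument work on the dual side is a combination of two ingredients you do not name: (i) the adjunction of the local Tate duality pairings across levels, namely $\langle \mu, \varpi^b\psi'\rangle_{a+b} = \langle \mathrm{red}_a(\mu), \psi'\rangle_a$ (dually, $\langle\lambda, \mathrm{red}_b(\psi)\rangle_b = \langle\varpi^a\lambda,\psi\rangle_{a+b}$); and (ii) the short exact sequence of Assumption \ref{3.10} with the roles of $a$ and $b$ interchanged, i.e. $0 \to Z_{b,\nu} \xrightarrow{\varpi^a} Z_{a+b,\nu} \xrightarrow{\mathrm{red}_a} Z_{a,\nu} \to 0$, which supplies the surjectivity of $\mathrm{red}_a : L_{a+b,\nu} \to L_{a,\nu}$ needed to lift a test class. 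Concretely: if $\psi \in H^1_{\mathcal L_{a+b,S}^\perp}$ dies modulo $\varpi^b$, write $\psi = \varpi^b\psi'$; given $\lambda \in L_{a,\nu}$, lift it via (ii) to $\mu \in L_{a+b,\nu}$ with $\mathrm{red}_a(\mu) = \lambda$, and compute $\langle\lambda,\mathrm{res}_\nu\psi'\rangle_a = \langle\mu,\mathrm{res}_\nu\psi\rangle_{a+b} = 0$ by (i). This is a different use of Assumption \ref{3.10} than in the primal case, not a formal dualization; your write-up should make both ingredients explicit.
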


Working with Selmer groups which have the same size as the corresponding dual Selmer groups plays a major technical role in the relative deformation theory. We say, that the local conditions $\mathcal L_{r,S}$ for $1 \le r \le n$ are \textit{balanced} if

$$\left|H^1_{\mathcal L_{r,S}}(\Gamma_{F,S},\rho_r(\mathfrak g^{\der}))\right| = \left|H^1_{\mathcal L_{r,S}^\perp}(\Gamma_{F,S},\rho_r(\mathfrak g^{\der})^*)\right|$$

\vspace{2 mm}

\noindent for all $1 \le r \le n$. We now have the following result which tells us that under some mild condition the local conditions being balanced implies that the $(r,s)$-relative Selmer groups are also balanced.

\begin{lem}

    \label{3.15}

    Suppose that the local conditions $\mathcal L_{r,S}$ for $1 \le r \le n$ are balanced and the spaces of invariants $\overline{\rho}(\mathfrak g^{\der})^{\Gamma_F}$ and $(\overline{\rho}(\mathfrak g^{\der})^*)^{\Gamma_F}$ are both zero. Then for $1 \le s \le r \le n$:

    $$\left|H^1_{(r,s)}(\Gamma_{F,S},\rho_r(\mathfrak g^{\der}))\right| = \left|H^1_{(r,s)}(\Gamma_{F,S},\rho_r(\mathfrak g^{\der})^*)\right|$$

    \vspace{2 mm}
    
\end{lem}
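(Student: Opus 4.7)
The plan is to realize $H^1_{(r,s)}(\Gamma_{F,S},\rho_r(\mathfrak g^{\der}))$ as the quotient of $H^1_{\mathcal L_{r,S}}(\Gamma_{F,S},\rho_r(\mathfrak g^{\der}))$ by a copy of $H^1_{\mathcal L_{r-s,S}}(\Gamma_{F,S},\rho_{r-s}(\mathfrak g^{\der}))$ sitting inside it, and to do the same on the dual side; the balanced hypothesis applied at levels $r$ and $r-s$ then immediately equates cardinalities. Concretely, I aim to produce, for $1 \le s < r \le n$, a short exact sequence
\begin{equation*}
0 \to H^1_{\mathcal L_{r-s,S}}(\Gamma_{F,S},\rho_{r-s}(\mathfrak g^{\der})) \to H^1_{\mathcal L_{r,S}}(\Gamma_{F,S},\rho_r(\mathfrak g^{\der})) \to H^1_{(r,s)}(\Gamma_{F,S},\rho_r(\mathfrak g^{\der})) \to 0
\end{equation*}
and its Tate-dual analog. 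The case $r=s$ is trivial: the reduction map is the identity, so $H^1_{(r,r)} = H^1_{\mathcal L_{r,S}}$, and balancedness at level $r$ gives the conclusion directly.

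First I upgrade the vanishing of invariants: by induction on $t$, using the short exact sequence
\begin{equation*}
0 \to \overline{\rho}(\mathfrak g^{\der}) \to \rho_t(\mathfrak g^{\der}) \to \rho_{t-1}(\mathfrak g^{\der}) \to 0
\end{equation*}
together with its Tate-dual counterpart, I obtain $\rho_t(\mathfrak g^{\der})^{\Gamma_{F,S}} = 0$ and $\rho_t(\mathfrak g^{\der})^{*,\Gamma_{F,S}} = 0$ for all $1 \le t \le n$. The long exact sequence in $\Gamma_{F,S}$-cohomology attached to
\begin{equation*}
0 \to \rho_{r-s}(\mathfrak g^{\der}) \to \rho_r(\mathfrak g^{\der}) \to \rho_s(\mathfrak g^{\der}) \to 0
\end{equation*}
then begins $0 = H^0(\Gamma_{F,S},\rho_s(\mathfrak g^{\der})) \to H^1(\Gamma_{F,S},\rho_{r-s}(\mathfrak g^{\der})) \to H^1(\Gamma_{F,S},\rho_r(\mathfrak g^{\der}))$, so the induced map is injective, and this restricts to an injection of the Selmer subgroups. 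Lemma \ref{3.14} (applied with $a=r-s$, $b=s$) provides the exactness at the middle term, and by Definition \ref{3.11} the image of the right-hand map is $H^1_{(r,s)}$, so the short exact sequence above is complete. The dual side is parallel: a dualized short exact sequence of coefficient modules together with the vanishing of dual invariants gives injectivity of $H^1(\Gamma_{F,S},\rho_{r-s}(\mathfrak g^{\der})^*) \to H^1(\Gamma_{F,S},\rho_r(\mathfrak g^{\der})^*)$, which combined with the dual form of Lemma \ref{3.14} yields
\begin{equation*}
0 \to H^1_{\mathcal L_{r-s,S}^\perp}(\Gamma_{F,S},\rho_{r-s}(\mathfrak g^{\der})^*) \to H^1_{\mathcal L_{r,S}^\perp}(\Gamma_{F,S},\rho_r(\mathfrak g^{\der})^*) \to H^1_{(r,s)}(\Gamma_{F,S},\rho_r(\mathfrak g^{\der})^*) \to 0.
\end{equation*}

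Taking cardinalities in the two short exact sequences and invoking the balanced hypothesis at levels $r$ and $r-s$ equates numerators and denominators across the two sides, proving the claim. The main technical point I anticipate is verifying that the leftmost map in each short exact sequence is genuinely injective on Selmer subgroups, rather than merely on ambient $H^1$'s; ambient injectivity is automatic from the vanishing of invariants, and the multiplication-by-$\varpi^b$ compatibility of local cocycle groups encoded in Assumption \ref{3.10} is precisely what ensures that this injectivity restricts cleanly to the Selmer subgroups.
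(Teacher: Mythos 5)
Your proof is correct and follows essentially the same strategy as the paper's: propagate the vanishing of invariants from level $1$ to level $t$, use the long exact sequence in cohomology plus the vanishing of $H^0(\Gamma_{F,S},\rho_s(\mathfrak g^{\der}))$ to get injectivity of $H^1_{\mathcal L_{r-s,S}} \hookrightarrow H^1_{\mathcal L_{r,S}}$, invoke Lemma \ref{3.14} for middle exactness, and compare with the dual side via balancedness at levels $r$ and $r-s$. Your packaging of this data into a short exact sequence of Selmer groups is a slightly cleaner bookkeeping of the same cardinality computation the paper performs directly (the paper even has a cosmetic slip writing subtraction where division of cardinalities is meant), but the underlying argument is identical.
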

\begin{proof}

    We first show that $\overline{\rho}(\mathfrak g^{\der})^{\Gamma_F} = 0$ implies $\rho_r(\mathfrak g^{\der})^{\Gamma_F} = 0$ for all $1 \le r \le n$. Suppose that we can find $A \neq 0 \in \mathfrak g^{\der} \otimes_{\mathcal O} \mathcal O/\varpi^r$ such that $\Ad(\rho_r(\sigma)) \cdot A = A$ for all $\sigma \in \Gamma_F$. We can then write $A = \varpi^{r_1}A'$ for some $r_1 < r$. such that $\overline{A'} \coloneqq A' \pmod{\varpi} \neq 0$. Hence, we get that $\Ad(\rho_r(\sigma)) \cdot A' \equiv A' \pmod{\varpi^{r-r_1}}$ for all $\sigma \in \Gamma_F$. In particular, working modulo $\varpi$ we get that $\Ad(\overline{\rho}(\sigma)) \cdot \overline{A'} = \overline{A'}$. But, as $\overline{\rho}(\mathfrak g^{\der})^{\Gamma_F} = 0$ this yields $\overline{A'} = 0$, which is a contradiction. Analogously, $(\overline{\rho}(\mathfrak g^{\der})^*)^{\Gamma_F} = 0$ implies that $(\rho_r(\mathfrak g^{\der})^*)^{\Gamma_F} = 0$ for all $1 \le r \le n$.

    Now, using the exact sequences in Lemma $\ref{3.14}$ we get:

    \begin{align*}
        \left|H^1_{(r,s)}(\Gamma_{F,S},\rho_r(\mathfrak g^{\der}))\right| &= \left| \im \left(H^1_{\mathcal L_{r,S}}(\Gamma_{F,S},\rho_r(\mathfrak g^{\der})) \longrightarrow H^1_{\mathcal L_{s,S}}(\Gamma_{F,S},\rho_s(\mathfrak g^{\der}))\right) \right| \\
        &= \left|H^1_{\mathcal L_{r,S}}(\Gamma_{F,s},\rho_r(\mathfrak g^{\der}))\right| - \left| \ker \left(H^1_{\mathcal L_{r,S}}(\Gamma_{F,S},\rho_r(\mathfrak g^{\der})) \longrightarrow H^1_{\mathcal L_{s,S}}(\Gamma_{F,S},\rho_s(\mathfrak g^{\der}))\right) \right| \\
        &= \left|H^1_{\mathcal L_{r,S}}(\Gamma_{F,s},\rho_r(\mathfrak g^{\der}))\right| - \left| \im \left(H^1_{\mathcal L_{r-s,S}}(\Gamma_{F,S},\rho_{r-s}(\mathfrak g^{\der})) \longrightarrow H^1_{\mathcal L_{r,S}}(\Gamma_{F,S},\rho_r(\mathfrak g^{\der}))\right) \right| 
    \end{align*}

    \vspace{2 mm}

    On the other side, from the long exact sequence on cohomology we get the exact sequence:

    $$H^0(\Gamma_{F,S},\rho_s(\mathfrak g^{\der})) \longrightarrow H^1(\Gamma_{F,S},\rho_{r-s}(\mathfrak g^{\der})) \longrightarrow H^1(\Gamma_{F,S},\rho_r(\mathfrak g^{\der}))$$

    \vspace{2 mm}

    However, by what we have proven at the beginning the first term is equal to $0$ and therefore the second map is injective. This means that the size of the image is equal to the size of the domain. Combining this with the analogous computations for the dual Selmer group and the assumption that the local conditions are balanced we get:

    \begin{align*}
    \left|H^1_{(r,s)}(\Gamma_{F,S},\rho_r(\mathfrak g^{\der}))\right| &= \left|H^1_{\mathcal L_{r,S}}(\Gamma_{F,s},\rho_r(\mathfrak g^{\der}))\right| - \left|H^1_{\mathcal L_{r-s,S}}(\Gamma_{F,S},\rho_{r-s}(\mathfrak g^{\der}))\right| \\
    &= \left|H^1_{\mathcal L_{r,S}^\perp}(\Gamma_{F,s},\rho_r(\mathfrak g^{\der})^*)\right| - \left|H^1_{\mathcal L_{r-s,S}^\perp}(\Gamma_{F,S},\rho_{r-s}(\mathfrak g^{\der})^*)\right| \\
    &= \left|H^1_{(r,s)}(\Gamma_{F,S},\rho_r(\mathfrak g^{\der})^*)\right|
    \end{align*}
    
\end{proof}

We remark that all the results up to this point in this subsection hold for any local conditions satisfying Assumptions \ref{3.10}. However, from now on we will focus on lifts produced by the doubling method. We remark that in that case the existence of such cocycles for primes in $S_n$ is guaranteed by either \cite[Lemma $3.5$]{FKP21} or \cite[Proposition $4.7$]{FKP21}. Since the cocycles produced by \cite[Lemma $3.5$]{FKP21} will appear in some of the computations later we take our time to restate its result, as well as to describe explicitly those cocycles.

\begin{lem}[{\cite[Lemma $3.5$]{FKP21}}]

    \label{3.16}

    Let $F_\nu/\mathbb Q_\ell$ be a finite extension with residue field of order $q$, let $M \ge 1$ be a fixed integer, and let $1 \le s \le M$ be another fixed integer. Suppose $\rho_{M+s}:\Gamma_{F_\nu} \to G(\mathcal O/\varpi^{M+s})$ is a homomorphism with multipler $\mu$ satisfying:

    \begin{itemize}
        \item The reduction $\rho_s \coloneqq \rho_{M+s} \pmod{\varpi^s}$ is trivial (modulo the center) and $q \equiv 1 \pmod{\varpi^s}$, but $q \not \equiv 1 \pmod{\varpi^{s+1}}$.
        \item There is a suitable choice of split maximal torus $T$ and root $\alpha \in \Phi(G^0,T)$ such that $\rho_{M+s}(\sigma_\nu) \in T(\mathcal O/\varpi^{M+s})$, $\alpha(\rho_{M+s}(\sigma_\nu)) = q$ and $\rho_{M+s}(\tau_\nu) \in U_\alpha(\mathcal O/\varpi^{M+s})$. In particular $\rho_{M+s} \in \Lift_{\rho_M}^{\mu,\alpha}(\mathcal O/\varpi^{M+s})$.
        \item For any root $\beta \in \Phi(G^0,T)$, $\beta(\rho_{M+s}(\sigma_\nu)) \not \equiv 1 \pmod{\varpi^{s+1}}$.

    \end{itemize}

        Then, for all $1 \le r \le M$ there are spaces of cocycles $Z_{r,\nu}^\alpha \subseteq Z^1(\Gamma_{F_\nu},\rho_r(\mathfrak g^{\der}))$ with images $L_{r,\nu}^\alpha \subseteq H^1(\Gamma_{F_\nu},\rho_r(\mathfrak g^{\der}))$ such that

    \begin{itemize}
         \item $Z_{r,\nu}^\alpha$ contains all coboundaries and is free over $\mathcal O/\varpi^r$ of rank $\dim(\mathfrak g^{\der})$.
        \item For any integers $a,b \ge 1$ such that $a+b = r$, the nautral maps induce short exact sequences:

        $$0 \longrightarrow Z_{a,\nu}^\alpha \longrightarrow Z_{r,\nu}^\alpha \longrightarrow Z_{b,\nu}^\alpha \longrightarrow 0.$$

        \vspace{2 mm}
        
        \item For all $m \ge 2s + r$ (in particular, for all $m \ge 3M$), and any lift $\rho_m \in \Lift_{\rho_r}^{\mu,\alpha}(\mathcal O/\varpi^m)$ of $\rho_{r+s}$, the fiber of $\Lift_{\rho_r}^{\mu,\alpha}(\mathcal O/\varpi^{m+r}) \to \Lift_{\rho_r}^{\mu,\alpha}(\mathcal O/\varpi^m)$ over $\rho_m$ is non-empty and $Z_{r,\nu}^\alpha$-stable.
    \end{itemize}
    
\end{lem}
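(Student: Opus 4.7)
Since $\rho_{M+s}$ (and hence each $\rho_r$) factors through the tame-with-$p$-power ramification quotient $\Gal(F_\nu^{\mathrm{tame},p}/F_\nu) = \langle \sigma_\nu, \tau_\nu \mid \sigma_\nu\tau_\nu\sigma_\nu^{-1} = \tau_\nu^{q}\rangle$, a cocycle $\phi \in Z^1(\Gamma_{F_\nu}, \rho_r(\mathfrak g^{\der}))$ is uniquely determined by the pair $(\phi(\sigma_\nu),\phi(\tau_\nu))$ subject to the single linear compatibility relation
$$\phi(\sigma_\nu) + \Ad(\rho_r(\sigma_\nu))\phi(\tau_\nu) = \left(\sum_{i=0}^{q-1} \Ad(\rho_r(\tau_\nu))^i\right)\phi(\tau_\nu) + \Ad(\rho_r(\tau_\nu))^q\phi(\sigma_\nu)$$
extracted from $\phi(\sigma_\nu\tau_\nu) = \phi(\tau_\nu^q\sigma_\nu)$. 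Because $\rho_r(\sigma_\nu) \in T(\mathcal O/\varpi^r)$, the operator $\Ad(\rho_r(\sigma_\nu))$ is diagonal for the root-space decomposition $\mathfrak g^{\der} = \mathfrak t^{\der} \oplus \bigoplus_\beta \mathfrak g_\beta$, acting by the scalar $\beta(\rho_r(\sigma_\nu))$ on $\mathfrak g_\beta$. The hypotheses then yield two numerical inputs that drive everything: (i) $1 - \beta(\rho_r(\sigma_\nu))$ has $\varpi$-adic valuation at most $s$ for every root $\beta$; and (ii) $\Ad(\rho_r(\tau_\nu)) - \mathrm{Id}$ has valuation at least $s$ on $\mathfrak g^{\der}$, since $\rho_s$ is trivial modulo the center.

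I would define $Z_{r,\nu}^\alpha \subseteq Z^1(\Gamma_{F_\nu},\rho_r(\mathfrak g^{\der}))$ as the sum of all coboundaries with the set of cocycles whose $\sigma_\nu$-value lies in $\ker(\alpha|_{\mathfrak t^{\der}})$ and whose $\tau_\nu$-value lies in $\mathfrak g_\alpha$; these are precisely the infinitesimal deformations that preserve the defining conditions $\rho(\sigma_\nu) \in T$, $\alpha(\rho(\sigma_\nu)) = q$, $\rho(\tau_\nu) \in U_\alpha$ and multiplier $\mu$ of $\Lift^{\mu,\alpha}$. Projecting the cocycle relation onto each summand in the root-space decomposition, one checks that cocycles of this prescribed shape solve it automatically modulo $\varpi^r$. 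A direct rank count of these cocycles plus the coboundaries, matched against the dimension of the tangent space to $\Lift^{\mu,\alpha}$, yields the first bullet (freeness over $\mathcal O/\varpi^r$ of rank $\dim\mathfrak g^{\der}$), and the short exact sequences of the second bullet then follow formally from the linearity and $\varpi$-torsion compatibility of the defining condition in $r$, together with freeness.

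The main obstacle is the third bullet: non-emptiness and $Z_{r,\nu}^\alpha$-stability of the fibers of $\Lift_{\rho_r}^{\mu,\alpha}(\mathcal O/\varpi^{m+r}) \to \Lift_{\rho_r}^{\mu,\alpha}(\mathcal O/\varpi^m)$ over a given $\rho_m$ when $m \ge 2s+r$. Non-emptiness comes from smoothness of $T$ and $U_\alpha$: lift $\rho_m(\sigma_\nu)$ within $T(\mathcal O/\varpi^{m+r})$ so that $\alpha$ takes value $q$ (adjusting by an $\alpha^\vee$-twist exactly as in the inductive step of Lemma \ref{3.7}) and so that the multiplier remains $\mu$, and lift $\rho_m(\tau_\nu)$ within $U_\alpha$ via the exponential of Remark \ref{3.6}; once $\alpha(\rho(\sigma_\nu)) = q$ the braid relation $\sigma_\nu\tau_\nu\sigma_\nu^{-1} = \tau_\nu^q$ is automatic. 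For stability, given $\phi \in Z_{r,\nu}^\alpha$ I would twist a chosen lift by $\exp(\varpi^m\phi)$ and show the perturbed representation is $\widehat G(\mathcal O/\varpi^{m+r})$-conjugate back into the required $(T,U_\alpha)$ shape. The conjugator is assembled root-space by root-space by inverting $1 - \beta(\rho_r(\sigma_\nu))$ on each $\mathfrak g_\beta$ with $\beta \ne \alpha$; by input (i) each such inversion costs at most a factor of $\varpi^s$, and the bound $m \ge 2s+r$ is calibrated exactly so that this loss, together with the loss of valuation $s$ from input (ii) applied to $\Ad(\rho_r(\tau_\nu))$, still leaves room to pin down the $\mathfrak t^{\der}$ and $\mathfrak g_\alpha$ components uniquely modulo $\varpi^{m+r}$. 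The bookkeeping of $\varpi$-adic valuations here, and the interplay of the three structural hypotheses on $\rho_{M+s}$, is the delicate point.
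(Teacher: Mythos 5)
Your plan goes wrong at the very first step, in the definition of $Z_{r,\nu}^\alpha$. You propose to take the coboundaries together with the shape-preserving cocycles, i.e.\ those $\phi$ with $\phi(\sigma_\nu)\in\ker(\alpha|_{\mathfrak t^{\der}})$ and $\phi(\tau_\nu)\in\mathfrak g_\alpha$. That set does satisfy the tame cocycle relation (your computation is fine), but the resulting module is too small: the shape-preserving cocycles form a free $\mathcal O/\varpi^r$-module of rank $\dim\mathfrak t^{\der}$, and $B^1$ adds nothing free — indeed when $r\le s$ the action on $\rho_r(\mathfrak g^{\der})$ is trivial, $H^0=\mathfrak g^{\der}\otimes\mathcal O/\varpi^r$, $B^1=0$, and your $Z_{r,\nu}^\alpha$ has rank $\dim\mathfrak t^{\der}$ rather than the required $\dim\mathfrak g^{\der}$. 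The first bullet of the lemma already fails.

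What you are missing are the ``divided coboundaries'' $\phi_{X_\beta}^r(\gamma)=\varpi^{-s}\bigl(X_\beta-\Ad(\rho_{r+s}(\gamma))X_\beta\bigr)\pmod{\varpi^r}$, one for each root $\beta$, which the paper lists explicitly in Remark~\ref{3.17} alongside your unramified and ramified cocycles. These are well-defined precisely because of what you call input (ii): $\rho_s$ trivial modulo the center makes $X_\beta-\Ad(\rho_{r+s}(\gamma))X_\beta$ divisible by $\varpi^s$. They are \emph{not} coboundaries of $Z^1(\Gamma_{F_\nu},\rho_r(\mathfrak g^{\der}))$ (that would require $X_\beta/\varpi^s$), and they do not have your prescribed shape since $\phi_{X_\beta}^r(\sigma_\nu)\in\mathfrak g_\beta$, not in $\ker(\alpha|_{\mathfrak t^{\der}})$. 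On the lifting side they correspond to conjugating $\rho_{m+r}$ by $\exp(\varpi^{m-s}X_\beta)$: a conjugation strictly deeper than $\varpi^m$ that nonetheless leaves the reduction mod $\varpi^m$ unchanged because of that same triviality-modulo-center. So the fibers of $\Lift^{\mu,\alpha}(\mathcal O/\varpi^{m+r})\to\Lift^{\mu,\alpha}(\mathcal O/\varpi^m)$ are genuinely larger than the orbit under your $Z_{r,\nu}^\alpha$, and both the rank claim and the $Z_{r,\nu}^\alpha$-stability statement break with your definition. You have correctly identified inputs (i) and (ii) and you use them in the right spirit for the non-emptiness and conjugation bookkeeping; the fix is to feed (ii) into the \emph{definition} of the cocycle space as well, adding the $|\Phi|$ divided coboundaries so that the rank comes out to $|\Phi|+\dim\mathfrak t^{\der}=\dim\mathfrak g^{\der}$ and the downstream size identity $|H^0(\Gamma_{F_\nu},\rho_r(\mathfrak g^{\der}))|=|L_{r,\nu}^\alpha|$ (Remark~\ref{3.18}) holds — this identity is essential in Lemmas \ref{3.22} and \ref{3.24}.
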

\begin{rem}

\label{3.17}

    The spaces $Z_{r,\nu}^\alpha$ consist of $3$ types of cocycles:

    \begin{itemize}
        \item $\phi_{X_\beta}^r(\gamma) = \frac{X_\beta - \Ad(\rho_{m+r}(\gamma))X_\beta}{\varpi^s} \pmod{\varpi^r}$, depening only on $\rho_{r+s}$, where $X_\beta$ is a generator of $\mathfrak g_\beta$ over $\mathcal O$.
        \item For $X$ in an $\mathcal O$-basis of $\ker(\restr{\alpha}{\mathfrak t^{\der}})$, the unramified cocycles given by $\phi_X^{\mathrm{un},r}(\sigma_\nu) = X$.
        \item The ramified cocycle $\phi_\alpha^r$ given by $\phi_\alpha^r(\sigma_\nu) = 0$ and $\phi_\alpha^r(\tau_\nu) = X_\alpha$.
    \end{itemize}
    
\end{rem}

\begin{rem}

    \label{3.18}

    We consider the map $X \to \phi_X$ that assigns to every element in $\mathfrak g^{\der} \otimes_\mathcal O \mathcal O/\varpi^r$ the coboundary $\phi_X(\gamma) \coloneqq X - \gamma \cdot X$. The kernel of this map consists of all the $\Gamma_{F_\nu}$-invariants, while the image is exactly the subspace of all coboundaries $B^1(\Gamma_{F_\nu},\rho_r(\mathfrak g^{\der}))$. Therefore, we get that:

    $$|\,H^0(\Gamma_{F_\nu},\rho_r(\mathfrak g^{\der})\,| \cdot |\,B^1(\Gamma_{F_\nu},\rho_r(\mathfrak g^{\der}))\,| = |\,\mathfrak g^{\der} \otimes_\mathcal O \mathcal O/\varpi^r\,| = |\,Z_{r,\nu}^\alpha\,|$$

    \vspace{2 mm}

    Using the fact that $Z_{r,\nu}^\alpha$ contains all the coboundaries we get that $|\,H^0(\Gamma_{F_\nu},\rho_r(\mathfrak g^{\der}))\,| = |\,L_{r,\nu}^\alpha\,|$. This equality will be important when it comes down to computing the balancedness of the local conditions. Additionally, using the explicit description of $\rho_{M+s}$ we can compute the size of $L_{r,\nu}^\alpha$ and we get that it is equal to:

    $$|\,\mathcal O/\varpi\,|^{s\dim(\mathfrak g^{\der}) + (r-s)\dim(\mathfrak t^{\der}) + (t-r)}$$

    \vspace{2 mm}

    \noindent where $t$ is the largest integer less than or equal to $r$ such that $\rho_t$ isn't ramified. 
    
\end{rem}

    Let $\Gamma$ be the inverse image in $G^{\mathrm{ad}}(\mathcal O)$ of $\Ad \circ \overline{\rho}(\Gamma_{\widetilde{F}}) \subseteq G^{\mathrm{ad}}(k)$. Applying \cite[Corollary B.$2$]{FKP21} we get that the reduction map 

    $$H^1(\Gamma,\mathfrak g^{\der} \otimes_{\mathcal O} \mathcal O/\varpi^M) \longrightarrow H^1(\Gamma,\mathfrak g^{\der} \otimes_{\mathcal O} k)$$

    \vspace{2 mm}

    \noindent is zero for all $M$ greater than some integer $M_1$, depending only on $\im(\overline{\rho})$. We suppose we have integers $N \ge M \ge M_1$, which are additionally divisible by $e$, the ramification index of $\mathcal O$. We let $\rho_N: \Gamma_{F,S_N} \to G(\mathcal O/\varpi^N)$ be a lift of $\overline{\rho}$ obtained by running the doubling method. Then, for $1 \le r \le N$ we set $F_r = F(\overline{\rho},\rho_r(\mathfrak g^{\der}))$ and $F_r^* = F_N(\mu_{p^{\lceil r/e \rceil}})$. We now state a simple lemma that we will use to establish the linear disjointness of fields:

    \begin{lem}[{\cite[Lemma $4.3$]{FKP22}}]

        \label{3.19}

       For any $1 \le r \le N$, the fields $K_\infty$ and $K(\rho_r(\mathfrak g^{\der}))$ are linearly disjoint over $K$. 
        
    \end{lem}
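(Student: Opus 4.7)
The plan is to show $M := K_\infty \cap K(\rho_r(\mathfrak g^{\der})) = K$. First I would check that $K(\rho_r(\mathfrak g^{\der}))/F$ is Galois (since $\ker(\Ad \circ \rho_r)$ is normal in $\Gamma_F$), so $M/F$ is Galois too; moreover $\Gal(K_\infty/K)$ and $\Gal(K(\rho_r(\mathfrak g^{\der}))/K)$ are both pro-$p$, hence $\Gal(M/K)$ is an abelian pro-$p$ group. It therefore suffices to rule out any $\Gal(K/F)$-equivariant surjection $\Gal(M/K) \twoheadrightarrow \mathbb F_p$, where $\Gal(K/F)$ acts by conjugation inside $\Gal(M/F)$. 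On the cyclotomic side, the cyclotomic character gives an embedding $\Gal(K_\infty/K) \hookrightarrow 1 + p\mathbb Z_p$, and the commutativity of $\mathbb Z_p^\times$ yields $\chi(\tau c \tau^{-1}) = \chi(\tau)\chi(c)\chi(\tau)^{-1} = \chi(c)$ for all $\tau \in \Gal(K_\infty/F)$ and $c \in \Gal(K_\infty/K)$, so the $\Gal(K/F)$-action on any $\mathbb F_p$-quotient of $\Gal(M/K)$ is trivial.

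The main content is on the deformation side. Set $H := \Gal(K(\rho_r(\mathfrak g^{\der}))/K)$. Since $\overline{\rho}|_{\Gamma_K}$ is trivial, $\rho_r(\Gamma_K) \subseteq \widehat{G}(\mathcal O/\varpi^r)$, and $H$ is this image modulo the subgroup of central elements acting trivially on $\mathfrak g^{\der}$. Using the exponential of Remark~\ref{3.6} together with the decomposition $\mathfrak g = \mathfrak g^{\der} \oplus \mathfrak z_G$ from the very-good hypothesis on $p$, the congruence filtration on $\widehat{G}(\mathcal O/\varpi^r)$ identifies the graded pieces of $\Ad(\widehat{G}(\mathcal O/\varpi^r))$ with $\mathfrak g^{\der} \otimes k = \overline{\rho}(\mathfrak g^{\der})$ as $G(k)$-modules, and restricting to $H$ gives a filtration $H = H_1 \supseteq H_2 \supseteq \cdots \supseteq H_r = 1$ whose graded pieces $H_i/H_{i+1}$ embed as $\Gal(K/F)$-submodules of $\overline{\rho}(\mathfrak g^{\der})$, with the action via $\Ad \circ \overline{\rho}$. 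The main subtlety I expect is verifying that the conjugation action of $\Gal(K/F)$ on these graded pieces genuinely matches the adjoint action; this rests on the standard commutator estimate $[\widehat{G}^{(i)}, \widehat{G}^{(j)}] \subseteq \widehat{G}^{(i+j)}$, which forces $H$ itself to act trivially on each $H_i/H_{i+1}$ so that the conjugation from $\Gal(K(\rho_r(\mathfrak g^{\der}))/F)$ factors through $\Gal(K/F)$.

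To close the argument, if $M \neq K$ then there is a nonzero map $\Gal(M/K) \twoheadrightarrow \mathbb F_p$, which also factors through $H$; picking the largest index $i$ for which $H_i$ still surjects onto this copy of $\mathbb F_p$ yields a $\Gal(K/F)$-equivariant surjection $H_i/H_{i+1} \twoheadrightarrow \mathbb F_p$ onto the trivial module. This exhibits the trivial representation as a subquotient of $\overline{\rho}(\mathfrak g^{\der})$, and by semisimplicity (Assumption~\ref{1.1}) as an actual direct summand, contradicting the assumption that $\overline{\rho}(\mathfrak g^{\der})$ does not contain the trivial representation. Hence $M = K$, proving the claimed linear disjointness.
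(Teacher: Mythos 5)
Your proof is correct. The paper does not reproduce a proof of this lemma (it cites \cite[Lemma 4.3]{FKP22}), so there is no in-paper argument to compare against, but the strategy you use is the standard one for such linear disjointness results and is sound: pro-$p$-ness of both sides, a trivial $\Gal(K/F)$-action on $\Gal(K_\infty/K)$, and the congruence filtration identifying graded pieces of $\Gal(K(\rho_r(\mathfrak g^{\der}))/K)$ with $\Gal(K/F)$-submodules of $\overline{\rho}(\mathfrak g^{\der})$, which by Assumption~\ref{1.1} has no trivial subquotient. One small observation: your direct cyclotomic-character computation showing triviality of the $\Gal(K/F)$-action on $\Gal(K_\infty/K)$ reproves a special case of the paper's Lemma~\ref{A.1} (taking $L=K$, $M=F(\mu_{p^\infty})$); citing that lemma would shorten the argument. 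The only step worth scrutinizing is that the conjugation action on the graded pieces $H_i/H_{i+1}$ factors through $\Gal(K/F)$ and agrees with $\Ad\circ\overline{\rho}$, and you have identified the right tool (the commutator estimate $[\widehat{G}^{(i)},\widehat{G}^{(j)}]\subseteq\widehat{G}^{(i+j)}$, which is implicit in Lemma~\ref{A.2}); the argument closes cleanly.
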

    
    We have the following result:

    \begin{lem}[{\cite[Lemma $6.4$]{FKP21}}]

        \label{3.20}

        For any sufficiently large $M$, in a manner depending only on $\im(\overline{\rho})$ we have:

        \begin{itemize}
            \item $H^1(\Gal(F_N^*/F),\rho_r(\mathfrak g^{\der})^*) = 0$ for all $r \le M$.
            \item The map $H^1(\Gal(F_N^*/F),\rho_M(\mathfrak g^{\der})) \to H^1(\Gal(F_N^*/F),\overline{\rho}(\mathfrak g^{\der}))$ is zero.
        \end{itemize}
        
    \end{lem}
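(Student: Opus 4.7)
The plan is to deduce both bullets from the Lazard-style vanishing quoted just before the lemma (applied to the $p$-adic Lie group $\Gamma$), combined with Assumption~\ref{1.1} and the linear disjointness of Lemma~\ref{3.19}. Since $\Gal(\widetilde F/F)$ has order prime to $p$ (by the assumption on $G/G^0$) and all coefficients are $p$-primary, Hochschild--Serre for $F\subseteq\widetilde F\subseteq F_N^*$ collapses to
$$H^1(\Gal(F_N^*/F),V)=H^1(\Gal(F_N^*/\widetilde F),V)^{\Gal(\widetilde F/F)},$$
compatibly with reduction modulo $\varpi$. Combining Theorem~\ref{3.4}(3) with the identification $G^{\mathrm{ad}}=G^{\der,\mathrm{ad}}$ (from the coprime-to-$p$ central isogeny assumption) and the fact that $\widehat{G^{\mathrm{ad}}}(\mathcal O/\varpi^M)$ is pro-$p$ while $[\widetilde F:F]$ is coprime to $p$, one checks that $\Ad\circ\rho_M$ surjects $\Gamma_{\widetilde F}$ onto the reduction $\Gamma/\Gamma^{[M]}$ of $\Gamma$ modulo $\varpi^M$; write $K_M=\Gal(F_N^*/F_M)$ for its kernel, which acts trivially on both $\rho_M(\mathfrak g^{\der})$ and $\overline\rho(\mathfrak g^{\der})$.

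For the first bullet I induct on $r$ via the short exact sequence $0\to\rho_{r-1}(\mathfrak g^{\der})^*\to\rho_r(\mathfrak g^{\der})^*\to\overline\rho(\mathfrak g^{\der})^*\to 0$ (dualizing the standard filtration) and the resulting long exact sequence. For the base case $r=1$, inflation-restriction for $F\subseteq K\subseteq F_N^*$ (observing that $\Gal(F_N^*/K)$ acts trivially on $\overline\rho(\mathfrak g^{\der})^*$, since $K=F(\overline\rho,\mu_p)$ kills the action) reduces the task to vanishing of $H^1(\Gal(K/F),\overline\rho(\mathfrak g^{\der})^*)$, which is Assumption~\ref{1.1}, and of $\Hom_{\Gal(K/F)}(\Gal(F_N^*/K)^{\mathrm{ab}},\overline\rho(\mathfrak g^{\der})^*)$. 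For the latter, Lemma~\ref{3.19} decomposes $\Gal(F_N^*/K)^{\mathrm{ab}}$ as a $\Gal(K/F)$-module into a cyclotomic part (on which $\Gal(K/F)$ acts trivially, since $\mu_p\subseteq K$) and an adjoint part whose irreducible subquotients appear already as subquotients of $\overline\rho(\mathfrak g^{\der})$; by the no-trivial and no-common-subquotient clauses of Assumption~\ref{1.1}, neither admits a nonzero equivariant map into $\overline\rho(\mathfrak g^{\der})^*$.

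For the second bullet, the Hochschild--Serre five-term sequence for $K_M\triangleleft\Gal(F_N^*/\widetilde F)$, combined with the compatibility of reduction, gives that the inflation contribution $H^1(\Gamma/\Gamma^{[M]},\rho_M(\mathfrak g^{\der}))\to H^1(\Gamma/\Gamma^{[M]},\overline\rho(\mathfrak g^{\der}))$ vanishes by inflating the Lazard result along $\Gamma\twoheadrightarrow\Gamma/\Gamma^{[M]}$ and using injectivity of inflation. For the restriction piece I argue that for any cocycle $\phi$ representing a class in $H^1(\Gal(F_N^*/\widetilde F),\rho_M(\mathfrak g^{\der}))$, the reduction $\bar\phi|_{K_M}$ vanishes as an element of $\Hom_{\Gamma/\Gamma^{[M]}}(K_M^{\mathrm{ab}},\overline\rho(\mathfrak g^{\der}))$: the cyclotomic subquotients of $K_M^{\mathrm{ab}}$ are $\Gal(K/F)$-trivial and thus cannot map nontrivially into $\overline\rho(\mathfrak g^{\der})$ (which has no trivial subquotient); for the adjoint subquotients, one uses that the transgression of $\phi|_{K_M}$ in $H^2(\Gamma/\Gamma^{[M]},\rho_M(\mathfrak g^{\der}))$ vanishes, a condition compatible with reduction, and combines this with the Lazard vanishing in degree two (after an analogous inflation from $\Gamma$) to conclude. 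The hard part is precisely this last point: unlike the first bullet, where Assumption~\ref{1.1} kills the adjoint Homs abstractly because the target $\overline\rho(\mathfrak g^{\der})^*$ shares no subquotient with $\overline\rho(\mathfrak g^{\der})$, here the target $\overline\rho(\mathfrak g^{\der})$ shares all adjoint subquotients with $K_M^{\mathrm{ab}}$, so the additional global-cocycle constraint (transgression), rather than any purely representation-theoretic vanishing, is what forces $\bar\phi|_{K_M}=0$.
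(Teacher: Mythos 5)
The paper does not reproduce the proof of this lemma (it cites \cite[Lemma 6.4]{FKP21}), so the comparison is to the correctness of your argument rather than to a proof in the text.

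Your treatment of the first bullet looks sound: the Hochschild--Serre reduction along the prime-to-$p$ quotient $\Gal(\widetilde F/F)$, then inflation-restriction for $K \subset F_N^*$ with the base $H^1(\Gal(K/F),\overline\rho(\mathfrak g^{\der})^*)=0$ from Assumption~\ref{1.1}, then a Jordan--H\"older argument showing $\Hom_{\Gal(K/F)}(\Gal(F_N^*/K)^{\mathrm{ab}},\overline\rho(\mathfrak g^{\der})^*)=0$ because the cyclotomic part is a trivial module (Lemma~\ref{A.1}) and the adjoint part shares all its simple constituents with $\overline\rho(\mathfrak g^{\der})$, contradicting the no-common-subquotient clause. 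The devissage in $r$ is then routine.

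The second bullet has a genuine gap, and you correctly identified where it is but did not fill it. You run Hochschild--Serre for $K_M=\Gal(F_N^*/F_M)\triangleleft P=\Gal(F_N^*/\widetilde F)$ with quotient approximately $\Gamma/\Gamma^{[M]}$. The inflation piece is handled correctly (inflate $H^1(\Gamma/\Gamma^{[M]},V)\hookrightarrow H^1(\Gamma,V)$ and use Lazard). But the kernel $K_M$ has a large adjoint part: by Lemma~\ref{A.2}, $(\Gamma^{[M]}/\Gamma^{[N]})^{\mathrm{ab}}\simeq \Gamma^{[M]}/\Gamma^{[2M]}\simeq\rho_M(\mathfrak g^{\der})$ for $N>2M$, so $\Hom_{\Gamma/\Gamma^{[M]}}(K_M^{\mathrm{ab}},\overline\rho(\mathfrak g^{\der}))$ is genuinely nonzero (it contains the reduction map). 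Your proposed remedy --- that the vanishing of the transgression of $\phi|_{K_M}$ constrains $\bar\phi|_{K_M}$ --- is circular: the transgression of $\phi|_{K_M}$ (resp.\ $\bar\phi|_{K_M}$) vanishes precisely because $\phi$ (resp.\ $\bar\phi$) exists, which is the hypothesis, not a constraint. Moreover the quoted input (Corollary B.2 of \cite{FKP21} as stated before the lemma) is a degree-one Lazard vanishing; a degree-two version is neither stated nor needed here. The fix is to use the \emph{full} quotient rather than the intermediate one: take the kernel $H_N$ of $P\twoheadrightarrow \mathrm{im}(\Ad\circ\rho_N|_{\Gamma_{\widetilde F}})\simeq\Gamma/\Gamma^{[N]}$ (surjectivity from Theorem~\ref{3.4}(3)). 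That kernel consists only of a prime-to-$p$ part (a subgroup of $Z_{G^0}(k)$, prime to $p$ by the very-good hypothesis) and a cyclotomic part with trivial $\Gamma_F$-conjugation action; neither admits a nonzero equivariant homomorphism into $\rho_M(\mathfrak g^{\der})$ or $\overline\rho(\mathfrak g^{\der})$, so inflation $H^1(\Gamma/\Gamma^{[N]},V)\to H^1(P,V)$ is an isomorphism for both coefficient modules, and the reduction map is then killed by the Lazard-plus-injective-inflation argument you already ran.
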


    We now fix and integer $M \ge \max\{eD,M_1\}$, asking for it to be divisible by $e$, as before. On the other hand, we will not fix the integer $N$ for the time being, but we will continue requiring it to be divisible by $e$. We now define the set of auxuliary primes that will be used to annihilate a relative dual Selmer group. 

    \begin{defn}

        \label{3.21}

        For $N \ge M$ we let $Q_{N,M}$ be the set of trivial primes $\nu$ of $F$ such that:

        \begin{itemize}
            \item $N(\nu) \equiv 1 \pmod{\varpi^M}$, but $N(\nu) \not \equiv 1 \pmod{\varpi^{M+1}}$.
            \item $\restr{\rho_N}{\Gamma_{F_\nu}}$ is unramified (with multiplier $\mu$) and $\restr{\rho_{M}}{\Gamma_{F_\nu}}$ is trivial modulo the center. 
            \item There exists a split maximal torus $T$ of $G$ and a root $\alpha \in \Phi(G^0,T)$ such that $\rho_N(\sigma_\nu) \in T(\mathcal O/\varpi^N)$ and $\alpha(\rho_N(\sigma_\nu)) = \kappa(\sigma_\nu) = N(\nu)$.
            \item For all root $\beta \neq \alpha \in \Phi(G^0,T)$, $\beta(\rho_{M+1}(\sigma_\nu)) \not \equiv 1, N(\nu) \pmod{\varpi^{M+1}}$.
        \end{itemize}
        
    \end{defn}

    We first remark that this definition of $Q_{N,M}$ differs slightly than the one in \cite[Definition $6.6$]{FKP21}, as we ask for additional condition in the fourth bullet point. In any case, the set $Q_{N,M}$ corresponds to a Chebotarev condition in $F_N^*/F$, hence it has a positive density. Indeed, the first condition can be prescribed by choosing a Chebotarev condition in $K(\mu_p^{N/e})/K$. On the other hand, as $\im(\rho_N)$ contains $\widehat{G^{\der}}(\mathcal O/\varpi^N)$ we can find an element of it that satisfies the last three condition, by applying Lemma \ref{3.7} with $s=M$ and $n=N$. This will give us a Chebotarev condition in $K(\rho_N(\mathfrak g^{\der}))/K$, which is moreover trivial on $K(\rho_M(\mathfrak g^{\der}))$. Finally, we need to make sure that these two conditions are compatible. This is guaranteed by the linear disjointess of the extensions over $K$, which comes from Lemma \ref{3.19}. Now, for primes in $Q_{N,M}$ we consider the lifting functors $\Lift_{\overline{\rho}}^{\mu,\alpha}$ and $\Lift_{\rho_M}^{\mu,\alpha}$, which we have defined in Definition \ref{3.2}.

    \begin{lem}[{\cite[Lemma $6.7$]{FKP21}}]

        \label{3.22}

        Assume $N \ge 4M$, Then for any $\nu \in Q_{N,M}$ with the corresponding $(T,\alpha)$, the following properties hold:

        \begin{enumerate}
            \item For any $m \ge N-M$ and $1 \le r \le M$, the fibers of $\Lift_{\rho_M}^{\mu,\alpha}(\mathcal O/\varpi^{m+r}) \longrightarrow \Lift_{\rho_M}^{\mu,\alpha}(\mathcal O/\varpi^m)$ are non-empty and stable under $Z_{r,\nu}^\alpha$, where $Z_{r,\nu}^\alpha \subseteq Z^1(\Gamma_{F_\nu},\rho_r(\mathfrak g^{\der}))$ are the submodules produced in Lemma \textup{\ref{3.16}}.
    
            \item For $1 \le r \le N-M$ let $L_{r,\nu}^\alpha$ be the image of $Z_{r,\nu}^\alpha$ in $H^1(\Gamma_{F_\nu}.\rho_r(\mathfrak g^{\der}))$, and let $L_{r,\nu}^{\alpha,\perp}$ be its annihilator in $H^1(\Gamma_{F_\nu},\rho(\mathfrak g^{\der})^*)$ under the local duality pairing. Then: 

            \begin{itemize}
                \item $|L_{r,\nu}^\alpha| = |H^0(\Gamma_{F_\nu},\rho_r(\mathfrak g^{\der}))|= |H^1_{\mathrm{unr}}(\Gamma_{F_\nu},\rho_r(\mathfrak g^{\der}))|$.
                \item The inclusion

                $$L_{r,\nu}^\alpha \cap H^1_{\mathrm{unr}}(\Gamma_{F_\nu},\rho_r(\mathfrak g^{\der})) \hookrightarrow H^1_{\mathrm{unr}}(\Gamma_{F_\nu},\rho_r(\mathfrak g^{\der}))$$

                \vspace{2 mm}

                has cokernel isomorphic to $\mathcal O/\varpi^r$, and this cokernel is generated by the image of the unramified cocycle that maps $\sigma_\nu$ to $H_\alpha = d(\alpha^\vee)(1)$, the usual coroot element in $\mathfrak t^{\der}$.

                \item The inclusion

                $$L_{r,\nu}^{\alpha,\perp} \cap H^1_{\mathrm{unr}}(\Gamma_{F_\nu},\rho_r(\mathfrak g^{\der})^*) \hookrightarrow H^1_{\mathrm{unr}}(\Gamma_{F_\nu},\rho_r(\mathfrak g^{\der})^*)$$

                \vspace{2 mm}

                has cokernel isomorphic to $\mathcal O/\varpi^r$, and this cokernel is generated by the image of the unramified cocycle that maps $\sigma_\nu$ to any element of $(\mathfrak g^{\der})^*$ whose restriction to $\mathfrak g_\alpha$ spans the free rank one $\mathcal O/\varpi^r$-module $\Hom_\mathcal O(\mathfrak g_\alpha,\mathcal O/\varpi^r)$.

                \vspace{2 mm}
                
            \end{itemize}
        \end{enumerate}
        
    \end{lem}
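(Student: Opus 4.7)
The plan is to apply Lemma \ref{3.16} to the local restriction $\restr{\rho_N}{\Gamma_{F_\nu}}$, taking its parameter $s$ to equal $M$ and its upper bound parameter to equal $N-M$, so that the input $\rho_{M+s}$ is precisely $\rho_N$. Each hypothesis of Lemma \ref{3.16} translates directly into a defining property of $\nu\in Q_{N,M}$ from Definition \ref{3.21}: the reduction $\rho_M$ is trivial modulo the center; $N(\nu)\equiv 1\pmod{\varpi^M}$ but $N(\nu)\not\equiv 1\pmod{\varpi^{M+1}}$; $\rho_N(\sigma_\nu)\in T$ with $\alpha(\rho_N(\sigma_\nu))=N(\nu)$; $\rho_N(\tau_\nu)=1\in U_\alpha$ because $\rho_N$ is unramified at $\nu$; and $\beta(\rho_{M+1}(\sigma_\nu))\not\equiv 1\pmod{\varpi^{M+1}}$ for every root $\beta$ (for $\beta\ne\alpha$ this is the fourth bullet of Definition \ref{3.21}, and for $\beta=\alpha$ it reduces to $N(\nu)\not\equiv 1\pmod{\varpi^{M+1}}$). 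This application produces the cocycle modules $Z_{r,\nu}^\alpha$ for all $1\le r\le N-M$ with the stated freeness and short-exact-sequence properties.

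Part (1) is then immediate from the third bullet of Lemma \ref{3.16}, which gives fiber-stability whenever $m\ge 2s+r=2M+r$. Since $N\ge 4M$ and $r\le M$, the assumption $m\ge N-M\ge 3M$ automatically implies $m\ge 2M+r$. To pass from the $\Lift_{\rho_r}^{\mu,\alpha}$-fibers of Lemma \ref{3.16} to the $\Lift_{\rho_M}^{\mu,\alpha}$-fibers required here, one notes that a $Z_{r,\nu}^\alpha$-twist alters a lift only in the kernel of reduction modulo $\varpi^m$; since $m\ge M$, this preserves reduction modulo $\varpi^M$, so a twist of a lift of $\rho_M$ remains a lift of $\rho_M$.

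For part (2), the size identity $|L_{r,\nu}^\alpha|=|H^0(\Gamma_{F_\nu},\rho_r(\mathfrak g^{\der}))|$ is exactly the coboundary count carried out in Remark \ref{3.18}, using the freeness of $Z_{r,\nu}^\alpha$ of rank $\dim(\mathfrak g^{\der})$ over $\mathcal O/\varpi^r$; the further equality with $|H^1_{\mathrm{unr}}|$ is the standard Euler--Poincar\'e identity for a finite module at a prime $\nu\nmid p$. For the cokernel statements I would invoke the explicit three-fold description of $Z_{r,\nu}^\alpha$ in Remark \ref{3.17}. Intersecting $L_{r,\nu}^\alpha$ with $H^1_{\mathrm{unr}}$ discards the ramified generator $\phi_\alpha^r$ and leaves the span of the unramified cocycles $\phi_X^{\mathrm{un},r}$, which arise only from $X\in\ker(\restr{\alpha}{\mathfrak t^{\der}})$. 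Hence the missing direction inside $H^1_{\mathrm{unr}}$ is precisely the coroot line spanned by $H_\alpha$, producing a cokernel free of rank one over $\mathcal O/\varpi^r$ with the stated generator. The dual cokernel then follows by local Tate duality: under the trace pairing, the annihilator of the $\ker(\restr{\alpha}{\mathfrak t^{\der}})$-unramified subspace is generated by an unramified cocycle whose $\sigma_\nu$-image is dual to $\mathfrak g_\alpha$.

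The main obstacle I expect lies in the last step, namely the precise tracking of the rank-one $\mathcal O/\varpi^r$-quotient in the range $M<r\le N-M$, where $\rho_r(\sigma_\nu)$ acts nontrivially on $\mathfrak g_\alpha$ with eigenvalue $N(\nu)$ of exact $\varpi$-order $M$. Isolating the coroot $H_\alpha$ as the cokernel generator relies on combining this divisibility with the condition $\beta(\rho_{M+1}(\sigma_\nu))\not\equiv 1\pmod{\varpi^{M+1}}$ for $\beta\ne\alpha$, which prevents any other root space from contaminating $H^1_{\mathrm{unr}}$ and thereby pins down both the rank and the generator of the quotient.
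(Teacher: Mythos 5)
Your treatment of part (1) is sound: the hypothesis-matching against Lemma \ref{3.16} with its ``$M$'' taken to be $N-M$ and ``$s$'' taken to be $M$ is exactly right (the fourth bullet of Definition \ref{3.21} together with $N(\nu)\not\equiv 1\pmod{\varpi^{M+1}}$ gives the third hypothesis for all roots $\beta$ including $\pm\alpha$), the inequality chain $m \ge N-M \ge 3M \ge 2M+r$ is correct, and the passage from $\Lift_{\rho_r}^{\mu,\alpha}$-fibers to $\Lift_{\rho_M}^{\mu,\alpha}$-fibers works since a twist by $\exp(\varpi^m\,\cdot\,)$ fixes the reduction modulo $\varpi^M$ when $m\ge M$. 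The first bullet of part (2), via the coboundary count of Remark \ref{3.18} and the standard size identity $|H^1_{\mathrm{unr}}|=|H^0|$, also matches the paper.

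There is a genuine gap in the cokernel argument, and it is precisely the observation that makes the lemma stronger than its antecedent in \cite{FKP21}. You assert that $L_{r,\nu}^\alpha\cap H^1_{\mathrm{unr}}$ is spanned only by the unramified cocycles $\phi_X^{\mathrm{un},r}$ with $X\in\ker(\restr{\alpha}{\mathfrak t^{\der}})$, treating the cocycles $\phi_{X_\beta}^r$ of Remark \ref{3.17} as discarded by the intersection. That is wrong here: since $1\le r\le N-M$ and $\nu\in Q_{N,M}$ means $\rho_N$ is unramified at $\nu$, the reduction $\rho_{r+M}$ is unramified, and from the defining formula $\phi_{X_\beta}^r(\tau_\nu)=\frac{X_\beta - \Ad(\rho_{r+M}(\tau_\nu))X_\beta}{\varpi^M}=0$ one sees that every $\phi_{X_\beta}^r$ is an \emph{unramified} cocycle. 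Thus the unramified part of $Z_{r,\nu}^\alpha$ covers $\ker(\restr{\alpha}{\mathfrak t^{\der}})\oplus\bigoplus_\beta\mathfrak g_\beta$, not just the toral kernel, and it is only for this reason that the difference between the unramified cocycles $Z^{\mathrm{un}}_{r,\nu}$ and the unramified part of $Z_{r,\nu}^\alpha$ collapses to the single coroot line $\mathcal O/\varpi^r\cdot\phi_{H_\alpha}^{\mathrm{un},r}$. Without this observation, your analysis would produce a cokernel of rank roughly $1+|\Phi(G^0,T)|$ rather than $1$, directly contradicting the statement. The paper's proof instead computes the cokernel of $L_{r,\nu}^\alpha\cap H^1_{\mathrm{unr}}\hookrightarrow L_{r,\nu}^\alpha$ (generated by the ramified cocycle $\phi_\alpha^r$, isomorphic to $\mathcal O/\varpi^r$, with no unramified cocycle cohomologous to it because coboundaries are unramified when $\rho_r$ is) and then transfers the rank-one conclusion to the inclusion into $H^1_{\mathrm{unr}}$ using the size equality $|L_{r,\nu}^\alpha|=|H^1_{\mathrm{unr}}|$. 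Your closing paragraph speculates that the condition on $\beta(\rho_{M+1}(\sigma_\nu))$ ``prevents other root spaces from contaminating $H^1_{\mathrm{unr}}$,'' but that is not what makes the argument work; the real mechanism is the unramifiedness of $\rho_{r+M}$ forcing the $\phi_{X_\beta}^r$ themselves into the unramified part of $L_{r,\nu}^\alpha$.
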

    \begin{proof}

        Part $(2)$ of the lemma is stronger than its corresponding part in {\cite[Lemma $6.7$]{FKP21}}, so we take our time to prove it. More precisely, the proof in \cite{FKP21} relies on the fact that $\restr{\rho_M}{\Gamma_{F_\nu}}$ is trivial and hence the statement is valid for $1 \le r \le M$. On the other hand, the proof below uses only that $\restr{\rho_N}{\Gamma_{F_\nu}}$ is unramified and therefore the statement is valid for $1 \le r \le N-M$, as well.
        
        For the first bullet point of part $(2)$ we first note that $H_{\mathrm{unr}}^1(\Gamma_{F_\nu},\rho_r(\mathfrak g^{\der})) = H^1(\Gamma_{F_{\nu}}/I_{F_\nu},\rho_r(\mathfrak g^{\der})^{I_{F_\nu}})$, where $I_{F_\nu}$ is the inertia group of $F_\nu$. Then, it is a standard exercise in cohomology of free abelian profinite groups that this group has the same size as $H^0(\Gamma_{F_\nu},\rho_r(\mathfrak g^{\der}))$. By Remark \ref{3.18} the latter has the same size as $L_{r,\nu}^\alpha$, which gives us what we want.

        For the first cokernel we note that as $\rho_{r+M}$ is unramified the cocycles $\phi_{X_{\beta}}^r$ are in fact unramified. This means that $Z_{r,\nu}^\alpha$ contains the unramified cocycles $\phi_X^{\mathrm{un},r}$ for $X \in \ker(\restr{\alpha}{\mathfrak t^{\der}}) \oplus \bigoplus_{\beta \in \Phi(G^0,T)} \mathfrak g_\beta$. On the other hand, the unramified cocycles $Z_{r,\nu}^{\mathrm{un}}$ are determined by the image of $\sigma_\nu$ and we are free to send it to any element of $\rho_r(\mathfrak g^{\der})^{I_{F_\nu}}$. As $\rho_r$ is unramified at $\nu$, this is the whole of $\rho_r(\mathfrak g^{\der})$, so $Z_{r,\nu}^\mathrm{un}$ consists exactly of the cocycles $\phi_X^{\mathrm{un},r}$ for $X \in \mathfrak g^{\der}$. Hence, by Remark \ref{3.17} we conclude that the difference between $Z_{r,\nu}^\mathrm{un}$ and $Z_{r,\nu}^\alpha$ is that the first contains the cocycle $\phi_{H_\alpha}^{\mathrm{un},r}$, while the second one the ramified cocycle $\phi_{\alpha}^r$. As $\rho_r$ is unramified the coboundaries are unramified, as well. Hence, the ramified cocycle can't be cohomologous to any unramified cocycle. As both $Z_{r,\nu}^\mathrm{un}$ and $Z_{r,\nu}^\alpha$ contain the coboundaries $B^1(\Gamma_{F_\nu}.\rho_r(\mathfrak g^{\der}))$ we deduce that

        $$L_{r,\nu}^\alpha \cap H^1_{\mathrm{unr}}(\Gamma_{F_\nu},\rho_r(\mathfrak g^{\der})) \hookrightarrow L_{r,\nu}^\alpha$$

        \vspace{2 mm}

        \noindent has a cokernel generated by the class of $\phi_\alpha^r$ and is isomorphic to $\mathcal O/\varpi^r$. We also get that the inclusion

        $$L_{r,\nu}^\alpha \cap H^1_{\mathrm{unr}}(\Gamma_{F_\nu},\rho_r(\mathfrak g^{\der})) \hookrightarrow H^1_{\mathrm{unr}}(\Gamma_{F_\nu},\rho_r(\mathfrak g^{\der}))$$

        \vspace{2 mm}

        \noindent has a cokernel isomorphic to $\mathcal O/\varpi^r$, which is necessarily generated by the class of $\phi_{H_\alpha}^{\mathrm{un},r}$. The statement about the cokernel of the dual cohomology groups follows similarly by using the fact that the annihilator of the unramified cocycles under the local duality pairing consists exactly of the dual unramified cocycles, the inertia group is trivial under the cyclotomic character and the computations in \cite[Lemma $3.9$]{FKP21}.

        \end{proof}

        \begin{rem}

            \label{3.23}

            If the $\Gamma_{F_\nu}$-action on $\rho_r(\mathfrak g^{\der})$ is trivial, then there will be no non-trivial coboundaries, so we can work with cocycles instead of cohomology classes. Therefore, we can make sense of the images of $\sigma_\nu$ and $\tau_\nu$ under those cocycles. Using the explicit definition of the cocycles given in Remark \ref{3.17} we can express the difference between $L_{r,\nu}^\alpha$ and $H_{\mathrm{unr}}^1(\Gamma_{F_\nu},\rho_r(\mathfrak g^{\der}))$ in terms of the images of $\sigma_\nu$ and $\tau_\nu$. In particular, from the lemma $\phi \in H_{\mathrm{unr}}^1(\Gamma_{F_\nu},\rho_r(\mathfrak g^{\der}))$ will not be an element of $L_{r,\nu}^\alpha$ if and only if $\phi(\sigma_\nu) \notin \ker(\restr{\alpha}{\mathfrak t}) \oplus \bigoplus_\beta \mathfrak g_{\beta}$. Similarly, $\psi \in H^1_{\mathrm{unr}}(\Gamma_{F_\nu},\rho_r(\mathfrak g^{\der})^*)$ will not be an element of $L_{r,\nu}^{\alpha,\perp}$ if and only if $\psi(\sigma_\nu) \notin \mathfrak g_\alpha^\perp$, where $\mathfrak g_\alpha^\perp$ is the annihilator of $\mathfrak g_\alpha$ under the local duality.

            The results from the lemma will be needed in their full generality, i.e. when $\rho_r$ is unramified and not just trivial in some computations in \S 4. However, during the relative deformation method for the new chosen prime $\nu$ it is important that a given cocycle lies in the difference of the unramified cocycles and $Z_{r,\nu}^\alpha$. To control this more easily we will choose $\nu$ so that the $\Gamma_{F_\nu}$-action is trivial on $\rho_r(\mathfrak g^{\der})$, which can be achieved by choosing it from the set $Q_{n,r}$ for suitably large $n$.
            
        \end{rem}

        We assume that the local conditions $\mathcal L_{r,S_N}$ for $1 \le r \le M$ are balanced. As we will need to apply some of the later results recursively it is important to verify that enlargening $S_N$ by some finite subset of $Q_{N,M}$ will keep the local conditions balanced, where the local conditions at the new primes are $L_{r,\nu}^\alpha$. For notational convenience, we will drop the root notation when it doesn't play a significant role in the proof.

    \begin{lem}

        \label{3.24}

        Let $Q$ be a finite subset of $Q_{N,M}$. The local conditions $\mathcal L_{r,S_N \cup Q}$ for $1 \le r \le M$ are balanced. 
        
    \end{lem}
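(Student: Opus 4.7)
The plan is to invoke the Greenberg--Wiles Euler characteristic formula for both Selmer pairs and observe that the only new multiplicative factors coming from primes in $Q$ are exactly one. Concretely, for a finite set of places $S$ carrying local conditions $\{L_{r,\nu}\}$, the Wiles formula yields
$$\frac{|H^1_{\mathcal L_{r,S}}(\Gamma_{F,S},\rho_r(\mathfrak g^{\der}))|}{|H^1_{\mathcal L_{r,S}^\perp}(\Gamma_{F,S},\rho_r(\mathfrak g^{\der})^*)|} = \frac{|H^0(\Gamma_F,\rho_r(\mathfrak g^{\der}))|}{|H^0(\Gamma_F,\rho_r(\mathfrak g^{\der})^*)|} \cdot \prod_{\nu} \frac{|L_{r,\nu}|}{|H^0(\Gamma_{F_\nu},\rho_r(\mathfrak g^{\der}))|},$$
where the product runs over all finite and archimedean places, with the standard conventions for $L_{r,\nu}$ at places not explicitly listed.

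First I would specialize this identity once to $S = S_N$ and once to $S = S_N \cup Q$, and divide one expression by the other. The global $H^0$ factors agree in both cases, since $\rho_r(\mathfrak g^{\der})$ is unramified outside $S_N$ (and therefore so is its Tate dual). The archimedean factors are unaltered, since no archimedean local condition is changed. The factors at each $\nu \in S_N$ cancel since $L_{r,\nu}$ is the same in both settings. What remains on the right-hand side is exactly $\prod_{\nu \in Q} |L_{r,\nu}^\alpha|/|H^0(\Gamma_{F_\nu},\rho_r(\mathfrak g^{\der}))|$.

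Next, I would appeal to the first bullet of part $(2)$ of Lemma \ref{3.22}, which is tailored exactly to this situation: for every $\nu \in Q \subseteq Q_{N,M}$ and every $1 \le r \le M$ (which, under the running assumption $N \ge 4M$, lies in the admissible range $1 \le r \le N-M$), the identity $|L_{r,\nu}^\alpha| = |H^0(\Gamma_{F_\nu},\rho_r(\mathfrak g^{\der}))|$ holds, so the entire product equals one. Hence the Selmer-to-dual-Selmer ratios for $\mathcal L_{r,S_N}$ and $\mathcal L_{r,S_N \cup Q}$ coincide, and since the former equals one by the balancedness hypothesis, so does the latter. This gives the desired balancedness of $\mathcal L_{r,S_N \cup Q}$ for all $1 \le r \le M$.

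No genuine obstacle stands in the way here, since the definition of $Q_{N,M}$ and the choice of $Z_{r,\nu}^\alpha$ were designed precisely so that adding a prime from $Q_{N,M}$ contributes a trivial Wiles local factor at that prime. The only points worth confirming are that the hypotheses of Lemma \ref{3.22}$(2)$ are in force for all $\nu \in Q$ and all $r$ under consideration, and that the global and archimedean factors are genuinely insensitive to enlarging $S_N$ to $S_N \cup Q$ --- both amount to the fact that $\rho_r$ is already unramified outside $S_N$.
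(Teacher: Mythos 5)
Your proof is correct and follows essentially the same route as the paper: both apply the Greenberg--Wiles formula to $\mathcal L_{r,S_N}$ and $\mathcal L_{r,S_N\cup Q}$ and then invoke the first bullet of Lemma~\ref{3.22}(2) to see that each factor at $\nu\in Q$ is trivial, so the two Selmer-to-dual-Selmer ratios agree and both equal one. The only cosmetic difference is that you divide the two identities explicitly, while the paper observes directly that the right-hand sides coincide.
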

    \begin{proof}

        Applying Wiles-Greenberg formula (\cite[Theorem $2.18$]{DDT97}) to $\mathcal L_{S_N}$ we get:

        $$\frac{|\, H_{\mathcal L_{r,S_N}}^1(\Gamma_{F,S_N},\rho_{r}(\mathfrak g^{\der}))\,|}{|\, H_{\mathcal L_{r,S_N}^\perp}^1(\Gamma_{F,S_N},\rho_{r}(\mathfrak g^{\der})^*)\,|} = \frac{|\,H^0(\Gamma_{F},\rho_r(\mathfrak g^{\der}))\,|}{|\,H^0(\Gamma_F,\rho_r(\mathfrak g^{\der})^*)\,|} \prod_{\nu \in S_N} \frac{|\,L_{r,\nu}\,|}{|\,H^0(\Gamma_{F_\nu},\rho_r(\mathfrak g^{\der}))\,|}$$

        \vspace{2 mm}

        As we assume that $\mathcal L_{r,S_N}$ are balanced, the left-hand side is equal to $1$. Applying, the formula to $\mathcal L_{r,S_N \cup Q}$ we get:

        $$\frac{|\, H_{\mathcal L_{r,S_N \cup Q}}^1(\Gamma_{F,S_N \cup Q},\rho_{r}(\mathfrak g^{\der}))\,|}{|\, H_{\mathcal L_{r,S_N \cup Q}^\perp}^1(\Gamma_{F,S_N \cup Q},\rho_{r}(\mathfrak g^{\der})^*)\,|} = \frac{|\,H^0(\Gamma_{F},\rho_r(\mathfrak g^{\der}))\,|}{|\,H^0(\Gamma_F,\rho_r(\mathfrak g^{\der})^*)\,|} \prod_{\nu \in S_N \cup Q} \frac{|\,L_{r,\nu}\,|}{|\,H^0(\Gamma_{F_\nu},\rho_r(\mathfrak g^{\der}))\,|}$$

        \vspace{2 mm}

        By part $(2)$ of Lemma $\ref{3.22}$ all the factors in the product that correspond to primes in $Q$ will be equal to $1$. Therefore, the right-hand side of the second equation reduces to the right-hand side of the first equation. This means that $|\,H_{\mathcal L_{r,S_N \cup Q}}^1(\Gamma_{F,S_N \cup Q},\rho_{r}(\mathfrak g^{\der}))\,| = |\,H_{\mathcal L_{r,S_N \cup Q}^\perp}^1(\Gamma_{F,S_N \cup Q},\rho_{r}(\mathfrak g^{\der})^*)\,|$, i.e. the local conditions $\mathcal L_{r,S_N \cup Q}$ are balanced.
        
    \end{proof}

    \begin{claim}[{\cite[Claim $6.13$]{FKP21}}]

        \label{3.25}

        It suffices to prove Theorem \textup{\ref{3.29}} when $G^0$ is an adjoint group and $\mathfrak g$ (now equal to $\mathfrak g^{\der}$) is equal to a single $\pi_0(G)$-orbit of simple factors.
        
    \end{claim}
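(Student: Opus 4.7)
The plan is to perform two successive reductions: first replace $G^0$ by its adjoint quotient, then decompose $\mathfrak g^{\der}$ into $\pi_0(G)$-orbits of simple factors and treat each orbit independently. Both reductions exploit the fact that the data relevant for Theorem \ref{3.29} — the Selmer and relative Selmer groups of Definition \ref{3.11}, the balancedness condition, the local cocycle spaces $Z_{r,\nu}$ of Lemma \ref{3.16}, and the formal smoothness criterion of Proposition \ref{3.1} — depend on the pair $(\overline{\rho}(\mathfrak g^{\der}),\mu)$ rather than on the ambient group $G$.

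For the reduction to $G^0$ adjoint, I would use the $G$-equivariant splitting $\mathfrak g = \mathfrak g^{\der} \oplus \mathfrak z_G$, which is available because the canonical isogeny $G^{\der} \times Z_G^0 \to G^0$ has kernel of order coprime to $p$. Setting $G' \coloneqq G/Z_{G^0}^0$, the composition $\overline{\rho}' : \Gamma_{F,S} \to G(k) \to G'(k)$ has the same adjoint module $\overline{\rho}'(\mathfrak g^{\der}) = \overline{\rho}(\mathfrak g^{\der})$ and satisfies Assumptions \ref{1.1}. A geometric lift $\rho'$ of $\overline{\rho}'$ with the formal-smoothness properties predicted by Theorem \ref{3.29} can be promoted to a lift $\rho$ of $\overline{\rho}$ of multiplier type $\mu$ by recording the central data $\mu$; since the central torus contributes only formally smooth factors to the local lifting rings, the formal smoothness at each prime of $T$ is preserved.

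With $G^0$ adjoint, $\mathfrak g = \mathfrak g^{\der}$ splits canonically as a direct sum of simple Lie algebras $\mathfrak g = \bigoplus_i \mathfrak g_i$, and $\pi_0(G) = G/G^0$ acts by permuting these simple factors. Because $\overline{\rho}$ is assumed to surject onto $\pi_0(G)$, the induced $\Gamma_F$-action on the index set $\{i\}$ coincides with that of $\pi_0(G)$. Grouping into orbits yields a $\Gamma_F$-equivariant decomposition $\mathfrak g = \bigoplus_j \mathfrak g_{(j)}$ in which each $\mathfrak g_{(j)}$ is a single $\pi_0(G)$-orbit. This decomposition is preserved by all the cohomological constructions in play: the global Selmer and dual Selmer groups, the local conditions $L_{r,\nu}^\alpha$ of Lemma \ref{3.22}, and the balancedness identity all split as direct sums over $j$. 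It therefore suffices to establish the existence of auxiliary primes and lifts orbit-by-orbit and assemble the results; concretely, for each $j$ one works with the natural subgroup scheme $G_{(j)} \subseteq G$ whose connected component has Lie algebra $\mathfrak g_{(j)}$, runs the doubling and relative deformation machinery there, and then recombines the resulting local conditions into a single global lift of $\overline{\rho}$.

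The main obstacle is to verify that the orbit-wise reductions can be carried out simultaneously rather than just independently, since the auxiliary sets $Q_{N,M}$ chosen for different orbits must all enlarge the same $S_N$. This compatibility is handled by observing that the defining Chebotarev conditions for all the $Q_{N,M}^{(j)}$ live in the common extension $F_N^\ast / F$, so one can take a single finite subset of $\bigcap_j Q_{N,M}^{(j)}$ (or independently chosen primes from each $Q_{N,M}^{(j)}$); balancedness of the combined local conditions is then preserved by Lemma \ref{3.24}, and the linear disjointness of $K_\infty$ and $K(\rho_r(\mathfrak g^{\der}))$ from Lemma \ref{3.19} guarantees that the root-theoretic conditions of Definition \ref{3.21} can be imposed jointly across orbits. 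Once this compatibility is in place, proving Theorem \ref{3.29} in the special case reduces, by a finite induction over the orbits, to the general case.
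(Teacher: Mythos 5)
Your two-step plan — pass to a central quotient, then decompose $\mathfrak g^{\der}$ by $\pi_0(G)$-orbits — is the right general framework, and you are correct that all the data in play (Selmer and relative Selmer groups, local conditions, balancedness, the smoothness criterion of Proposition \ref{3.1}) depend only on $\big(\overline{\rho}(\mathfrak g^{\der}),\mu\big)$. However, the first reduction as written does not reach the target case: quotienting by $Z_{G^0}^0$ kills only the connected center, so $(G')^0$ is semisimple but still carries the finite center $Z_{G^{\der}}$, whereas the claim requires $G^0$ adjoint. You should quotient by all of $Z_{G^0}$ (equivalently, pass to the image under $\Ad$). Relatedly, the step of "promoting $\rho'$ to a lift of $\overline{\rho}$ by recording $\mu$" glosses over a genuine lifting problem: the map $G \to G^{\mathrm{ad}}\times_{\pi_0(G)} G/G^{\der}$ is a central isogeny with finite kernel $Z_{G^{\der}}$, and the reason a lift exists is that this kernel has order coprime to $p$ (a consequence of the very-good hypothesis), so that the lifting problem, which is residually solved by $\overline{\rho}$, is unobstructed. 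That coprime-to-$p$ argument — not the alleged formal smoothness of a central-torus factor of the local lifting ring, which is not what the finite-kernel issue is about — is what makes the reduction go and needs to appear explicitly.

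On the orbit decomposition, there is a smaller inaccuracy: the groups you call $G_{(j)}$ are quotients $G/\prod_{j'\neq j}G^0_{(j')}$, not subgroup schemes; the extension $G\to\pi_0(G)$ need not split, so in general there is no subgroup of $G$ with connected component $G^0_{(j)}$ surjecting onto $\pi_0(G)$. What makes the recombination work is that $G$ is the fiber product of the $G_{(j)}$ over $\pi_0(G)$, so lifts $\rho_{(j)}$ of the projections $\overline{\rho}_{(j)}$ glue to a lift of $\overline{\rho}$ once one notes their $\pi_0(G)$-projections coincide (automatic, since $\pi_0(G)$ has order prime to $p$ and so lifts uniquely). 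Your last paragraph correctly identifies the compatibility issue among the auxiliary sets, and the Chebotarev/balancedness observations point in the right direction; but to make it rigorous one should check, via Lemma \ref{3.24} applied to each summand $\mathfrak g_{(j)}$, that ramification added for one orbit does not enlarge the relative dual Selmer groups of the others. As it stands the argument asserts this rather than establishes it.
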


    From now on we assume that $G^0$ is adjoint and $\mathfrak g^{\der} = \mathfrak g$ consists of a single $\pi_0(G)$-orbit of simple factors.

    \begin{prop}[{\cite[Proposition $6.8$]{FKP21}}]

        \label{3.26}

        Let $Q$ be a finite subset of $Q_{N,M}$. Suppose we can find $\phi \in H^1_{\mathcal L_{M,S_N \cup Q}}(\Gamma_{F,S_N \cup Q},\rho_M(\mathfrak g^{\der}))$ and $\psi \in H^1_{\mathcal L_{M,S_N \cup Q}^\perp}(\Gamma_{F,S_N \cup Q},\rho_M(\mathfrak g^{\der})^*)$ such that their reductions $\overline{\phi}$ and $\overline{\psi}$ are non-zero. Then, there exists a prime $\nu \in Q_{N,M}$ with an associated torus and root $(T,\alpha)$ such that

        \begin{itemize}
            \item $\restr{\overline{\psi}}{\Gamma_{F_\nu}} \notin L_{1,\nu}^{\alpha,\perp}$; and
            \item $\restr{\phi}{\Gamma_{F_\nu}} \notin L_{M,\nu}^{\alpha}$.
        \end{itemize}
        
    \end{prop}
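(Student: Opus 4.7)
The plan is to realize the two desired failure conditions as open Chebotarev conditions on the Frobenius at $\nu$ in a suitable finite Galois extension of $F$, and to show that they are simultaneously satisfiable compatibly with membership of $\nu$ in $Q_{N,M}$. Since $\phi$ and $\psi$ are unramified outside $S_N \cup Q$, for any auxiliary prime $\nu \notin S_N \cup Q$ their restrictions to $\Gamma_{F_\nu}$ are unramified cocycles determined by the values on $\sigma_\nu$. Membership of $\nu$ in $Q_{N,M}$ moreover forces $\rho_M|_{\Gamma_{F_\nu}}$ to be trivial modulo the center, so Remark \ref{3.23} translates the two desired conditions into the requirements $\phi(\sigma_\nu) \notin \ker(\alpha|_{\mathfrak t}) \oplus \bigoplus_{\beta} \mathfrak g_\beta$ inside $\rho_M(\mathfrak g^{\der})$ and $\overline{\psi}(\sigma_\nu) \notin \mathfrak g_\alpha^{\perp}$ inside $\overline{\rho}(\mathfrak g^{\der})^*$.

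Next I would show that the restrictions of $\overline{\phi}$ and $\overline{\psi}$ to $\Gamma_{F_N^*}$ yield non-zero $\Gal(F_N^*/F)$-equivariant homomorphisms $\phi' \colon \Gamma_{F_N^*} \to \overline{\rho}(\mathfrak g^{\der})$ and $\psi' \colon \Gamma_{F_N^*} \to \overline{\rho}(\mathfrak g^{\der})^*$. Indeed, if $\phi'$ or $\psi'$ were trivial, the inflation-restriction sequence would force the corresponding reduction to factor through $H^1(\Gal(F_N^*/F), \overline{\rho}(\mathfrak g^{\der}))$ or its dual; by Lemma \ref{3.20} the dual vanishes and the image of the non-dual version under reduction of coefficients is zero, contradicting non-triviality of $\overline{\phi}$ and $\overline{\psi}$. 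After the reduction in Claim \ref{3.25} to a single $\pi_0(G)$-orbit of simple factors, the semisimplicity and no-common-subquotient hypothesis in Assumptions \ref{1.1} ensure that the $\Gamma_F$-stable images of $\phi'$ and $\psi'$ exhaust $\overline{\rho}(\mathfrak g^{\der})$ and $\overline{\rho}(\mathfrak g^{\der})^*$, respectively.

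The main obstacle is the simultaneous Chebotarev step. Let $L$ be the compositum of $F_N^*$ with the fixed fields of $\phi'$ and $\psi'$; over $L$ I wish to choose a Frobenius realizing the torus-and-root datum of Definition \ref{3.21} on the $\rho_N$-component while projecting under $\phi'$ (respectively $\psi'$) to an element outside $\ker(\alpha|_{\mathfrak t}) \oplus \bigoplus_{\beta} \mathfrak g_\beta$ (respectively $\mathfrak g_\alpha^\perp$). Lemma \ref{3.19} gives the required disjointness of $K_\infty$ from $K(\rho_r(\mathfrak g^{\der}))$, while the no-common-subquotient hypothesis between $\overline{\rho}(\mathfrak g^{\der})$ and $\overline{\rho}(\mathfrak g^{\der})^*$ in Assumptions \ref{1.1} ensures linear disjointness of the fixed fields of $\phi'$ and $\psi'$ over $F_N^*$. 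Since $\ker(\alpha|_{\mathfrak t}) \oplus \bigoplus_\beta \mathfrak g_\beta$ and $\mathfrak g_\alpha^\perp$ are proper subspaces of the respective surjective target modules, a standard coset-counting argument within each equivariant homomorphism produces Frobenius conjugacy classes meeting all combined constraints, and Chebotarev density furnishes the required $\nu$.
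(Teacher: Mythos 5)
Your overall skeleton --- restrict to $\Gamma_{F_N^*}$, use inflation-restriction for non-vanishing, then a Chebotarev argument --- matches the paper's strategy in outline, but two of the load-bearing steps are not justified as written and would break the proof. First, the non-vanishing of the restriction of $\overline{\phi}$ to $\Gamma_{F_N^*}$. If that restriction were trivial, inflation-restriction would place $\overline{\phi}$ in the image of $H^1(\Gal(F_N^*/F),\overline{\rho}(\mathfrak g^{\der}))$, but Lemma \ref{3.20} does not say this group vanishes: its second bullet only says the \emph{transition map} from $\rho_M(\mathfrak g^{\der})$-coefficients to $\overline{\rho}(\mathfrak g^{\der})$-coefficients is zero, which is unavailable once you have already reduced $\phi$ modulo $\varpi$. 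In the paper (the proof is deferred to Proposition \ref{4.16}) the mod-$\varpi^M$ class $\phi$ itself is restricted to $\Gamma_{F_N^*}$: a trivial restriction would make $\phi$ an inflation with $\rho_M(\mathfrak g^{\der})$-coefficients, hence $\overline{\phi}=0$ by the zero transition map, contradiction. The price is real: the value of $\phi$ at a chosen $\gamma_2$ may be a proper multiple of $\varpi$, which is exactly why Proposition \ref{4.16} has to factor it as $\varpi^s X$ before continuing.

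Second, and more seriously, your claim that the images of $\phi'$ and $\psi'$ exhaust $\overline{\rho}(\mathfrak g^{\der})$ and $\overline{\rho}(\mathfrak g^{\der})^*$ is unjustified and false in general: Assumptions \ref{1.1} give semisimplicity, not simplicity, of these as $\mathbb F_p[\Gamma_F]$-modules (simplicity holds as $\mathbb F_p[G(k)]$-modules for $p\gg 0$, but $\overline{\rho}(\Gamma_F)$ is in general a much smaller subgroup of $G(k)$). Without surjectivity, your coset-counting step collapses: for a fixed $(T,\alpha)$ the image of $\phi'$ could lie entirely inside $\ker(\alpha|_{\mathfrak t}) \oplus \bigoplus_\beta \mathfrak g_\beta$, in which case no Frobenius works. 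What the paper actually establishes (after Claim \ref{3.25}) is only that each restricted cocycle has non-trivial projection onto every simple factor; it then makes the torus-root pair the moving piece rather than the Frobenius: Lemma \ref{A.3} produces $g\in G(k)$ so that the conjugate $(T_g,\alpha_g)=\Ad(g)(T_1,\alpha_1)$ is in general position relative to the fixed non-zero values at $\gamma_2$, and only afterwards is the $Q_{N,M}$-Chebotarev element $\gamma_1$ chosen for that $(T_g,\alpha_g)$. Compatibility of all the requirements is then secured by Lemma \ref{A.4}: some product $\gamma_2^a\gamma_1$ with small $a$ achieves all the avoidance conditions simultaneously, while multiplying by $\gamma_2^a\in\Gamma_{F_N^*}$ leaves the $Q_{N,M}$-Chebotarev condition on $\gamma_1$ untouched.
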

    \begin{proof}

        We will prove a more general statement in Proposition \ref{4.16}, so we will omit the proof for the sake of not repeating ourselves.
        
    \end{proof}

    \begin{thm}[{\cite[Theorem $6.9$]{FKP21}}]

        \label{3.27}

        There exists a finite set $Q \subset Q_{N,M}$ such that

        $$\overline{H^1_{\mathcal L_{M,S_N \cup Q}}(\Gamma_{F,S_N \cup Q},\rho_M(\mathfrak g^{\der}))} = 0$$

        \vspace{2 mm}

        \noindent and

        $$\overline{H^1_{\mathcal L^\perp_{M,S_N \cup Q}}(\Gamma_{F,S_N \cup Q},\rho_M(\mathfrak g^{\der})^*)} = 0$$

        \vspace{2 mm}

        \noindent where the local conditions at primes in $Q$ are $L_{M,\nu}^\alpha$, defined in Lemma \textup{\ref{3.16}}.

    \end{thm}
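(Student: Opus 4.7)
The plan is to build $Q$ inductively, killing one class in the mod-$\varpi$ dual relative Selmer group at a time and using the balance with the primal side to force the latter to shrink as well. Starting from $Q = \emptyset$, at each stage Lemma \ref{3.24} keeps the local conditions $\mathcal L_{r,S_N \cup Q}$ balanced for all $1 \le r \le M$; Lemma \ref{3.15}, whose invariant hypotheses follow from Assumption \ref{1.1}, then transfers this balance to the $(M,1)$-relative Selmer groups:
$$\left| \overline{H^1_{\mathcal L_{M,S_N \cup Q}}(\Gamma_{F,S_N \cup Q},\rho_M(\mathfrak g^{\der}))} \right| = \left| \overline{H^1_{\mathcal L_{M,S_N \cup Q}^\perp}(\Gamma_{F,S_N \cup Q},\rho_M(\mathfrak g^{\der})^*)} \right|.$$
Both sides are finite. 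If both vanish, we are done; otherwise both are nonzero and I would pick non-trivial classes $\overline{\phi}$ and $\overline{\psi}$ coming from some $\phi$ and $\psi$ in the $M$-th Selmer and dual Selmer groups, and invoke Proposition \ref{3.26} to produce a prime $\nu \in Q_{N,M}$, with associated $(T,\alpha)$, satisfying $\restr{\overline{\psi}}{\Gamma_{F_\nu}} \notin L_{1,\nu}^{\alpha,\perp}$ and $\restr{\phi}{\Gamma_{F_\nu}} \notin L_{M,\nu}^{\alpha}$.

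I would then enlarge $Q$ to $Q' = Q \cup \{\nu\}$, equipping $\nu$ with the cocycle spaces $Z_{r,\nu}^{\alpha}$ from Lemma \ref{3.16}, and show that $\overline{\psi}$ is not in the new relative dual Selmer group. For this, suppose some $\psi' \in H^1_{\mathcal L_{M,S_N \cup Q'}^\perp}(\Gamma_{F,S_N \cup Q'}, \rho_M(\mathfrak g^{\der})^*)$ reduces to $\overline{\psi}$. Then $\restr{\psi'}{\Gamma_{F_\nu}} \in L_{M,\nu}^{\alpha,\perp}$, and the surjection $Z_{M,\nu}^{\alpha} \twoheadrightarrow Z_{1,\nu}^{\alpha}$ from Lemma \ref{3.16} combined with the functoriality of local Tate duality under reduction modulo $\varpi$ would send this to an element of $L_{1,\nu}^{\alpha,\perp}$, forcing $\restr{\overline{\psi}}{\Gamma_{F_\nu}} \in L_{1,\nu}^{\alpha,\perp}$ and contradicting the choice of $\nu$. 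Hence the relative dual Selmer strictly shrinks; reapplying the balance established in the first paragraph to $S_N \cup Q'$ then forces the primal relative Selmer to shrink as well.

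Since the relative Selmer groups sit inside the finite mod-$\varpi$ Selmer groups, the iteration terminates in finitely many steps, producing the required $Q$. The main obstacle I anticipate is the compatibility of $L_{\bullet,\nu}^{\alpha,\perp}$ with reduction modulo $\varpi$; once that is set up carefully via Lemma \ref{3.16} and local duality, the rest is a straightforward iteration of Proposition \ref{3.26} together with the balance lemmas. The symmetric requirement on both $\overline{\phi}$ and $\overline{\psi}$ in Proposition \ref{3.26} is essentially bookkeeping: killing a single class on the dual side, combined with the balance, is what forces simultaneous strict decrease on both sides.
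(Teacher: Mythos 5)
The paper cites \cite{FKP21} for this theorem and only sketches the proof in Remark \ref{3.28}, which already indicates a different division of labor than what you propose: the condition on $\overline{\psi}$ is used to ensure that enlarging $Q$ to $Q \cup \{\nu\}$ introduces no new Selmer classes, while the condition on $\phi$ is what eliminates an existing class. Your proposal uses the $\overline{\psi}$ condition to argue that $\overline{\psi}$ itself is kicked out of the new relative dual Selmer group, and then dismisses the role of $\phi$ as ``essentially bookkeeping.'' This misreads the mechanism and leaves a real gap.

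Concretely, your argument that no $\psi' \in H^1_{\mathcal L^\perp_{M,S_N \cup Q'}}(\Gamma_{F,S_N\cup Q'},\rho_M(\mathfrak g^{\der})^*)$ reduces to $\overline{\psi}$ is fine as far as it goes, but it does not show that the new relative dual Selmer group is smaller. The ambient mod-$\varpi$ dual Selmer group changes when you pass from $S_N \cup Q$ to $S_N \cup Q'$: classes unramified at $\nu$ need not satisfy $L_{1,\nu}^{\alpha,\perp}$, and classes satisfying $L_{1,\nu}^{\alpha,\perp}$ need not be unramified at $\nu$, so there is no a priori containment between the two groups. Without a containment, ``$\overline{\psi}$ is excluded'' gives nothing about sizes. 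The containment one actually needs is on the primal side: one shows via the Greenberg--Wiles formula (exactly as in Proposition \ref{4.11}) that $\restr{\overline{\psi}}{\Gamma_{F_\nu}} \notin L_{1,\nu}^{\alpha,\perp}$ forces $H^1_{\mathcal L_{M,S_N\cup Q_0}\cup[L^{\alpha}_{M,\nu}+L^{\mathrm{unr}}_{M,\nu}]} = H^1_{\mathcal L_{M,S_N\cup Q}}$, and since $L^\alpha_{M,\nu} \subseteq L^\alpha_{M,\nu}+L^{\mathrm{unr}}_{M,\nu}$, the new Selmer group embeds in the old one. It is then the condition $\restr{\phi}{\Gamma_{F_\nu}} \notin L^\alpha_{M,\nu}$ that forces $\phi$ out of the new Selmer group, which is the step you cannot replace by bookkeeping.

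There is also a second issue with the claim of strict decrease per prime. Even granting the containment, kicking $\phi$ out of the $\mathcal O/\varpi^M$-valued Selmer group need not kick $\overline{\phi}$ out of the mod-$\varpi$ image, because some other class $f$ in the new Selmer group could still reduce to $\overline{\phi}$. As the paper notes explicitly after Lemma \ref{4.13}, quoting \cite[Remark $6.10$]{FKP21}: ``in order to annihilate the relative dual Selmer group we might need more primes than its dimension.'' The entire point of Lemma \ref{4.14} in \S $4.2$ is to circumvent this by working with $(2r,r)$-relative Selmer groups, since one cannot in general guarantee that the $r$-th relative Selmer group strictly decreases with each prime. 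Your iteration, which asserts strict shrinkage step-by-step and therefore termination, is not correct as stated; the actual argument in \cite{FKP21} (and the refined one in Proposition \ref{4.18} and Remark \ref{4.19} of this paper) has to be more careful precisely here.
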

    
    \begin{rem}

        \label{3.28}

        The main idea behind the proof is to recursively lower the size of $H^1_{\mathcal L_{M,S_N}}(\Gamma_{F,S_N},\rho_M(\mathfrak g^{\der}))$ by adding primes produced by Proposition \ref{3.26}. The condition on $\overline{\psi}$ will guarantee that changing the local conditions from the unramified ones to $L_{M,\nu}^\alpha$ will not introduce any new cocycles, while the condition on $\phi$ guarantees that we have also eliminated some cocycles. The computations in Proposition \ref{4.18} and Remark \ref{4.19} go into more details on why the conditions we imposed on the cocycles give us this.
        
    \end{rem}

\subsection{Lifting to characteristic \texorpdfstring{$0$}{0}}

Having constructed a high enough lift of $\overline{\rho}$ using the doubling method and annihilated a relative Selmer group we can produce a $G(\mathcal O)$-lift of $\overline{\rho}$ using the main result in \cite{FKP22}:

\begin{thm}[{\cite[Theorem $5.2$]{FKP22}}]

    \label{3.29}

    Let $p \gg_G 0$. Then for some finite set of primes $T$ containing $S$, there is a geometric lift

        $$\rho:\Gamma_{F,T} \to G(\overline{\mathbb Z_p})$$

    \vspace{2 mm}

    More precisely, we fix an integer $t$ and for each $\nu \in S$ and irreducible component containing $\rho_\nu$ of

    \begin{itemize}
        \item for $\nu \in S \setminus \{\nu \mid p\}$, the generic fiber of the local lifting ring $R^{\sqr,\mu}_{\restr{\overline{\rho}}{\Gamma_{F_\nu}}}[1/\varpi]$; and
        \item for $v \mid p$, the lifting ring $R^{\sqr,\mu,\tau,\textup{\textbf{v}}}_{\restr{\overline{\rho}}{\Gamma_{F_\nu}}}[1/\varpi]$, whose $\overline{E}$-points parametrize lifts of $\restr{\overline{\rho}}{\Gamma_{F_\nu}}$ with specified inertial type $\tau$ and $p$-adic Hodge type $\textup{\textbf{v}}$.
    \end{itemize}

    Then, there exists a finite extension $E'$ of $E = \mathrm{Frac}(\mathcal O)$, whose ring of integers and residue field we denote by $\mathcal O'$ and $k'$, respectively, which depends only on the set $\{\rho_\nu\}_{\nu \in S}$; a finite set of places $T$ containing $S$ and a geometric lift:

    \begin{center}

        \begin{tikzcd}
                                                                   &  & G(\mathcal O') \arrow[dd] \\
                                                                   &  &                           \\
        {\Gamma_{F,T}} \arrow[rruu, "\rho"] \arrow[rr, "\overline{\rho}"'] &  & G(k')                    
        \end{tikzcd}
        
    \end{center}

    \vspace{2 mm}

    of $\overline{\rho}$ such that

    \begin{itemize}
        \item $\rho$ has multiplier $\mu$.
        \item $\rho(\Gamma_F)$ contains $\widehat{G^{\der}}(\mathcal O')$.
        \item For all $\nu \in S$, $\restr{\rho}{\Gamma_{F_\nu}}$ is congruent modulo $\varpi^t$ to some $\widehat{G}(\mathcal O')$-conjugate of $\rho_v$ and $\restr{\rho}{\Gamma_{F_\nu}}$ belongs to the specified irreducible component for every $\nu \in S$.
    \end{itemize}
    
\end{thm}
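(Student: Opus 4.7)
The plan is to combine the constructions of \S 3.1 and \S 3.2 with an inductive lifting argument and then pass to the inverse limit, in the spirit of Ramakrishna. First I fix $M \ge \max\{eD, M_1\}$ divisible by $e$ and run the doubling method (Theorem \ref{3.4}) to produce a coherent system $(\rho_n)_{1 \le n \le N}$ for some $N \ge 4M$, divisible by $e$, with the explicit local conditions $\mathcal L_{r,S_N}$ balanced at each level. I then invoke Theorem \ref{3.27} to choose a finite $Q \subseteq Q_{N,M}$ so that both the $M$-relative Selmer group and its dual with local conditions $\mathcal L_{M,S_N \cup Q}$ vanish, and I set $T = S_N \cup Q$; the new local conditions at $\nu \in Q$ are $L_{M,\nu}^{\alpha}$ from Lemma \ref{3.16}.

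Next I carry out the inductive lifting. Assume $\rho_m : \Gamma_{F,T} \to G(\mathcal O/\varpi^m)$ has been constructed for some $m \ge N$, reducing to $\rho_N$ modulo $\varpi^N$ and satisfying the prescribed local conditions at every $\nu \in T$. At each such $\nu$, the local fibers of the lifting functor modulo $\varpi^{m+1}$ are non-empty and stable under $Z_{1,\nu}^{\alpha}$: this comes from Lemma \ref{3.22}(1) for $\nu \in Q$ and from the construction inside Theorem \ref{3.4} for $\nu \in S_N$. After choosing arbitrary local lifts, the obstruction to assembling them into a global $\rho_{m+1}$ is an $H^2$-class whose image, after the Poitou-Tate nine-term sequence combined with the compatibility exact sequences of Lemma \ref{3.14}, lands in a relative dual Selmer group at level $M$. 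That group vanishes by the choice of $Q$, so a suitable global cochain correction yields $\rho_{m+1}$. Iterating produces a compatible system $(\rho_m)_{m \ge N}$, and the inverse limit gives $\rho : \Gamma_{F,T} \to G(\mathcal O')$ for a fixed finite extension $\mathcal O'/\mathcal O$ absorbing the coefficient fields of $\{\rho_\nu\}_{\nu \in S}$. The multiplier of $\rho$ is $\mu$ by construction, and maximality of image in $\widehat{G^{\der}}(\mathcal O')$ follows from Theorem \ref{3.4}(3) preserved in the limit.

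Finally I verify geometricity and the irreducible-component condition. At $\nu \in S$ with $\nu \nmid p$, the congruence $\restr{\rho}{\Gamma_{F_\nu}} \equiv \rho_\nu \pmod{\varpi^t}$, for $t$ chosen large enough to separate components of the local lifting ring, forces $\restr{\rho}{\Gamma_{F_\nu}}$ onto the same component of $R^{\sqr,\mu}_{\restr{\overline{\rho}}{\Gamma_{F_\nu}}}[1/\varpi]$ as $\rho_\nu$. At $\nu \mid p$, the local tangent conditions $\mathcal L_{r,\nu}$ used throughout the inductive step are by design the tangent spaces to the component of $R^{\sqr,\mu,\tau,\textbf{v}}_{\restr{\overline{\rho}}{\Gamma_{F_\nu}}}$ containing $\rho_\nu$, so $\restr{\rho}{\Gamma_{F_\nu}}$ lies on that component and is in particular de Rham of type $(\tau,\textbf{v})$; hence $\rho$ is geometric.

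The principal obstacle is the obstruction-class computation in the inductive step: one must verify that the $\varpi^m$-obstruction to lifting from $\mathcal O/\varpi^m$ to $\mathcal O/\varpi^{m+1}$ genuinely corresponds, after local corrections, to the reduction modulo $\varpi$ of a class in the $M$-relative dual Selmer group annihilated by $Q$, rather than some larger group. This requires the balancedness of the local conditions (Lemma \ref{3.15}), the Lazard vanishing built into $M_1$, and careful bookkeeping with Lemma \ref{3.14} to shift from the obstruction at level $m$ down to one at level $M$. Getting this identification right is precisely what forces the quantitative lower bounds on $M$ and $N$, and it is the technical heart of the proof.
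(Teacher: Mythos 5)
Your high-level skeleton matches the paper's (doubling method to get $\rho_N$, choose $Q$ by Theorem \ref{3.27}, lift inductively, take the inverse limit), but the inductive lifting step as you describe it would not work, and it is precisely here that the relative deformation method departs from the classical Ramakrishna argument.

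You propose to construct $\rho_{m+1}$ from $\rho_m$ one step at a time, and say the ``obstruction to assembling them into a global $\rho_{m+1}$ is an $H^2$-class'' whose image lands in the $M$-relative dual Selmer group. This conflates two different things. The genuine $H^2$ obstruction to lifting globally is already dead because $\Sh^2_{S_N\cup Q}$ has been arranged to vanish (local lifts exist and $\Sh^2=0$ forces the global obstruction class to be trivial). The actual problem is to correct an arbitrary global lift $\rho'_{m+1}$ so that its restrictions satisfy the prescribed local conditions; that correction lives in $H^1$, not $H^2$. The comparison between $\restr{\rho'_{m+1}}{\Gamma_{F_\nu}}$ and an element of $D_{m+1,\nu}$ naively gives a class in $\bigoplus_\nu H^1(\Gamma_{F_\nu},\overline{\rho}(\mathfrak g^{\der}))/L_{1,\nu}$, and to kill it by a global cocycle you would need the \emph{mod-$\varpi$} dual Selmer group $H^1_{\mathcal L^\perp_{1}}$ to vanish. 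That is not what Theorem \ref{3.27} gives you; only the $M$-\emph{relative} dual Selmer group vanishes, and the whole point of the relative deformation method is that the mod-$\varpi$ dual Selmer group typically cannot be annihilated.

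The paper circumvents this by never lifting one step from $\rho_m$. Instead it carries a pair $(\tau_n,\rho_{n+M})$, lifts $\rho_{n+M}$ to $\rho'_{n+M+1}$, and compares with the fibers of $D_{n+M+1,\nu}\to D_{n+1,\nu}$, which are $M$-step fibers. This produces a discrepancy $(f_\nu)$ at level $\varpi^M$, and the key observation (which the student's sketch skips) is that $(f_\nu)$ \emph{vanishes after reduction mod $\varpi^{M-1}$} because $\rho_{n+M}$ already lay in the correct fibers. This is exactly the extra constraint that lets the $M$-relative dual Selmer group vanishing be applied: combined with Lemma \ref{3.14} and the Poitou--Tate sequence, surjectivity of $H^1_{\mathcal L^\perp_{M-1}}\to H^1_{\mathcal L^\perp_M}$ (equivalent to the relative vanishing) forces $(f_\nu)$ to come from a global class. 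Without carrying the upper lift $\rho_{n+M}$ and exploiting this $\varpi^M$ buffer, you have no leverage to promote the local discrepancy to a level where the relative Selmer machinery applies, and the one-step induction stalls. So the ``careful bookkeeping'' you defer to is not bookkeeping — it is the pair structure itself, which is missing from your outline.
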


We will give a short summary of the proof, illustrating the main technical details. We select an integer $M$ in a manner as explained in the previous subsection. In particular, we ask for $M \ge \max\{eD,M_1\}$, where $M_1$ is the integer coming from Lemma \ref{3.20}, and for $M$ to be divisible by $e$. 

For each $\nu \in S$ let $\overline{R}_\nu[1/\varpi]$ be the chosen irreducible component of $R^{\sqr,\mu}_{\restr{\overline{\rho}}{\Gamma_{F_\nu}}}[1/\varpi]$ (for $\nu \nmid p$) or $R^{\sqr,\mu,\tau,\textbf{v}}_{\restr{\overline{\rho}}{\Gamma_{F_\nu}}}$ (for $\nu \mid p$). Let $\overline{R}_\nu$ be its scheme-theoretic closure in the local lifting ring $R^{\sqr,\mu}_{\restr{\overline{\rho}}{\Gamma_{F_\nu}}}$. By \cite[Theorem $3.3.3$]{BG19} we know that $\overline{R}_\nu[1/\varpi]$ has a dense subset of formally smooth closed points. Then, for the fixed integer $t$ using \linebreak \cite[Lemma $4.9$]{FKP21} we can produce a finite extension $\mathcal O'$ of $\mathcal O$, independent of $t$, and lifts $\rho_\nu'$ of $\restr{\overline{\rho}}{\Gamma_{F_\nu}}$ corresponding to $\mathcal O'$-points of $\overline{R}_\nu$ such that $\rho_\nu' \equiv \rho_\nu \pmod{\varpi^t}$ and $\rho_\nu'$ defines a formally smooth point on $\overline{R}_\nu[1/\varpi]$. We replace $\mathcal O$ by $\mathcal O'$ and $\rho_\nu$ by $\rho_\nu'$, but we will keep using the original notation. 

We now run the first $eD$ steps of the doubling method, producing a lift $\rho_{eD}:\Gamma_{F,S_{eD}} \to G(\mathcal O/\varpi^{eD})$ that satisfies the conclusion of Theorem \ref{3.4}. We recall that for all primes  $\nu \in S_{eD} \setminus S$ at which we have ramification by part $(a)$ of the theorem we get local $G(\mathcal O)$-valued lifts $\rho_\nu$ that correspond to formally smooth points on a suitable irreducible component $\overline{R}_\nu[1/\varpi]$ of the generic fiber of the local lifting ring $R^{\sqr,\mu}_{\restr{\overline{\rho}}{\Gamma_{F_\nu}}}$. 

Now, for each prime in $S_{eD}$ at which $\rho_{eD}$ is ramified we apply \cite[Proposition $4.7$]{FKP21} with $r_0 = M$, the formally smooth local lift $\rho_\nu$ and irreducible component $\overline{R}_\nu[1/\varpi]$. This will provide us with cocycles that in particular satisfy the conditions mentioned at the beginning of \S 3.2. Among the other things, for $n_0$ large enough we also have a class of local lifts, which is a principal homogenous space under the action of those cocycles. Let $N_0$ be the maximum of all the $n_0$ produced by applying the proposition for each such prime. On the other hand, by \cite[Lemma $6.15$]{FKP21} we get that for $N_1$ large enough if $\rho_{N_1}$ contains $\widehat{G^{\der}}(\mathcal O/\varpi^{N_1})$, then the same is true for every $n \ge N_1$. We then choose an integer $N$ such that:

\begin{equation}
    \label{Nbound} N \ge \max\{N_0+M,(N_1+1)+M,t+M,4M\}  \tag{$1$}
\end{equation}

\vspace{2 mm}

\noindent In addition, we also ask for it to be divisible by $e$. We then start running the doubling method, continuing from $\rho_{eD}$ until we produce a lift $\rho_N:\Gamma_{F,S_N} \to G(\mathcal O/\varpi^N)$. We quickly explain why we impose each of these bounds. We ask for $N \ge N_0+M$ so that the local lifting conditions at primes in $S_{eD}$ at which $\rho_{eD}$ is ramified satisfy the desired properties, i.e. the conclusions in \cite[Proposition $4.7$]{FKP21}. We want $N \ge (N_1+1) + M$, since the doubling method guarantees that any lift produced by it will have a maximal image, and as noted above this will be true for every lift modulo $\varpi^n$ for $n \ge N_1$, as well as for the characteristic $0$ lift $\rho$. We remark that because of the modifications we made, as seen in Lemma \ref{3.8} we need to prescribe the image of the unramified lifts one level higher, so in particular we need to run the doubling method at least $N_1+1$ times to achieve what we want. Asking for $N \ge t+M$ allows us to control the local behavior of $\rho$ at the primes in $S$ up to the desired level of precision. Finally, $N \ge 4M$ allows us to apply Lemma \ref{3.22}, so the local lifting conditions at the primes added during the relative deformation argument will be as desired. This will also make sure that $N$ is relatively large compared to $M$, so we can start building upwards from a lift $\rho_N$ modulo high enough power of $\varpi$.

Eventually, for all primes $\nu \in S_N$ we end up with subspaces $L_{r,\nu} \subseteq H^1(\Gamma_{F_\nu},\rho_r(\mathfrak g^{\der}))$ for $1 \le r \le M$ and classes of lifts $D_{n,\nu} \subseteq \Lift_{\restr{\overline{\rho}}{\Gamma_{F_\nu}}}(\mathcal O/\varpi^n)$ for $n \ge N-M$, such that for all $1 \le r \le M$ the fibers of $D_{n+r,\nu} \to D_{n,\nu}$ are non-empty principal homogenous spaces under $L_{r,\nu}$. Before using the relative deformation argument in this case, it is important to check that these local conditions are balanced.

\begin{claim}

    \label{3.30}

    For $1 \le r \le M$, the local conditions $\mathcal L_{r,S_N} \coloneqq \{L_{r,\nu}\}_{\nu \in S_N}$ are balanced. 
    
\end{claim}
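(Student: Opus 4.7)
The plan is to apply the Wiles--Greenberg formula \cite[Theorem $2.18$]{DDT97} for each $1 \le r \le M$ and then show that every factor on the right-hand side scales as the $r$-th power of its $r = 1$ counterpart, so that balancedness for all $r$ reduces to balancedness at $r = 1$, which is the standard input of the doubling method. First I would dispose of the global factor: by Assumption \ref{1.1} neither $\overline{\rho}(\mathfrak g^{\der})$ nor $\overline{\rho}(\mathfrak g^{\der})^*$ contains the trivial representation, so $H^0(\Gamma_F, \overline{\rho}(\mathfrak g^{\der})) = H^0(\Gamma_F, \overline{\rho}(\mathfrak g^{\der})^*) = 0$; by the inductive argument opening the proof of Lemma \ref{3.15}, this vanishing propagates to $\rho_r(\mathfrak g^{\der})$ and its dual for every $1 \le r \le M$, so the global ratio is $1$.

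Next, for each finite $v \in S_N$, the short exact sequence in Assumption \ref{3.10} forces $|Z_{r,v}| = |Z_{1,v}|^r$; since $Z_{r,v}$ contains the coboundaries, combining $|L_{r,v}| = |Z_{r,v}|\cdot|H^0(\Gamma_{F_v}, \rho_r(\mathfrak g^{\der}))|/|\rho_r(\mathfrak g^{\der})|$ with $|\rho_r(\mathfrak g^{\der})| = |\overline{\rho}(\mathfrak g^{\der})|^r$ yields
\[
\frac{|L_{r,v}|}{|H^0(\Gamma_{F_v}, \rho_r(\mathfrak g^{\der}))|} = \left(\frac{|L_{1,v}|}{|H^0(\Gamma_{F_v}, \overline{\rho}(\mathfrak g^{\der}))|}\right)^r.
\]
At an archimedean $v$ the analogous scaling holds because $\Ad \circ \rho_r(c)$ is diagonalizable over $\mathcal O/\varpi^r$ when $p \neq 2$, which gives $|H^0(\Gamma_{F_v}, \rho_r(\mathfrak g^{\der}))| = |H^0(\Gamma_{F_v}, \overline{\rho}(\mathfrak g^{\der}))|^r$ and $L_{r,v} = 0$ throughout.

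Consequently the right-hand side of the Wiles--Greenberg formula at level $r$ is the $r$-th power of its value at $r = 1$, and the claim reduces to balancedness of $\mathcal L_{1, S_N}$. This $r = 1$ identity is exactly what the doubling method is engineered to preserve: the oddness of $\overline{\rho}$ controls the archimedean contribution, while at each of the four prime types of $S_N$ the cocycle space $L_{1,v}$ has been chosen so that the local factor $|L_{1,v}|/|H^0(\Gamma_{F_v}, \overline{\rho}(\mathfrak g^{\der}))|$ matches the required value (via \cite[Proposition $4.7$]{FKP21} for the primes in $S$ and the ramified primes of type $2$, via Lemma \ref{3.16} for type $3$ primes, and trivially in the unramified case of type $4$). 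The main obstacle I anticipate is verifying that the short exact sequence of Assumption \ref{3.10} does hold simultaneously at every prime of $S_N$ for these concrete local conditions; this is immediate for types $3$ and $4$ from Lemma \ref{3.16} and the unramified setup, and for types $1$ and $2$ it reduces to the formal smoothness of $\rho_\nu$ on the prescribed irreducible component, which is the content of \cite[Proposition $4.7$]{FKP21} combined with the formal smoothness input of \cite[Theorem $3.3.3$]{BG19}.
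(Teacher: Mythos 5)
Your reduction is a valid variation, and the core observation—that each local factor $|L_{r,\nu}|/|H^0(\Gamma_{F_\nu},\rho_r(\mathfrak g^{\der}))|$ is the $r$-th power of its $r=1$ counterpart, so balancedness propagates from $r=1$—is correct. The computation $|L_{r,\nu}|/|H^0| = |Z_{r,\nu}|/|\rho_r(\mathfrak g^{\der})|$ and the scaling $|Z_{r,\nu}| = |Z_{1,\nu}|^r$ both follow cleanly from the short exact sequence in Assumption \ref{3.10}, and your treatment of the archimedean factor via semi-simplicity of $\Ad\rho_r(c)$ for odd $p$ is fine. The paper instead computes directly at level $r$: it records the size of $L_{r,\nu}$ for each of the three categories (equal to $|H^0(\Gamma_{F_\nu},\rho_r(\mathfrak g^{\der}))|$ for $\nu\nmid p$; times $|\mathcal O/\varpi^r|^{[F_\nu:\mathbb Q_p]\dim_k\mathfrak n}$ for $\nu\mid p$; $L_{r,\nu}=0$ with $|H^0|=|\mathcal O/\varpi^r|^{\dim_k\mathfrak n}$ at infinite places via oddness) and then simplifies the Greenberg--Wiles product. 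Your $r$-th-power reduction is a slightly different organization but not a genuine shortcut, since the remaining $r=1$ verification is essentially the same local size bookkeeping.

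Where your proposal falls short is that it does not actually finish the $r=1$ case; it gestures at the doubling method being ``engineered to preserve'' the identity, which conflates what the method arranges with what needs to be proved. The balancedness is not an input to the doubling method but a consequence of two separate ingredients that you should carry out: (i) the local factor at $\nu\nmid p$ equals $1$, at $\nu\mid p$ equals $|k|^{[F_\nu:\mathbb Q_p]\dim_k\mathfrak n}$, and at $\nu\mid\infty$ equals $|k|^{-\dim_k\mathfrak n}$; and (ii) the product over all places collapses to $|k|^{(-[F:\mathbb Q]+\sum_{\nu\mid p}[F_\nu:\mathbb Q_p])\dim_k\mathfrak n}=1$, using that $F$ is totally real so the number of infinite places is $[F:\mathbb Q]=\sum_{\nu\mid p}[F_\nu:\mathbb Q_p]$. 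Without step (ii) you have not established that the $r=1$ product is trivial, only that the level-$r$ problem reduces to it. I'd also note your formula $|L_{r,v}| = |Z_{r,v}|\cdot|H^0|/|\rho_r(\mathfrak g^{\der})|$ implicitly assumes $|B^1(\Gamma_{F_\nu},\rho_r(\mathfrak g^{\der}))| = |\rho_r(\mathfrak g^{\der})|/|H^0|$, which is correct but worth stating (it is the content of Remark \ref{3.18}).
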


\begin{proof}

For primes $\nu$ above $p$, as the given lifts $\rho_\nu$ are regular we have that the local conditions $L_{r,\nu}$ have size $|\,H^0(\Gamma_{F_\nu},\rho_r(\mathfrak g^{\der}))\,| \cdot |\mathcal O/\varpi^r|^{[F_\nu:\mathbb Q_p]\cdot \dim_k(\mathfrak n)}$, where $\mathfrak n$ is the Lie algebra of a Borel subgroup of $G$. On the other hand, for primes $\nu$ not lying over $p$ the local conditions have size $|\,H^0(\Gamma_{F_\nu},\rho_r(\mathfrak g^{\der}))\,|$. This follows either from \cite[Proposition $4.7$]{FKP21} or the discussion in Remark \ref{3.18}. Finally, for places $\nu$ above $\infty$, the local conditions are trivial, while the assumption on $\overline{\rho}$ being odd gives us that $|\,H^0(\Gamma_{F_\nu},\rho_{r}(\mathfrak g^{\der}))\,| = |\mathcal O/\varpi^r|^{\dim_k(\mathfrak n)}$. From the Greenberg-Wiles formula we get:

$$\frac{|\,H^1_{\mathcal L_{r,S_N}}(\Gamma_{F,S_N},\rho_{r}(\mathfrak g^{\der}))\,|}{|\,H^1_{\mathcal L_{r,S_N}^\perp}(\Gamma_{F,S_N},\rho_{r}(\mathfrak g^{\der})^*)\,|} = \frac{|\,H^0(\Gamma_F,\rho_{r}(\mathfrak g^{\der}))\,|}{|\,H^0(\Gamma_F,\rho_{r}(\mathfrak g^{\der})^*)\,|} \prod_{\nu \in S_N} \frac{|\,L_{r,\nu}\,|}{|\,H^0(\Gamma_{F_\nu},\rho_{r}(\mathfrak g^{\der}))\,|}$$

\vspace{2 mm}

As neither $\overline{\rho}(\mathfrak g^{\der})$, not $\overline{\rho}(\mathfrak g^{\der})^*$ contain the trivial representation this is true for the $\rho_r(\mathfrak g^{\der})$ and $\rho_r(\mathfrak g^{\der})^*$, as well. Thus, the first factor on the right is equal to $1$. Using the remarks on the sizes of the local conditions the product on the right reduces to:

$$\prod_{\nu \mid p} |\mathcal O/\varpi^r|^{[F_\nu:\mathbb Q_p]\cdot \dim_k(\mathfrak n)} \cdot \prod_{\nu \mid \infty} |\mathcal O/\varpi^r|^{-\dim_k(\mathfrak n)} = |\mathcal O/\varpi^r|^{\left(-[F:\mathbb Q] + \sum_{\nu \mid p} [F_\nu:\mathbb Q_p]\right)\dim_k(\mathfrak n)} = 1$$

\vspace{2 mm}

\noindent where we used that $F$ is a totally real field, so the number of non-Archimedean places is $[F:\mathbb Q]$. This gives us that $|\,H^1_{\mathcal L_{r,S_N}}(\Gamma_{F,S_N},\rho_{r}(\mathfrak g^{\der}))\,| = |\,H^1_{\mathcal L_{r,S_N}^\perp}(\Gamma_{F,S_N},\rho_{r}(\mathfrak g^{\der})^*)\,|$, i.e. the local conditions $\mathcal L_{r,S_N}$ are balanced. 

\end{proof}

Now, from Theorem \ref{3.27} we find a subset $Q$ of $Q_{N,M}$, such that $\overline{H^1_{\mathcal L_{M,S_N \cup Q}}(\Gamma_{F,S_N \cup Q},\rho_{M}(\mathfrak g^{\der})^*)} = 0$. \linebreak \cite[Claim $6.14$]{FKP21} tells us that this is enough to get a lift $\rho$ as in the statement of Theorem \ref{3.29}. Since we will constantly invoke this result, as well as some ideas from its proof during the forcing ramification argument we take our time to reproduce the proof.

\begin{claim}[{\cite[Claim $6.14$]{FKP21}}]

    \label{3.31}
    
    For $n \ge N - M$, we have pairs of (multiple $\mu$) lifts $(\tau_n,\rho_{n+M})$, where $\tau:\Gamma_{F,S_N \cup Q} \to G(\mathcal O/\varpi^n)$, $\rho_{n+M}:\Gamma_{F,S_N \cup Q} \to G(\mathcal O/\varpi^{n+M})$ and $\tau_n = \rho_{n+M} \pmod{\varpi^n}$ such that:

    \begin{enumerate}
        \item For each $\nu \in S_N \cup Q$, $\restr{\tau_n}{\Gamma_{F_\nu}} \in D_{n,\nu}$ and $\restr{\rho_{n+M}}{\Gamma_{F_\nu}} \in D_{n+M,\nu}$.
        \item $\tau_{n+1} = \rho_{n+M} \pmod{\varpi^{n+1}}$.
        \item $\tau_n = \tau_{n+1} \pmod{\varpi^n}$.
    \end{enumerate}
    
\end{claim}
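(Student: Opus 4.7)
The plan is to induct on $n$ starting from the base case $n = N - M$, where I take $\rho_{(N-M)+M} := \rho_N$ (the lift produced by the doubling method at the end of \S$3.1$) and set $\tau_{N-M} := \rho_N \pmod{\varpi^{N-M}}$. The compatibility $\tau_{N-M} = \rho_N \pmod{\varpi^{N-M}}$ is then tautological, and the local conditions $\restr{\rho_N}{\Gamma_{F_\nu}} \in D_{N,\nu}$ hold by the construction of $\rho_N$ and the definition of $D_{N,\nu}$; the reduction modulo $\varpi^{N-M}$ then automatically lands in $D_{N-M,\nu}$ because $D_{N,\nu} \to D_{N-M,\nu}$ is the reduction map and $D_{N,\nu}$ is a fiber over $D_{N-M,\nu}$.

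For the inductive step, assume $(\tau_n, \rho_{n+M})$ has been constructed. I first define $\tau_{n+1} := \rho_{n+M} \pmod{\varpi^{n+1}}$, which immediately gives conditions $(2)$ and $(3)$ of the statement, and which also guarantees the level-$(n+1)$ compatibility $\tau_{n+1} = \rho_{(n+1)+M} \pmod{\varpi^{n+1}}$ provided I construct $\rho_{(n+1)+M}$ to agree with $\rho_{n+M}$ modulo $\varpi^{n+1}$. The genuine work is producing $\rho_{(n+1)+M}$. I would first attempt to lift $\rho_{n+M}$ one level to some $\rho'_{n+M+1} : \Gamma_{F,S_N \cup Q} \to G(\mathcal O/\varpi^{n+M+1})$; the obstruction lives in $H^2(\Gamma_{F,S_N\cup Q}, \overline{\rho}(\mathfrak g^{\der}))$ and vanishes locally because the fibers $D_{n+M+1,\nu} \to D_{n+M,\nu}$ are non-empty principal homogeneous spaces under $L_{1,\nu}$, so the class lies in $\Sh^2_{S_N \cup Q}(\Gamma_{F,S_N\cup Q},\overline{\rho}(\mathfrak g^{\der}))$, which was arranged to vanish during the doubling method (this is one of the reasons primes of the fourth type are added to $S_N$).

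Once $\rho'_{n+M+1}$ exists globally, its local restrictions need not belong to the prescribed $D_{n+M+1,\nu}$, and the discrepancy at each $\nu$ gives a class in $H^1(\Gamma_{F_\nu}, \overline{\rho}(\mathfrak g^{\der}))/L_{1,\nu}$. A direct Poitou--Tate attempt to kill these discrepancies by a global class in $H^1(\Gamma_{F,S_N\cup Q},\overline{\rho}(\mathfrak g^{\der}))$ would require the vanishing of $H^1_{\mathcal L_{1,S_N\cup Q}^\perp}(\Gamma_{F,S_N\cup Q},\overline{\rho}(\mathfrak g^{\der})^*)$, which we do not have. To circumvent this, I would modify $\rho_{n+M}$ itself by a cocycle $\phi \in Z^1(\Gamma_{F,S_N\cup Q},\rho_M(\mathfrak g^{\der}))$ of the form $\rho_{n+M} \exp(\varpi^n \phi)$; because this modification enters multiplied by $\varpi^n$, it leaves $\tau_n$ unchanged, but its reduction modulo $\varpi$ adjusts the data controlling whether the one-step lift lands in $D_{n+M+1,\nu}$. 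Using the short exact sequences of Assumption \ref{3.10} and Lemma \ref{3.14}, combined with Poitou--Tate duality, the vanishing of the relative dual Selmer group $\overline{H^1_{\mathcal L^\perp_{M,S_N\cup Q}}(\Gamma_{F,S_N\cup Q},\rho_M(\mathfrak g^{\der})^*)} = 0$ translates exactly into the surjectivity of the relevant restriction map, yielding a suitable $\phi$ whose modification of $\rho_{n+M}$ admits a one-step lift lying in $D_{n+M+1,\nu}$ at every $\nu \in S_N \cup Q$. I then set $\rho_{(n+1)+M}$ to be this lift.

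The main obstacle is precisely the bridging step between the relative vanishing at level $M$ and the one-step adjustment required at level $1$: naively one wants vanishing at level $1$, which fails, and the argument must exploit the full $M$-step tower of local conditions together with the exact sequences of Lemma \ref{3.14} to reduce to the relative statement we do have. This is the content of \cite[Claim $6.14$]{FKP21}, whose argument I would reproduce in this setting with the only modifications being notational (the enlarged set $S_N \cup Q$ and our refined local conditions at primes of the third and fourth type).
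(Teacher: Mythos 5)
Your base case and the overall inductive scaffolding match the paper's proof, as does the identification of the two main ingredients: vanishing of $\Sh^2$ (for the existence of a global one-step lift $\rho'_{n+M+1}$) and the relative dual Selmer vanishing (for correcting its local restrictions). You also correctly flag where the real difficulty lies — naive Poitou--Tate at level $1$ fails, and one must exploit the full depth-$M$ tower. But your proposed resolution has two linked problems that the paper's proof avoids, and without them the Poitou--Tate argument does not go through.

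First, the level at which the discrepancy is measured. You compare $\rho'_{n+M+1}$ against the fiber of $D_{n+M+1,\nu}\to D_{n+M,\nu}$ over $\rho_{n+M}$, getting a class in $H^1(\Gamma_{F_\nu},\overline{\rho}(\mathfrak g^{\der}))/L_{1,\nu}$. The paper instead compares $\rho'_{n+M+1}$ against the fiber of $D_{n+M+1,\nu}\to D_{n+1,\nu}$ over $\tau_{n+1}$, producing $(f_\nu)\in\bigoplus_\nu H^1(\Gamma_{F_\nu},\rho_M(\mathfrak g^{\der}))/L_{M,\nu}$. The crucial observation — entirely absent from your sketch — is that this $(f_\nu)$ reduces to $0$ in $\bigoplus_\nu H^1(\Gamma_{F_\nu},\rho_{M-1}(\mathfrak g^{\der}))/L_{M-1,\nu}$, because $\rho_{n+M}$ already lies in the appropriate fiber over $\tau_{n+1}$. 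That partial vanishing, fed into the commuting Poitou--Tate square together with the injectivity of the dual map $H^1_{\mathcal L^\perp_M}{}^\vee\hookrightarrow H^1_{\mathcal L^\perp_{M-1}}{}^\vee$ (dualized from the surjectivity that follows from the relative dual Selmer vanishing via Lemma~\ref{3.14}), is precisely what shows $(f_\nu)$ comes from a global $f$. Computing only at level $1$, one simply does not have access to this structure, so the argument cannot close.

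Second, the location and shift of the correction. You propose modifying the previously constructed $\rho_{n+M}$ to $\exp(\varpi^n\phi)\rho_{n+M}$ with $\phi$ valued in $\rho_M(\mathfrak g^{\der})$. This alters the data from level $n$ upward, and if $\overline{\phi}\ne 0$ it changes $\tau_{n+1}$; you are then implicitly discarding and rebuilding the inductive pair at stage $n$ while trying to run stage $n+1$. Moreover, keeping $\rho_{n+M}^{\mathrm{new}}$ in $D_{n+M,\nu}$ forces $\restr{\phi}{\Gamma_{F_\nu}}\in L_{M,\nu}$ (so $\phi$ is in the Selmer group), and modifying by a Selmer class does not move the level-$1$ class in $H^1(\Gamma_{F_\nu},\overline{\rho}(\mathfrak g^{\der}))/L_{1,\nu}$, which is what you are trying to kill. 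The paper's modification is instead $\rho_{n+M+1}\coloneqq\exp(\varpi^{n+1}f)\rho'_{n+M+1}$: it acts on the \emph{candidate} lift (not the inductive data), is shifted by $\varpi^{n+1}$ rather than $\varpi^n$ so that $\tau_{n+1}=\rho_{n+M}\pmod{\varpi^{n+1}}$ is genuinely preserved, and it only needs the local restrictions of $f$ to hit $-f_\nu$ modulo $L_{M,\nu}$, which is what the Poitou--Tate argument delivers. Replacing your modification by this one and inserting the missing observation that $(f_\nu)$ dies modulo $\varpi^{M-1}$ is exactly what your sketch needs to become a proof.
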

\begin{proof}

    We will prove this claim by induction on $n$. For $n = N-M$ we take $\tau_{N-M} = \rho_{N-M}$. Clearly, properties $(2)$ and $(3)$ are satisfied, while property $(1)$ holds for primes in $S_N$ as this was arranged in the doubling method. On the other hand, property $(1)$ holds for primes in $Q$, as they satisfy the third property in Definition \ref{3.21}.

    Assume that we have such a pair $(\tau_n,\rho_{n+M})$ for some $n \ge N-M$. We set $\tau_{n+1} = \rho_{n+M} \pmod{\varpi^{n+1}}$. As $\restr{\rho_{n+M}}{\Gamma_{F_\nu}} \in D_{n+M,\nu}$ for all $\nu \in S_N \cup Q$ and as the fibers are non-empty we can find find local lifts of them modulo $\varpi^{n+M+1}$. We arranged $\Sh^1_{S_N}(\Gamma_{F,S_N},\overline{\rho}(\mathfrak g^{\der})^*) = 0$, which implies that $\Sh^2_{S_N \cup Q}(\Gamma_{F,S_N \cup Q},\overline{\rho}(\mathfrak g^{\der})) = 0$ by using Poitou-Tate's duality. This means that we can lift $\rho_{n+M}$ to $\rho'_{n+M+1}:\Gamma_{F,S_N \cup Q} \to G(\mathcal O/\varpi^{n+M+1})$. For each $\nu \in S_N \cup Q$, $\restr{\rho'_{n+M+1}}{\Gamma_{F_\nu}}$ is a lift of $\restr{\tau_{n+1}}{\Gamma_{F_\nu}}$, so comparing it with an element of the fiber of $D_{n+M+1,\nu} \to D_{n+1,\nu}$ over $\restr{\tau_{n+1}}{\Gamma_{F_\nu}}$ we end up with an element:

    $$(f_\nu)_{\nu \in S_N \cup Q} \in \bigoplus_{\nu \in S_N \cup Q} \frac{H^1(\Gamma_{F_\nu},\rho_{M}(\mathfrak g^{\der}))}{L_{M,\nu}}$$

    \vspace{2 mm}

    As $\rho_{n+M+1}'$ lifts $\rho_{n+M}$ and by the inductive hypothesis for all $\nu \in S_N \cup Q$, $\restr{\rho_{n+M}}{\Gamma_{F_\nu}}$ lies in the fiber of $D_{n+M,\nu} \to D_{n+1,\nu}$ over $\restr{\tau_{n+1}}{\Gamma_{F_\nu}}$, we get that the reduction of $(f_\nu)$ vanishes in $\bigoplus_{\nu \in S_N \cup Q} \frac{H^1(\Gamma_{F_\nu},\rho_{M-1}(\mathfrak g^{\der}))}{L_{M-1,\nu}}$. We now have the following commutative diagram:

    \begin{center}

        \begin{tikzcd}
            {H^1(\Gamma_{F,S_N \cup Q},\rho_{M}(\mathfrak g^{\der}))} \arrow[r] \arrow[d] &  {\bigoplus_{\nu \in S_N \cup Q} \frac{H^1(\Gamma_{F_\nu},\rho_{M}(\mathfrak g^{\der}))}{L_{M,\nu}}} \arrow[r] \arrow[d] &  {H^1_{\mathcal L^\perp_{M,S_N \cup Q}}(\Gamma_{F,S_N \cup Q},\rho_{M}(\mathfrak g^{\der})^*)^\vee} \arrow[d] \\
            {H^1(\Gamma_{F,S_N \cup Q},\rho_{M-1}(\mathfrak g^{\der}))} \arrow[r]         &  {\bigoplus_{\nu \in S_N \cup Q} \frac{H^1(\Gamma_{F_\nu},\rho_{M-1}(\mathfrak g^{\der}))}{L_{M-1,\nu}}} \arrow[r]       &  {H^1_{\mathcal L^\perp_{M-1,S_N \cup Q}}(\Gamma_{F,S_N \cup Q},\rho_{M-1}(\mathfrak g^{\der})^*)^\vee}      
        \end{tikzcd}
        
    \end{center}

    \vspace{2 mm}

    \noindent where the rows come from the Poitou-Tate exact sequence and the vertical maps are induced by reduction modulo $\varpi^{M-1}$. Now, the exact sequence in Lemma \ref{3.14}, coupled with the vanishing of the $M$-relative dual Selmer group gives us that the map:

    $$H^1_{\mathcal L^\perp_{M-1,S_N \cup Q}}(\Gamma_{F,S_N \cup Q},\rho_{M-1}(\mathfrak g^{\der})^*) \to H^1_{\mathcal L^\perp_{M,S_N \cup Q}}(\Gamma_{F,S_N \cup Q},\rho_{M}(\mathfrak g^{\der})^*)$$

    \vspace{2 mm}

    \noindent is surjective. Dualizing we get that the third vertical map in the commutative diagram above is injective. This implies that $(f_\nu)$ maps to zero in $H^1_{\mathcal L^\perp_{M,S_N \cup Q}}(\Gamma_{F,S_N \cup Q},\rho_{M}(\mathfrak g^{\der})^*)^\vee$, which using the exactness of the top row means that $(f_\nu)$ comes from a global cocycle $f \in H^1(\Gamma_{F,S_N \cup Q},\rho_{M}(\mathfrak g^{\der}))$. We modify $\rho'_{n+M+1}$ by this cocycle, by setting $\rho_{n+M+1} \coloneqq \exp(\varpi^{n+1}f)\rho_{n+M+1}'$. Using the fact that the fibers $D_{n+M+1,\nu} \to D_{n+1,\nu}$ are stable under the action of elements in $L_{M,\nu}$ for all $n \in S_N \cup Q$ this modification ensures that $\rho_{n+M+1}$ satisfies property $(1)$ of the claim. Moreover, this is a lift of $\tau_{n+1}$ and clearly the rest of the properties are satisfied. 
    
\end{proof}

Finally, we take $\rho \coloneqq \varprojlim \tau_n: \Gamma_{F,S_N \cup Q} \to G(\mathcal O)$. As remarked in the choice of the bound for $N$, this lift will have a maximal image and will approximate the initial lifts at primes in $S$ modulo $\varpi^t$. Hence, this is a lift that satisfies the statement of the theorem.

\section{Producing a locally smooth lift}

In this section we will first show that the modification we did in \S $3.1$ to the doubling method will guarantee the formal smoothness at primes in $S_N$ for every characteristic $0$ lift obtained by the method explained in \S $3.3$. After that we will prove that the question of formal smoothness at primes in $Q$ reduces to the question of whether the lift $\rho$ is ramified at those primes. In general, the lifting method on its own doesn't guarantee this. However, borrowing ideas from \cite{KR03} and \cite{Pat17} we will show that this can be achieved by changing the set $Q$ and subsequently the characteristic $0$ lift $\rho$. The criterion of Proposition \ref{3.1} will play a key role in what follows, so we recall its statement which says that for primes $\nu \nmid p$ a point $x$ is formally smooth point in $R_{\restr{\overline{\rho}}{\Gamma_{F_\nu}}}^{\sqr,\mu}[1/p]$ if and only if $H^0(\Gamma_{F_\nu},\rho_x(\mathfrak g^{\der})^*) = 0$.

\subsection{Smoothness at primes in \texorpdfstring{$S_N$}{S\textunderscore N}}

We recall that in \S $3$ we split the primes in $S_N$ into $4$ types based on the local behavior the lift $\rho_N$ exhibits at them. At the first two types of primes we didn't make any modifications compared to the original doubling method. This is because the local lifting conditions at those primes already guarantee the formal smoothness at them. At each such prime $\nu$, the local conditions we have smooth local lifts $\rho_\nu$, whose existence comes from either Assumptions \ref{1.1} (recall we potentially changed these lifts in \S $3.3$ without altering the notation) or \cite[Lemma $3.7$]{FKP21}. Then, as elaborated in \cite[\S $4$]{FKP21}, using these lifts we find a class of smooth lifts $D_{\nu} \subseteq \Lift_{\restr{\overline{\rho}}{\Gamma_{F_\nu}}}^{\mu}(\mathcal O)$. We let $D_{n,\nu}\subseteq \Lift_{\restr{\overline{\rho}}{\Gamma_{F_\nu}}}^{\mu}(\mathcal O/\varpi^n)$ be the set of their reductions modulo $\varpi^n$. By \cite[Proposition $4.7$]{FKP21} for $r \ge 1$ and $n$ large enough the fibers of the maps $D_{n+r,\nu} \to D_{n,\nu}$ will be non-empty principal homogeneous under the action of certain $1$-cocycles. We use these cocycles to define the local lifting conditions at $\nu$. These conditions will have the right size and also will satisfy the necessary conditions mentioned at the beginning of the relative deformation argument. From the way the relative deformation argument works $\restr{\rho}{\Gamma_{F_\nu}}$ will lie in $D_\nu$, which gives us the formal smoothness at these primes.

Next, we focus on the third type of primes in $S_N$. These primes will satisfy part $(b)$ of Theorem \ref{3.4} and $\rho_N$ will be ramified at them. The ramification will play a major role in establishing the smoothness at these primes. However, looking closer at Claim \ref{3.31} we can only guarantee that $\rho$ will be ramified at primes in $S_{N-M}$. This is because even though we use $\rho_N$ as an auxiliary lift and we have to allow for ramification at primes in $S_N$, the relative deformation arguments start building upwards from $\rho_{N-M}$. Therefore, for the time being we will only prove the formal smoothness at primes of the third type found in $S_{N-M}$. On the other hand, we will enlarge the set $Q$ with primes of the third type in $S_N \setminus S_{N-M}$. We remark that the local conditions at primes in $S_N \setminus S_{N-M}$ and primes already in $Q$ are defined in the same manner, using Lemma \ref{3.16}. This will allow us to treat them together in \S $4.2$. We recall that by Lemma \ref{3.7} we imposed stronger conditions on the primes of the third type. Using them we can prove the following result:

 \begin{lem}
     \label{4.1}

    Let $\nu$ be a prime of the third type in $S_{N-M}$. Then, for $n \ge N-M$ we have that 
    
    $$H^0(\Gamma_{F_\nu},\rho_n(\mathfrak g^{\der})^*) \subseteq \varpi^{n-(N-M-1)}\rho_{N-M-1}(\mathfrak g^{\der})^*$$

    \vspace{2 mm}

    In other words, the $\Gamma_{F_\nu}$-invariant elements of $\rho_n(\mathfrak g^{\der})^*$ are all multiples of $\varpi^{n-(N-M-1)}$. 
     
 \end{lem}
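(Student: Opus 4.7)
The plan is to exploit the explicit form of $\restr{\rho_n}{\Gamma_{F_\nu}}$ at a third-type prime $\nu\in S_{N-M}$, as prescribed by Theorem \ref{3.4}(1b) and the refinement from Lemma \ref{3.7}. There exist a split maximal torus $T_\nu$ of $G^0$ and a root $\alpha$ such that $\rho_n(\sigma_\nu)\in T_\nu(\mathcal O/\varpi^n)$ with $\alpha(\rho_n(\sigma_\nu))\equiv N(\nu)\pmod{\varpi^n}$, $\beta(\rho_{eD+1}(\sigma_\nu))\not\equiv 1, N(\nu)\pmod{\varpi^{eD+1}}$ for every $\beta\neq\alpha\in\Phi(G^0,T_\nu)$, and $\rho_n(\tau_\nu)=u_\alpha(y_n)$ for some $y_n\in\mathcal O/\varpi^n$. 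Since $\nu$ is of the third type in $S_{N-M}$, it was introduced into the doubling method at some step $k$ with $eD<k\le N-M$ at which $\rho_k$ is already ramified, so $y_n\equiv y_k\pmod{\varpi^k}$ with $y_k\neq 0$; hence $v(y_n)=v(y_k)\le k-1\le N-M-1$.

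I would then decompose $\mathfrak g^{\der}=\mathfrak t^{\der}\oplus\bigoplus_\beta\mathfrak g_\beta$ under the adjoint action of $T_\nu$ and write a general $\Gamma_{F_\nu}$-invariant $\phi$ as $\phi=\phi_{\mathfrak t}+\sum_\beta\phi_\beta$. The formula $(g\phi)(v)=\chi(g)\phi(g^{-1}v)$, with $\chi(\sigma_\nu)=N(\nu)$, shows that $\sigma_\nu$ acts on $(\mathfrak t^{\der})^*$ by $N(\nu)$ and on $\mathfrak g_\beta^*$ by $N(\nu)\beta(\rho_n(\sigma_\nu))^{-1}$. Since $N(\nu)\equiv 1\pmod{\varpi^{eD}}$ but not modulo $\varpi^{eD+1}$, and for $\beta\neq\alpha$ the condition $\beta(\rho_{eD+1}(\sigma_\nu))\not\equiv N(\nu)\pmod{\varpi^{eD+1}}$ together with $\beta(\rho_{eD}(\sigma_\nu))\equiv 1\pmod{\varpi^{eD}}$ (as $\rho_{eD}$ is trivial modulo the center) forces $N(\nu)\beta(\rho_n(\sigma_\nu))^{-1}-1$ to be a unit multiple of $\varpi^{eD}$, $\sigma_\nu$-invariance gives $\varpi^{eD}\phi_{\mathfrak t}=0$ and $\varpi^{eD}\phi_\beta=0$ for each $\beta\neq\alpha$. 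These components therefore lie in $\varpi^{n-eD}\rho_n(\mathfrak g^{\der})^*$, which sits inside $\varpi^{n-(N-M-1)}\rho_n(\mathfrak g^{\der})^*$ because $eD\le M\le N-M-1$ (guaranteed by $M\ge eD$ and $N\ge 4M$). Only $\phi_\alpha$ is unconstrained by $\sigma_\nu$, since $\alpha(\rho_n(\sigma_\nu))\equiv N(\nu)\pmod{\varpi^n}$ makes the action trivial on $\mathfrak g_\alpha^*$.

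To handle $\phi_\alpha$ I would use $\tau_\nu$-invariance. Because $\nu\nmid p$, $\chi(\tau_\nu)=1$, so the invariance amounts to $\phi$ vanishing on $(\Ad(\rho_n(\tau_\nu))-1)(\mathfrak g^{\der})$. The identity $[X_\alpha,X_\alpha]=0$ yields the exact formula $(\Ad(u_\alpha(y_n))-1)(t)=-y_n\alpha(t)X_\alpha$ for $t\in\mathfrak t^{\der}$. Evaluating $\phi$ on this and choosing $t$ with $\alpha(t)\in\mathcal O^\times$ gives $y_n\phi_\alpha(X_\alpha)=0$, so $\phi_\alpha(X_\alpha)\in\varpi^{n-v(y_n)}(\mathcal O/\varpi^n)\subseteq\varpi^{n-(N-M-1)}(\mathcal O/\varpi^n)$ by the valuation bound above. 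Combined with the $\sigma_\nu$-analysis, every component of $\phi$ is divisible by $\varpi^{n-(N-M-1)}$, which is precisely the containment claimed, after identifying $\varpi^{n-(N-M-1)}\rho_n(\mathfrak g^{\der})^*$ with the image of $\rho_{N-M-1}(\mathfrak g^{\der})^*$ inside $\rho_n(\mathfrak g^{\der})^*$ as in the Notation section.

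The main obstacle is the $\mathfrak g_\alpha$-component, on which $\sigma_\nu$ acts trivially by construction; on every other component the required divisibility is a clean consequence of the quantitative root conditions secured in Lemma \ref{3.7}. The crucial arithmetic input for $\phi_\alpha$ is that $\nu$ was introduced into $S_{N-M}$ at a step $k\le N-M$ with $\rho_k(\tau_\nu)\neq 1$, forcing $v(y_n)\le N-M-1$, which converts the $\tau_\nu$-invariance into exactly the divisibility bound that matches the $\sigma_\nu$-bound on the remaining components.
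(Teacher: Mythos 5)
Your proof is correct and follows essentially the same route as the paper: decompose under the root space decomposition for $T_\nu$, use the refined root conditions from Lemma~\ref{3.7} together with $\sigma_\nu$-invariance to force divisibility by $\varpi^{n-eD}$ on every component except the one associated to $\alpha$, and then invoke $\tau_\nu$-invariance with the valuation bound $v(y_n)\le N-M-1$ to handle that last component. The only (cosmetic) differences are that you work with $(\mathfrak g^{\der})^*$ directly rather than via the trace identification with $\mathfrak g^{\der}(1)$ (so your exceptional root is $\alpha$ where the paper's is $-\alpha$), and you extract the $\tau_\nu$-constraint by evaluating $\phi$ on $(\Ad(\rho_n(\tau_\nu))-1)(t)$ for $t\in\mathfrak t^{\der}$ with $\alpha(t)\in\mathcal O^\times$, a slightly more direct variant of the paper's step of first isolating the exceptional component as $\tau_\nu$-invariant and then expanding the adjoint action on it.
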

 \begin{proof}

    Firstly, using the perfect trace pairing we can identify $\mathfrak g^{\der}$ with its dual and thus $(\mathfrak g^{\der})^* \simeq \mathfrak g^{\der}(1)$. Now, let $X \in H^0(\Gamma_{F_\nu},\rho_n(\mathfrak g^{\der})^*)$. We know that $\rho_n(\sigma_\nu)$ lies in a split maximal torus $T$. So using the root decomposition of $\mathfrak g^{\der}$ we can write $X = Z + \sum_{\beta \in \Phi(G^0,T)} X_\beta$ with $Z \in \mathfrak t^{\der}(1)$ and $X_\beta \in \mathfrak g_\beta(1)$. Acting by $\sigma_\nu$ we get:

    $$X = \sigma_\nu \cdot X = qZ + \sum_{\beta \in \Phi(G^0,T)} q\beta(\rho_n(\sigma_\nu))X_\beta$$

    \vspace{2 mm}

    We now compare elements on both sides of the equation. First, we must have that $(q-1)Z = 0$. Since $q \equiv 1 \pmod{\varpi^{eD}}$, but $q \not \equiv 1 \pmod{\varpi^{eD+1}}$ we must have that $Z \pmod {\varpi^{n-eD}} = 0$, i.e. $Z$ is a multiple of $\varpi^{n-eD}$. Similarly, $(q\beta(\rho_n(\sigma_\nu))-1)X_\beta = 0$ for all roots $\beta$. As $\rho_n(\sigma_\nu)$ and $q$ are trivial modulo $\varpi^{eD}$ we get that $q\beta(\rho_n(\sigma_\nu)) \equiv 1 \pmod{\varpi^{eD}}$. On the other hand, by Lemma \ref{3.7} we have $\beta(\rho_n(\sigma_\nu)) \not \equiv q^{-1} \pmod{\varpi^{eD+1}}$ for $\beta \neq -\alpha$. In particular, this means that $X_\beta \pmod{\varpi^{n-eD}} = 0$ for $\beta \neq -\alpha$. Hence, all the terms, except $X_{-\alpha}$ are multiples of $\varpi^{n-eD}$ and therefore of $\varpi^{n-(N-M-1)}$.

    To show that $X_{-\alpha}$ is a multiple of $\varpi^{n-(N-M-1)}$ we use the invariance under the action of $\tau_\nu$. Since, all other terms are multiples of $\varpi^{n-eD}$ and $\rho_{eD}$ isn't ramified at $\nu$ each of them individually will be invariant under $\tau_\nu$. Hence, we have that $X_{-\alpha}$ is invariant under the action of $\tau_\nu$, as well. Now, $\rho_{N-M}(\tau_\nu)$ is a non-trivial element of $U_\alpha(\mathcal O/\varpi^{N-M})$ and from the definition of the local conditions at $\nu$ in Lemma \ref{3.16} this remains true for $\rho_n$. Hence, we can write $\rho_n(\tau_\nu) = u_\alpha(yY_\alpha)$, where $Y_\alpha$ is a generator of $\mathfrak g_\alpha$ over $\mathcal O$ and $y \not \equiv 0 \pmod{\varpi^{N-M}}$ Then:

    $$X_{-\alpha} = \tau_\nu \cdot X_{-\alpha} = \Ad(u_\alpha(yY_\alpha))(X_{-\alpha}) = X_{-\alpha} + y[Y_\alpha,X_{-\alpha}] + \frac{y^2}2[Y_\alpha,[Y_\alpha,X_{-\alpha}]]$$

    \vspace{2 mm}

    We note that the expansion above terminates after the third term, since $[Y_\alpha,X_{-\alpha}] \in \mathfrak t$, $[Y_\alpha,[Y_\alpha,X_{-\alpha}]] \in \mathfrak g_\alpha$, and $[Y_\alpha,[Y_\alpha,[Y_\alpha,X_{-\alpha}]]] \in \mathfrak g_{2\alpha} = 0$. Comparing both sides, we get that the last two terms on the right vanish. Since $Y_\alpha \pmod{\varpi} \neq 0$ this means that $yX_{-\alpha} \pmod{\varpi^n} = 0$. For the last conclusion we use the fact that \linebreak $X_{-\alpha} \in \mathfrak g_{-\alpha}$ and $[Y_\alpha,Y_{-\alpha}] \pmod{\varpi} \neq 0$. Since $y \not \equiv 0 \pmod{\varpi^{N-M}}$ we have $X_{-\alpha} \pmod{\varpi^{n-(N-M-1)}} = 0$, \linebreak i.e. $X_{-\alpha}$ is a multiple of $\varpi^{n-(N-M-1)}$. This shows that $H^0(\Gamma_{F_\nu},\rho_n(\mathfrak g^{\der})^*)$ consists of multiples of $\varpi^{n-(N-M-1)}$, which in turn are element of the image of $\rho_{N-M-1}(\mathfrak g^{\der})^*$ under the multiplication by $\varpi^{n-(N-M-1)}$ map.
     
 \end{proof}

Using this lemma we conclude that for primes $\nu$ of the third type in $S_{N-M}$, as we move up the tower of $\Gamma_{F_\nu}$-invariants they will start being multiples of higher and higher powers of $\varpi$. In particular, for large $n$, the reduction modulo $\varpi^{n-(N-M-1)}$ will induce a map between $\Gamma_{F_\nu}$-invariants which is the zero map. Therefore, $H^0(\Gamma_{F_\nu},\rho(\mathfrak g^{\der})^*) = \varprojlim_n H^0(\Gamma_{F_\nu},\rho_n(\mathfrak g^{\der})^*) = 0$ and by Proposition \ref{3.1} $\restr{\rho}{\Gamma_{F_\nu}}$ is a formally smooth point in $R_{\restr{\overline{\rho}}{\Gamma_{F_\nu}}}^{\sqr,\mu}[1/\varpi]$.

It remains to prove that $\rho$ is formally smooth at the primes in $S_N$ of the fourth type, i.e. the primes at which $\rho_{N}$ isn't ramified. We'll show an analogous result as above, using the modifications we made in Lemma \ref{3.8}.

\begin{lem}

    \label{4.2}

    Let $\nu \in S_N$, such that $\rho_N$ isn't ramified at $\nu$. Let $s = \max\{eD,N_1\}$. Then for $n \ge s+1$ we have:

    $$H^0(\Gamma_{F_\nu},\rho_n(\mathfrak g^{\der})^*) \subseteq \varpi^{n-s}\rho_{s}(\mathfrak g^{\der})^*$$

    \vspace{2 mm}

    \noindent where $N_1$ comes from \cite[Lemma $6.15$]{FKP21} and has already appeared in our bound for $N$ in \textup{(\ref{Nbound})}.
    
\end{lem}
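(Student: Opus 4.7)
The argument parallels that of Lemma \ref{4.1} but is strictly simpler, since the unramified hypothesis reduces $\Gamma_{F_\nu}$-invariance to $\sigma_\nu$-invariance and there is no ramified generator $\tau_\nu$ to analyze. I would first pass to the \'etale extension $\mathcal O_\nu/\mathcal O$ over which the torus $T_\nu$ containing $\rho_\nu(\sigma_\nu)=t_i$ (produced by Lemma \ref{3.8}) splits, and identify $(\mathfrak g^{\der})^*\simeq \mathfrak g^{\der}(1)$ via the trace form. Any $X\in H^0(\Gamma_{F_\nu},\rho_n(\mathfrak g^{\der})^*)$ then decomposes as $X=Z+\sum_{\beta\in\Phi(G^0,T_\nu)} X_\beta$ with $Z\in \mathfrak t^{\der}(1)$ and $X_\beta\in\mathfrak g_\beta(1)$ (all taken over $\mathcal O_\nu/\varpi^n$).

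Writing $q=N(\nu)=\chi(\sigma_\nu)$ and $t=\rho_n(\sigma_\nu)$, the invariance equation $\sigma_\nu\cdot X=X$ separates by $T_\nu$-weight into
\begin{equation*}
(q-1)Z=0\qquad\text{and}\qquad (q\beta(t)-1)X_\beta=0\;\;\text{for each}\;\beta\in\Phi(G^0,T_\nu).
\end{equation*}
For the Cartan piece, the extra condition $N(\nu)\not\equiv 1\pmod{\varpi^{eD+1}}$ that was arranged via the linear disjointness established in Lemma \ref{3.19} gives $v_\varpi(q-1)\le eD\le s$, hence $Z\in\varpi^{n-s}\mathfrak t^{\der}(1)$. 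For each root piece, Lemma \ref{3.8} (for primes added during the first few steps of the doubling method, and for the primes used to kill $\Sh^2_{S_N}$) together with Remark \ref{3.9} (for primes added in later steps, where $t$ is close enough to the identity relative to $q-1$) guarantees that $\beta(t)\not\equiv q\pmod{\varpi^{s+1}}$ for every root $\beta$. Replacing $\beta$ by $-\beta$ and using $(-\beta)(t)=\beta(t)^{-1}$ rewrites this as $\beta(t)\not\equiv q^{-1}\pmod{\varpi^{s+1}}$, so $v_\varpi(q\beta(t)-1)=v_\varpi(\beta(t)-q^{-1})\le s$ and $X_\beta\in\varpi^{n-s}\mathfrak g_\beta(1)$.

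Summing the graded pieces places $X$ inside $\varpi^{n-s}(\mathfrak g^{\der}\otimes_\mathcal{O}\mathcal{O}_\nu)(1)$, and the faithful flatness of the \'etale extension $\mathcal O/\varpi^n\to\mathcal O_\nu/\varpi^n$ descends this back to $X\in\varpi^{n-s}\rho_s(\mathfrak g^{\der})^*$, as claimed. The main obstacle is the bookkeeping needed to verify the root-space bound $\beta(t)\not\equiv q\pmod{\varpi^{s+1}}$ uniformly across all fourth-type primes: the $\Sh^2$-killing primes (which invoke Lemma \ref{3.8} with $n=2$) and those added at each step of the doubling method up through step $N_1$, where the relevant invocation of Lemma \ref{3.8} must be matched with the value of $s=\max\{eD,N_1\}$, with the handoff to Remark \ref{3.9} occurring precisely where the $N(\nu)\not\equiv 1\pmod{\varpi^{eD+1}}$ choice begins to dominate. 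Once this tabulation is in place, the rest of the argument is mechanical and mirrors Lemma \ref{4.1}, with the single simplification that no $\tau_\nu$-computation is required for a distinguished root.
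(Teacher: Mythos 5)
Your proof is correct and takes essentially the same approach as the paper: pass to the splitting field of the torus, identify $(\mathfrak g^{\der})^*\simeq\mathfrak g^{\der}(1)$, decompose $X$ by $T_\nu$-weight, read off $(q-1)Z=0$ and $(q\beta(t)-1)X_\beta=0$ from $\sigma_\nu$-invariance, and bound the $\varpi$-valuations of $q-1$ and $q\beta(t)-1$ using the choices of Frobenius images forced by Lemma~\ref{3.8}, Remark~\ref{3.9}, and the $N(\nu)\not\equiv1\pmod{\varpi^{eD+1}}$ condition made possible by Lemma~\ref{3.19}. Your explicit descent step via flatness of the \'etale extension $\mathcal O_\nu/\mathcal O$ is a cleaner treatment of a point the paper dispatches by simply declaring, ``for the purposes of this lemma, in order to simplify the notation, we assume that $T$ splits over $\mathcal O$.''
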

\begin{proof}

    Using Lemma \ref{3.8} we know that $\rho_n(\sigma_\nu)$ is an element of $T(\mathcal O/\varpi^n)$, for some maximal $\mathcal O$-torus $T$. The torus $T$ might not split over $\mathcal O$, but it will over a finite \'etale extension $\mathcal O'$ of $\mathcal O$. Thus, to make use of root decompositions of the Lie algebra we might be required to work with $\mathfrak g^{\der} \otimes_\mathcal O \mathcal O'$ instead of $\mathfrak g^{\der}$. However, we are interested in showing that the $\Gamma_{F_\nu}$-invariants are multiples of $\varpi^{n-s}$, and $\varpi$ is a uniformizer of $\mathcal O'$, too. Therefore, for the purposes of this lemma, in order to simplify the notation, we assume that $T$ splits over $\mathcal O$.

    Let $X \in H^0(\Gamma_{F_\nu},\rho_n(\mathfrak g^{\der})^*)$. Again, using the identification $(\mathfrak g^{\der})^* \simeq \mathfrak g^{\der}(1)$ and the root decomposition of $\mathfrak g^{\der}$ we write $X = Z + \sum_{\beta \in \Phi(G^0,T)} X_\beta$, where $Z \in \mathfrak t^{\der}(1)$ and $X_\beta \in \mathfrak g_\beta(1)$. We now act by $\sigma_\nu$:

    $$X = \sigma_\nu \cdot X = qZ + \sum_{\beta \in \Phi(G^0,T)} q\beta(\rho_n(\sigma_\nu))X_\beta$$

    \vspace{2 mm}

    \noindent where $q = N(\nu)$. Comparing terms on both sides we first get $(q-1)Z = 0$. As remarked in the discussion following Remark \ref{3.9}, in the doubling method $q$ is chosen so that $q \not \equiv 1 \pmod{\varpi^{eD+1}}$. Hence, $Z \pmod{\varpi^{n-eD}} = 0$. Thus, $Z$ is a multiple of $\varpi^{n-eD}$. On the other hand, we get $(q\beta(\rho_n(\sigma_\nu))-1)X_\beta = 0$. By Lemma \ref{3.8} we have that if $\nu$ was added in the $r$-th step of the doubling method, then $\beta(\rho_{r+1}(\sigma_\nu)) \not \equiv q^{-1} \pmod{\varpi^{r+1}}$ for all $\beta \in \Phi(G^0,T)$. Since after the $N_1$-th step we are not adding any new primes at which the lift is unramified we get that $q\beta(\rho_{N_1+1}(\sigma_\nu)) \not \equiv 1 \pmod{\varpi^{N_1+1}}$. This implies that $X_\beta \pmod{\varpi^{n-N_1}} = 0$, i.e. $X_\beta$ is a multiple of $\varpi^{n-N_1}$. Combining these two results, we get that $X$ is a multiple of $\varpi^{n-s}$.
    
\end{proof}

Using this result as above $H^0(\Gamma_{F_\nu},\rho(\mathfrak g^{\der})^*) = \varprojlim_n H^0(\Gamma_{F_\nu},\rho_n(\mathfrak g^{\der})^*) = 0$. Thus, again by Proposition \ref{3.1} $\restr{\rho}{\Gamma_{F_\nu}}$ is a formally smooth point in $R_{\restr{\overline{\rho}}{\Gamma_{F_\nu}}}^{\sqr,\mu}[1/\varpi]$. 

\begin{rem}

    \label{4.3}

    All these results rely on the changes we did during the doubling method and the relative deformation argument doesn't play any role in them. More precisely, as long as we run the relative deformation argument starting from the pair $(\rho_{N-M},\rho_N)$ or any other pair that appears higher in the tower of lifts, and we keep the same local deformation conditions at primes in $S_{N-M}$ the characteristic $0$ lift we eventually obtain will be locally smooth at all primes in $S_{N-M}$. This will be of particular importance in the next section as in it we might need to change the set $Q$ in order to get the formal smoothness at primes in $Q$, as well. However, we will not make any changes to the set $S_{N-M}$, so any lift that we later get will be locally smooth at primes in $S_{N-M}$ and we don't have to worry about formal smoothness at those primes anymore.
    
\end{rem}

\subsection{Forcing ramification at primes in \texorpdfstring{$Q$}{Q}}

In this section we focus on the set of primes $Q$. We show that even though the lift $\rho$ might not be smooth at each of the primes in $Q$, we can choose a different such set $Q$, and hence a different lift $\rho$ which will be formally smooth at all primes in $Q$. This section is the gist of the paper and represents the most difficult step in achieving the formal smoothness at all the primes in question. In it we first prove that the question of formal smoothness at primes in $Q$ reduces to a question about ramification at those primes. After that, mimicking the ideas from \cite{KR03} we will force the ramification at primes in $Q$. This will require us to modify the initial set $Q$ and we will need to choose primes in it more carefully in comparison to the original lifting method. One of the novelties will be considering $(r,s)$-relative Selmer group for varying values of $r$ and $s$, instead of just working with $M$-th relative Selmer group for a fixed $M$. 

We first recall that in the previous subsection we have enlarged the set $Q$, adding the primes in $S_N \setminus S_{N-M}$ of the third type. From now on we will not differentiate between these primes and the ones in the original set $Q$. However, we will make sure to point out some of the subtle differences when it comes down to the application of Lemma \ref{3.16}, used in the production of the local lifting conditions at them. Since the integer $N$ will not play any further role we write $S'$ for the set of primes $S_N$, remembering that we moved some of its prime to $Q$. We initially prove that in order to show that $\rho$ is formally smooth at primes in $Q$ it is enough to show that $\rho$ is ramified at each of the primes in $Q$.

\begin{lem}

    \label{4.4}

    Suppose that $\rho$ ramifies at $\nu \in Q$, then $\restr{\rho}{\Gamma_{F_\nu}}$ is a formally smooth point in $R_{\restr{\overline{\rho}}{\Gamma_{F_\nu}}}^{\sqr,\mu}[1/\varpi]$.
    
\end{lem}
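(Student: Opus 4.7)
The plan is to invoke Proposition \ref{3.1}, which reduces the claim to the vanishing of $H^0(\Gamma_{F_\nu}, \rho(\mathfrak g^{\der})^*)$, and then to mirror the strategy of Lemma \ref{4.1}: at each finite level $n$, establish a uniform inclusion $H^0(\Gamma_{F_\nu}, \rho_n(\mathfrak g^{\der})^*) \subseteq \varpi^{n-c}\rho_c(\mathfrak g^{\der})^*$ for some constant $c$ independent of $n$. As in the discussion following Lemma \ref{4.1}, this forces the transition maps in the inverse system to be zero for $n \gg 0$ and hence $\varprojlim_n H^0(\Gamma_{F_\nu}, \rho_n(\mathfrak g^{\der})^*) = 0$.

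To set things up, after $\widehat{G}(\mathcal O)$-conjugation one may assume $\restr{\rho_n}{\Gamma_{F_\nu}}$ is in the normalized form of Definition \ref{3.2}: $\rho_n(\sigma_\nu) \in T$ with $\alpha(\rho_n(\sigma_\nu)) = q = N(\nu)$, and $\rho_n(\tau_\nu) \in U_\alpha$. Using the trace pairing $(\mathfrak g^{\der})^* \simeq \mathfrak g^{\der}(1)$ together with the root decomposition, any invariant $X$ decomposes as $X = Z + \sum_\beta X_\beta$. Expanding $\sigma_\nu \cdot X = X$ componentwise yields $(q-1)Z = 0$ and $(q\beta(\rho_n(\sigma_\nu)) - 1)X_\beta = 0$. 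The conditions of Definition \ref{3.21}---namely $v(q-1) = M$ and, for every $\beta \neq \alpha$, $\beta(\rho_{M+1}(\sigma_\nu)) \not\equiv 1, q \pmod{\varpi^{M+1}}$---applied also with $-\beta$ in place of $\beta$, give $v(q\beta(\rho_n(\sigma_\nu)) - 1) \leq M$ for every $\beta \neq -\alpha$ (using $p$ odd to handle $\beta = \alpha$, where $v(q^2 - 1) = M$). Consequently $Z$ and every $X_\beta$ with $\beta \neq -\alpha$ is a multiple of $\varpi^{n-M}$; only $X_{-\alpha}$ remains unconstrained by $\sigma_\nu$.

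The crux---and the only step where the ramification hypothesis enters---is the $\tau_\nu$-analysis of $X_{-\alpha}$. Write $\rho(\tau_\nu) = u_\alpha(yY_\alpha)$; ramification forces $y \neq 0$, so set $k = v(y) < \infty$. Since $\mathfrak g_{2\alpha} = 0$, the expansion $\Ad(u_\alpha(yY_\alpha)) = \exp(\ad(yY_\alpha))$ terminates at degree two, and projecting $\tau_\nu \cdot X = X$ onto the $\mathfrak t^{\der}$-component produces $y[Y_\alpha, X_{-\alpha}] = 0$. Writing $X_{-\alpha} = \lambda Y_{-\alpha}$ with $[Y_\alpha, Y_{-\alpha}] = H_\alpha$ forces $\lambda$, and hence $X_{-\alpha}$, to be a multiple of $\varpi^{n-k}$. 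Taking $c = \max(M,k)$ then gives the desired bound. The main obstacle is precisely this $\tau_\nu$-step: without ramification one would have $y = 0$ and $X_{-\alpha}$ wholly unconstrained, reflecting the geometric fact that an unramified point on the $(\mu, \alpha)$-component of the local lifting ring cannot be formally smooth. The rest of the argument is a direct translation of Lemma \ref{4.1}, with $eD$ replaced by $M$ and the explicit lower bound $v(\rho_{N-M}(\tau_\nu)) < N - M$ replaced by the qualitative finiteness of $v(\rho(\tau_\nu))$ afforded by ramification.
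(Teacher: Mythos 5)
Your proof is correct and follows the same route as the paper's: reduce via Proposition \ref{3.1} to the vanishing of $H^0(\Gamma_{F_\nu},\rho(\mathfrak g^{\der})^*)$, establish a uniform $\varpi$-divisibility bound on the mod-$\varpi^n$ invariants by decomposing along $\mathfrak t^{\der}\oplus\bigoplus_\beta\mathfrak g_\beta$, using the Definition~\ref{3.21} constraints on $\sigma_\nu$ for every component except $X_{-\alpha}$, and then using the (ramified, unipotent) action of $\tau_\nu$ to handle $X_{-\alpha}$, exactly as in Lemma~\ref{4.1}. The only cosmetic difference is in the $\tau_\nu$-step: the paper first notes that the $\varpi$-divisibility of the other components makes them individually $\tau_\nu$-invariant so that $X_{-\alpha}$ alone must be fixed, whereas you project $\tau_\nu\cdot X - X = 0$ directly onto the $\mathfrak t^{\der}$-component (which only $X_{-\alpha}$ can reach under $\ad(Y_\alpha)$) — a mild simplification of the same calculation.
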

\begin{proof}

    As $\rho$ is ramified at $\nu$ and $\rho$ is the inverse limit of the lifts $\rho_n$ we can find a positive integer $s$ such that $\rho_s$ is ramified at $\nu$. From the way the local conditions at $\nu$ are defined we must have that the ramification is unipotent in some root $\alpha$, associated to some split maximal torus $T$. In other words $\rho_s(\tau_\nu)$ is a non-trivial element of $U_\alpha(\mathcal O/\varpi^s)$.

    From this it easily follows that $H^0(\Gamma_{F_\nu},\rho_n(\mathfrak g^{\der})^*) \subseteq \varpi^{n-s-1}\rho_{s-1}(\mathfrak g^{\der})^*$ for every $n \ge s$. Indeed, as in Lemma \ref{4.1} based on the first and the last bullet point of Definition \ref{3.21} (or Lemma \ref{3.7}) from the action of $\sigma_\nu$ we deduce that for $X \in H^0(\Gamma_{F_\nu},\rho_n(\mathfrak g^{\der}))$, every term in the root decomposition is a multiple of $\varpi^{n-M}$ (or $\varpi^{n-eD}$ for primes that we moved from $S_N$ to $Q$), except the one corresponding to the root $-\alpha$. As any ramification at $\nu$ must occur during the relative deformation part, looking back at Claim \ref{3.31} we get that $s > N-M > M \ge eD$, hence all these terms are multiples of $\varpi^{n-s-1}$. On the other hand, according to the same computation the action of $\tau_\nu$ gives us that the term corresponding to the root $-\alpha$ is also a multiple of $\varpi^{n-s-1}$.

    Finally, we get $H^0(\Gamma_{F_\nu},\rho(\mathfrak g^{\der})^*) = \varprojlim_n H^0(\Gamma_{F_\nu},\rho_n(\mathfrak g^{\der})^*) = 0$, which by Proposition \ref{3.1} yields the formal smoothness at $\nu$.
    
\end{proof}

Although in the relative deformation part we must allow ramification at primes in $Q$ for the argument to work, there is no guarantee that $\rho$ will be ramified at any of the primes in $Q$. In what follows we will develop a method in which we will force the ramification at primes in $Q$. This method might require us to change the initial set $Q$ and subsequently the initial lift $\rho$. More precisely, the main idea is to either discard each prime in $Q$ at which $\rho$ isn't ramified or to replace it with $2$ other primes. We will choose these primes carefully, in such a manner that first a relative Selmer group with local conditions at this new set of primes will vanish. This will allow us to run the relative deformation argument and produce a new characteristic $0$ lift. Based on the conditions imposed in our choice of primes it will be ramified at these two new primes, as well as all the primes where $\rho$ was already ramified. We then run this argument recursively exhausting the subset of $Q$ at which $\rho$ is unramified, which eventually leave us with a characteristic $0$ lift which is ramified at all primes in $Q$.

We will still rely on the same lifting method and hence on the vanishing of relative dual Selmer group in order to produce the characteristic $0$ lift. However, if beforehand we knew that the $r$-th relative dual Selmer group vanished, after replacing a prime in $Q$ at which $\rho$ doesn't ramify with two other primes we might no longer be able to guarantee the vanishing of the $r$-th relative dual Selmer group. Instead, we will show that the $2r$-th relative dual Selmer group is $0$. Therefore, running this process recursively, starting with $r=M$ we end up with a vanishing $2^dM$-th relative dual Selmer group, where $d$ is the number of primes in $Q$ at which $\rho$ isn't ramified. At the end, we use this to produce the characteristic $0$ lift that will be ramified at all the primes in the new set $Q$. 

Now, suppose that the $r$-th relative dual Selmer group $\overline{H_{\mathcal L^\perp_{r,S' \cup Q}}^1(\Gamma_{F,S' \cup Q},\rho_r(\mathfrak g^{\der})^*)}$ vanishes for some $r \ge M$, divisible by $e$, and we used this to produce a characteristic $0$ lift $\rho$. Working with relative Selmer groups modulo $\varpi^{2r}$ will require us to first define the local conditions modulo higher powers of $\varpi$. The next lemma shows us that we can do this in a suitable manner.

\begin{lem}

    \label{4.5}

    For $r < m \le 2r$ there exist local conditions $\mathcal L_{m,S' \cup Q} = \{ L_{m,\nu}\}_{\nu \in S' \cup Q}$ defined by cocycles $Z_{m,\nu}$ satisfying Assumptions \textup{\ref{3.10}} such that
    \begin{enumerate}
        \item The image of $\mathcal L_{m,S' \cup Q}$ under the map induced by the reduction $\mathcal O/\varpi^{m} \to \mathcal O/\varpi^r$ is exactly $\mathcal L_{r,S' \cup Q}$.
        \item For each $\nu \in S' \cup Q$ and $n$ large enough there exists a class of lifts $D_{m,\nu}(\mathcal O/\varpi^n) \subseteq \Lift^\mu_{\restr{\overline{\rho}}{\Gamma_{F_\nu}}}(\mathcal O/\varpi^n)$ such that the fibers of $D_{m,\nu}(\mathcal O/\varpi^{n+m}) \to D_{m,\nu}(\mathcal O/\varpi^n)$ consisting of lifts over $\rho_{m}$ are non-empty principal homogeneous spaces over the submodules $Z_{m,\nu}$.
        \item $$|L_{m,\nu}| = \begin{cases} |H^0(\Gamma_{F_\nu},\rho_m(\mathfrak g^{\der}))| & \text{if } \nu \nmid p \\ |H^0(\Gamma_{F_\nu},\rho_m(\mathfrak g^{\der}))| \cdot |\mathcal O/\varpi^m|^{[F_\nu:\mathbb Q_p]\dim_k(\mathfrak n)} & \text{if } \nu \mid p\end{cases}$$

        \vspace{2 mm}

        \noindent where $\mathfrak n$ is the Lie algebra of a Borel subgroup of $G$.
    \end{enumerate}
    
\end{lem}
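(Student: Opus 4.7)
The plan is to construct the spaces $Z_{m,\nu}$ separately for primes $\nu\in S'$ and for primes $\nu\in Q$, in each case by running the existing level-$r$ construction at the new level $m$, and to verify the compatibility with reduction modulo $\varpi^r$ from the uniformity of that construction in the level. Throughout we use that $N\ge 4M$, so $m+s\le 2M+M=3M\le N$ for any $m\le 2r\le 2M$ and any $s\le M$; in particular every reduction of $\rho_N$ used below is legitimately available.

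For $\nu\in S'$ of types (1), (2), and (4), we have a characteristic $0$ local lift $\rho_\nu$ that is a formally smooth point on the relevant irreducible component of the local lifting ring (with multiplier $\mu$, and with fixed inertial type $\tau$ and Hodge type $\textup{\textbf{v}}$ if $\nu\mid p$). The construction in \cite[Proposition $4.7$]{FKP21}, applied with the parameter $M$ replaced by $m$, yields a class of lifts $D_{m,\nu}(\mathcal O/\varpi^n)\subseteq\Lift^{\mu}_{\restr{\overline{\rho}}{\Gamma_{F_\nu}}}(\mathcal O/\varpi^n)$ for all sufficiently large $n$, together with a cocycle space $Z_{m,\nu}\subseteq Z^1(\Gamma_{F_\nu},\rho_m(\mathfrak g^{\der}))$ acting freely and transitively on the non-empty fibers of $D_{m,\nu}(\mathcal O/\varpi^{n+m})\to D_{m,\nu}(\mathcal O/\varpi^n)$. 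Since this construction is uniform in $m$, reduction of coefficients from $\mathcal O/\varpi^m$ to $\mathcal O/\varpi^r$ carries $Z_{m,\nu}$ onto $Z_{r,\nu}$, verifying condition $(1)$. The size of $L_{m,\nu}$ then follows from formal smoothness exactly as in Claim~\ref{3.30}: for $\nu\nmid p$ it equals $|H^0(\Gamma_{F_\nu},\rho_m(\mathfrak g^{\der}))|$, and for $\nu\mid p$ the Hodge-Tate regularity of $\rho_\nu$ contributes the additional factor $|\mathcal O/\varpi^m|^{[F_\nu:\mathbb Q_p]\dim_k(\mathfrak n)}$ coming from the tangent space of the ordinary lifting ring.

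For $\nu\in Q$ (including the primes moved from $S_N\setminus S_{N-M}$), the level-$r$ conditions are the spaces $Z^{\alpha_\nu}_{r,\nu}$ of Lemma~\ref{3.16}, built with $s=M$ for primes of $Q_{N,M}$ and $s=eD$ for the moved primes. Applying Lemma~\ref{3.16} again with $M$ replaced by $m$ (which is legitimate by the estimate $m+s\le N$ above and by the fact that the hypotheses of the lemma are visible on $\rho_{m+s}\coloneqq\rho_N\pmod{\varpi^{m+s}}$) produces cocycle spaces $Z^{\alpha_\nu}_{m,\nu}$ free of rank $\dim\mathfrak g^{\der}$ over $\mathcal O/\varpi^m$, together with the class of lifts $D_{m,\nu}$ and the stated principal-homogeneous-space structure from the third bullet of the lemma. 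The compatibility $Z^{\alpha_\nu}_{m,\nu}\to Z^{\alpha_\nu}_{r,\nu}$ under reduction is immediate from the explicit formulas in Remark~\ref{3.17} — each of the three cocycle types is given by the same prescription independently of the modulus — and the short exact sequences of Assumption~\ref{3.10} for the $Z^{\alpha_\nu}_{m,\nu}$ are the second bullet of Lemma~\ref{3.16} itself. The size $|L^{\alpha_\nu}_{m,\nu}|=|H^0(\Gamma_{F_\nu},\rho_m(\mathfrak g^{\der}))|$ then follows from the coboundary count of Remark~\ref{3.18}. The step I expect to be the most delicate is checking that the reduction $Z_{m,\nu}\twoheadrightarrow Z_{r,\nu}$ is genuinely surjective and not merely well-defined; for primes of $Q$ this is built into the $\mathcal O/\varpi^m$-freeness of the construction, and for primes of $S'$ it reduces to the non-emptiness of the fibers $D_{m,\nu}(\mathcal O/\varpi^{n+m})\to D_{m,\nu}(\mathcal O/\varpi^n)$, which is guaranteed by formal smoothness of $\rho_\nu$.
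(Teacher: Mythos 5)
There is a genuine gap: your argument does not address the key subtlety that the paper itself isolates in Remark~\ref{4.6}, and which occupies the bulk of the paper's proof of this lemma. The new conditions $\mathcal L_{m,S'\cup Q}$ cannot be built from the original data ($\rho_\nu$ for primes of $S'$, or $\rho_N$ from the doubling method for primes of $Q$); they must be built from reductions of the \emph{current} characteristic~$0$ lift $\rho$, because the whole point of the extended conditions is to parametrize fibers of lifts over $\rho_m$ (so that the relative deformation argument in the next step can actually run). Your construction for primes in $Q$ explicitly takes $\rho_{m+s} \coloneqq \rho_N \pmod{\varpi^{m+s}}$, but $\rho_N$ and $\rho_m$ differ once $m$ exceeds $N-M$, and $\rho_N$ provides no data at all at primes of $Q$ that were added during later recursive steps (such as the $\nu_1,\nu_2$ produced in Propositions~\ref{4.10} and \ref{4.16}) since those lie outside $S_N$. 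Similarly, for $\nu\in S'$ of types (1)--(2) the paper feeds $\restr{\rho}{\Gamma_{F_\nu}}$ into \cite[Proposition~$4.7$]{FKP21}, not the fixed local lift $\rho_\nu$, precisely because the class of lifts $D_{m,\nu}$ has to be centered on the lift the global relative deformation argument is actually tracking.

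Once you accept that the new conditions must be built from $\rho$, the statement ``the construction is uniform in $m$'' no longer finishes the proof, because the old conditions $\mathcal L_{r,S'\cup Q}$ were built from a \emph{different} lift $\rho'$ (the previous recursive step's output, or the initial doubling-method data). Property~$(1)$ then becomes a genuine claim: the $\varpi^r$-reductions of cocycle spaces produced from $\rho$ agree with cocycle spaces produced from $\rho'$. The paper closes this by observing that, since $\overline{H^1_{\mathcal L^\perp_{r,S'\cup Q}}} = 0$, the argument of Claim~\ref{3.31} starts building upward from at least $\rho'_{3r}$, so $\rho_{3r} = \rho'_{3r}$; this gives $\restr{\rho_{r+s}}{\Gamma_{F_\nu}} = \restr{\rho'_{r+s}}{\Gamma_{F_\nu}}$ for primes where Lemma~\ref{3.16} is used, and, via Remark~\ref{5.2} (the $\varpi^r$-conditions of \cite[Proposition~$4.7$]{FKP21} depend only on $\rho_r$), the same conclusion at primes over $p$. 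Your proof contains none of this; the ``most delicate step'' you flag (surjectivity of $Z_{m,\nu} \twoheadrightarrow Z_{r,\nu}$) is not the hard part, and the argument as written would not generalize beyond the very first application of the lemma.
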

\begin{proof}

    The new local conditions can be produced similarly to the local conditions $\mathcal L_{r,S' \cup Q}$. At the primes in $S'$ of the first two types we obtain them by applying \cite[Proposition $4.7$]{FKP21} with $r_0 = m$ and the local lift $\restr{\rho}{\Gamma_{F_\nu}}$. For primes in $S'$ of the third type and primes in $Q$ we apply Lemma \ref{3.16} using the local lift $\restr{\rho_{M+s}}{\Gamma_{F_\nu}}$ with $M=2r$ and $s$ being the largest integer such that $\restr{\rho_s}{\Gamma_{F_\nu}}$ is trivial modulo the center. We note that the value of $s$ will depend on $\nu$, but it will always be smaller than or equal to $2r$. Finally, at primes in $S'$ of the fourth type we set the unramified local conditions which can be easily defined modulo any power of $\varpi$.

    Immediately from \cite[Proposition $4.7$]{FKP21} and Lemma \ref{3.16} we get that the cocycles used to define the local conditions will satisfy Assumptions \ref{3.10}. The same is true for the last two properties. The conditions $\mathcal L_{r,S' \cup Q}$ were produced by reductions of the characteristic $0$ lift appearing in the previous recursive step (if this is the first step we use the given local lifts and lifts coming from the doubling method), which we label by $\rho'$. The new conditions $\mathcal L_{m,S' \cup Q}$ are defined using the same methods, so to prove the first part it suffices to show that the local lifts used to produce $\mathcal L_{r,S' \cup Q}$ and the ones used to produce $\mathcal L_{m,S' \cup Q}$ are equal modulo a certain power of $\varpi$. Indeed, for primes where we used Lemma \ref{3.16} to produce the local conditions it is enough to show that $\restr{\rho_{r+s}}{\Gamma_{F_\nu}}$ and $\restr{\rho'_{r+s}}{\Gamma_{F_\nu}}$ are the same. At primes in $S'$ of the first type we used \cite[Proposition $4.7$]{FKP21} to define the local conditions, which requires us to use the characteristic $0$ lift. However, as discussed in more details in Remark \ref{5.2} the modulo $\varpi^r$ local conditions will depend only on the reduction modulo $\varpi^r$ of this characteristic $0$ lift. Therefore it suffices to show that $\restr{\rho_{r}}{\Gamma_{F_\nu}}$ and $\restr{\rho'_{r}}{\Gamma_{F_\nu}}$.
    
    As $\overline{H_{\mathcal L^\perp_{r,S' \cup Q}}^1(\Gamma_{F,S' \cup Q},\rho'_r(\mathfrak g^{\der})^*)} = 0$, the lifting method in the forcing ramification argument that follows will start building upward from at least $\rho'_{3r}$. This means that $\rho'_{3r}$ and $\rho_{3r}$ are equal. In particular, as $s \le 2r$ this means that $\restr{\rho_{r+s}}{\Gamma_{F_\nu}}$ and $\restr{\rho'_{r+s}}{\Gamma_{F_\nu}}$ are the same, as well as $\restr{\rho_{r}}{\Gamma_{F_\nu}}$ and $\restr{\rho'_{r}}{\Gamma_{F_\nu}}$. Therefore, as explained above the first property is satisfied, too.
    
\end{proof}

\begin{rem}

    \label{4.6}

    For some of the primes in $S' \cup Q$ \cite[Proposition $4.7$]{FKP21} and Lemma \ref{3.16} allow us to produce local conditions modulo higher powers of $\varpi$ using the previous lift $\rho'$. However, these will not be good enough for us and we want to point out the small subtlety of why that is the case. These old local conditions will parametrize the fibers which consist of lifts over $\rho'_m$. There is no guarantee that the local restrictions of $\rho$ will lie in those fibers. Indeed, $\rho$ is a lift of $\rho'_r$, but not necessarily of $\rho'_m$. Therefore, we might not be able to run the relative deformation argument starting from $\rho_n$ for $n\ge r$. This becomes more evident as we work modulo higher powers of $\varpi$, since the relative deformation argument doesn't allow us to prescribe the image of the lifts. 

    Alternatively, one can spot the issue in the fact that the old local conditions were living in $H^1(\Gamma_{F_\nu},\rho'_{m}(\mathfrak g^{\der}))$, while we need ones which are valued in $H^1(\Gamma_{F_\nu},\rho_m(\mathfrak g^{\der}))$. For bigger values of $m$, $\rho'_m$ and $\rho_m$ might differ, hence these cohomology groups may not be the same. 
    
\end{rem}

These extensions of the local lifting conditions will require us to update the bounds in (\ref{Nbound}) for any subsequent application of the relative deformation argument. Let $N_0'$ be the maximum of all the $n_0$ produced by applying \cite[Proposition $4.7$]{FKP21} with $r_0 = 2r$, as in Lemma \ref{4.5}. From now on we will always choose $n$ that satisfies:

\begin{equation}
    \label{Nbound2} n \ge \max\{N_0'+2r,(N_1+1)+2r,t+2r,12r\}  \tag{$2$}
\end{equation}

\vspace{2 mm}

\noindent as well as being divisible by $e$. These bounds will guarantee that $n$ is large enough so that the fibers $D_{2r,\nu}(\mathcal O/\varpi^{n+2r}) \to D_{2r,\nu}(\mathcal O/\varpi^n)$ over $\rho_{2r}$ are preserved under the action of the corresponding local conditions. 

We now partition the set $Q$ as $Q_{\mathrm{ram}} \sqcup Q_{\mathrm{unr}}$, where a prime $\nu$ belongs to $Q_{\mathrm{ram}}$ if and only if $\rho$ is ramified at it. We suppose that $Q_{\mathrm{unr}} \neq \emptyset$, as otherwise we are done. The next lemma allows us to assume that removing any prime $\nu$ from $Q_{\mathrm{unr}}$ would result into a non-zero $r$-the relative dual Selmer group.

\begin{lem}

    \label{4.7}

    Suppose that $\overline{H_{\mathcal L^\perp_{2r,S' \cup (Q \setminus \nu)}}^1(\Gamma_{F,S' \cup (Q \setminus \nu)},\rho_{2r}(\mathfrak g^{\der})^*)} = 0$ for some $\nu \in Q_{\mathrm{unr}}$. Then, there exists a lift $\rho':\Gamma_{F,S\cup (Q \setminus \nu)} \to G(\mathcal O)$ of $\overline{\rho}$ that is ramified at all primes in $Q_{\mathrm{ram}}$.
    
\end{lem}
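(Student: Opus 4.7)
The strategy is to rerun the relative deformation argument of Claim \ref{3.31} with $M$ replaced by $2r$, the ramifying set $S' \cup Q$ replaced by $S' \cup (Q \setminus \nu)$, and $\rho$ itself serving as the seed. Since $\rho$ is unramified at $\nu$ by hypothesis, its restriction to the inertia at $\nu$ is trivial and $\rho$ factors through $\Gamma_{F, S' \cup (Q \setminus \nu)}$, so all reductions of $\rho$ descend to this quotient. Pick an integer $n$ that satisfies the analogue of (\ref{Nbound2}) for $2r$, is divisible by $e$, and is large enough that $\rho \pmod{\varpi^n}$ is already ramified at every prime in $Q_{\mathrm{ram}}$; such an $n$ exists because $Q_{\mathrm{ram}}$ is finite and each of its primes has a finite ramification level in $\rho$. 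Set $\tau_n := \rho \pmod{\varpi^n}$ and $\rho_{n+2r} := \rho \pmod{\varpi^{n+2r}}$. Lemma \ref{4.5} ensures that the level-$2r$ local conditions $\mathcal L_{2r, S' \cup (Q \setminus \nu)}$ and their fibers $D_{m,\nu'}$ are manufactured directly from reductions of $\rho$, so at every $\nu' \in S' \cup (Q \setminus \nu)$ one has $\restr{\tau_n}{\Gamma_{F_{\nu'}}} \in D_{n,\nu'}$ and $\restr{\rho_{n+2r}}{\Gamma_{F_{\nu'}}} \in D_{n+2r,\nu'}$, so the seed meets the initial hypotheses of Claim \ref{3.31}.

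The induction now proceeds verbatim as in the proof of Claim \ref{3.31}. At step $m \ge n$ one sets $\tau_{m+1} := \rho_{m+2r} \pmod{\varpi^{m+1}}$, lifts $\rho_{m+2r}$ to some $\rho'_{m+2r+1}$ using the vanishing of $\Sh^2_{S' \cup (Q \setminus \nu)}(\Gamma_{F, S' \cup (Q \setminus \nu)}, \overline{\rho}(\mathfrak g^{\der}))$ (which still holds by the same Poitou-Tate argument, since $S' \cup (Q \setminus \nu) \supseteq S'$ and we already arranged $\Sh^1_{S'}(\overline{\rho}(\mathfrak g^{\der})^*) = 0$), and extracts the obstruction class $(f_{\nu'}) \in \bigoplus_{\nu'} H^1(\Gamma_{F_{\nu'}}, \rho_{2r}(\mathfrak g^{\der}))/L_{2r,\nu'}$. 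The hypothesis $\overline{H^1_{\mathcal L^\perp_{2r, S' \cup (Q \setminus \nu)}}(\Gamma_{F, S' \cup (Q \setminus \nu)}, \rho_{2r}(\mathfrak g^{\der})^*)} = 0$, together with the exact sequence of Lemma \ref{3.14}, forces the reduction map $H^1_{\mathcal L^\perp_{2r-1}} \to H^1_{\mathcal L^\perp_{2r}}$ to be surjective; its dual furnishes exactly the injectivity in the Poitou-Tate diagram of Claim \ref{3.31} that promotes $(f_{\nu'})$ to a global class $f \in H^1(\Gamma_{F, S' \cup (Q \setminus \nu)}, \rho_{2r}(\mathfrak g^{\der}))$. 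Correcting by $\rho_{m+2r+1} := \exp(\varpi^{m+1} f)\,\rho'_{m+2r+1}$ closes the induction, and $\rho' := \varprojlim_m \tau_m$ gives a lift $\Gamma_{F, S' \cup (Q \setminus \nu)} \to G(\mathcal O)$ of $\overline{\rho}$.

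For ramification at primes in $Q_{\mathrm{ram}}$, property (3) of Claim \ref{3.31} forces $\tau_m \equiv \tau_n \pmod{\varpi^n}$ for all $m \ge n$, so $\rho' \equiv \rho \pmod{\varpi^n}$ globally. By the choice of $n$, $\rho \pmod{\varpi^n}$ is ramified at every prime in $Q_{\mathrm{ram}}$, hence so is $\rho'$. The only genuine nontriviality is the availability of the level-$2r$ local conditions at $\rho$ furnished by Lemma \ref{4.5}: without that compatibility, $\rho$ would not qualify as a valid seed and the induction could not even be started. Beyond this, the argument is bookkeeping, since the hypothesized vanishing of the $2r$-th relative dual Selmer group substitutes transparently for the $M$-th relative dual Selmer vanishing used in Claim \ref{3.31}, and preservation of the existing ramification pattern is automatic from the inductive structure.
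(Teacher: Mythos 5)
Your proof is correct and takes essentially the same approach as the paper's: run the inductive argument of Claim \ref{3.31} over $\Gamma_{F,S'\cup(Q\setminus\nu)}$ starting from a level high enough that $\rho$ mod $\varpi^n$ already exhibits ramification at every prime of $Q_{\mathrm{ram}}$, then use the compatibility property (3) to conclude $\rho'\equiv\rho\pmod{\varpi^n}$. The only surface difference is indexing (you start the pair at $(n,n+2r)$ where the paper writes $(\rho_{n-2r},\rho_n)$), and you spell out two bookkeeping points the paper leaves implicit, namely that $\rho$ factors through $\Gamma_{F,S'\cup(Q\setminus\nu)}$ because it is unramified at $\nu$, and that $\Sh^2$ still vanishes for the smaller ramifying set.
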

\begin{proof}

As $\rho$ is an inverse limit of modulo $\varpi^s$ lifts, for each prime in $Q_{\mathrm{ram}}$ there exists an integer $s_\nu$ such that $\rho_{s_\nu}$ is ramified at that prime, too. So, in addition to the bounds coming from (\ref{Nbound2}) we let $n$ be larger than all the integers $s_\nu + 2r$. Then, running the argument in Claim \ref{3.31} starting with the pair $(\rho_{n-2r},\rho_n)$, viewing them as maps from  $\Gamma_{F,S' \cup (Q \setminus \nu)}$, we can produce a characteristic $0$ lift $\rho'$. By the choice of the integer $n$ this lift $\rho'$ will be formally smooth at all the primes at which $\rho$ was formally smooth. In particular, $\rho'$ will be ramified at all primes in $Q_{\mathrm{ram}}$.

\end{proof}

Using this lemma we can assume that $\overline{H_{\mathcal L^\perp_{2r,S' \cup (Q \setminus \nu)}}^1(\Gamma_{F,S' \cup (Q \setminus \nu)},\rho_{2r}(\mathfrak g^{\der})^*)} \neq 0$ for all $\nu \in Q_{\mathrm{unr}}$, as otherwise we can run the forcing ramification argument with $\rho'$ in place of $\rho$ and $S' \cup (Q \setminus \nu)$ instead of $S' \cup Q$. This will reduce the size of the set $Q_{\mathrm{unr}}$ which is our ultimate goal. This assumption will play a crucial role in the dimension computations that follow.

\begin{lem}

    \label{4.8}
    
    Let $\nu \in Q_{\mathrm{unr}}$. Set $Q_0 \coloneqq Q \setminus \{\nu\}$. Then
    
    $$\overline{H_{\mathcal L_{2r,S' \cup Q_0}}^1(\Gamma_{F,S' \cup Q_0},\rho_{2r}(\mathfrak g^{\der}))} \quad \quad \text{and} \quad \quad \overline{H_{\mathcal L^\perp_{2r,S' \cup Q_0}}^1(\Gamma_{F,S' \cup Q_0},\rho_{2r}(\mathfrak g^{\der})^*)}$$ 

    \vspace{2 mm}
    
    \noindent have dimension $1$.
    
\end{lem}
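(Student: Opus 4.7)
The plan is to prove that the primal and dual $(2r,1)$-relative Selmer groups have equal $k$-dimension by balancedness, and to pin both dimensions to exactly $1$ by working on the dual side. The lower bound of $1$ is already built into the standing hypothesis (the assumption following Lemma \ref{4.7} ensures $\overline{H^1_{\mathcal L^\perp_{2r,S'\cup Q_0}}(\Gamma_{F,S'\cup Q_0},\rho_{2r}(\mathfrak g^{\der})^*)} \neq 0$), so the entire content of the lemma is a matching upper bound of $1$ on the dual side.

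The main obstacle is this upper bound. I would first bootstrap the hypothesis $\overline{H^1_{\mathcal L^\perp_{r,S'\cup Q}}(\Gamma_{F,S'\cup Q},\rho_r(\mathfrak g^{\der})^*)}=0$ used to produce $\rho$ to its analogue at level $2r$: by Lemma \ref{4.5}(1) the level-$2r$ conditions reduce to the level-$r$ conditions, so a level-$2r$ class first reduces modulo $\varpi^r$ into the (vanishing) level-$r$ relative Selmer group and hence to $0$ modulo $\varpi$, giving $\overline{H^1_{\mathcal L^\perp_{2r,S'\cup Q}}(\Gamma_{F,S'\cup Q},\rho_{2r}(\mathfrak g^{\der})^*)}=0$. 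Next I introduce the restriction-at-$\nu$ map
$$\psi \colon H^1_{\mathcal L^\perp_{2r,S'\cup Q_0}}(\Gamma_{F,S'\cup Q_0},\rho_{2r}(\mathfrak g^{\der})^*) \longrightarrow \frac{H^1_{\mathrm{unr}}(\Gamma_{F_\nu},\rho_{2r}(\mathfrak g^{\der})^*)}{L_{2r,\nu}^{\alpha,\perp}\cap H^1_{\mathrm{unr}}(\Gamma_{F_\nu},\rho_{2r}(\mathfrak g^{\der})^*)},$$
which is well-defined because classes in the smaller Galois group are automatically unramified at $\nu\notin S'\cup Q_0$, and whose target is a cyclic $\mathcal O/\varpi^{2r}$-module by the third bullet of Lemma \ref{3.22}(2) applied at level $2r$ (valid since the bound (\ref{Nbound2}) gives $n\ge 12r$). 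The key claim is $\ker(\psi)\subseteq \varpi H^1_{\mathcal L^\perp_{2r,S'\cup Q_0}}$: if $c\in\ker(\psi)$ then its restriction at $\nu$ lies in $L_{2r,\nu}^{\alpha,\perp}$, so inflating $c$ to $\Gamma_{F,S'\cup Q}$ gives a class in $H^1_{\mathcal L^\perp_{2r,S'\cup Q}}$, which by the bootstrap reduces to $0$ modulo $\varpi$; by injectivity of inflation on $H^1$ the reduction of $c$ in the smaller group also vanishes, and the exact sequence of Lemma \ref{3.14} with $a=2r-1$, $b=1$ identifies this kernel of reduction modulo $\varpi$ with $\varpi H^1_{\mathcal L^\perp_{2r,S'\cup Q_0}}$. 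Consequently $\overline{H^1_{\mathcal L^\perp_{2r,S'\cup Q_0}}}$ is a quotient of $\im(\psi)/\varpi\im(\psi)$; since $\im(\psi)$ is a cyclic $\mathcal O/\varpi^{2r}$-module this latter quotient has $k$-dimension at most $1$, which together with the lower bound gives dimension exactly $1$ on the dual side.

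For the transfer to the primal side I would verify that $\mathcal L_{m,S'\cup Q_0}$ is balanced for all $1\le m\le 2r$: the Greenberg--Wiles computation of Claim \ref{3.30} extends verbatim using the sizes recorded in Lemma \ref{4.5}(3), and Lemma \ref{3.24} shows that adjusting the subset of $Q_{N,M}$ preserves balancedness since the local Greenberg--Wiles factors at primes in $Q_{N,M}$ are trivial by Lemma \ref{3.22}(2). Combined with the vanishing of the global invariants $\overline{\rho}(\mathfrak g^{\der})^{\Gamma_F}$ and $(\overline{\rho}(\mathfrak g^{\der})^*)^{\Gamma_F}$ coming from Assumptions \ref{1.1}, Lemma \ref{3.15} then gives equal $k$-dimensions on the primal and dual sides, so the primal relative Selmer group has dimension $1$ as well.
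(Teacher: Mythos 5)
Your proof is correct and relies on the same key ingredients as the paper: the cyclic cokernel of Lemma \ref{3.22}(2), the inherited vanishing of the $(2r,1)$-relative dual Selmer group over $S'\cup Q$, the standing non-vanishing hypothesis arranged after Lemma \ref{4.7}, and balancedness to transfer between primal and dual. The only structural difference is one of side: the paper argues on the primal side via a short exact sequence over $\Gamma_{F,S'\cup Q}$ with the relaxed local condition $L'_{2r,\nu} = L_{2r,\nu} + L^{\mathrm{unr}}_{2r,\nu}$ at $\nu$, showing any two elements of that relaxed Selmer group reduce mod $\varpi$ to scalar multiples of each other; you argue on the dual side via the restriction map $\psi$ and bound the kernel of mod-$\varpi$ reduction. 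The second and third bullet points of Lemma \ref{3.22}(2) supply precisely the two cyclic cokernels each version uses, so the two proofs are dual reorganizations of the same argument rather than genuinely distinct routes. Two minor imprecisions worth noting: first, the exact sequence of Lemma \ref{3.14} with $a = 2r-1$, $b=1$ identifies $\ker(\text{red mod }\varpi)$ with the image of multiplication by $\varpi$ from level $2r-1$, which contains but need not equal $\varpi H^1_{\mathcal L^\perp_{2r,S'\cup Q_0}}$; your actual downstream step only needs $\ker\psi\subseteq\ker(\text{red mod }\varpi)$, which your inflation argument does establish, so this is cosmetic. Second, the applicability of Lemma \ref{3.22}(2) at level $2r$ is justified, as the paper notes, because $\rho_{4r}$ is unramified at $\nu\in Q_{\mathrm{unr}}$ and the level-$2r$ conditions at $\nu$ were produced by applying Lemma \ref{3.16} to this unramified local lift (via Lemma \ref{4.5}), not by the numerical bound $n\ge 12r$ alone.
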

\begin{proof}

    We set $L_{2r,\nu}'= L_{2r,\nu} + L_{2r,\nu}^{\mathrm{unr}}$. We then have a commutative diagram of short exact sequences:
    
    \begin{center}
        \begin{tikzcd}
            0 \arrow[r] & {H^1_{\mathcal L_{2r,S' \cup Q}}(\Gamma_{F,S' \cup Q},\rho_{2r}(\mathfrak g^{\der}))} \arrow[r] \arrow[d, "0 \,\,"'] & {H^1_{\mathcal L_{2r,S' \cup Q_0} \cup L_{2r,\nu}'}(\Gamma_{F,S' \cup Q},\rho_{2r}(\mathfrak g^{\der}))} \arrow[d] \arrow[r] & {L_{2r,\nu}'/L_{2r,\nu}} \arrow[d] \\
            0 \arrow[r] & {H^1_{\mathcal L_{1,S' \cup Q}}(\Gamma_{F,S' \cup Q},\overline{\rho}(\mathfrak g^{\der}))} \arrow[r]                       & {H^1_{\mathcal L_{1,S' \cup Q_0} \cup L_{1,\nu}'}(\Gamma_{F,S' \cup Q},\overline{\rho}(\mathfrak g^{\der}))} \arrow[r]           & {L_{1,\nu}'/L_{1,\nu}}          
        \end{tikzcd}
    \end{center}
    
    \vspace{2 mm}
    
    Here, the vertical maps are just reductions modulo $\varpi$, while the last horizontal maps are restriction to $\Gamma_{F_\nu}$. By the overarching assumption of the forcing ramification method we have $\overline{H_{\mathcal L^\perp_{r,S' \cup Q}}^1(\Gamma_{F,S' \cup Q},\rho_r(\mathfrak g^{\der})^*)} = 0$. Hence, $\overline{H_{\mathcal L^\perp_{2r,S' \cup Q}}^1(\Gamma_{F,S' \cup Q},\rho_{2r}(\mathfrak g^{\der})^*)} = 0$, as well. By Lemma \ref{3.24} the local conditions $\mathcal L_{2r,S' \cup Q}$ are balanced and therefore $\overline{H_{\mathcal L_{2r,S' \cup Q}}^1(\Gamma_{F,S' \cup Q},\rho_{2r}(\mathfrak g^{\der}))} = 0$, which gives us that the first vertical map is $0$.

    The local conditions $L_{2r,\nu}$ we defined in Lemma \ref{4.5} and were produced by applying Lemma \ref{3.16} using a reduction of $\rho$ modulo $\varpi^{4r}$(or possibly a lower power of $\varpi$). As $\rho$ isn't ramified at $\nu$ the same is true for $\rho_{4r}$. Therefore the conclusions from Lemma \ref{3.22} are valid for the local lifting conditions $L_{2r,\nu}$ as they only relied on using a lift that is not ramified at $\nu$ to produce them. Therefore, we conclude that $L_{2r,\nu}'/L_{2r,\nu}$ is generated by the class of the unramified cocycle $\phi_{H_\alpha}^{\mathrm{un},2r}$ and is isomorphic to $\mathcal O/\varpi^{2r}$. This means that for any two cocycles $\phi_1,\phi_2 \in H^1_{\mathcal L_{2r,S' \cup Q_0} \cup L_{2r,\nu}'}(\Gamma_{F,S' \cup Q},\rho_{2r}(\mathfrak g^{\der}))$ we can find $a \in \mathcal O/\varpi^{2r}$ such that $\phi_1 = a\phi_2 + \phi$ (or possibly $\phi_2 = a\phi_1 + \phi$) for some $\phi \in H^1_{\mathcal L_{2r,S' \cup Q}}(\Gamma_{F,S' \cup Q},\rho_{2r}(\mathfrak g^{\der}))$. The first vertical map being $0$ yields $\overline{\phi} = 0$, and so the reductions modulo $\varpi$ of any two elements in $H^1_{\mathcal L_{2r,S' \cup Q_0} \cup L_{2r,\nu}'}(\Gamma_{F,S' \cup Q},\rho_{2r}(\mathfrak g^{\der}))$ differ by a scalar multiple. Thus, $\overline{H^1_{\mathcal L_{2r,S' \cup Q_0} \cup L_{2r,\nu}'}(\Gamma_{F,S' \cup Q},\rho_{2r}(\mathfrak g^{\der}))}$ has dimension at most $1$. On the other hand, we have assumed that $\overline{H_{\mathcal L_{2r,S' \cup Q_0}}^1(\Gamma_{F,S' \cup Q_0},\rho_{2r}(\mathfrak g^{\der}))}$ is a non-zero subspace of it. Combining these two results we get that $\overline{H_{\mathcal L_{2r,S' \cup Q_0}}^1(\Gamma_{F,S' \cup Q_0},\rho_{2r}(\mathfrak g^{\der}))}$ has dimension $1$. Using the balancedness of the local conditions the same can be said about $\overline{H_{\mathcal L^\perp_{2r,S' \cup Q_0}}^1(\Gamma_{F,S' \cup Q_0},\rho_{2r}(\mathfrak g^{\der})^*)}$.
    
\end{proof}

\begin{rem}

    \label{4.9}

    A similar computation shows that $\overline{H_{\mathcal L_{r,S' \cup Q_0}}^1(\Gamma_{F,S' \cup Q_0},\rho_{r}(\mathfrak g^{\der}))}$ and $\overline{H_{\mathcal L^\perp_{r,S' \cup Q_0}}^1(\Gamma_{F,S' \cup Q_0},\rho_{r}(\mathfrak g^{\der})^*)}$ are also one dimensional and therefore coincide with the corresponding $2r$-relative (dual) Selmer groups.

\end{rem}

We now fix $\nu_0 \in Q_{\mathrm{unr}}$ and as above we label $Q \setminus \{\nu_0\}$ by $Q_0$. We can write $\overline{H_{\mathcal L_{2r,S' \cup Q_0}}^1(\Gamma_{F,S' \cup Q_0},\rho_{2r}(\mathfrak g^{\der}))} = \langle \overline{\phi} \rangle$ and $\overline{H_{\mathcal L^\perp_{2r,S' \cup Q_0}}^1(\Gamma_{F,S' \cup Q_0},\rho_{2r}(\mathfrak g^{\der})^*)} = \langle \overline{\psi} \rangle$ as a result of the lemma. As in Lemma \ref{4.7}, for every prime $\nu \in Q_{\mathrm{ram}}$ we can choose an integer $s_\nu$ such that $\rho_{s_\nu}$ is ramified at $\nu$, too. We now fix an integer $n$ that is larger than all the integers $s_\nu + 2r$, while also satisfying the inequality in (\ref{Nbound2}). The next proposition will allow us to choose the first prime $\nu_1$.

\begin{prop}

    \label{4.10}

    There exists a prime $\nu \in Q_{n,4r}$, disjoint from $S' \cup Q$ such that

    \begin{itemize}
        \item $\restr{\rho_n}{\Gamma_{F_\nu}} \in \Lift_{\restr{\overline{\rho}}{\Gamma_{F_\nu}}}^{\alpha,\mu}(\mathcal O/\varpi^n)$, but $\restr{\rho_{n+1}}{\Gamma_{F_\nu}} \notin \Lift_{\restr{\overline{\rho}}{\Gamma_{F_\nu}}}^{\alpha,\mu}(\mathcal O/\varpi^{n+1})$,
        \item $\restr{f}{\Gamma_{F_\nu}} \in L_{2r,\nu}^\alpha$ for each $f \in H_{\mathcal L_{2r,S' \cup Q_0}}^1(\Gamma_{F,S' \cup Q_0},\rho_{2r}(\mathfrak g^{\der}))$; and
        \item $\restr{\overline{\psi}}{\Gamma_{F_\nu}} \notin L_{1,\nu}^{\alpha,\perp}$.
    \end{itemize}

    \noindent where $\alpha$ is the root associated to $\nu$ in the definition of the set $Q_{n,4r}$ and the local conditions at $\nu$ are defined by applying Lemma \textup{\ref{3.16}} with $M=n-4r$ and $s=4r$.
    
\end{prop}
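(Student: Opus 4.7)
The plan is to construct $\nu$ by a Chebotarev density argument in a carefully chosen Galois extension of $F$, following the same outline as Proposition~\ref{3.26} (hence \cite[Proposition $6.8$]{FKP21}) but with two new features: the primal Selmer condition now forces $f$ \emph{into} the local condition rather than outside, and we impose an extra constraint at level $n+1$ that will eventually make the produced lift ramify at $\nu$.

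First I would set up the compositum. Let $L/F$ be the extension generated by $K_\infty$, by $F(\overline\rho,\rho_{n+1}(\mathfrak g^{\der}))$, and by the fields cut out by fixed cocycle representatives of a generating set of $H^1_{\mathcal L_{2r,S'\cup Q_0}}(\Gamma_{F,S'\cup Q_0},\rho_{2r}(\mathfrak g^{\der}))$ together with a cocycle representing $\overline\psi$. Using Lemma~\ref{3.19}, together with the vanishing from Lemma~\ref{3.20} (applied with parameter at least $2r$, which is admissible since the bound~(\ref{Nbound2}) on $n$ forces $n+1\ge 2r$), these cocycle extensions are linearly disjoint over $F_{n+1}^*$, so Frobenius may be prescribed independently in each piece.

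Next I would build the desired Frobenius image in $G(\mathcal O/\varpi^{n+1})$. Applying Lemma~\ref{3.7} with $s=4r$ gives an element $t_n\in T(\mathcal O/\varpi^n)$ trivial modulo $\varpi^{4r}$, with $\alpha(t_n)=q$ for some $q\equiv 1\pmod{\varpi^{4r}}$, $q\not\equiv 1\pmod{\varpi^{4r+1}}$, and $\beta(t_{4r+1})\not\equiv 1,q\pmod{\varpi^{4r+1}}$ for all $\beta\ne\alpha$; this is exactly the $Q_{n,4r}$ data. To force the level-$(n{+}1)$ failure I lift $t_n$ to $T(\mathcal O/\varpi^{n+1})$ and adjust by an element of the form $\exp(\varpi^n X)$ with $X\in\mathfrak t$ via Remark~\ref{3.6}: since $\alpha|_{\mathfrak t}$ is surjective onto $\mathcal O$, I can select the lift so that $\alpha(t_{n+1})\not\equiv q\pmod{\varpi^{n+1}}$, and tracking the semisimple invariants shows that no $\widehat G(\mathcal O/\varpi^{n+1})$-conjugate of this element satisfies Definition~\ref{3.2}. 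Since $\rho_{n+1}(\Gamma_F)\supseteq\widehat{G^{\der}}(\mathcal O/\varpi^{n+1})$ by Theorem~\ref{3.4}(3), such an image is realized by some Galois element. Remark~\ref{3.23} then translates the cocycle conditions into linear conditions on $f(\sigma_\nu)$ and $\overline\psi(\sigma_\nu)$: namely, $f(\sigma_\nu)\in\ker(\alpha|_{\mathfrak t})\oplus\bigoplus_\beta\mathfrak g_\beta$ for each generator $f$, and $\overline\psi(\sigma_\nu)\notin\mathfrak g_\alpha^\perp$.

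The main obstacle is showing that all these conditions can be simultaneously imposed, i.e.\ that the corresponding conjugacy class in $\Gal(L/F)$ is non-empty. The condition on $\overline\psi$ is open and non-empty because $\overline\psi\ne 0$ (the assumption we arranged after Lemma~\ref{4.7}) and because the linear disjointness from the first step lets $\overline\psi(\sigma_\nu)$ range over all of $\overline\rho(\mathfrak g^{\der})^*$. The delicate point is the ``inside $L^\alpha_{2r,\nu}$'' condition: it is a linear constraint on $f$ in the full Selmer group and not merely on its mod-$\varpi$ reduction, so one cannot simply invoke the one-dimensionality of Lemma~\ref{4.8}. I would handle this by induction along the $\varpi$-adic filtration of the Selmer group, reducing at each graded step to the mod-$\varpi$ picture controlled by Lemma~\ref{4.8} through the exact sequences of Lemma~\ref{3.14}, and using that the sub-lattice $L^\alpha_{2r,\nu}$ is compatible with those filtrations by the construction in Lemma~\ref{3.16}. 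Once a suitable conjugacy class is produced, Chebotarev yields infinitely many primes $\nu$ with the required Frobenius, and any choice disjoint from $S'\cup Q$ satisfies the proposition.
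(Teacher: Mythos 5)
Your outline follows the right general shape, and you correctly recognize that the Selmer condition now forces $f$ \emph{into} $L^\alpha_{2r,\nu}$ rather than out of it, but the proposal slides past the two places where the paper's argument actually does work. The more serious gap is the compatibility of the Chebotarev conditions. One must simultaneously prescribe the $Q_{n,4r}$ data (a condition in $F_n^*/F$), the level-$(n{+}1)$ failure (a condition in $F_{n+1}^*/F$), and triviality of $\sigma_\nu$ on the composite $F_{2r}^*(H^1)$ of the fields $F_{2r}^*(f)$ cut out by the cocycles $f\in H^1_{\mathcal L_{2r,S'\cup Q_0}}(\Gamma_{F,S'\cup Q_0},\rho_{2r}(\mathfrak g^{\der}))$. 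Lemmas~\ref{3.19} and~\ref{3.20} do not give linear disjointness of $F_{2r}^*(H^1)$ from $F_{n+1}^*$; these fields both contain $F_{2r}^*$, so they are manifestly not disjoint, and your claim that ``these cocycle extensions are linearly disjoint over $F_{n+1}^*$'' is false. What the paper actually proves, via Lemma~\ref{A.2} on abelianizations of congruence kernels of $\widehat{G^{\der}}$, is that $F_{n+1}^*\cap F_{2r}^*(H^1)\subseteq F_{4r}^*$: $\Gal(F_{2r}^*(H^1)/F_{2r}^*)$ is abelian of exponent $p^{2r/e}$, and $\Gal(F_{4r}^*/F_{2r}^*)$ is the maximal such quotient of $\Gal(F_{n+1}^*/F_{2r}^*)$. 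This is precisely why the auxiliary set is $Q_{n,4r}$ and not $Q_{n,2r}$: Definition~\ref{3.21} makes $\sigma_\nu$ trivial on $F_{4r}^*$, exactly the constraint needed to reconcile the prescriptions. Incidentally, your proposed $\varpi$-adic filtration induction for the second bullet is not only unjustified but superfluous: once $\sigma_\nu$ is trivial on $F_{2r}^*(H^1)$, each $f(\sigma_\nu)=0$, and Remark~\ref{3.23} then places $\restr{f}{\Gamma_{F_\nu}}$ in $L^\alpha_{2r,\nu}$ without any filtration argument.

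The second gap concerns the dual-Selmer condition. You treat $\overline\psi(\sigma_\nu)\notin\mathfrak g_\alpha^\perp$ as automatically achievable because ``$\overline\psi(\sigma_\nu)$ ranges over all of $\overline\rho(\mathfrak g^{\der})^*$,'' but the torus $T$ and root $\alpha$ are not free: they are the same pair appearing in the $Q_{n,4r}$ data and defining $L^\alpha_{2r,\nu}$, so the condition on $\overline\psi$ is coupled to the earlier constraints. The paper first establishes $\restr{\overline\psi}{\Gamma_{F_{n+1}^*(H^1)}}\neq 0$ by an inflation–restriction argument relying on Lemma~\ref{3.20} and the no-common-subquotient hypothesis of Assumptions~\ref{1.1}; then, via Lemma~\ref{A.3} (forward-referenced in this proof to Proposition~\ref{4.16}), it selects $(T,\alpha)$ so that the image of this restriction is not contained in $\mathfrak g_\alpha^\perp$; and finally it merges the Frobenius constraints from $F_{n+1}^*(H^1)$ and from $\overline\psi$ using the $\gamma_2^a\gamma_1$ composite trick of Lemma~\ref{A.4}. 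None of these ingredients appears in your outline.
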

\begin{proof}

    As explained in the discussion following Definition \ref{3.21} primes in $Q_{n,4r}$ come from a Chebotarev condition in $F_n^*/F$, which is trivial on $F_{4r}^*$. By the third bullet point of the definition each prime $\nu \in Q_{n,4r}$ will satisfy $\restr{\rho_n}{\Gamma_{F_\nu}} \in \Lift_{\restr{\overline{\rho}}{\Gamma_{F_\nu}}}^{\alpha,\mu}(\mathcal O/\varpi^n)$. On the other side, in order to make sure that $\restr{\rho_{n+1}}{\Gamma_{F_\nu}} \notin \Lift_{\restr{\overline{\rho}}{\Gamma_{F_\nu}}}^{\alpha,\mu}(\mathcal O/\varpi^{n+1})$ it is enough to look for primes $\nu$ in $Q_{n,4r}$ such that for their Frobenius $\sigma_\nu$ we have $\alpha(\rho_{n+1}(\sigma_\nu)) \not \equiv N(\nu) \pmod{\varpi^{n+1}}$. As $K(\rho_{n+1}(\mathfrak g^{\der}))$ and $K_\infty$ are linearly disjoint over $K$ by Lemma \ref{3.19}, we can fix $q = N(\nu)$. Then, using the fact that $\rho_{n+1}$ has maximal image it is enough to find an element $t_{n+1} \in T(\mathcal O/\varpi^{n+1})$, a lift of $\rho_n(\sigma_\nu)$ such that $\alpha(t_{n+1}) \not \equiv q \pmod{\varpi^{n+1}}$. This can be easily done as in Lemma \ref{3.7}. Therefore, the first property can be prescribed by a Chebotarev condition in $F_{n+1}^*/F$. 

    For $f \in H_{\mathcal L_{2r,S' \cup Q_0}}^1(\Gamma_{F,S' \cup Q_0},\rho_{2r}(\mathfrak g^{\der}))$ we define $F_{2r}^*(f)$ to be the fixed field of $\ker(\restr{f}{\Gamma_{F_{2r}^*}})$. As $F_{2r}^*$ trivializes the $\Gamma_F$-action on $\rho_{2r}(\mathfrak g^{\der})$ these are all Galois extensions of $F$. We then define $F_{2r}^*(H^1)$ to be the composite of all $F_{2r}^*(f)$, as $f$ ranges through $H_{\mathcal L_{2r,S' \cup Q_0}}^1(\Gamma_{F,S' \cup Q_0},\rho_{2r}(\mathfrak g^{\der}))$. To produce primes for which the second condition is satisfied we can use the Chebotarev Density Theorem and ask for their Frobenius to be trivial in $F_{2r}^*(H^1)/F$. To make sure that the first two properties can be arranged simultaneously we need to verify that these Chebotarev conditions are compatible. For this we need to show that the first Chebotarev condition is trivial on $L \coloneqq F_{n+1}^* \cap F_{2r}^*(H^1)$. It will be enough to show that this intersection lies inside $F_{4r}^*$.

    We first note that $\Gal(F_{2r}^*(H^1)/F_{2r}^*)$ is an abelian $p^{2r/e}$-torsion group. This is true because each $\restr{f}{\Gamma_{F_{2r}^*}}$ is a homomorphism valued in $\mathfrak g^{\der} \otimes_{\mathcal O} \mathcal O/\varpi^{2r}$. Then $\Gal(L/F_{2r}^*)$ is a quotient of $\Gal(F_{2r}^*(H^1)/F_{2r}^*)$ and hence it is an abelian $p^{2r/e}$-torsion group itself. On the other side, using the linear disjointness coming from Lemma \ref{3.19} and the fact that each reduction of $\rho$ has maximal image we have 

    $$\Gal(F_{n+1}^*/F_{2r}^*) \simeq \ker(\widehat{G^{\der}}(\mathcal O/\varpi^{n+1}) \to \widehat{G^{\der}}(\mathcal O/\varpi^{2r})) \oplus \mathbb Z/p^{\lceil{\frac{n+1-2r}e}\rceil}$$

    \vspace{2 mm}
 
     For $p \gg_G 0$ we have that $\mathfrak g^{\der} \otimes_\mathcal O k$ is a simple $\mathbb F_p[G(k)]$-module. Thus, by Lemma \ref{A.2} the largest abelian $p^{2r/e}$-torsion quotient of $\Gal(F_{n+1}^*/F_{2r}^*)$ is isomorphic to $\ker(\widehat{G^{\der}}(\mathcal O/\varpi^{4r}) \to \widehat{G^{\der}}(\mathcal O/\varpi^{2r})) \oplus \mathbb Z/p^{2r/e}$ and it corresponds to $\Gal(F_{4r}^*/F_{2r}^*)$. Thus, $L \subseteq F_{4r}^*$, which is exactly what we wanted. Summarizing, we get that the first two conditions can be prescribed by a Chebotarev condition in $F_{n+1}^*(H^1)/F$. 

    For the final condition we are going to first show that $\restr{\overline{\psi}}{\Gamma_{F_{n+1}^*(H^1)}} \neq 0$. We have the inflation-restriction sequence:

    $$H^1(\Gal(F_{n+1}^*/F),\overline{\rho}(\mathfrak g^{\der})^*) \to H^1(\Gal(F_{n+1}^*(H^1)/F),\overline{\rho}(\mathfrak g^{\der})^*) \to H^1(\Gal(F_{n+1}^*(H^1)/F_{n+1}^*),\overline{\rho}(\mathfrak g^{\der})^*)^{\Gal(F_{n+1}^*/F)}$$

    \vspace{2 mm}

    By Lemma \ref{3.20} the first term is equal to $0$. As $F_{n+1}^*$ trivializes the action on $\overline{\rho}(\mathfrak g^{\der})^*$ the last term is $\Hom_{\Gamma_F}(\Gal(F_{n+1}^*(H^1)/F_{n+1}^*),\overline{\rho}(\mathfrak g^{\der})^*)$. Any non-zero element of it would give us an isomorphism between a quotient of $\Gal(F_{n+1}^*(H^1)/F_{n+1}^*)$ and a submodule of $\overline{\rho}(\mathfrak g^{\der})^*$. As the cocycles $f$ are valued in $\rho_{2r}(\mathfrak g^{\der})$ we have that $\Gal(F_{n+1}^*(H^1)/F_{n+1}^*)$ is an $\mathbb F_p[\Gamma_F]$-submodule of a direct sum of copies of $\rho_{2r}(\mathfrak g^{\der})$. Therefore, if $\Hom_{\Gamma_F}(\Gal(F_{n+1}^*(H^1)/F_{n+1}^*),\overline{\rho}(\mathfrak g^{\der})^*) \neq 0$ we have an isomorphism between a simple subquotient of $\rho_{2r}(\mathfrak g^{\der})$ and a simple submodule of $\overline{\rho}(\mathfrak g^{\der})^*$. Using the exact sequences in Lemma \ref{3.14} and the Jordan-Holder's Theorem $\rho_{2r}(\mathfrak g^{\der})$ and $\overline{\rho}(\mathfrak g^{\der})$ share the same simple subquotients (with different multiplicities). All this yields an isomorphism between a subquotient of $\overline{\rho}(\mathfrak g^{\der})$ and a submodule of $\overline{\rho}(\mathfrak g^{\der})^*$, which contradicts Assumptions \ref{1.1}. Therefore, $\Hom_{\Gamma_F}(\Gal(F_{n+1}^*(H^1)/F_{n+1}^*),\overline{\rho}(\mathfrak g^{\der})^*) = 0$ and hence the middle term in the exact sequence is also equal to $0$. This term vanishing tells us that the inflation-restricton sequnce yields an inclusion $H^1(\Gamma_{F},\overline{\rho}(\mathfrak g^{\der})^*) \hookrightarrow H^1(\Gamma_{F_{n+1}^*(H^1)},\overline{\rho}(\mathfrak g^{\der})^*)$. As $\overline{\psi}$ is non-zero this means that $\restr{\overline{\psi}}{\Gamma_{F_{n+1}^*(H^1)}} \neq 0$. 

    We claim that there is a split maximal torus $T$ and a root $\alpha \in \Phi(G^0,T)$ such that $\overline{\psi}(\Gamma_{F_{n+1}^*(H^1)})$ is not contained in $\mathfrak g_\alpha^\perp$. We prove a more general statement in Proposition \ref{4.16}, so we omit the proof of this claim for the time being. Hence we can find $\gamma_2 \in \Gal(F_{n+1}^*(H^1,\overline{\psi})/F_{n+1}^*(H^1))$ such that $\overline{\psi}(\gamma_2) \notin \mathfrak g_\alpha^\perp$. Now, let $\gamma_1 \in \Gamma_F$ be an element whose restriction to $F_{n+1}^*(H^1)$ will provide us with a prime that satisfies the first two properties, as explained above. We remark that we are feeding the specific choice of $(T,\alpha)$ in Definition \ref{3.21} to produce the set $Q_{n,4r}$.

    Now, by Lemma \ref{A.4} there exists $a \in \{0,1\}$ such that $\overline{\psi}(\gamma_2^a\gamma_1) = a \cdot \overline{\psi}(\gamma_2) + \overline{\psi}(\gamma_1) \notin \mathfrak g_\alpha^\perp$. We set $\sigma = \gamma_2^a\gamma_1$. We now claim any prime $\nu$ whose Frobenius $\sigma_\nu$ agrees with $\sigma$ inside $\Gal(F_{n+1}^*(H^1,\overline{\psi})/F)$ will satisfy the wanted properties. First of all, we note that there is a positive density of such primes by the Chebotarev Density Theorem, so we can always find such prime. As the restriction of $\sigma$ to $F_{n+1}^*(H^1)$ is $\gamma_1$, by what we have arranged before $\nu$ will be an element of $Q_{n,4r}$ that satisfies the first two properties. We also have that $\overline{\psi}(\sigma_\nu) \notin \mathfrak g_\alpha^\perp$. As $\overline{\psi}$ isn't ramified at $\nu$ by Remark \ref{3.23} we have that the third property is satisfied, as well.
    
\end{proof}

We now fix a prime $\nu_1$ that is produced by this proposition.

\begin{prop}
    \label{4.11}

    $$H^1_{\mathcal L_{2r,S' \cup Q_0} \cup [L_{2r,\nu_1}^\alpha + L_{2r,\nu_1}^{\mathrm{unr}}]}(\Gamma_{F,S' \cup Q_0 \cup \nu_1},\rho_{2r}(\mathfrak g^{\der})) = H_{\mathcal L_{2r,S' \cup Q_0}}^1(\Gamma_{F,S' \cup Q_0},\rho_{2r}(\mathfrak g^{\der}))$$
    $$= H^1_{\mathcal L_{2r,S' \cup Q_0} \cup [L_{2r,\nu_1}^\alpha \cap L_{2r,\nu_1}^{\mathrm{unr}}]}(\Gamma_{F,S' \cup Q_0},\rho_{2r}(\mathfrak g^{\der})) = H_{\mathcal L_{2r,S' \cup Q_0 \cup \nu_1}}^1(\Gamma_{F,S' \cup Q_0 \cup \nu_1},\rho_{2r}(\mathfrak g^{\der}))$$

    \vspace{2 mm}
    
\end{prop}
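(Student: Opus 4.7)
The plan is to establish the three non-trivial equalities by labelling the four Selmer groups displayed as $H_1, H_2, H_3, H_4$ in order, setting $V = \rho_{2r}(\mathfrak g^{\der})$, and combining Proposition \ref{4.10} with a Poitou--Tate comparison argument. The equality $H_2 = H_3$ follows directly from the second bullet of Proposition \ref{4.10}: any $f \in H_2$ is automatically unramified at $\nu_1$, so $\restr{f}{\Gamma_{F_{\nu_1}}}$ lies in $L^{\mathrm{unr}}_{2r,\nu_1}$, and the proposition guarantees it also lies in $L^\alpha_{2r,\nu_1}$, hence in the intersection. The reverse containment is tautological, as is the chain $H_3 \subseteq H_4 \subseteq H_1$ coming from the set-theoretic containments $L^\alpha_{2r,\nu_1} \cap L^{\mathrm{unr}}_{2r,\nu_1} \subseteq L^\alpha_{2r,\nu_1} \subseteq L^\alpha_{2r,\nu_1} + L^{\mathrm{unr}}_{2r,\nu_1}$.

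The substantive step will be $H_1 \subseteq H_2$, that is, any class in $H_1$ is automatically unramified at $\nu_1$. I will invoke the four-term Poitou--Tate exact sequence governing the change of local condition at $\nu_1$ from $L^{\mathrm{unr}}_{2r,\nu_1}$ to $L^\alpha_{2r,\nu_1} + L^{\mathrm{unr}}_{2r,\nu_1}$ (identifying $H_2$ with the subgroup of $H^1(\Gamma_{F,S'\cup Q_0 \cup \nu_1},V)$ unramified at $\nu_1$):
\begin{equation*}
0 \to H_2 \to H_1 \to \frac{L^\alpha_{2r,\nu_1} + L^{\mathrm{unr}}_{2r,\nu_1}}{L^{\mathrm{unr}}_{2r,\nu_1}} \xrightarrow{\partial} H_2^{\vee} \to H_1^{\vee} \to 0,
\end{equation*}
where the dual conditions at $\nu_1$ use the self-annihilation $(L^{\mathrm{unr}}_{2r,\nu_1})^\perp = L^{\mathrm{unr}}_{2r,\nu_1}$ under local Tate duality. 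Then $H_1 = H_2$ is equivalent to the injectivity of $\partial$, which by exactness amounts to the surjectivity of the restriction map $H_2^\vee \to L^{\mathrm{unr}}_{2r,\nu_1}/(L^{\alpha,\perp}_{2r,\nu_1} \cap L^{\mathrm{unr}}_{2r,\nu_1})$.

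To verify this surjectivity I will use the class $\overline{\psi}$ from Lemma \ref{4.8}. By Lemma \ref{3.22}, both $(L^\alpha + L^{\mathrm{unr}})/L^{\mathrm{unr}}$ and $L^{\mathrm{unr}}/(L^{\alpha,\perp} \cap L^{\mathrm{unr}})$ are free of rank one over $\mathcal O/\varpi^{2r}$ and pair perfectly under local Tate duality, with the analogous rank-one statement over $k$ at level $1$. Lift $\overline{\psi}$ to a class $\psi \in H_2^\vee$. By the third bullet of Proposition \ref{4.10}, $\restr{\overline{\psi}}{\Gamma_{F_{\nu_1}}} \notin L^{\alpha,\perp}_{1,\nu_1}$; since $\overline{\psi}$ is automatically unramified at $\nu_1$, its restriction generates the rank-one $k$-module $L^{\mathrm{unr}}_{1,\nu_1}/(L^{\alpha,\perp}_{1,\nu_1} \cap L^{\mathrm{unr}}_{1,\nu_1})$. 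Thus $\restr{\psi}{\Gamma_{F_{\nu_1}}}$ reduces modulo $\varpi$ to a generator of the quotient, and Nakayama's lemma upgrades this to the conclusion that $\restr{\psi}{\Gamma_{F_{\nu_1}}}$ generates the full $\mathcal O/\varpi^{2r}$-module $L^{\mathrm{unr}}_{2r,\nu_1}/(L^{\alpha,\perp}_{2r,\nu_1} \cap L^{\mathrm{unr}}_{2r,\nu_1})$, which delivers the required surjectivity. The main technical subtlety will be ensuring the compatibility of the $\mathcal O/\varpi^{2r}$-module structures between levels $1$ and $2r$ so that Nakayama applies, which is guaranteed by the short exact sequences of cocycle spaces in Lemma \ref{3.16} together with Assumptions \ref{3.10}.
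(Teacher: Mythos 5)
Your proof is correct and follows essentially the same route as the paper's own argument. Both proofs reduce the main equality $H_1 = H_2$ to the surjectivity of the restriction map from the dual Selmer group $H^1_{\mathcal L^\perp_{2r,S'\cup Q_0}}(\Gamma_{F,S'\cup Q_0},\rho_{2r}(\mathfrak g^{\der})^*)$ onto the local quotient $L^{\mathrm{unr}}_{2r,\nu_1}/(L^{\alpha,\perp}_{2r,\nu_1}\cap L^{\mathrm{unr}}_{2r,\nu_1})\simeq\mathcal O/\varpi^{2r}$, and both certify that surjectivity by the class $\psi$ whose mod-$\varpi$ restriction is forced outside $L^{\alpha,\perp}_{1,\nu_1}$ by Proposition \ref{4.10}, upgrading via the ``nonzero mod $\varpi$ implies unit in $\mathcal O/\varpi^{2r}$'' observation (your Nakayama step is the same thing phrased differently). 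The only presentational difference is that you conclude via the injectivity of the connecting map in the Poitou--Tate exchange-of-local-condition exact sequence, whereas the paper applies the Greenberg--Wiles formula twice and divides; these are two packagings of the same duality computation and yield identical information. Your chain $H_2 = H_3 \subseteq H_4 \subseteq H_1$ to close the loop matches the paper as well. One small notational caution: in your four-term sequence the symbols $H_2^\vee$, $H_1^\vee$ should denote the Pontryagin duals of the corresponding \emph{dual} Selmer groups $H^1_{\mathcal L^\perp_\bullet}(\rho_{2r}(\mathfrak g^{\der})^*)$, not literal duals of $H_1$, $H_2$; the object carrying the restriction map whose surjectivity you verify is the dual Selmer group itself, not its Pontryagin dual. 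The argument is unaffected, but the notation as written would mislead a reader.
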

\begin{proof}

    We start with the commuting diagram

    \begin{center}
        \begin{tikzcd}
            0 \arrow[r] & {H^1_{\mathcal L_{2r,S' \cup Q}^\perp \cup L_{2r,\nu_1}^{'\perp}}(\Gamma_{F,S' \cup Q_0},\rho_{2r}(\mathfrak g^{\der})^*)} \arrow[r] \arrow[d] & {H^1_{\mathcal L_{2r,S' \cup Q_0}^\perp}(\Gamma_{F,S' \cup Q_0},\rho_{2r}(\mathfrak g^{\der})^*)} \arrow[d] \arrow[r] & {L_{2r,\nu_1}^{\mathrm{unr}}/L_{2r,\nu_1}^{'\perp}} \arrow[d] \\
            0 \arrow[r] & {H^1_{\mathcal L_{1,S' \cup Q}^\perp \cup L_{1,\nu_1}^{'\perp}}(\Gamma_{F,S' \cup Q_0},\overline{\rho}(\mathfrak g^{\der})^*)} \arrow[r]                       & {H^1_{\mathcal L_{1,S' \cup Q_0}^\perp}(\Gamma_{F,S' \cup Q_0},\overline{\rho}(\mathfrak g^{\der})^*)} \arrow[r]           & {L_{1,\nu_1}^{\mathrm{unr}}/L_{1,\nu_1}^{'\perp}}          
        \end{tikzcd}
    \end{center}
    
    \vspace{2 mm}

    \noindent where the vertical maps are just reductions modulo $\varpi$, the last horizontal maps are restriction to $\Gamma_{F_{\nu_1}}$ and we used the notation $L_{2r,\nu_1}' = L_{2r,\nu_1}^{\alpha} + L_{2r,\nu_1}^{\mathrm{unr}}$ introduced in the proof of Lemma \ref{4.8}. Based on our choice of prime $\nu_1$ we get that $\restr{\overline{\psi}}{\Gamma_{F_{\nu_1}}}$ will have a non-zero image in $L_{1,\nu_1}^{\mathrm{unr}}/L_{1,\nu_1}^{'\perp}$, which by Lemma \ref{3.22} is isomorphic to $\mathcal O/\varpi$. This implies that $\restr{\psi}{\Gamma_{F_{\nu_1}}}$ will generate ${L_{2r,\nu_1}^{\mathrm{unr}}/L_{2r,\nu_1}^{'\perp}}$ which is isomorphic to $\mathcal O/\varpi^{2r}$, as the only elements that have a non-zero image under the map $\mathcal O/\varpi^{2r} \to \mathcal O/\varpi$ are the units, i.e. the generators of $\mathcal O/\varpi^{2r}$. Therefore the last maps in both rows are surjective.

    Applying the Greenberg-Wiles formula once with local conditions $\mathcal L_{2r,S' \cup Q_0} \cup L_{2r,\nu_1}'$ and once with $\mathcal L_{2r,S' \cup Q_0}$ and dividing the two equations we get the following equation:

    $$\frac{\left|\,H^1_{\mathcal L_{2r,S' \cup Q_0} \cup L_{2r,\nu_1}'}(\Gamma_{F,S' \cup Q_0 \cup \nu_1},\rho_{2r}(\mathfrak g^{\der}))\,\right|}{\left|\,H_{\mathcal L_{2r,S' \cup Q_0}}^1(\Gamma_{F,S' \cup Q_0},\rho_{2r}(\mathfrak g^{\der}))\,\right|} = \frac{\left|\,H^1_{\mathcal L_{2r,S' \cup Q_0}^\perp \cup L_{2r,\nu_1}^{'\perp}}(\Gamma_{F,S' \cup Q_0},\rho_{2r}(\mathfrak g^{\der})^*)\,\right|}{\left|\,H_{\mathcal L_{2r,S' \cup Q_0}^\perp}^1(\Gamma_{F,S' \cup Q_0},\rho_{2r}(\mathfrak g^{\der})^*)\,\right|} \cdot \frac{\left|\,L_{2r,\nu_1}'\,\right|}{\left|\,L_{2r,\nu_1}^{\mathrm{unr}}\,\right|}$$

    \vspace{2 mm}

    Based on the computations in Lemma \ref{3.22} the last fraction is equal to $|\mathcal O/\varpi^{2r}|$. On the other side, from the discussion above the first factor on the right is equal to $|\mathcal O/\varpi^{2r}|^{-1}$. Hence, the left side is equal to $1$. This coupled with the obvious inclusion $H_{\mathcal L_{2r,S' \cup Q_0}}^1(\Gamma_{F,S' \cup Q_0},\rho_{2r}(\mathfrak g^{\der})) \subseteq H^1_{\mathcal L_{2r,S' \cup Q_0} \cup L_{2r,\nu_1}'}(\Gamma_{F,S' \cup Q_0 \cup \nu_1},\rho_{2r}(\mathfrak g^{\der}))$ gives us the first equality of the proposition. The second equality follows immediately, since by the choice of the prime $\nu_1$ the restriction of every element of $H_{\mathcal L_{2r,S' \cup Q_0}}^1(\Gamma_{F,S' \cup Q_0},\rho_{2r}(\mathfrak g^{\der}))$ to $\Gamma_{F_{\nu_1}}$ will also lie in $L_{2r,\nu_1}^\alpha$. The final equality follows from the obvious inclusions and the equality between the first and the third Selmer groups in the statement of the proposition.
    
\end{proof}

\begin{rem}
    \label{4.12}

    Therefore, $\overline{H_{\mathcal L_{2r,S' \cup Q_0 \cup \nu_1}}^1(\Gamma_{F,S' \cup Q_0 \cup \nu_1},\rho_{2r}(\mathfrak g^{\der}))} = \overline{H_{\mathcal L_{2r,S' \cup Q_0}}^1(\Gamma_{F,S' \cup Q_0},\rho_{2r}(\mathfrak g^{\der}))} = \langle \overline{\phi} \rangle$ and by the balancedness of the local conditions $\overline{H_{\mathcal L^\perp_{2r,S' \cup Q_0 \cup \nu_1}}^1(\Gamma_{F,S' \cup Q_0 \cup \nu_1},\rho_{2r}(\mathfrak g^{\der})^*)} = \langle \overline{\psi'} \rangle$ for some cocycle $\psi'$.

\end{rem}

\begin{rem}

    \label{4.13}

    The first equality in the proposition relied only on the fact that $\restr{\overline{\psi}}{\Gamma_{F_{\nu_1}}} \notin L_{1,\nu}^{\alpha,\perp}$. As $\overline{\psi}$ generates the $r$-th relative dual Selmer group, as well, we get that:

    $$H_{\mathcal L_{r,S' \cup Q_0 \cup \nu_1}}^1(\Gamma_{F,S' \cup Q_0 \cup \nu_1},\rho_{r}(\mathfrak g^{\der})) \subseteq H^1_{\mathcal L_{r,S' \cup Q_0} \cup L_{r,\nu_1}'}(\Gamma_{F,S' \cup Q_0 \cup \nu_1},\rho_{r}(\mathfrak g^{\der})) = H_{\mathcal L_{r,S' \cup Q_0}}^1(\Gamma_{F,S' \cup Q_0},\rho_{r}(\mathfrak g^{\der}))$$

    \vspace{2 mm}

    Therefore:
    
    $$\overline{H_{\mathcal L_{2r,S' \cup Q_0 \cup \nu_1}}^1(\Gamma_{F,S' \cup Q_0 \cup \nu_1},\rho_{2r}(\mathfrak g^{\der}))} \subseteq \overline{H_{\mathcal L_{r,S' \cup Q_0 \cup \nu_1}}^1(\Gamma_{F,S' \cup Q_0 \cup \nu_1},\rho_{r}(\mathfrak g^{\der}))} \subseteq \overline{H_{\mathcal L_{r,S' \cup Q_0}}^1(\Gamma_{F,S' \cup Q_0},\rho_{r}(\mathfrak g^{\der}))}$$

    \vspace{2 mm}

    The first term is equal to $\langle \overline{\phi} \rangle$ by Remark \ref{4.12} and the same is true for the last one by Remark \ref{4.9}. Hence, the middle term is equal to $\langle \overline{\phi} \rangle$, too. As the local conditions are balanced $\overline{H_{\mathcal L^\perp_{r,S' \cup Q_0 \cup \nu_1}}^1(\Gamma_{F,S' \cup Q_0 \cup \nu_1},\rho_{r}(\mathfrak g^{\der})^*)}$ has dimension $1$, as well. It also contains the $2r$-th relative dual Selmer group $\langle \overline{\psi'} \rangle$, so it must be equal to it. 

\end{rem}

The next lemma constitutes one of the main technical innovations of this paper. Producing characteristic $0$ lifts relies heavily on our ability to annihilate certain relative dual Selmer groups. The results in \cite{FKP21}, as seen in Theorem \ref{3.27} tell us that we can always annihilate such groups by allowing extra ramification at a finite set of primes. However, for our purposes, in the dimension computations that follow it is crucial that allowing ramification at each prime reduces the dimension of the relative dual Selmer group. Unfortunately, as mentioned in \cite[Remark $6.10$]{FKP21} in order to annihilate the relative dual Selmer group we might need more primes than its dimension. The lemma will tell us that by allowing ramification at a carefully chosen prime although we might not be able to lower the dimension of that particular relative dual Selmer group, we can achieve that for a higher power relative dual Selmer group.

\begin{lem}

    \label{4.14}

    Let $1 \le s \le m$ be integers. Suppose that $\overline{H_{L_{m,S}}^1(\Gamma_{F,S},\rho_m(\mathfrak g^{\der}))} = \overline{H_{L_{s,S}}^1(\Gamma_{F,S},\rho_s(\mathfrak g^{\der}))}$ with $k$-dimension $d$. Then for any $t \le m-s+1$, $H^1_{(m,t)}(\Gamma_{F,S},\rho_m(\mathfrak g^{\der}))$ is a free $\mathcal O/\varpi^t$-module of rank $d$.
    
\end{lem}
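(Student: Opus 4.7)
My plan is to construct an explicit isomorphism $(\mathcal O/\varpi^t)^d \xrightarrow{\sim} H^1_{(m,t)}$ using lifts of a $k$-basis of $H^1_{(m,1)}$. First I would set $d_r := \dim_k H^1_{(r,1)}$ and observe that $d_r$ is non-increasing in $r$: any cocycle representing a class in $H^1_{\mathcal L_{r+1,S}}$ reduces modulo $\varpi^r$ to a class in $H^1_{\mathcal L_{r,S}}$ by Assumptions \ref{3.10}, and the two further reductions modulo $\varpi$ coincide, giving $H^1_{(r+1,1)} \subseteq H^1_{(r,1)}$. The hypothesis $d_s = d_m = d$ then forces $d_r = d$ for every $r \in [s, m]$, and all the subspaces $H^1_{(r,1)} \subseteq H^1_{\mathcal L_{1,S}}$ in this range coincide with a common $d$-dimensional space $V := H^1_{(m,1)}$.

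Next I fix lifts $\phi_1, \dots, \phi_d \in H^1_{\mathcal L_{m,S}}$ whose mod-$\varpi$ reductions $\overline{\phi_i}$ form a basis of $V$, set $\psi_i^{(t)} := \phi_i \bmod \varpi^t \in H^1_{(m,t)}$, and define $f \colon (\mathcal O/\varpi^t)^d \to H^1_{(m,t)}$ by $(a_i) \mapsto \sum_i a_i \psi_i^{(t)}$. For injectivity, suppose $\sum a_i \psi_i^{(t)} = 0$ with not all $a_i$ zero; set $r := \min_i v(a_i) < t$ and write $a_i = \varpi^r b_i$ with at least one $b_i$ a unit. Reducing the relation to $H^1_{\mathcal L_{r+1,S}}$, each term $\varpi^r b_i \psi_i^{(r+1)}$ depends only on $\overline{b_i}$ and $\overline{\phi_i}$, so the relation becomes $\iota^r\bigl(\sum_i \overline{b_i}\,\overline{\phi_i}\bigr) = 0$, where $\iota^r \colon H^1_{\mathcal L_{1,S}} \hookrightarrow H^1_{\mathcal L_{r+1,S}}$ is the multiplication-by-$\varpi^r$ map of Lemma \ref{3.14}. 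This map is injective because of the vanishing of $H^0(\Gamma_{F,S}, \rho_r(\mathfrak g^{\der}))$ implied by Assumptions \ref{1.1} via the argument of Lemma \ref{3.15}. Hence $\sum_i \overline{b_i}\,\overline{\phi_i} = 0$, contradicting the linear independence of the $\overline{\phi_i}$.

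For surjectivity, given $c \in H^1_{(m,t)}$ I would lift to $\tilde c \in H^1_{\mathcal L_{m,S}}$ and peel off layers iteratively. At step $k$, writing $\overline{\tilde c_k} = \sum_i \overline{a_i^{(k)}}\,\overline{\phi_i}$ (possible as long as $m - k \ge s$, since then $H^1_{(m-k,1)} = V$) and subtracting $\sum a_i^{(k)} \phi_i^{(m-k)}$, the residue lies in $\ker(H^1_{\mathcal L_{m-k,S}} \to H^1_{\mathcal L_{1,S}}) = \iota(H^1_{\mathcal L_{m-k-1,S}})$ by Lemma \ref{3.14}, producing $\tilde c_{k+1}$. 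Using the identity $\iota^{k+1}(\phi_i^{(m-k-1)}) = \varpi^{k+1}\phi_i$ inside $H^1_{\mathcal L_{m,S}}$, after $m-s+1$ iterations one obtains
$$\tilde c \;=\; \sum_i \Bigl(\sum_{j=0}^{m-s} \varpi^j a_i^{(j)}\Bigr)\phi_i \;+\; \iota^{m-s+1}(\tilde c_{m-s+1}),$$
with $\tilde c_{m-s+1} \in H^1_{\mathcal L_{s-1,S}}$. The tail is divisible by $\varpi^{m-s+1}$, and the hypothesis $t \le m-s+1$ forces it to vanish modulo $\varpi^t$; reducing the whole expression then exhibits $c$ as a combination of the $\psi_i^{(t)}$. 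Thus $f$ is an isomorphism and $H^1_{(m,t)} \cong (\mathcal O/\varpi^t)^d$ as desired. The main subtlety is tracking the iterative lifting: the bound $t \le m-s+1$ is exactly what makes the leftover tail die modulo $\varpi^t$ after the maximum number of iterations allowed while staying in the range where $H^1_{(m-k,1)} = V$.
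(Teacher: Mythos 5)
Your proof is correct and follows essentially the same approach as the paper: generation comes from iterated application of the exact sequence of Lemma~\ref{3.14}, producing the decomposition of an arbitrary element as an $\mathcal O$-linear combination of the chosen lifts plus a tail divisible by $\varpi^{m-s+1}$, and freeness comes from stripping a common power of $\varpi$ from a putative relation and reducing mod $\varpi$. The only difference is that you make explicit the injectivity of the multiplication-by-$\varpi^r$ map (via the vanishing of $H^0(\Gamma_{F,S},\rho_r(\mathfrak g^{\der}))$), a step the paper's proof uses tacitly when it divides the relation by $\varpi^{t_1}$.
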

\begin{proof}

    As the reduction maps factor through each other for $s \le l \le m$ we have that

    $$\overline{H_{L_{m,S}}^1(\Gamma_{F,S},\rho_m(\mathfrak g^{\der}))} \,\subseteq\, \overline{H_{L_{l,S}}^1(\Gamma_{F,S},\rho_m(\mathfrak g^{\der}))} \,\subseteq\, \overline{H_{L_{s,S}}^1(\Gamma_{F,S},\rho_s(\mathfrak g^{\der}))}$$

    \vspace{2 mm}

    The hypothesis of the lemma ensures that we have an equality. From this we get that if $\overline{\phi_{m,1}},\overline{\phi_{m,2}},\dots,\overline{\phi_{m,d}}$ form a $k$-basis of $\overline{H_{L_{m,S}}^1(\Gamma_{F,S},\rho_m(\mathfrak g^{\der}))}$, then $\overline{\phi_{l,1}},\overline{\phi_{l,2}},\dots,\overline{\phi_{l,d}}$ form a $k$-basis of $\overline{H_{L_{l,S}}^1(\Gamma_{F,S},\rho_l(\mathfrak g^{\der}))}$ for any $s \le l \le m$, where $\phi_{l,i} \coloneqq \phi_{m,i} \pmod{\varpi^{l}}$.

    Now, let $f_m \in H_{L_{m,S}}^1(\Gamma_{F,S},\rho_m(\mathfrak g^{\der}))$. Applying Lemma \ref{3.14} with $a=m-1$ and $b=1$ we get an exact sequence:

    $$H^1_{\mathcal L_{m-1,S}}(\Gamma_{F,S},\rho_{m-1}(\mathfrak g^{\der})) \stackrel{\cdot \varpi}{\longrightarrow} H^1_{\mathcal L_{m,S}}(\Gamma_{F,S},\rho_{m}(\mathfrak g^{\der})) \longrightarrow H^1_{\mathcal L_{1,S}}(\Gamma_{F,S},\overline{\rho}(\mathfrak g^{\der}))$$

    \vspace{2 mm}

    The image of the second map is the $m$-th relative Selmer group, which is spanned by $\overline{\phi_{m,1}},\overline{\phi_{m,2}},\dots,\overline{\phi_{m,d}}$. Thus, we can find $a_{m,i} \in \mathcal O$ such that $\overline{f_m - \sum_{i=1}^d a_{m,i} \cdot \phi_{m,i}} = 0$. Hence, from the exact sequence we can find $f_{m-1} \in H^1_{\mathcal L_{m-1,S}}(\Gamma_{F,S},\rho_{m-1}(\mathfrak g^{\der}))$ such that $f_m = \varpi f_{m-1} + \sum_{i=1}^d a_{m,i}\phi_{m,i}$. Repeating this procedure for $f_{m-1}$ and so on we get:

    \begin{align*}
        f_m &= \varpi f_{m-1} + \sum_{i=1}^d a_{m,i}\phi_{m,i} \\
        &= \varpi\left(\varpi f_{m-2} +\sum_{i=1}^d a_{m-1,i}\phi_{m-1,i}\right) + \sum_{i=1}^d a_{m,i}\phi_{m,i} \\
        &= \varpi^2 f_{m-2} + \sum_{i=1}^d \varpi a_{m-1,i}\phi_{m-1,i} + \sum_{i=1}^d a_{m,i}\phi_{m,i} \\
        &= \dots = \varpi^{m-s+1}f_{s-1} + \sum_{j=0}^{m-s}\sum_{i=0}^d \varpi^ja_{m-j,i}\phi_{m-j,i}
    \end{align*}

    \vspace{2 mm}

    Therefore, for $t \le m-s+1$ the first term will vanish modulo $\varpi^t$. Then, based on how $\phi_{l,i}$ were defined we get that $f_m \pmod{\varpi^t}$ can be written as an $\mathcal O/\varpi^t$-linear combination of $\phi_{t,1},\dots,\phi_{t,d}$. Thus, $H_{(m,t)}(\Gamma_{F,S},\rho_m(\mathfrak g^{\der}))$ will be generated by $d$ elements. It remains to show that the mentioned generators form a free basis. We write $b_1\phi_{t,1} + b_2\phi_{t,2} + \cdots + b_d\phi_{t,d} = 0$ for some $b_i \in \mathcal O/\varpi^t$ with not all $b_i$ being zero. Let $t_1 < t$ be the highest power of $\varpi$ that divides all $b_i$. Writing $b_i = \varpi^{t_1}b_i'$ we get that $b_1'\phi_{t,1} + b_2'\phi_{t,2} + \cdots + b_d'\phi_{t,d} = 0 \pmod{\varpi^{t-t_1}}$ with at least one $b_i'$ not being a multiple of $\varpi$. Reducing modulo $\varpi$ we obtain a non-trivial relation between $\overline{\phi_{t,i}}$, which contradicts the fact that those form a $k$-basis of $\overline{H_{L_{m,S}}^1(\Gamma_{F,S},\rho_m(\mathfrak g^{\der}))}$.

\end{proof}

\begin{rem}

    \label{4.15}

    The analogous result when working with dual Selmer groups holds as well.

\end{rem}

Then we have the following result:

\begin{prop}

    \label{4.16}

    Recall $\overline{H^1_{\mathcal L_{2r,S' \cup Q}^\perp}(\Gamma_{F,S' \cup Q},\rho_{2r}(\mathfrak g^{\der})^*)} = \langle \overline{\psi} \rangle$, $\overline{H^1_{\mathcal L_{2r,S' \cup Q \cup \nu_1}^\perp}(\Gamma_{F,S' \cup Q \cup \nu_1},\rho_{2r}(\mathfrak g^{\der})^*)} = \langle \overline{\psi'} \rangle$ and $\overline{H^1_{\mathcal L_{2r,S' \cup Q}}(\Gamma_{F,S' \cup Q},\rho_{2r}(\mathfrak g^{\der}))} = \overline{H^1_{\mathcal L_{2r,S' \cup Q \cup \nu_1}}(\Gamma_{F,S' \cup Q \cup \nu_1},\rho_{2r}(\mathfrak g^{\der}))} = \langle \overline{\phi} \rangle$. There exists a prime \linebreak $\nu \in Q_{n,2r}$ such that

    \begin{itemize}
        \item $\restr{\overline{\psi}}{\Gamma_{F_\nu}} \notin L_{1,\nu}^{\alpha,\perp}$,
        \item $\restr{\overline{\psi'}}{\Gamma_{F_\nu}} \notin L_{1,\nu}^{\alpha,\perp}$; and
        \item $\restr{\phi_r}{\Gamma_{F_\nu}} \notin L_{r,\nu}^{\alpha}$.
    \end{itemize}

    \noindent where we write $\phi_r$ for the reduction of the cocycle $\phi$ modulo $\varpi^r$, $\alpha$ is the root associated to $\nu$ in the definition of the set $Q_{n,r}$ and the local conditions at $\nu$ are defined by applying Lemma \textup{\ref{3.16}} with $M = n-2r$ and $s=2r$.
    
\end{prop}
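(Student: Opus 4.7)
The proof will extend the approach of Proposition \ref{4.10} by adding two additional cocycle conditions; the strategy is to identify a suitable element $\sigma \in \Gamma_F$ and a split maximal torus-root pair $(T,\alpha)$ such that a prime $\nu \in Q_{n,2r}$ whose Frobenius agrees with $\sigma$ on a sufficiently large composite field will automatically satisfy all three conditions.

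First I would set up the relevant splitting fields. Let $F_n^*(\phi_r)$ denote the fixed field of $\ker(\restr{\phi_r}{\Gamma_{F_n^*}})$, and iteratively form $F_n^*(\phi_r,\overline{\psi})$ and $L \coloneqq F_n^*(\phi_r,\overline{\psi},\overline{\psi'})$. Using the inflation-restriction sequence at each stage together with Lemma \ref{3.20} (to kill the $H^1(\Gal(F_n^*/F),-)$ terms) and Assumption \ref{1.1} (which prevents isomorphisms between subquotients of $\overline{\rho}(\mathfrak g^{\der})$ and $\overline{\rho}(\mathfrak g^{\der})^*$, thus forcing the $\Hom_{\Gamma_F}$-terms to vanish, exactly as in the proof of Proposition \ref{4.10}), I would conclude that $\restr{\phi_r}{\Gamma_{F_n^*}}$ is non-zero and similarly $\restr{\overline{\psi}}{\Gamma_{F_n^*(\phi_r)}}$ and $\restr{\overline{\psi'}}{\Gamma_{F_n^*(\phi_r,\overline{\psi})}}$ are non-zero.

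Next, I would show that there is a single split maximal torus $T$ and root $\alpha$ witnessing all three conditions simultaneously. Since $\overline{\rho}(\mathfrak g^{\der})$ and $\overline{\rho}(\mathfrak g^{\der})^*$ are semisimple with no trivial subquotients, the $\Gamma_F$-submodule generated by any non-zero image is nontrivial; combined with irreducibility of $\mathfrak g^{\der}\otimes_\mathcal O k$ as an $\mathbb F_p[G(k)]$-module (for $p\gg_G 0$, after passing to simple factors via Claim \ref{3.25}), the images of $\restr{\overline{\psi}}{\Gamma_L}$ and $\restr{\overline{\psi'}}{\Gamma_L}$ are not contained in $\mathfrak g_\alpha^\perp$ for at least one root $\alpha$, and $\phi_r(\Gamma_L)$ escapes the subspace $\ker(\restr{\alpha}{\mathfrak t})\oplus\bigoplus_\beta \mathfrak g_\beta$ for some $\alpha$. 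Since the set of bad $\alpha$'s (those for which one of the three conditions fails) is a proper subset, and the Weyl orbit acts transitively on roots of a given length, a uniform choice of $(T,\alpha)$ exists for $p\gg_G 0$; this yields elements $\gamma_2,\gamma_3,\gamma_4 \in \Gamma_L$ (one for each of $\overline{\psi},\overline{\psi'},\phi_r$) landing in the prescribed complements.

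Finally, I would choose $\gamma_1\in\Gamma_F$ whose restriction to $L$ satisfies the Chebotarev condition defining $Q_{n,2r}$ with the chosen $(T,\alpha)$ (possible by Lemma \ref{3.19} and the linear disjointness arguments from Proposition \ref{4.10}), and then iteratively apply Lemma \ref{A.4} to find exponents $a_2,a_3,a_4\in\{0,1\}$ such that $\sigma\coloneqq\gamma_4^{a_4}\gamma_3^{a_3}\gamma_2^{a_2}\gamma_1$ satisfies $\overline{\psi}(\sigma),\overline{\psi'}(\sigma),\phi_r(\sigma)$ all lie outside their forbidden subspaces (since adding $\gamma_i^{a_i}$ for $i\neq$ the relevant index only translates the cocycle value within a fixed coset while the $\gamma_i$ action can shift out of any hyperplane). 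By Chebotarev, any prime $\nu$ with Frobenius equal to $\sigma$ on $\Gal(L(\overline{\psi}',\phi_r,\overline{\psi})/F)$ lies in $Q_{n,2r}$ and, via Remark \ref{3.23} (which converts the subspace conditions on $\sigma_\nu$-images into the desired cohomology-class conditions since $\overline{\psi},\overline{\psi'},\phi_r$ are all unramified at $\nu$), satisfies the three listed properties.

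The main obstacle will be Step 2: securing a uniform $(T,\alpha)$ compatible with all three cocycles while maintaining compatibility with the definition of $Q_{n,2r}$. The conditions on $\overline{\psi},\overline{\psi'}$ and on $\phi_r$ involve different modules and different types of subspaces, and the Weyl group-combinatorics controlling which $\alpha$'s are "good" for each cocycle must be shown to have nonempty intersection; this will ultimately rest on the fact that for $p\gg_G 0$ the image of each cocycle spans enough of the ambient simple module to avoid a short list of proper subspaces simultaneously.
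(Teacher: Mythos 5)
Your overall skeleton — establish non-vanishing of the restricted cocycles via inflation-restriction plus Lemma~\ref{3.20} and Assumptions~\ref{1.1}, then build a Frobenius element out of $\gamma$'s in $\Gamma_{F_n^*}$ and a Chebotarev element $\gamma_1$, then appeal to Remark~\ref{3.23} — matches the paper's. But there is a genuine gap in your Step 2, and a secondary structural difference that creates extra work you haven't carried out.

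The gap: you secure a uniform $(T,\alpha)$ by invoking transitivity of the Weyl group on roots of a given length and saying the set of ``bad'' $\alpha$ is proper. This is not the right tool. Fixing the torus $T$ and varying only $\alpha$ gives you at most $|\Phi(G,T)|$ choices, and nothing prevents a single cocycle value from lying in $\mathfrak g_\alpha^\perp$ for every $\alpha$ in the Weyl orbit (the forbidden set is defined by a hyperplane depending on $\alpha$, and the Weyl orbit may not sweep out enough of these). What is actually needed — and what the paper does — is to vary the \emph{torus embedding}: apply Lemma~\ref{A.3} with a fixed reference pair $(T_1,\alpha_1)$, treating the three cocycle values as the data $A_i, B_j$, to find $g\in G(k)$ such that $\Ad(g)^{-1}$ pulls all three values simultaneously out of their forbidden subspaces; then set $(T_g,\alpha_g) := \Ad(g)(T_1,\alpha_1)$. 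This is a density/dimension count in $G(k)$ using Bruhat cells, valid for $p\gg_G 0$, not a Weyl-orbit argument. Relatedly, you need to reduce $\phi_r(\gamma)$ to characteristic $p$ correctly: $\phi_r(\gamma)$ may be trivial mod $\varpi$, so the paper writes $\phi_r(\gamma_2) = \varpi^s X$ with $\overline{X}\neq 0$ and feeds $\overline{X}$ into Lemma~\ref{A.3}. Your proposal does not address this.

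A structural difference that increases your burden: the paper shows, via a field-containment argument (namely that $F_n^*(\phi_r)\not\subseteq F_n^*(\overline{\psi},\overline{\psi}')$, since otherwise Assumptions~\ref{1.1} would be violated), that a \emph{single} $\gamma_2\in\Gal(F_n^*(\phi_r,\overline{\psi},\overline{\psi}')/F_n^*)$ exists with all three cocycle values simultaneously non-zero; this allows a single application of Lemma~\ref{A.4} with $a\in\{0,1,2,3\}$. You instead take three elements $\gamma_2,\gamma_3,\gamma_4$ and iterate. Two issues with this. First, you write $\gamma_i\in\Gamma_L$ where $L = F_n^*(\phi_r,\overline{\psi},\overline{\psi}')$, but every cocycle in question vanishes identically on $\Gamma_L$ by definition of $L$; you must mean (lifts of) elements of $\Gal(L/F_n^*)$. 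Second, with $\sigma = \gamma_4^{a_4}\gamma_3^{a_3}\gamma_2^{a_2}\gamma_1$, the value $\overline{\psi}(\sigma) = a_4\overline{\psi}(\gamma_4) + a_3\overline{\psi}(\gamma_3) + a_2\overline{\psi}(\gamma_2) + \overline{\psi}(\gamma_1)$ involves \emph{all three} undetermined exponents unless the $\gamma_i$ are chosen from nested subgroups so that the irrelevant cocycles vanish on them (e.g.\ $\overline{\psi}(\gamma_4) = \overline{\psi'}(\gamma_4) = 0$, $\overline{\psi'}(\gamma_3)=0$). You never impose this nesting, so the ``iterative'' claim — that adjusting $a_i$ for the relevant $i$ shifts only that one cocycle value — does not follow as stated. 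The single-$\gamma_2$ route of the paper avoids this bookkeeping entirely, at the modest cost of the field-containment observation.
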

\begin{proof}

    We first show that the restrictions of $\overline{\psi}$, $\overline{\psi'}$ and $\phi_r$ to $\Gamma_{F_n^*}$ are non-zero. For the first two, by the first part of Lemma \ref{3.20} we have that $H^1(\Gal(F_n^*/F),\overline{\rho}(\mathfrak g^{\der})^*) = 0$. Therefore, the inflation-restriction sequence gives us an inclusion $H^1(\Gamma_F,\overline{\rho}(\mathfrak g^{\der})^*) \hookrightarrow H^1(\Gamma_{F_n^*},\overline{\rho}(\mathfrak g^{\der})^*)$. Thus, as $\overline{\psi}$ and $\overline{\psi'}$ are non-zero, so are their restrictions to $\Gamma_{F_n^*}$. For $\phi_r$ we consider the diagram:

    \begin{center}
        \begin{tikzcd}
            0 \arrow[rr] &  & {H^1(\Gal(F_n^*/F),\rho_r(\mathfrak g^{\der}))} \arrow[rr] \arrow[d, "0"] &  & {H^1(\Gamma_F,\rho_r(\mathfrak g^{\der}))} \arrow[rr] \arrow[d] &  & {H^1(\Gamma_{F_n^*},\rho_r(\mathfrak g^{\der}))} \arrow[d] \\
            0 \arrow[rr] &  & {H^1(\Gal(F_n^*/F),\overline{\rho}(\mathfrak g^{\der}))} \arrow[rr]       &  & {H^1(\Gamma_F,\overline{\rho}(\mathfrak g^{\der}))} \arrow[rr]           &  & {H^1(\Gamma_{F_n^*},\overline{\rho}(\mathfrak g^{\der}))}          
            \end{tikzcd}
    \end{center}    

    \vspace{2 mm}

    \noindent where the exact rows come from the inflation-restriction sequence, the vertical maps are reductions modulo $\varpi$ and the first vertical map is $0$ by Lemma \ref{3.20} as $r \ge M$. If $\restr{\phi_r}{\Gamma_{F_n^*}}$ were zero, then $\phi_r$ would come from an element in $H^1(\Gal(F_n^*/F),\rho_r(\mathfrak g^{\der}))$. Hence its restriction modulo $\varpi$ would be $0$, which is a contradiction. Thus, $\restr{\phi_r}{\Gamma_{F_n^*}} \neq 0$. As the images of these restrictions are $\Gamma_F$-equivariants and by Claim \ref{3.25} we can assume that $\mathfrak g^{\der}$ is a single $\pi_0(G)$-orbit of simple factors it turns out that each of these images will have a non-trivial projection on each simple factor of $\mathfrak g^{\der}$. We can now focus on those projections for one simple factor, which allows us to assume that $G$ is connected and simple. 

    Now, let $F_n^*(\phi_r),F_n^*(\overline{\psi})$ and $F_n^*(\overline{\psi'})$ be the fixed field of these restricted cocycles. By above, each of these is a non-trivial extension of $F_n^*$. We claim that we can find $\gamma_2 \in \Gal(F_n^*(\phi_r,\overline{\psi},\overline{\psi'})/F_n^*)$ such that $\phi_r(\gamma_2),\overline{\psi}(\gamma_2),\overline{\psi'}(\gamma_2) \neq 0$. This corresponds to finding an element in $\Gamma_{F_n^*}$, which acts non-trivially on each of the extensions. For $2$ non-trivial extension this can be easily arranged by looking on a case-by-case basis whether one field is contained into another or not. Thus, we can find an element in $\Gamma_{F_n^*}$ which acts non-trivially on $F_n^*(\overline{\psi})$ and $F_n^*(\overline{\psi'})$. To make sure that we can choose it to act non-trivally on $F_n^*(\phi_r)$, as well it is enough to show that $F_n^*(\phi_r)$ is not contained in $F_n^*(\overline{\psi},\overline{\psi'})$. If this is the case we get that $\overline{\rho}(\mathfrak g^{\der})^*$ and $\rho_r(\mathfrak g^{\der})$ share a non-trivial $\mathbb F_p[\Gamma_F]$-subquotient, which moreover gives us a common non-trivial $\mathbb F_p[\Gamma_F]$-subquotient of $\overline{\rho}(\mathfrak g^{\der})$ and $\overline{\rho}(\mathfrak g^{\der})^*$, which contradicts Assumptions \ref{1.1}. Hence, such $\gamma_2$ exists.  

    $\phi_r(\gamma_2)$ might be trivial modulo $\varpi$, however, as it is non-zero we can write it as $\varpi^s X$, for some $s < r$ and $X$ such that $X \pmod{\varpi} \neq 0$. We now fix a maximal torus $T_1$ and a root $\alpha_1 \in \Phi(G,T)$. Applying Lemma \ref{A.3} with $\overline{X}$, $\overline{\psi}(\gamma_2)$ and $\overline{\psi'}(\gamma_2)$ we can find $\overline{g} \in G(k)$ such that $\Ad(\overline{g})^{-1} \overline{X} \notin \ker(\restr{\alpha_1}{\mathfrak t_1}) \oplus \bigoplus g_\beta$ and $\Ad(\overline{g})^{-1} \overline{\psi(\gamma_2)}, \Ad(\overline{g})^{-1} \overline{\psi'}(\gamma_2) \notin \mathfrak g_{\alpha_1}^\perp$. We can lift $\overline{g}$ to $g \in G(\mathcal O/\varpi^r)$ using the smoothness of $G$. Then, writing $(T_g,\alpha_g)$ for $\Ad(g)(T_1,\alpha_1)$ we get that $\phi_r(\gamma_2) \notin \ker(\restr{\alpha_g}{\mathfrak t_g}) \oplus \bigoplus g_\beta$ and $\overline{\psi}(\gamma_2),\overline{\psi'}(\gamma_2) \notin \mathfrak g_{\alpha_g}^\perp$. 
    
    As noted many times before the primes in $Q_{n,r}$ are given by a Chebotarev condition in $F_n^*/F$, where we use $(T_g,\alpha_g)$ in Definition \ref{3.21}. Let $\gamma_1 \in \Gamma_F$, whose restriction to $F_n^*$ will produce primes in $Q_{n,r}$ with respect to $(T_g,\alpha_g)$ using the Chebotarev Density Theorem. We by Lemma \ref{A.4} there exists $a \in \{0,1,2,3\}$ such that 

    $$\phi_r(\gamma_2^a \gamma_1) = a\phi_r(\gamma_2) + \phi_r(\gamma_1) \notin \ker(\restr{\alpha_g}{\mathfrak t_g}) \oplus \bigoplus g_\beta,$$

    \vspace{2 mm}
    
    $$\overline{\psi}(\gamma_2^a\gamma_1) = a\overline{\psi}(\gamma_2) + \overline{\psi}(\gamma_1) \notin \mathfrak g_{\alpha_g}^\perp$$

    \vspace{2 mm}
    $$\overline{\psi'}(\gamma_2^a\gamma_1) = a\overline{\psi'}(\gamma_2) + \overline{\psi'}(\gamma_1) \notin \mathfrak g_{\alpha_g}^\perp$$

    \vspace{2 mm}

    \noindent As noted in Remark \ref{3.23} any prime whose Frobenius is equal to $\gamma_2^a\gamma_1$ in $\Gal(F_n^*(\phi_r,\overline{\psi},\overline{\psi'})/F)$ will satisfy the desired conditions.
    
\end{proof}

We now fix a prime $\nu_2$ that will satisfy these conditions. Based on how we chose this prime we have the following result:

\begin{prop}
    \label{4.17}

    $$H_{\mathcal L_{2r,S' \cup Q_0} \cup [L_{2r,\nu_2}^\alpha + L_{2r,\nu_2}^{\mathrm{unr}}]}^1(\Gamma_{F,S' \cup Q_0 \cup \nu_2},\rho_{2r}(\mathfrak g^{\der})) = H^1_{\mathcal L_{2r,S' \cup Q_0} \cup [L_{2r,\nu_1}^\alpha \cap L_{2r,\nu_1}^{\mathrm{unr}}]}(\Gamma_{F,S' \cup Q_0},\rho_{2r}(\mathfrak g^{\der}))$$

    \vspace{2 mm}
    
\end{prop}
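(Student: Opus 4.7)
The plan is to reduce both sides of the equality to the ``base'' Selmer group $H^1_{\mathcal L_{2r,S'\cup Q_0}}(\Gamma_{F,S'\cup Q_0},\rho_{2r}(\mathfrak g^{\der}))$. The right-hand side has already been identified with this base Selmer group in Proposition \ref{4.11} (specifically, as the second equal term in the chain of equalities there, applied at the prime $\nu_1$). So the only thing left to prove is that the left-hand side also equals this base Selmer group.

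To establish this, I will re-run the proof of Proposition \ref{4.11} verbatim with $\nu_2$ taking the role of $\nu_1$. The sole input used in that proof is the non-containment $\restr{\overline{\psi}}{\Gamma_{F_{\nu_1}}} \notin L_{1,\nu_1}^{\alpha,\perp}$ for the generator $\overline{\psi}$ of the one-dimensional space $\overline{H^1_{\mathcal L^\perp_{2r,S'\cup Q_0}}(\Gamma_{F,S'\cup Q_0},\rho_{2r}(\mathfrak g^{\der})^*)}$ from Lemma \ref{4.8}; the analogous property $\restr{\overline{\psi}}{\Gamma_{F_{\nu_2}}} \notin L_{1,\nu_2}^{\alpha,\perp}$ at $\nu_2$ is exactly the first bullet point of Proposition \ref{4.16}, which was included there for precisely this application.

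Concretely, writing $L_{2r,\nu_2}' \coloneqq L_{2r,\nu_2}^\alpha + L_{2r,\nu_2}^{\mathrm{unr}}$, I will form the commutative diagram of inflation--restriction short exact sequences comparing the $2r$-th dual Selmer groups with local conditions enlarged by $L^{'\perp}_{2r,\nu_2}$ at $\nu_2$ to their reductions modulo $\varpi$, and invoke Lemma \ref{3.22} to identify $L_{1,\nu_2}^{\mathrm{unr}}/L_{1,\nu_2}^{'\perp}$ with $\mathcal O/\varpi$ and $L_{2r,\nu_2}^{\mathrm{unr}}/L_{2r,\nu_2}^{'\perp}$ with $\mathcal O/\varpi^{2r}$. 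The non-vanishing of $\restr{\overline{\psi}}{\Gamma_{F_{\nu_2}}}$ in the first quotient, together with the unit criterion for lifting a class whose reduction modulo $\varpi$ is a generator, yields surjectivity of the restriction map from $H^1_{\mathcal L^\perp_{2r,S'\cup Q_0}}(\Gamma_{F,S'\cup Q_0},\rho_{2r}(\mathfrak g^{\der})^*)$ onto $L_{2r,\nu_2}^{\mathrm{unr}}/L_{2r,\nu_2}^{'\perp}$. Substituting this into the ratio of Greenberg--Wiles formulas at $(S'\cup Q_0\cup\{\nu_2\},\,\mathcal L_{2r,S'\cup Q_0}\cup L_{2r,\nu_2}')$ and at $(S'\cup Q_0,\,\mathcal L_{2r,S'\cup Q_0})$ forces the ratio of the corresponding Selmer group orders to equal $1$, and the obvious inclusion then closes the equality. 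There is no essential new obstacle beyond what was overcome in Proposition \ref{4.11}; the argument is a straight transcription. The only verification worth pausing on is that the prime $\nu_2$ produced by Proposition \ref{4.16} satisfies three simultaneous non-containment conditions, of which only the first is used here and the other two will be needed later, which is exactly why Proposition \ref{4.16} was packaged in that combined form so that a single choice of $\nu_2$ can serve all three purposes in the forcing-ramification induction.
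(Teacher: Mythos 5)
Your proposal is correct and coincides with the paper's own proof: the paper likewise re-runs the Greenberg--Wiles computation from Proposition \ref{4.11} at $\nu_2$, using only the first bullet of Proposition \ref{4.16} (that $\restr{\overline{\psi}}{\Gamma_{F_{\nu_2}}}\notin L_{1,\nu_2}^{\alpha,\perp}$) to conclude the left-hand side equals $H^1_{\mathcal L_{2r,S'\cup Q_0}}(\Gamma_{F,S'\cup Q_0},\rho_{2r}(\mathfrak g^{\der}))$, and then cites Proposition \ref{4.11} for the equality of that base Selmer group with the right-hand side. Your proposal simply unpacks the phrase ``a similar computation as in Proposition \ref{4.11}'' that the paper leaves implicit.
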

\begin{proof}

    We recall that $\overline{H_{\mathcal L_{2r,S' \cup Q_0}^\perp}^1(\Gamma_{F,S' \cup Q_0},\rho_{2r}(\mathfrak g^{\der}))} = \langle \overline{\psi} \rangle$. The prime $\nu_2$ was chosen so that $\restr{\overline{\psi}}{\Gamma_{F_{\nu_2}}} \notin L_{1,\nu_2}^{\alpha,\perp}$. Based on this, a similar computation as in Proposition \ref{4.11} gives us the equality:

    $$H^1_{\mathcal L_{2r,S' \cup Q_0} \cup [L_{2r,\nu_2}^\alpha + L_{2r,\nu_2}^{\mathrm{unr}}]}(\Gamma_{F,S' \cup Q_0 \cup \nu_2},\rho_{2r}(\mathfrak g^{\der})) = H_{\mathcal L_{2r,S' \cup Q_0}}^1(\Gamma_{F,S' \cup Q_0},\rho_{2r}(\mathfrak g^{\der}))$$

    \vspace{2 mm}

    By Proposition \ref{4.11} the Selmer group on the right is equal to $H^1_{\mathcal L_{2r,S' \cup Q_0} \cup [L_{2r,\nu_1}^\alpha \cap L_{2r,\nu_1}^{\mathrm{unr}}]}(\Gamma_{F,S' \cup Q_0},\rho_{2r}(\mathfrak g^{\der}))$, which gives us what we want. 
    
\end{proof}

\begin{prop}

    \label{4.18}

    $$\overline{H_{\mathcal L^\perp_{2r,S' \cup Q_0 \cup \{\nu_1,\nu_2\}}}^1(\Gamma_{F,S' \cup Q_0 \cup \{\nu_1,\nu_2\}},\rho_{2r}(\mathfrak g^{\der})^*)} = 0$$

    \vspace{2 mm}
    
\end{prop}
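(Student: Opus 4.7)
The plan is to show that the primal $2r$-th relative Selmer group over $\tilde S_+:=S'\cup Q_0\cup\{\nu_1,\nu_2\}$ vanishes modulo $\varpi$ (using the condition $\restr{\phi_r}{\Gamma_{F_{\nu_2}}}\notin L^\alpha_{r,\nu_2}$ from Proposition \ref{4.16}) and then to transfer this vanishing to the dual side via Lemma \ref{3.15} together with the balancedness of the local conditions from Lemma \ref{3.24}.

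First, I would apply the template of Proposition \ref{4.11} to the prime $\nu_2$ over $\tilde S:=S'\cup Q_0\cup\{\nu_1\}$, taking as input the hypothesis $\restr{\overline{\psi'}}{\Gamma_{F_{\nu_2}}}\notin L^{\alpha,\perp}_{1,\nu_2}$; this is valid because $\overline{\psi'}$ generates the mod-$\varpi$ relative dual Selmer over $\tilde S$ by Remark \ref{4.12}. Running the same commutative diagram and Greenberg--Wiles computation as in Proposition \ref{4.11} yields the primal equality
\[
H^1_{\mathcal L_{2r,\tilde S}\cup[L^\alpha+L^{\mathrm{unr}}]_{2r,\nu_2}}(\Gamma_{F,\tilde S_+},\rho_{2r}(\mathfrak g^{\der}))=H^1_{\mathcal L_{2r,\tilde S}}(\Gamma_{F,\tilde S},\rho_{2r}(\mathfrak g^{\der})).
\]
In particular $H^1_{\mathcal L_{2r,\tilde S_+}}(\Gamma_{F,\tilde S_+},\rho_{2r}(\mathfrak g^{\der}))\subseteq H^1_{\mathcal L_{2r,\tilde S}}(\Gamma_{F,\tilde S},\rho_{2r}(\mathfrak g^{\der}))$, so every class in the new primal Selmer is automatically unramified at $\nu_2$.

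Next, Lemma \ref{4.14} identifies $H^1_{(2r,r)}(\Gamma_{F,\tilde S},\rho_{2r}(\mathfrak g^{\der}))$ as a free $\mathcal O/\varpi^r$-module of rank one, generated by the image of $\phi_r$. For any $\chi\in H^1_{\mathcal L_{2r,\tilde S_+}}$, its mod-$\varpi^r$ reduction equals $c\phi_r$ for some $c\in\mathcal O/\varpi^r$, so the local condition $\restr{\chi}{\Gamma_{F_{\nu_2}}}\in L^\alpha_{2r,\nu_2}$ forces $c\restr{\phi_r}{\Gamma_{F_{\nu_2}}}\in L^\alpha_{r,\nu_2}$. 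Since $L^\alpha_{r,\nu_2}$ is an $\mathcal O/\varpi^r$-submodule and $\restr{\phi_r}{\Gamma_{F_{\nu_2}}}\notin L^\alpha_{r,\nu_2}$ by Proposition \ref{4.16}, the scalar $c$ must be divisible by $\varpi$; reducing modulo $\varpi$ gives $\overline\chi=0$, hence $\overline{H^1_{\mathcal L_{2r,\tilde S_+}}(\Gamma_{F,\tilde S_+},\rho_{2r}(\mathfrak g^{\der}))}=0$.

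Finally, the local conditions $\mathcal L_{2r,\tilde S_+}$ are balanced by Lemma \ref{3.24}, and the invariants $\overline{\rho}(\mathfrak g^{\der})^{\Gamma_F}$ and $(\overline{\rho}(\mathfrak g^{\der})^*)^{\Gamma_F}$ vanish by Assumptions \ref{1.1}, so Lemma \ref{3.15} (with $r=2r,s=1$) equates the sizes of the $(2r,1)$-relative primal and dual Selmer groups. The primal vanishing mod $\varpi$ therefore transfers to $\overline{H^1_{\mathcal L^\perp_{2r,\tilde S_+}}(\Gamma_{F,\tilde S_+},\rho_{2r}(\mathfrak g^{\der})^*)}=0$. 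The main difficulty lies in the first step: one must verify that the Greenberg--Wiles calculation and the commutative diagram in Proposition \ref{4.11} carry over unchanged when $\nu_2$ replaces $\nu_1$ and $\overline{\psi'}$ replaces $\overline{\psi}$ (which requires invoking Lemma \ref{3.22} at $\nu_2\in Q_{n,2r}$ so that the local factor at $\nu_2$ equals one); once this analog is in hand, the remainder of the argument is a direct application of Lemmas \ref{4.14} and \ref{3.15}.
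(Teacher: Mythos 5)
Your proposal is correct and uses exactly the same inputs as the paper's proof: the Greenberg--Wiles computation with $\overline{\psi'}$ at $\nu_2$ (the analog of Proposition~\ref{4.11}) to obtain the inclusion $H^1_{\mathcal L_{2r,\tilde S_+}}\subseteq H^1_{\mathcal L_{2r,\tilde S}}$, Lemma~\ref{4.14} over $\tilde S=S'\cup Q_0\cup\{\nu_1\}$ to identify $H^1_{(2r,r)}(\tilde S)$ as $\mathcal O/\varpi^r\cdot\phi_r$, the non-containment $\restr{\phi_r}{\Gamma_{F_{\nu_2}}}\notin L^\alpha_{r,\nu_2}$ from Proposition~\ref{4.16}, and balancedness (Lemma~\ref{3.24} plus Lemma~\ref{3.15}) to move from the primal to the dual side.

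The one place where your route departs from the paper is the final step. The paper first observes that the mod-$\varpi$ Selmer over $\tilde S_+$ has dimension at most one, then runs a proof by contradiction: assuming dimension exactly one, it establishes a further $r$-level inclusion, invokes Lemma~\ref{4.14} a second time over $\tilde S_+$ to make $H^1_{(2r,r)}(\tilde S_+)$ free of rank one, concludes it must coincide with $H^1_{(2r,r)}(\tilde S)$, and thereby forces $\phi_r$ into a space where $\restr{\phi_r}{\Gamma_{F_{\nu_2}}}\notin L^\alpha_{r,\nu_2}$ forbids it. You instead take an arbitrary $\chi\in H^1_{\mathcal L_{2r,\tilde S_+}}$, note via the step-one inclusion that $\chi_r\in H^1_{(2r,r)}(\tilde S)$ so $\chi_r=c\phi_r$, and read off from the local condition at $\nu_2$ that $c$ is a non-unit, hence $\overline\chi=0$. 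This is a genuine simplification: it eliminates the contradiction scaffolding and the second application of Lemma~\ref{4.14} over $\tilde S_+$ (and the $r$-level inclusion that step required), while delivering the same conclusion. The only thing you must (and do) keep track of is that the claim $\chi_r=c\phi_r$ relies on first landing $\chi$ inside $H^1_{\mathcal L_{2r,\tilde S}}$ via the Proposition~\ref{4.11}-style inclusion, so that Lemma~\ref{4.14} applies with the hypotheses checked over $\tilde S$ (supplied by Remark~\ref{4.13}).
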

\begin{proof}

    By Remark \ref{4.13} we have that $\overline{H_{\mathcal L_{2r,S' \cup Q_0 \cup \nu_1}}^1(\Gamma_{F,S' \cup Q_0 \cup \nu_1},\rho_{2r}(\mathfrak g^{\der}))}$ and $\overline{H_{\mathcal L_{r,S' \cup Q_0 \cup \nu_1}}^1(\Gamma_{F,S' \cup Q_0 \cup \nu_1},\rho_{r}(\mathfrak g^{\der}))}$ are both equal to $\langle \overline{\phi} \rangle$. Then, by Lemma \ref{4.14} we have that $H^1_{(2r,r)}(\Gamma_{F,S' \cup Q_0 \cup \nu_1},\rho_{2r}(\mathfrak g^{\der}))$ is a free $\mathcal O/\varpi^r$-module of rank $1$. Moreover, this image is generated by $\phi_r$, the reduction of $\phi$ modulo $\varpi^r$. By Remark $\ref{4.12}$ we have $\overline{H_{\mathcal L^\perp_{2r,S' \cup Q_0 \cup \nu_1}}^1(\Gamma_{F,S' \cup Q_0 \cup \nu_1},\rho_{2r}(\mathfrak g^{\der})^*)} = \langle \overline{\psi'} \rangle$ and since $\restr{\overline{\psi'}}{\Gamma_{F_{\nu_2}}} \notin L_{1,\nu_2}^{\alpha,\perp}$ by the analogous computations to the ones in Proposition \ref{4.11} using the Greenberg-Wiles formula we get the inclusion:

    $$H_{\mathcal L_{2r,S' \cup Q_0 \cup \{\nu_1,\nu_2\}}}^1(\Gamma_{F,S' \cup Q_0 \cup \{\nu_1,\nu_2\}},\rho_{2r}(\mathfrak g^{\der})) \subseteq H_{\mathcal L_{2r,S' \cup Q_0 \cup \nu_1}}^1(\Gamma_{F,S' \cup Q_0 \cup \nu_1},\rho_{2r}(\mathfrak g^{\der}))$$

    \vspace{2 mm}

    This gives us that $\overline{H_{\mathcal L_{2r,S' \cup Q_0 \cup \{\nu_1,\nu_2\}}}^1(\Gamma_{F,S' \cup Q_0 \cup \{\nu_1,\nu_2\}},\rho_{2r}(\mathfrak g^{\der}))}$ has dimension $\le 1$. It remains to show that the dimension can't be equal to $1$. Suppose that it equals $1$. Since, $\overline{H_{\mathcal L^\perp_{r,S' \cup Q_0 \cup \nu_1}}^1(\Gamma_{F,S' \cup Q_0 \cup \nu_1},\rho_{r}(\mathfrak g^{\der})^*)} = \langle \overline{\psi'} \rangle$, as noted in Remark \ref{4.13}, by the same computations we get the inclusion:

    $$H_{\mathcal L_{r,S' \cup Q_0 \cup \{\nu_1,\nu_2\}}}^1(\Gamma_{F,S' \cup Q_0 \cup \{\nu_1,\nu_2\}},\rho_{r}(\mathfrak g^{\der})) \subseteq H_{\mathcal L_{r,S' \cup Q_0 \cup \nu_1}}^1(\Gamma_{F,S' \cup Q_0 \cup \nu_1},\rho_{r}(\mathfrak g^{\der}))$$

    \vspace{2 mm}

    By the choice of $\nu_1$ we have that $\overline{H_{\mathcal L_{r,S' \cup Q_0 \cup \nu_1}}^1(\Gamma_{F,S' \cup Q_0 \cup \nu_1},\rho_{r}(\mathfrak g^{\der}))} = \langle \overline{\phi} \rangle$ and it has dimension $1$. On the other side, $\overline{H_{\mathcal L_{r,S' \cup Q_0 \cup \{\nu_1,\nu_2\}}}^1(\Gamma_{F,S' \cup Q_0 \cup \{\nu_1,\nu_2\}},\rho_{r}(\mathfrak g^{\der}))}$ contains the $2r$-th relative Selmer group, which we assumed is also one dimensional. Therefore, this $r$-th relative Selmer group has dimension $1$ as well. Thus, by Lemma \ref{4.14} we have that $H^1_{(2r,r)}(\Gamma_{F,S' \cup Q_0 \cup \{\nu_1,\nu_2\}},\rho_{2r}(\mathfrak g^{\der}))$ is a free $\mathcal O/\varpi^r$-module of rank $1$. The inclusions above tell us that this is a subgroup of $H^1_{(2r,r)}(\Gamma_{F,S' \cup Q_0 \cup \nu_1},\rho_{2r}(\mathfrak g^{\der}))$. But, both of these are free $\mathcal O/\varpi^r$-modules of rank $1$, so they must be equal. This will imply that $\phi_r \in H^1_{(2r,r)}(\Gamma_{F,S' \cup Q_0 \cup \{\nu_1,\nu_2\}},\rho_{2r}(\mathfrak g^{\der}))$, which contradicts the fact that $\restr{\phi_r}{\Gamma_{F_{\nu_2}}} \notin L_{r,\nu_2}^\alpha$. 
    
\end{proof}

\begin{rem}
    \label{4.19}

    We can easily generalize this argument and show that if an $r$-th relative Selmer group has dimension $m \ge 1$, then we can make sure that the $2r$-th relative Selmer group has dimension $\le m-1$ by allowing ramification at at most one prime. Indeed, if these relative Selmer groups are not equal we are done without the need for any extra ramification. Otherwise, running the same argument as in the proposition, relying on Lemma \ref{4.14} and choosing a prime as in Proposition \ref{4.16} we can guarantee that the dimension of the $2r$-th relative Selmer group decreases by allowing ramification at the new prime.
    
\end{rem}

Having annihilated a $2r$-th relative dual Selmer group, we can run the argument of Claim \ref{3.31} starting with the pair $(\rho_{n-2r},\rho_{n})$ to produce a characteristic $0$ lift $\rho':\Gamma_{F,S' \cup Q_0 \cup \{\nu_1,\nu_2\}} \to G(\mathcal O)$ of $\rho_{n-2r}$. This is possible because of the bounds we imposed on $n$ in (\ref{Nbound2}). We also imposed additional bounds on $n$ that as explained above will guarantee that $\rho'$ ramifies at all the primes at which $\rho$ ramified. We now show that $\rho'$ is ramified at $\nu_1$ and $\nu_2$. 

\begin{prop}

    \label{4.20}

    $\rho_{n+1}'$ is ramified at $\nu_1$ and $\nu_2$.
    
\end{prop}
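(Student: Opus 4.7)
The plan is to argue by contradiction, first at $\nu_1$. Suppose $\restr{\rho_{n+1}'}{\Gamma_{F_{\nu_1}}}$ is unramified. Tracking the iteration of Claim \ref{3.31} starting from the pair $(\rho_{n-2r},\rho_n)$, one verifies inductively that $\tau_m = \rho \pmod{\varpi^m}$ for every $m\le n$, so $\rho_{n+1}'\equiv \rho_{n+1}\pmod{\varpi^n}$, where $\rho_{n+1}\coloneqq \rho\pmod{\varpi^{n+1}}$ denotes the original level-$(n+1)$ reduction. By the local conditions built into the relative deformation step (via Lemmas \ref{3.16} and \ref{4.5}), $\restr{\rho_{n+1}'}{\Gamma_{F_{\nu_1}}}\in\Lift^{\mu,\alpha}(\mathcal O/\varpi^{n+1})$, whereas Proposition \ref{4.10} guarantees $\restr{\rho_{n+1}}{\Gamma_{F_{\nu_1}}}\notin\Lift^{\mu,\alpha}(\mathcal O/\varpi^{n+1})$. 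Under the hypothesis, both restrictions are unramified at $\nu_1$, so their difference yields a class $\chi\in H^1(\Gamma_{F,S'\cup Q\cup\{\nu_1,\nu_2\}},\rho_1(\mathfrak g^{\der}))$ with $\restr{\chi}{\Gamma_{F_{\nu_1}}}\in H^1_{\mathrm{unr}}$. Since $\Lift^{\mu,\alpha}$-classes at $\nu_1$ are stable under $L^\alpha_{1,\nu_1}$-modification and $\chi$ carries a point outside $\Lift^{\mu,\alpha}$ to one inside, $\restr{\chi}{\Gamma_{F_{\nu_1}}}\notin L^\alpha_{1,\nu_1}$; by Lemma \ref{3.22} its projection to $H^1/L^\alpha_{1,\nu_1}$ is a non-zero scalar multiple of the class of $\phi^{\mathrm{un},1}_{H_\alpha}$.

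Next, I would analyze $\chi$ globally. For $\nu\in S'\cup Q_0$ both $\rho_{n+1}$ and $\rho_{n+1}'$ lie in the unchanged local conditions $D_{n+1,\nu}$, so $\restr{\chi}{\Gamma_{F_\nu}}\in L_{1,\nu}$; at $\nu_2$ an analogous check shows $\restr{\chi}{\Gamma_{F_{\nu_2}}}\in L^\alpha_{1,\nu_2}+L^{\mathrm{unr}}_{1,\nu_2}$. Lifting $\chi$ to a level-$r$ class (which is possible because $\rho_{n+1}'$ and $\rho_{n+1}$ agree mod $\varpi^n$ and both extend upwards) and re-running the Greenberg–Wiles computations of Propositions \ref{4.11} and \ref{4.17} at level $r$ — whose inputs are exactly the non-$L^{\alpha,\perp}$ properties $\restr{\overline\psi}{\Gamma_{F_{\nu_1}}}\notin L^{\alpha,\perp}_{1,\nu_1}$ and $\restr{\overline{\psi'}}{\Gamma_{F_{\nu_2}}}\notin L^{\alpha,\perp}_{1,\nu_2}$ — one shows the enlarged Selmer group collapses onto the original relative Selmer group $\overline{H^1_{\mathcal L_{2r,S'\cup Q_0}}}=\langle\overline\phi\rangle$ (one-dimensional by Remark \ref{4.9}), so that $\chi\equiv c\cdot\overline\phi$ for some $c\in k$.

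The contradiction at $\nu_1$ is then immediate: the second bullet of Proposition \ref{4.10} gives $\restr{\phi}{\Gamma_{F_{\nu_1}}}\in L^\alpha_{2r,\nu_1}$, which reduces to $\restr{\overline\phi}{\Gamma_{F_{\nu_1}}}\in L^\alpha_{1,\nu_1}$, hence $\restr{\chi}{\Gamma_{F_{\nu_1}}}\in L^\alpha_{1,\nu_1}$, contradicting the first paragraph. The $\nu_2$-case is handled by the same template, but at level $r$ rather than level $1$: Proposition \ref{4.16}'s third bullet $\restr{\phi_r}{\Gamma_{F_{\nu_2}}}\notin L^\alpha_{r,\nu_2}$ plays the role that Proposition \ref{4.10}'s first bullet played at $\nu_1$, and the obstruction class lives naturally in $H^1(\Gamma_F,\rho_r(\mathfrak g^{\der}))$ so that the $\phi_r$-generator is detected instead of $\overline\phi$.

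The main obstacle will be making the Selmer-group identification of the second paragraph precise: adapting Propositions \ref{4.11} and \ref{4.17} to the correct cohomological level (1 for $\nu_1$, $r$ for $\nu_2$) requires re-running the Greenberg–Wiles bookkeeping with the local-condition sizes supplied by Lemmas \ref{3.22} and \ref{4.5}, and checking that the difference class $\chi$ genuinely lifts, with controlled local behavior, to a cocycle valued in $\rho_r(\mathfrak g^{\der})$ so that the generator $\phi_r$ (respectively $\overline\phi$) can be brought to bear.
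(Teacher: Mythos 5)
Your high-level strategy --- compare $\rho_{n+1}$ and $\rho'_{n+1}$ via a difference cocycle, locate that cocycle in a Selmer group, collapse the Selmer group using Propositions \ref{4.11} and \ref{4.17}, and derive a contradiction at $\nu_1$ with the first bullet of Proposition \ref{4.10} --- is indeed the paper's approach. However, there is a genuine error in the opening move, and a second place where you deviate into unnecessary and incompletely justified complications.

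The congruence $\rho'_{n+1}\equiv \rho_{n+1}\pmod{\varpi^n}$ is false. Tracing the iteration in Claim \ref{3.31} starting from $(\rho_{n-2r},\rho_n)$: one has $\tau_{n-2r}=\rho_{n-2r}$ and $\tau_{n-2r+1}=\rho_n\pmod{\varpi^{n-2r+1}}=\rho_{n-2r+1}$, but in the very next step the auxiliary lift is produced by \emph{modifying} a naive lift of $\rho_n$ by $\exp(\varpi^{n-2r+1}f_0)$ for some cocycle $f_0$ valued in $\rho_{2r}(\mathfrak g^{\der})$. That modification changes the lift at all levels from $n-2r+2$ up, so $\tau_{n-2r+2}$ already need not equal $\rho_{n-2r+2}$. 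Consequently $\rho_{n+1}$ and $\rho'_{n+1}$ agree only mod $\varpi^{n-2r+1}$, and the difference class $f$ with $\rho_{n+1}=\exp(\varpi^{n-2r+1}f)\rho'_{n+1}$ lives in $H^1(\Gamma_{F,S'\cup Q\cup\{\nu_1,\nu_2\}},\rho_{2r}(\mathfrak g^{\der}))$, \emph{not} in $H^1(\Gamma,\rho_1(\mathfrak g^{\der}))$. Your mod-$\varpi$ class $\chi$ and the subsequent claim that it can be ``lifted to a level-$r$ class because $\rho_{n+1}'$ and $\rho_{n+1}$ agree mod $\varpi^n$'' both rest on this false congruence and cannot be carried through as written. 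The paper works with $f$ at the full $\varpi^{2r}$ level for precisely this reason; the Greenberg--Wiles computations in Propositions \ref{4.11} and \ref{4.17} are carried out at that level, and they deliver $\restr{f}{\Gamma_{F_{\nu_1}}}\in L_{2r,\nu_1}^\alpha$ directly.

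Your treatment of the $\nu_2$ case also departs from the paper in a way that adds complication. In the paper, if $\rho'$ is unramified at $\nu_2$ then $j=1$, so $f$ lands in the Selmer group with condition $L^\alpha_{2r,\nu_1}+L^{\mathrm{unr}}_{2r,\nu_1}$ at $\nu_1$ and unramified at $\nu_2$; Proposition \ref{4.11} collapses that group onto the one with condition $L^\alpha_{2r,\nu_1}\cap L^{\mathrm{unr}}_{2r,\nu_1}$ at $\nu_1$, yielding $\restr{f}{\Gamma_{F_{\nu_1}}}\in L^\alpha_{2r,\nu_1}$, and the contradiction is again at $\nu_1$ via the first bullet of Proposition \ref{4.10}. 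Both cases bottom out at $\nu_1$; the third bullet of Proposition \ref{4.16} (that $\restr{\phi_r}{\Gamma_{F_{\nu_2}}}\notin L^\alpha_{r,\nu_2}$) is used in Proposition \ref{4.18} to control the Selmer-group dimension drop, not in the proof of ramification at $\nu_1,\nu_2$. Your proposal to route the $\nu_2$ case through that bullet and ``detect the $\phi_r$-generator'' replaces a short, symmetric argument with one that has to re-identify $f$ with $\phi_r$ at some level, a step you correctly flag as unresolved but which the paper avoids entirely.
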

\begin{proof}

    From the way we produced $\rho'$ we get that both $\rho$ and $\rho'$ are lifts of $\rho_{n-2r}$. In fact, looking at the proof of Claim \ref{3.31}, in particular by part $(2)$ of it, we see that $\rho'_{n-2r+1} = \rho_{n-2r+1}$. Therefore, we can write:

    $$\rho_{n+1} = \exp(\varpi^{n-2r+1}f)\rho'_{n+1}$$

    \vspace{2 mm}

    \noindent for some $f \in H^1(\Gamma_{F,S' \cup Q \cup \{\nu_1,\nu_2\}},\rho_{2r}(\mathfrak g^{\der}))$. Suppose that $\rho_{n+1}'$ isn't ramified at $\nu_i$ for $i \in \{1,2\}$ and let $j$ be the other index. Then, as $\rho_{n+1}$ isn't ramified at $\nu_0$ and both lifts satisfy the same local conditions at each prime in $S' \cup Q_0$ we get that:

    $$f \in H^1_{\mathcal L_{2r,S' \cup Q_0} \cup [L_{2r,\nu_j}^\alpha + L_{2r,\nu_j}^{\mathrm{unr}}]}(\Gamma_{F,S' \cup Q_0 \cup \nu_j},\rho_{2r}(\mathfrak g^{\der}))$$

    \vspace{2 mm}

    However, the Selmer group in question is equal to $ H^1_{\mathcal L_{2r,S' \cup Q_0} \cup [L_{2r,\nu_1}^\alpha \cap L_{2r,\nu_1}^{\mathrm{unr}}]}(\Gamma_{F,S' \cup Q_0},\rho_{2r}(\mathfrak g^{\der}))$ by Proposition \ref{4.17} if $j = 2$ or Proposition \ref{4.11} if $j = 1$. As $\restr{\rho_{n+1}'}{\Gamma_{F_{\nu_1}}} \in \Lift_{\restr{\overline{\rho}}{\Gamma_{F_{\nu_1}}}}^{\alpha,\mu}(\mathcal O/\varpi^{n+1})$ and the fibers of this class of lifts are preserved under the action of $L_{2r,\nu_1}^\alpha$ we have that $\restr{\rho_{n+1}}{\Gamma_{F_{\nu_1}}}  \in \Lift_{\restr{\overline{\rho}}{\Gamma_{F_{\nu_1}}}}^{\alpha,\mu}(\mathcal O/\varpi^{n+1})$, as well. But this contradicts the first bullet point of Proposition \ref{4.10}, which was used to produce the prime $\nu_1$.
    
\end{proof}

As $\rho'_{n+1}$ is ramified at $\nu_1$ and $\nu_2$ the same is true for its lift $\rho'$. Summarizing, we get that we can decrease the size of $Q_{\mathrm{unr}}$ at the cost of only guaranteeing the vanishing of a higher relative dual Selmer group. We then repeat this argument recursively, with $\rho'$ in place of $\rho$, $S' \cup Q_0 \cup \{\nu_1,\nu_2\}$ in place of $S' \cup Q$ and $2r$ in place of $r$, until we discard all the primes in $Q_{\mathrm{unr}}$. Eventually, we end up with a lift $\rho$ and a set of primes $Q$ such that $\rho$ is ramified at each of those primes, which by Lemma \ref{4.4} is enough to guarantee the local smoothness at each prime in $Q$. Hence, we have proven the following result:

\begin{thm}
\label{4.21}

    Suppose $\overline{\rho}:\Gamma_{F,S} \to G(k)$ satisfies Assumptions \ref{1.1} Then there exists a finite enlargement $T$ of $S$ and a lift $\rho:\Gamma_{F,T} \to G(\mathcal O)$ of $\overline{\rho}$ with multiplier type $\mu$ such that for each prime $\nu \in T$ the local restriction $\restr{\rho}{\Gamma_{F_\nu}}$ corresponds to a formally smooth point of
    
    \begin{itemize}
        \item the generic fiber of the local lifting ring $R_{\restr{\overline{\rho}}{\Gamma_{F_\nu}}}^{\sqr,\mu}[1/\varpi]$ for $\nu \nmid p$.
        \item the inertial type $\tau$ and Hodge type $\textup{\textbf{v}}$ lifting ring $R^{\sqr,\mu,\tau,\textup{\textbf{v}}}_{\restr{\overline{\rho}}{\Gamma_{F_\nu}}}[1/\varpi]$ (see \cite[Proposition $3.0.12$]{Bal12} for the construction of this ring) for $\nu \mid p$.
    \end{itemize}
    
\end{thm}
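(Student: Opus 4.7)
The plan is to assemble the tools developed in \S 3 and \S 4 into a single construction, following the architecture laid out at the start of \S 4. First, I will run the modified doubling method of \S 3.1 to produce the tower $(\rho_n)_{n \le N}$ culminating in $\rho_N: \Gamma_{F,S_N} \to G(\mathcal O/\varpi^N)$, where at each of the four types of primes in $S_N$ the local behavior is chosen as prescribed in the discussion following Theorem \ref{3.4}: the extra root conditions from Lemma \ref{3.7} are imposed at primes of type three, and the refined choice of unramified Frobenius images from Lemma \ref{3.8} is imposed at primes of type four. I choose $N$ to satisfy the bound (\ref{Nbound}) so that all subsequent Selmer group manipulations and fiber actions are well-defined. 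Claim \ref{3.30} ensures that the resulting local conditions $\mathcal L_{r,S_N}$ are balanced for $1 \le r \le M$, which lets me feed them into the relative deformation machinery.

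Next, I apply Theorem \ref{3.27} to find a finite set $Q \subseteq Q_{N,M}$ killing the $M$-th relative dual Selmer group, and then Claim \ref{3.31} to obtain the characteristic $0$ lift $\rho: \Gamma_{F,S_N \cup Q} \to G(\mathcal O)$. At this point, for primes in $S_{N-M}$, formal smoothness at $\restr{\rho}{\Gamma_{F_\nu}}$ is handled by the work of \S 4.1: types one and two already carry smooth local lifts by construction, while types three and four follow from Lemmas \ref{4.1} and \ref{4.2} together with Proposition \ref{3.1}, since the invariants $H^0(\Gamma_{F_\nu},\rho_n(\mathfrak g^{\der})^*)$ are forced into higher and higher powers of $\varpi$ and hence vanish in the limit. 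The type-three primes in $S_N \setminus S_{N-M}$ are moved into $Q$ as indicated in \S 4.2, where they are handled uniformly with the rest of $Q$.

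The hard step is ensuring formal smoothness at primes in $Q$, and by Lemma \ref{4.4} this reduces to forcing $\rho$ to ramify at every $\nu \in Q$. I carry this out by induction on $\#Q_{\mathrm{unr}}$, the subset of $Q$ at which the current $\rho$ is unramified. Fixing some $\nu_0 \in Q_{\mathrm{unr}}$ and setting $Q_0 = Q \setminus \{\nu_0\}$, Lemma \ref{4.7} lets me assume the $2r$-th relative dual Selmer group over $S' \cup Q_0$ is nonzero, so Lemma \ref{4.8} identifies it (together with its partner Selmer group) as a one-dimensional space spanned by $\overline{\phi}$ and $\overline{\psi}$. I then select two auxiliary primes: $\nu_1$ via Proposition \ref{4.10} and $\nu_2$ via Proposition \ref{4.16}, each chosen to meet a Chebotarev condition compatible with the linear disjointness provided by Lemma \ref{3.19}. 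Propositions \ref{4.11}, \ref{4.17} and \ref{4.18} together establish that the $2r$-th relative dual Selmer group over $S' \cup Q_0 \cup \{\nu_1,\nu_2\}$ vanishes, and Proposition \ref{4.20} shows the new lift $\rho'$ produced via Claim \ref{3.31} is ramified at $\nu_1,\nu_2$ and at every prime where $\rho$ was already ramified.

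The main obstacle, and the one the bulk of \S 4.2 exists to overcome, is that the dimension of the relative dual Selmer group need not drop when a single prime is added; this is the content of \cite[Remark $6.10$]{FKP21}. My workaround, as realized in Lemma \ref{4.14}, is to pass from modulo-$\varpi^r$ conditions to modulo-$\varpi^{2r}$ conditions and track the $(2r,r)$-relative Selmer group, which is free of known rank over $\mathcal O/\varpi^r$; this is what allows a single forced ramification to reduce the dimension at the doubled level. Doubling $r$ at each inductive step is harmless, because $Q_{\mathrm{unr}}$ is finite and the bound (\ref{Nbound2}) can be refreshed at each stage. After finitely many iterations $Q_{\mathrm{unr}} = \emptyset$, and the final lift $\rho$, with enlarged ramifying set $T$, is ramified at every prime of $T \setminus S_{N-M}$ and formally smooth at every prime of $T$ by Lemmas \ref{4.1}, \ref{4.2}, and \ref{4.4}, establishing Theorem \ref{4.21}.
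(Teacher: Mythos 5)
Your proposal is correct and follows essentially the same route as the paper: modified doubling method (Lemmas \ref{3.7}, \ref{3.8}) plus relative deformation (Theorem \ref{3.27}, Claims \ref{3.30}, \ref{3.31}) to produce $\rho$, smoothness at $S_{N-M}$-primes via Lemmas \ref{4.1}, \ref{4.2} and Proposition \ref{3.1}, then the inductive forcing-ramification loop of \S 4.2 with the key doubling trick of Lemma \ref{4.14}. Two tiny imprecisions worth noting but not affecting the argument: Proposition \ref{4.20} only directly gives ramification at $\nu_1,\nu_2$ (ramification at the old ramified primes comes from the choice of $n$, not from the proposition), and the final $\rho$ is not ramified at the type-four primes in $T \setminus S_{N-M}$ — the formal smoothness there is supplied by Lemma \ref{4.2}, which you do cite, rather than by ramification.
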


\subsection{Raising the level}

In the context when we know the representations produced by these lifting methods to be modular we can interpret Theorem \ref{4.21} as some sort of a level raising result. We will be able to produce lifts which correspond to automorphic representations of higher level and establish congruence relations between them. From this point of view, the trivial case, i.e. the case where we don't have to force any ramification requires a bit more attention. More precisely, when the initial set $Q$ is the empty set and the relative Selmer group is trivial after the doubling method, we don't have any extra ramification and subsequently we do not increase the level. Following the idea of \cite[Proposition $3.17$]{Pat17} we can always overcome this issue. 

\begin{prop}
\label{4.22}

The set $Q_{\mathrm{ram}}$ can be arbitrarily enlarged, while simultaneously keeping $Q_{\mathrm{unr}}$ empty. In particular, the set $T$ in Theorem \textup{\ref{4.21}} can be chosen to properly contain the set $S'$, hence the initial set $S$.
    
\end{prop}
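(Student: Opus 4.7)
The plan follows \cite[Proposition $3.17$]{Pat17}: to enlarge $Q_{\mathrm{ram}}$, I would artificially introduce an auxiliary prime $\nu_0$ that plays the role of an element of $Q_{\mathrm{unr}}$ in the forcing ramification machinery of \S $4.2$, and let that machinery convert $\nu_0$ into two genuinely ramifying primes $\nu_1,\nu_2\in Q_{\mathrm{ram}}$. Iterating the construction enlarges $Q_{\mathrm{ram}}$ by any desired finite number of primes; in particular, a single iteration starting from the trivial case $Q=\emptyset$ already gives $T\supsetneq S'$.

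First, I would fix the lift $\rho:\Gamma_{F,T}\to G(\mathcal O)$ of Theorem \ref{4.21} with $T=S'\cup Q$ and $Q_{\mathrm{unr}}=\emptyset$ (possibly $Q=\emptyset$, in which case $T=S'$), and choose integers $r\ge M$ and $n$ satisfying (\ref{Nbound2}) with $n$ additionally larger than every ramification depth of $\rho$ at the primes of $Q_{\mathrm{ram}}$. By the linear disjointness of $K(\rho_n(\mathfrak g^{\der}))$ and $K_\infty$ over $K$ (Lemma \ref{3.19}) and Chebotarev density, there is a positive density of primes $\nu_0\in Q_{n,r}$ that lie outside $T$. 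Since $\nu_0\notin T$, the lift $\rho$ is unramified at $\nu_0$, and its local restriction automatically satisfies the local conditions $L^{\alpha_0}_{m,\nu_0}$ of Lemma \ref{3.16} attached to the data $(T_0,\alpha_0)$ of Definition \ref{3.21}, for every $1\le m\le 2r$.

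Next, I would put $Q'=Q\cup\{\nu_0\}$, equip $\nu_0$ with the local conditions $L^{\alpha_0}_{m,\nu_0}$, and regard $\rho$ as a lift for $S'\cup Q'$ with $Q'_{\mathrm{unr}}=\{\nu_0\}$; Lemma \ref{3.24} ensures the enlarged local conditions are still balanced. I would then run the entire forcing ramification argument of \S $4.2$ on $\nu_0$: Proposition \ref{4.10} produces $\nu_1$, Proposition \ref{4.16} produces $\nu_2$, Propositions \ref{4.17} and \ref{4.18} annihilate the $2r$-th relative dual Selmer group over $S'\cup Q\cup\{\nu_1,\nu_2\}$, Claim \ref{3.31} lifts to characteristic zero, and Proposition \ref{4.20} verifies that the resulting lift $\rho'$ is ramified at $\nu_1$ and $\nu_2$. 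The choice of $n$ also guarantees that $\rho'$ remains ramified at every prime of the old $Q_{\mathrm{ram}}$. Thus $\rho'$ has ramifying set $S'\cup Q_{\mathrm{ram}}\cup\{\nu_1,\nu_2\}$ with $Q_{\mathrm{unr}}$ empty, so $\nu_0$ has served its purpose and can be discarded.

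The principal obstacle is arranging the Selmer-theoretic dimension hypotheses that make the forcing argument applicable after $\nu_0$ is introduced: since the $2r$-th relative dual Selmer group over $S'\cup Q$ vanishes, adding $\nu_0$ can raise the dimension by at most one, and to apply Lemma \ref{4.8} and Proposition \ref{4.18} one needs the dimension to equal exactly one so that $\rho$ is genuinely forced off itself. I would secure this by imposing on $\nu_0$ one additional Chebotarev-type constraint, in the spirit of Proposition \ref{4.10}, guaranteeing that a chosen non-zero cohomology class has non-trivial restriction at $\nu_0$ in the unramified-modulo-$L^{\alpha_0,\perp}_{1,\nu_0}$ quotient of Lemma \ref{3.22}. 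The existence of such $\nu_0$ follows from the same inflation--restriction and Assumptions \ref{1.1} analysis used throughout \S $4.2$, combined with the linear disjointness of the relevant auxiliary fields over $K$; in the degenerate sub-case where no such class exists, Lemma \ref{4.7} already produces the desired enlarged lift directly from a different choice of auxiliary prime, so the iteration still proceeds.
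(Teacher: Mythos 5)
Your overall plan --- introduce an auxiliary prime $\nu_0\notin T$ carrying the ramified local condition and feed it into the forcing ramification machinery --- matches the paper's strategy, and your observation that a single iteration from $Q=\emptyset$ already gives $T\supsetneq S'$ is correct. The gap is in your treatment of the degenerate case, and it is not a minor one; it is the case the paper must work hardest to handle.

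After adding $\nu_0$ with $Q'=Q\cup\{\nu_0\}$ and $Q'_{\mathrm{unr}}=\{\nu_0\}$, setting $Q_0=Q'\setminus\{\nu_0\}=Q$ gives $\overline{H^1_{\mathcal L^\perp_{2r,S'\cup Q_0}}(\Gamma_{F,S'\cup Q_0},\rho_{2r}(\mathfrak g^{\der})^*)}=0$ automatically, since that was the hypothesis you started from. Thus the precondition for Lemma \ref{4.8} --- that this dual Selmer group be non-zero --- always fails, so its dimension-$1$ conclusion is unavailable; and Lemma \ref{4.7} applied to $\nu_0$ simply discards it, returning a lift ramified only at the old $Q_{\mathrm{ram}}$, not an enlarged set. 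Your proposed remedy of constraining $\nu_0$ so that some dual Selmer class restricts non-trivially there cannot get off the ground either: that group is zero, so there is no class to restrict, and the inflation--restriction argument you invoke has nothing to act on.

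The paper resolves this by dichotomizing on whether $\overline{H^1_{\mathcal L^\perp_{2r,S'\cup Q\cup\nu}}(\Gamma_{F,S'\cup Q\cup\nu},\rho_{2r}(\mathfrak g^{\der})^*)}$ has dimension $0$ or $1$ (at most $1$ by the computation of Lemma \ref{4.8}). If the dimension is $1$ for some $\nu$, the forcing ramification argument runs roughly as you envisage, with additional bookkeeping for whether the resulting $\rho'$ ramifies at $\nu$ itself. If the dimension is $0$ for every $\nu\in Q_{n,4r}$, the paper instead chooses $\nu$ to satisfy the first two bullet points of Proposition \ref{4.10} --- that $\restr{\rho_n}{\Gamma_{F_\nu}}$ lies in $\Lift^{\alpha,\mu}_{\restr{\overline{\rho}}{\Gamma_{F_\nu}}}(\mathcal O/\varpi^n)$ but $\restr{\rho_{n+1}}{\Gamma_{F_\nu}}$ does not lie in the analogous class modulo $\varpi^{n+1}$, and that every Selmer class $f$ has $\restr{f}{\Gamma_{F_\nu}}\in L^{\alpha}_{2r,\nu}$ --- and lifts directly over $S'\cup Q\cup\nu$ via Claim \ref{3.31}; the contradiction argument of Proposition \ref{4.20} then forces $\rho'$ to ramify at $\nu$ without any Selmer-class machinery. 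This second branch is the content your proof is missing, and it is precisely the branch that occurs when starting from a vanishing relative dual Selmer group.
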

\begin{proof}

    If $Q_{\mathrm{unr}} \neq \emptyset$ by running the forcing ramification argument we can remove one prime of it and replace it with two new primes where the new characteristic $0$ lift will be ramified, hence increasing the set $Q_{\mathrm{ram}}$. Thus, we only need to consider the case $Q_{\mathrm{unr}} = \emptyset$, i.e. $Q = Q_{\mathrm{ram}}$.

    Suppose that $\overline{H^1_{\mathcal{L}_{r,S'\cup Q}^\perp}(\Gamma_{F,S'\cup Q},\rho_r(\mathfrak g^{\der})^*)} = 0$ for some integer $r$ and we used this to produce the lift $\rho$. First, we extend the local conditions modulo $\varpi^{2r}$ as in Lemma \ref{4.5}. Let $n$ be a large enough integer, so that it satisfies the inequality (\ref{Nbound2}) and $\rho_n$ is ramified at each primes in $Q$. As in Lemma \ref{4.8} for any $\nu \in Q_{n,4r}$ disjoint from $S' \cup Q$ the Selmer group $\overline{H^1_{\mathcal{L}_{2r,S'\cup Q \cup \nu}^\perp}(\Gamma_{F,S'\cup Q \cup \nu},\rho_{2r}(\mathfrak g^{\der})^*)}$ has dimension at most $1$. If the dimension is $1$ by the same computations we have that $\overline{H^1_{\mathcal{L}_{r,S'\cup Q \cup \nu}^\perp}(\Gamma_{F,S'\cup Q \cup \nu},\rho_{r}(\mathfrak g^{\der})^*)}$ has dimension $1$, as well. Then, running the forcing ramification argument, skipping the removal of a prime, we can produce primes $\nu_1,\nu_2$ such that $\overline{H^1_{\mathcal{L}_{2r,S'\cup Q \cup \{\nu,\nu_1,\nu_2\}}^\perp}(\Gamma_{F,S'\cup Q \cup \{\nu,\nu_1,\nu_2\}},\rho_{2r}(\mathfrak g^{\der})^*)} = 0$ and we have a lift $\rho':\Gamma_{F,S' \cup Q \cup \{\nu,\nu_1,\nu_2\}} \to G(\mathcal O)$ of $\rho_{n-2r}$ which is ramified at all primes in $S' \cup Q \cup \{\nu_1,\nu_2\}$. If $\rho'$ is also ramified at $\nu$ then we have enlarged $Q_{\mathrm{ram}}$ with $\nu,\nu_1$ and $\nu_2$ and we are done. If $\rho'$ isn't ramified at $\nu$, the new set $Q_{\mathrm{unr}}$ is non-empty and we can run the forcing ramification argument again. During it we can either discard $\nu$ as in Lemma \ref{4.7} and just add $\nu_1,\nu_2$ to $Q_{\mathrm{ram}}$ or replace $\nu$ with two new primes $\nu_3,\nu_4$ and a new characteristic $0$ lift which will be ramified at them and therefore enlarge $Q_{\mathrm{ram}}$ by $\nu_1,\nu_2,\nu_3$ and $\nu_4$.

    Thus, we can assume that $\overline{H^1_{\mathcal{L}_{2r,S'\cup Q \cup \nu}^\perp}(\Gamma_{F,S'\cup Q \cup \nu},\rho_{2r}(\mathfrak g^{\der})^*)} = 0$ for all primes $\nu \in Q_{n,4r}$. Now, we choose a prime $\nu \in Q_{n,4r}$ such that $\restr{\rho_n}{\Gamma_{F_\nu}} \in \Lift_{\restr{\overline{\rho}}{\Gamma_{F_\nu}}}^{\alpha,\mu}(\mathcal O/\varpi^n)$, but $\restr{\rho_{n+1}}{\Gamma_{F_\nu}} \notin \Lift_{\restr{\overline{\rho}}{\Gamma_{F_\nu}}}^{\alpha,\mu}(\mathcal O/\varpi^{n+1})$ and $\restr{f}{\Gamma_{F_\nu}} \in L_{2r,\nu}^\alpha$ for each $f \in H_{\mathcal L_{2r,S' \cup Q}}^1(\Gamma_{F,S' \cup Q},\rho_{2r}(\mathfrak g^{\der}))$. This can be done exactly as in Proposition \ref{4.10}. By our assumption the $2r$-th relative dual Selmer group will still vanish after adding $\nu$ to the list of primes. Hence, we can produce a lift $\rho':\Gamma_{F,S'\cup Q \cup \nu} \to G(\mathcal O)$ of $\rho_{n-2r}$. We claim that $\rho'$ is ramified at $\nu$. For the sake of contradiction, suppose that $\rho'$ isn't ramified at $\nu$. As both $\rho_{n+1}$ and $\rho'_{n+1}$ are lifts of $\rho_{n-2r+1}$ we have that $\rho_{n+1} = \exp(\varpi^{n-2r+1}f)\rho'_{n+1}$ for some $f \in H_{\mathcal L_{2r,S' \cup Q}}^1(\Gamma_{F,S' \cup Q},\rho_{2r}(\mathfrak g^{\der}))$. By our choice of the prime $\nu$ we have that $\restr{f}{\Gamma_{F_\nu}} \in L_{2r,\nu}^\alpha$. Thus, as $\restr{\rho_{n+1}'}{\Gamma_{F_\nu}} \in \Lift_{\restr{\overline{\rho}}{\Gamma_{F_\nu}}}^{\alpha,\mu}(\mathcal O/\varpi^{n+1})$ the same will be true for $\restr{\rho_{n+1}}{\Gamma_{F_\nu}}$, which is a contradiction. Therefore, we have enlarged $Q_{\mathrm{ram}}$ by $\nu$
    
\end{proof}
    
\section{Finiteness of Selmer groups}

Knowing that we can produce a lift $\rho: \Gamma_{F,S' \cup Q} \to G(\mathcal O)$ of $\overline{\rho}$ as in Theorem \ref{4.21}, we can show that the analogous results to the ones in \cite[\S $4$]{KR03} hold. In particular, we can show the vanishing of certain Selmer groups associated to the adjoint representation of the lift $\rho$, which as explained in \S 1 is expected to follow from the Bloch-Kato conjecture. We can additionally get some information about the structure of the universal deformation ring of $\overline{\rho}$.

For notational convenience from now on all deformation and lifting rings of the local representation $\overline{\rho_\nu}$ will be labeled with a subscript $\nu$, while the deformation and lifting rings without a subscript will correspond to the global representation $\overline{\rho}$. Additionally, we will be dealing with lifts and deformation rings which are unramified away from $S' \cup Q$. So, to ease off the notation we will drop that label. Let $R^{\sqr,\mu}$ be the unramified away from $S' \cup Q$, with fixed multiplier $\mu$ universal lifting ring of $\overline{\rho}$. Moreover, by our assumptions $H^0(\Gamma_{F,S' \cup Q},\overline{\rho}(\mathfrak g)) = \mathfrak z_{\mathfrak g}$, so $\overline{\rho}$ admits an unramified away from $S' \cup Q$, with fixed multiplier $\mu$ universal deformation ring $R^\mu$. We will now define a suitable quotient of $R^{\mu}$ that corresponds to lifts lying in a certain irreducible component of the generic fiber of the local lifting rings. 

For each prime $\nu \mid p$ we choose an irreducible component of the generic fiber of the fixed inertial type $\tau$ and $p$-adic Hodge type \textbf{v} lifting ring $R_{\nu}^{\sqr,\mu,\tau,\textbf{v}}[1/\varpi]$ (see \cite[Proposition $3.0.12$]{Bal12} for the construction of this ring) that contains the points coming from the given local lifts $\rho_\nu$. We then let $\overline{R^{\sqr}_\nu}$ be the quotient ring that corresponds to the scheme-theoretic closure of this component in $\Spec(R_{\nu}^{\sqr,\mu})$. We note that these rings are non-zero since by construction the subschemes contain the local lifts $\rho_\nu$. Now, the restriction to each $\Gamma_{F_\nu}$ gives rise to a natural transformation of the lifting functors $\Lift_{\overline{\rho}}^\mu(-) \to \Lift_{\overline{\rho_\nu}}^\mu(-)$, so the choice of a universal lift of $\overline{\rho}$ gives us a morphism $R_\nu^{\sqr,\mu} \to R^{\sqr,\mu}$ for each $\nu \mid p$. Thus, if we set $R_{p}^{\sqr,\mu} \coloneqq \widehat{\otimes}_{\nu \mid p, \mathcal O} R_\nu^{\sqr,\mu}$ we get a morphism $R_{p}^{\sqr,\mu} \to R^{\sqr,\mu}$. We can then define $\overline{R^{\sqr}} \coloneqq R^{\sqr,\mu} \widehat{\otimes}_{R_{p}^{\sqr,\mu}} (\widehat{\otimes}_{\nu \mid p, \mathcal O} \overline{R_\nu^{\sqr}})$, which is non-zero as each $\overline{R^{\sqr}_\nu}$ is non-zero. 

Now, for any complete local Noetherian $\mathcal O$-algebra $R$ with residue field $k$ we define $D_p(R)$ to be the subset of $\Lift_{\overline{\rho}}^\mu(R)$ consisting of lifts such that the induced map $R^{\sqr,\mu} \to R$ factors through $\overline{R^{\sqr}}$. These are all the lifts of $\overline{\rho}$ whose restrictions to the local Galois groups lie in $\Spec(\overline{R^{\sqr}_\nu})$ for each $\nu \mid p$. By \cite[Lemma $3.4.1$]{BG19}, conjugation by elements in $\widehat{G}(O)$ fixes the irreducible components of the generic fiber. Thus, by \cite[Lemma $3.2$]{BLGHT11}, $D_p$ define a deformation problem in the sense of \cite[Definition $2.2$]{CHT08} (see discussion preceding \cite[Lemma $1.3.3$]{BLGGT14}). Therefore, we can talk about the quotient $\overline{R}$ of $R^\mu$ that corresponds to $\overline{R^{\sqr}}$. In particular, for an element in $\Def_{\overline{\rho}}^\mu(R)$ the induced map $R^\mu \to R$ will factor through $\overline{R}$ if and only if any choice of a representative is an element of $D_p(R)$.

Based on the construction of the lift $\rho$ and the choice of the irreducible component of the generic fibers we get that $\rho$ provides us with a point in $\Spec(\overline{R})$, and so we get an $\mathcal O$-algebra local homomorphism $\varphi:\overline{R} \to \mathcal O$, which is necessarily surjective. We label the prime ideal $\ker \varphi$ by $\mathfrak p$. We're interested in the cotangent space $\mathfrak p/\mathfrak p^2$ and in particular our goal is to show that this is a finite abelian group. To achieve this we will identify its dual $\Hom_\mathcal O(\mathfrak p/\mathfrak p^2,E/\mathcal O)$ with a certain Selmer group. We will first need to define the local conditions at each prime $\nu \mid p$ that will be used in these Selmer groups. For each $\nu \mid p$ let $\Omega_{\overline{R^{\sqr}_\nu}/\mathcal O}$ be the module of continuous differentials. This is the representing object of the functor $\mathrm{Der}_{\mathcal O}(\overline{R^{\sqr}_\nu},-)$ of continuous $\mathcal O$-linear derivations into finite $\overline{R^{\sqr}_\nu}$-modules. We then have the following lemma (cf. \cite[\S 2]{KP24}):

\begin{lem}
\label{5.1}

    For each positive integer $n$, $\Hom_{\mathcal O}(\Omega_{\overline{R^{\sqr}_\nu}/\mathcal O} \otimes_{\overline{R^{\sqr}_\nu}} \mathcal O,\mathcal O/\varpi^n)$ can be identified with an $\mathcal O$-submodule of $Z^1(\Gamma_{F_\nu},\rho_n(\mathfrak g^{\der}))$ containing the coboundaries, where we view $\mathcal O$ as an $\overline{R^{\sqr}_\nu}$-module using the associated map $\overline{R^{\sqr}_\nu} \to \mathcal O$ coming from the local lift $\restr{\rho}{\Gamma_{F_\nu}}$. Moreover, this identification depends only on the reduction of $\restr{\rho}{\Gamma_{F_\nu}}$ modulo $\varpi^n$.
    
\end{lem}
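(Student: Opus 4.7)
The plan is to identify the Hom group with a concrete tangent-like object and then realize that object as a subset of $1$-cocycles using the dual-numbers interpretation of derivations.

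First I would unwind the left-hand side: by the universal property of the module of continuous differentials, $\Hom_{\mathcal O}(\Omega_{\overline{R^{\sqr}_\nu}/\mathcal O} \otimes_{\overline{R^{\sqr}_\nu}} \mathcal O, \mathcal O/\varpi^n)$ is canonically the set of continuous $\mathcal O$-linear derivations $D:\overline{R^{\sqr}_\nu} \to \mathcal O/\varpi^n$, where $\mathcal O/\varpi^n$ is given its $\overline{R^{\sqr}_\nu}$-module structure through the composite $x_n : \overline{R^{\sqr}_\nu} \xrightarrow{x} \mathcal O \to \mathcal O/\varpi^n$. By the standard dual-numbers trick, such derivations $D$ correspond bijectively to continuous $\mathcal O$-algebra homomorphisms $\widetilde{x}:\overline{R^{\sqr}_\nu} \to \mathcal O/\varpi^n[\epsilon]/(\epsilon^2)$ lifting $x_n$, via the formula $\widetilde{x}(r) = x_n(r) + D(r)\epsilon$. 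Crucially, the map $x_n$ only depends on $\restr{\rho}{\Gamma_{F_\nu}} \bmod \varpi^n$, since $x$ is the point of $\overline{R^{\sqr}_\nu}$ attached to $\restr{\rho}{\Gamma_{F_\nu}}$ and $x_n$ is its reduction.

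Next I would pass from algebra homomorphisms to Galois representations. Because $\overline{R^{\sqr}_\nu}$ is a quotient of the $\mu$-fixed framed lifting ring cut out by the condition that the lift lies in the chosen irreducible component, the set of $\widetilde{x}$ as above is in natural bijection with the set of lifts $\widetilde{\rho}:\Gamma_{F_\nu} \to G(\mathcal O/\varpi^n[\epsilon])$ of multiplier $\mu$ that reduce modulo $\epsilon$ to $\rho_n := \restr{\rho}{\Gamma_{F_\nu}} \bmod \varpi^n$ (viewed in $G(\mathcal O/\varpi^n[\epsilon])$ via the canonical inclusion) and whose corresponding map factors through $\overline{R^{\sqr}_\nu}$. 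Writing any such lift as
\[
\widetilde{\rho}(\sigma) = \exp(\epsilon\,\phi(\sigma))\,\rho_n(\sigma),
\]
via the exponential isomorphism recalled in Remark \ref{3.6} applied to the square-zero ideal $\epsilon\cdot\mathcal O/\varpi^n$, the homomorphism condition on $\widetilde{\rho}$ is equivalent to the $1$-cocycle condition on $\phi$. The fixed-multiplier constraint forces $\phi$ to take values in $\mathfrak g^{\der} \otimes_{\mathcal O} \mathcal O/\varpi^n$, so $\phi \in Z^1(\Gamma_{F_\nu},\rho_n(\mathfrak g^{\der}))$.

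Finally I would check the two remaining assertions. The assignment $\widetilde{x} \mapsto \phi$ is injective because $\rho_n$ and $\phi$ determine $\widetilde{\rho}$, which in turn determines $\widetilde{x}$; hence the composite injects $\mathrm{Der}_{\mathcal O}(\overline{R^{\sqr}_\nu},\mathcal O/\varpi^n)$ into $Z^1(\Gamma_{F_\nu},\rho_n(\mathfrak g^{\der}))$. A coboundary $\phi(\sigma) = X - \Ad(\rho_n(\sigma))X$ with $X \in \mathfrak g^{\der} \otimes \mathcal O/\varpi^n$ corresponds to the lift $\widetilde{\rho} = \exp(\epsilon X)\,\rho_n\,\exp(-\epsilon X)$, i.e.\ to a $\widehat{G}(\mathcal O/\varpi^n[\epsilon])$-conjugate of $\rho_n$ itself; since $\widehat{G}$-conjugation preserves irreducible components by \cite[Lemma $3.4.1$]{BG19} and $\rho_n$ already comes from $\Spec(\overline{R^{\sqr}_\nu})$, this conjugate lift also factors through $\overline{R^{\sqr}_\nu}$, so every coboundary lies in the image. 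The mild subtlety, and the only place where one must be careful, is verifying the irreducible-component containment for coboundaries rather than merely for the trivial lift; this is exactly what the cited lemma of Bellovin--Gee supplies. The last claim, that the whole construction depends only on $\rho_n$, is immediate since both $x_n$ and the cocycle recipe $\widetilde{\rho} = \exp(\epsilon\phi)\rho_n$ are computed purely from $\rho_n$.
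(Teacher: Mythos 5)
Your proof is correct but takes a genuinely different route from the paper's. You work over the dual numbers $\mathcal O/\varpi^n[\epsilon]$ and identify derivations with cocycles via the square-zero extension by $(\epsilon)$; the paper instead works over $\mathcal O/\varpi^{m+n}$ for auxiliary $m \geq n$, treating the square-zero ideal $\varpi^m\mathcal O/\varpi^{m+n}$ and exhibiting both $\Lift_{\restr{\rho_m}{\Gamma_{F_\nu}}}^\mu(\mathcal O/\varpi^{m+n})$ and $\Hom_{\mathcal O}(R^{\sqr,\mu}_\nu,\mathcal O/\varpi^{m+n})_{\varphi_{m,\nu}}$ as torsors under the cocycles and the derivations respectively, and then checking that the resulting identification is independent of the choice of $m$. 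Your dual-numbers formulation is cleaner on exactly this point: since no $m$ appears, the dependence on $\rho_n$ alone is immediate, whereas the paper has to argue via $\delta_\alpha(\rho_\nu^{\sqr,\mu}) = (\exp(\varpi^m\alpha)-1)\varphi_{m+n,\nu}(\rho_\nu^{\sqr,\mu})$ and observe that the right-hand side only sees $\rho_n$.

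Where the paper's choice of ambient ring pays off is in the coboundary containment. There the conjugating element lies in $\ker(G(\mathcal O/\varpi^{m+n}) \to G(\mathcal O/\varpi^m))$, and because $\mathcal O/\varpi^{m+n}$ is a quotient of $\mathcal O$, this element lifts to $\widehat{G}(\mathcal O)$; then \cite[Lemma $3.4.1$]{BG19} applies literally to the conjugated characteristic-$0$ lift before reducing. In your setup the conjugating element $\exp(\epsilon X)$ lies in $\widehat{G}(\mathcal O/\varpi^n[\epsilon])$ and $\mathcal O/\varpi^n[\epsilon]$ is not a quotient of $\mathcal O$, so \cite[Lemma $3.4.1$]{BG19} does not apply to it directly. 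You implicitly need the stronger statement that $\Spec(\overline{R^{\sqr}_\nu})$ is stable under $\widehat{G}(A)$-conjugation for arbitrary coefficient algebras $A$, which is the assertion that $D_p$ is a deformation problem in the sense of \cite[Definition $2.2$]{CHT08}; that is exactly what the paper establishes in \S $5$ via \cite[Lemma $3.4.1$]{BG19} together with \cite[Lemma $3.2$]{BLGHT11}. So your argument is sound once the citation is sharpened to the deformation-problem property rather than the $\widehat{G}(\mathcal O)$-statement alone, and this is not a real gap since the needed invariance is already in place before the lemma is invoked.
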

\begin{proof}

    For any $m \ge n$ from the universal property of the ring $R^{\sqr,\mu}_\nu$ we have the isomorphism

    $$\Lift_{\restr{\rho_m}{\Gamma_{F_\nu}}}^\mu(\mathcal O/\varpi^{m+n}) \simeq \Hom_{\mathcal O}(R^{\sqr,\mu}_\nu,\mathcal O/\varpi^{m+n})_{\varphi_{m,\nu}}$$

    \vspace{2 mm}

    \noindent where the right-hand side stands for local $\mathcal O$-algebra homomorphisms $R_{\nu}^{\sqr,\mu} \to \mathcal O/\varpi^{m+n}$ that modulo $\varpi^m$ reduce to the morphism $\varphi_{m,\nu}:R_{\nu}^{\sqr,\mu} \to \mathcal O/\varpi^{m}$ coming from the lift $\restr{\rho_m}{\Gamma_{F_\nu}}$. The left-hand side is a $Z^1(\Gamma_\nu,\rho_n(\mathfrak g^{\der}))$-torsor under the action $\alpha \cdot \rho = \exp(\varpi^m\alpha)\rho$, while the right-hand side is a $\mathrm{Der}_{\mathcal O}(R_\nu^{\sqr,\mu},\varpi^m\mathcal O/\varpi^{m+n}\mathcal O)$-torsor via $\delta \cdot f = f + \delta$. Here, we remark that $\varpi^{m}\mathcal O/\varpi^{m+n}\mathcal O$ obtains its $R^{\sqr,\mu}_\nu$-module structure from the previously mentioned map coming from the local lift $\restr{\rho}{\Gamma_{F_\nu}}$. This structure will depend only on $\restr{\rho_n}{\Gamma_{F_\nu}}$. Thus, we can identify the $1$-cocycles and the derivations. We make this identification explicit. Let $\varphi_{m+n,\nu}:R^{\sqr,\mu}_\nu \to \mathcal O/\varpi^{m+n}$ be the map coming from $\restr{\rho_{m+n}}{\Gamma_{F_\nu}}$ and for each cocycle $\alpha$, let $\varphi_{m+n,\nu}^\alpha :R^{\sqr,\mu}_\nu \to \mathcal O/\varpi^{m+n}$ be the map corresponding to $\alpha \cdot \restr{\rho_{m+n}}{\Gamma_{F_\nu}}$. By the identification, there exists a unique derivation $\delta_\alpha$ such that $\varphi_{m+n,\nu}^\alpha = \delta_\alpha + \varphi_{m+n,\nu}$. Let $\rho_\nu^{\sqr,\mu}:\Gamma_{F_\nu} \to G(R_{\nu}^{\sqr,\mu})$ be the universal lift of $\restr{\overline{\rho}}{\Gamma_{F_\nu}}$. We then get:

    $$\delta_\alpha(\rho_\nu^{\sqr,\mu}) = (\exp(\varpi^m\alpha) - 1)\varphi_{m+n,\nu}(\rho_\nu^{\sqr,\mu})$$

    \vspace{2 mm}

    \noindent where we use a faithful finite-dimensional representation $G \hookrightarrow \GL_n \hookrightarrow \mathrm{M}_{n \times n}$, allowing us to work in a matrix ambient space. In particular, this allows us to apply the derivation $\delta_\alpha$ to the universal lift $\rho_\nu^{\sqr,\mu}$ and also to make sense of addition.

    Since $\exp(\varpi^m\alpha) - 1$ is a multiple of $\varpi^m$, $\delta_\alpha(\rho_\nu^{\sqr,\mu})$ will depend only on the reduction of $\varphi_{m+n,\nu}(\rho_{\nu}^{\sqr,\mu})$ modulo $\varpi^n$, which is exactly $\varphi_{n,\nu}(\rho_{\nu}^{\sqr,\mu}) = \restr{\rho_n}{\Gamma_{F_\nu}}$. Now, from the universality of $\rho_\nu^{\sqr,\mu}$ we get that $\delta_\alpha(\rho_\nu^{\sqr,\mu})$ uniquely determines $\delta_\alpha$. Indeed, if $\delta(\rho_\nu^{\sqr,\mu}) = \delta'(\rho_\nu^{\sqr,\mu})$ we then have that $(\phi_{m+n}+\delta)(\rho_\nu^{\sqr,\mu}) = (\phi_{m+n}+\delta')(\rho_\nu^{\sqr,\mu})$, which implies that $\phi_{m+n} + \delta = \phi_{m+n} + \delta'$ and therefore $\delta = \delta'$. Hence the identification depends on $\restr{\rho_n}{\Gamma_{F_\nu}}$ only.

    As $\overline{R^{\sqr}_\nu}$ is a quotient of $R_\nu^{\sqr,\mu}$ by a closed ideal $I_\nu$ we have an inclusion $\mathrm{Der}_{\mathcal O}(\overline{R^{\sqr}_\nu},\mathcal O/\varpi^n) \hookrightarrow \mathrm{Der}_{\mathcal O}(R_\nu^{\sqr,\mu},\mathcal O/\varpi^n)$ and so we can identify $\mathrm{Der}_{\mathcal O}(\overline{R^{\sqr}_\nu},\mathcal O/\varpi^n)$ with a $\mathcal O$-submodule of $Z^1(\Gamma_{F_\nu},\rho_n(\mathfrak g^{\der}))$, where we used the isomorphism $\varpi^{m}\mathcal O/\varpi^{m+n}\mathcal O \simeq \mathcal O/\varpi^n$, coming from our fixed choice of a uniformizer $\varpi$. Using the representability of the functor we have:

    $$\mathrm{Der}_{\mathcal O}(\overline{R^{\sqr}_\nu},\mathcal O/\varpi^n) = \Hom_{\overline{R_\nu^{\sqr}}}(\Omega_{\overline{R_\nu^{\sqr}}/\mathcal O},\mathcal O/\varpi^n) = \Hom_{\mathcal O}( \Omega_{\overline{R_\nu^{\sqr}}/\mathcal O} \otimes_{\overline{R_\nu^{\sqr}}} \mathcal O,\mathcal O/\varpi^n)$$

    \vspace{2 mm}

    \noindent which gives us the desired identification. It remains to show that under this identification this subspace contains the coboundaries. More precisely, we want to show that coboundaries correspond to $\mathcal O$-linear derivations on $R_\nu^{\sqr,\mu}$ that vanish on the ideal $I_\nu$. Let $\alpha \in Z^1(\Gamma_{F_\nu},\rho_n(\mathfrak g^{\der}))$ be a coboundary. Then $\varphi_{m+n,\nu}^\alpha(\rho_\nu^{\sqr,\mu})$ and $\restr{\rho_{m+n}}{\Gamma_{F_\nu}}$ are conjugates by an element in $\ker(G(\mathcal O/\varpi^{m+n}) \to G(\mathcal O/\varpi^m))$. We know that $\restr{\rho_{m+n}}{\Gamma_{F_\nu}}$ corresponds to a point on $\overline{R_\nu^{\sqr}}$ and since the irreducible components of the generic fiber are preserved under conjugation by element in $\widehat{G}(\mathcal O)$ the same is true for $\varphi_{m+n,\nu}^\alpha(\rho_\nu^{\sqr,\mu})$. In particular, this means that both $\varphi_{m+n,\nu}$ and $\varphi_{m+n,\nu}^\alpha$ factor through $\overline{R_\nu^{\sqr}}$ and are trivial on $I_\nu$. Hence, the same is true for $\delta_\alpha$. Thus, $\alpha$ corresponds to an element of $\mathrm{Der}_{\mathcal O}(\overline{R^{\sqr}_\nu},\mathcal O/\varpi^n)$.

\end{proof}

\begin{rem}
\label{5.2}

The importance of this identification depending only on $\restr{\rho_n}{\Gamma_{F_\nu}}$ is two-fold. On one hand, as we see below this will tell us that the identification commutes with the multiplication by $\varpi$ maps $\mathcal O/\varpi^n \to \mathcal O/\varpi^{n+1}$. On the other hand, this identification plays a key part in the definition of local conditions using \cite[Proposition $4.7$]{FKP21}. The local conditions at primes in $S'$ of the first type and in particular primes over $p$ will correspond to an $\mathcal O$-submodule of $\Hom_{\mathcal O}(\Omega_{\overline{R^{\sqr}_\nu}/\mathcal O} \otimes_{\overline{R^{\sqr}_\nu}} \mathcal O,\mathcal O/\varpi^n)$, where for $\nu \nmid p$ the ring $\overline{R^{\sqr}}$ is defined similarly as above by a choice of an irreducible component of $R_\nu^{\sqr,\mu}[1/\varpi]$. The lemma will be still valid for $\nu \nmid p$, hence even though we need a characteristic $0$ lift to define the $\varpi^n$ conditions at $\nu$, they will depend only on the reduction of that lift modulo $\varpi^n$.

\end{rem}

\begin{defn}

\label{5.3}
    
for each positive integer $n$ we define local conditions $\mathcal P_{n,p} = \{P_{n,\nu}\}_{\nu \mid p}$, where $P_{n,\nu}$ consists of the classes that land in the image of $\Hom_{\mathcal O}( \Omega_{\overline{R_\nu^{\sqr}}/\mathcal O} \otimes_{\overline{R_\nu^{\sqr}}} \mathcal O,\mathcal O/\varpi^n)$ inside of $H^1(\Gamma_{F_\nu},\rho_n(\mathfrak g^{\der}))$ under the identification of Lemma \ref{5.1}.
    
\end{defn}

Now, using these local conditions we define the Selmer group $H^1_{\mathcal P_{n,p}}(\Gamma_{F,S'\cup Q},\rho_n(\mathfrak g^{\der}))$. We then have:

\begin{prop}
\label{5.4}

$\Hom(\mathfrak p/\mathfrak p^2,E/\mathcal O) \simeq \varinjlim_n H^1_{\mathcal P_{n,p}}(\Gamma_{F,S' \cup Q},\rho_n(\mathfrak g^{\der}))$, where all the maps in the direct limit are induced by the multiplication by $\varpi$ maps.

\end{prop}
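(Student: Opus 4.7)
The plan is to reduce the statement to a compatible system of identifications at each finite level $\mathcal{O}/\varpi^n$ and then invoke the universal property of $\overline{R}$ together with Lemma \ref{5.1}. First, since $\overline{R}$ is Noetherian and $\overline{R}/\mathfrak{p}=\mathcal{O}$, the ideal $\mathfrak{p}$ is finitely generated, so $\mathfrak{p}/\mathfrak{p}^2$ is a finitely presented $\mathcal{O}$-module. Consequently, since $E/\mathcal{O}=\varinjlim_n \mathcal{O}/\varpi^n$ with transition maps given by multiplication by $\varpi$, we obtain
$$\Hom_{\mathcal{O}}(\mathfrak{p}/\mathfrak{p}^2, E/\mathcal{O}) = \varinjlim_n \Hom_{\mathcal{O}}(\mathfrak{p}/\mathfrak{p}^2, \mathcal{O}/\varpi^n).$$
It therefore suffices to construct, for each $n$, a natural isomorphism $\Hom_{\mathcal{O}}(\mathfrak{p}/\mathfrak{p}^2, \mathcal{O}/\varpi^n) \simeq H^1_{\mathcal{P}_{n,p}}(\Gamma_{F,S'\cup Q}, \rho_n(\mathfrak{g}^{\der}))$ compatible with multiplication by $\varpi$ on both sides.

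For the core identification at level $n$, I would recognize $\Hom_{\mathcal{O}}(\mathfrak{p}/\mathfrak{p}^2, \mathcal{O}/\varpi^n)$ as the set of $\mathcal{O}$-algebra lifts $\tilde{\varphi}: \overline{R} \to \mathcal{O}[\epsilon]/(\epsilon^2, \varpi^n \epsilon)$ of $\varphi$, via the standard bijection with $\varphi$-derivations $\delta: \overline{R} \to \mathcal{O}/\varpi^n$; the key observation is that any such $\delta$ vanishes on $\mathfrak{p}^2$ by Leibniz and is determined by its restriction to $\mathfrak{p}$, which is $\mathcal{O}$-linear because $\varphi(\mathfrak{p})=0$. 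By the universal property of $\overline{R}$ as a deformation ring, such a $\tilde{\varphi}$ corresponds to an equivalence class, modulo $\widehat{G}(\mathcal{O}[\epsilon]/(\epsilon^2, \varpi^n\epsilon))$-conjugation, of lifts $\tilde{\rho}: \Gamma_{F,S'\cup Q} \to G(\mathcal{O}[\epsilon]/(\epsilon^2, \varpi^n\epsilon))$ of $\overline{\rho}$ with multiplier $\mu$ satisfying the local conditions encoded by $\overline{R_\nu^{\sqr}}$ at $\nu \mid p$, whose reduction modulo $\epsilon$ represents the class of $\rho$. Choosing representatives that reduce exactly to $\rho$ and writing $\tilde{\rho}(g) = \exp(\epsilon \alpha(g))\rho(g)$ produces the standard bijection with cocycles $\alpha \in Z^1(\Gamma_{F,S'\cup Q}, \rho_n(\mathfrak{g}^{\der}))$ modulo coboundaries; the vanishing $H^0(\Gamma_F, \rho_n(\mathfrak{g}^{\der}))=0$ (which follows from Assumptions \ref{1.1} as in the proof of Lemma \ref{3.15}) ensures that the equivalence of representatives reducing exactly to $\rho$ matches precisely with modding cocycles by coboundaries, so no extra factor appears. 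Lemma \ref{5.1} then translates the $\overline{R_\nu^{\sqr}}$-factorization condition into the requirement that $[\restr{\alpha}{\Gamma_{F_\nu}}] \in P_{n,\nu}$.

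The main obstacle, and the step requiring the most care, is the compatibility of these level-$n$ isomorphisms with the transition maps. On the Hom side these are induced by multiplication by $\varpi$ on the target $\mathcal{O}/\varpi^n \to \mathcal{O}/\varpi^{n+1}$; on the Selmer side they come from the inclusion $\rho_n(\mathfrak{g}^{\der}) \hookrightarrow \rho_{n+1}(\mathfrak{g}^{\der})$ arising from the same choice of uniformizer. To verify compatibility I would exploit the final sentence of Lemma \ref{5.1}: since the identification between derivations valued in $\mathcal{O}/\varpi^n$ and cocycles depends only on $\restr{\rho_n}{\Gamma_{F_\nu}}$, it is formally compatible with multiplication by $\varpi$, and the global analogue follows from the functoriality of the universal property of $\overline{R}$. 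Passing to the colimit then yields the desired isomorphism.
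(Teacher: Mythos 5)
Your proposal is correct and follows essentially the same route as the paper: identify the cotangent space with deformation classes via the universal property of $\overline{R}$, match those with Selmer cocycles, and check compatibility with the $\varpi$-maps before passing to the colimit. The one genuine implementation difference is that you realize lifts of $\varphi$ into the square-zero extension $\mathcal{O}[\epsilon]/(\epsilon^2,\varpi^n\epsilon)$ of $\mathcal{O}$ by $\mathcal{O}/\varpi^n$, whereas the paper phrases everything through lifts from $\mathcal{O}/\varpi^m$ to $\mathcal{O}/\varpi^{m+n}$ for $m\ge n$, which is the convention built into Lemma~\ref{5.1}. Your version is a little cleaner in that it avoids the auxiliary integer $m$, but it creates a small compatibility debt you should discharge: the local conditions $\mathcal{P}_{n,p}$ of Definition~\ref{5.3} are defined via Lemma~\ref{5.1}'s $\mathcal{O}/\varpi^{m+n}$ identification of cocycles with derivations, so you should observe that your dual-number identification agrees with it — for instance, by pushing forward along the $\mathcal{O}$-algebra map $\mathcal{O}[\epsilon]/(\epsilon^2,\varpi^n\epsilon)\to\mathcal{O}/\varpi^{m+n}$, $\epsilon\mapsto\varpi^m$, and comparing with the explicit formula for $\delta_\alpha$ in the proof of Lemma~\ref{5.1}. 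This is also the honest source of the $\varpi$-compatibility of the level-$n$ isomorphisms, which you currently attribute somewhat loosely to "functoriality"; it is worth making the $\epsilon\mapsto\varpi\epsilon$ (equivalently $\varpi^m\mapsto\varpi^{m+1}$) diagram explicit as the paper does. Finally, your appeal to $H^0(\Gamma_F,\rho_n(\mathfrak g^{\der}))=0$ to reconcile choice of representatives with coboundaries is not wrong but slightly off-center: what is really being used is that $\Def^\mu_{\rho_m}(\,\cdot\,)$ (restricted to lifts landing in $\Spec(\overline{R})$) is a torsor under $H^1_{\mathcal P_{n,p}}(\Gamma_{F,S'\cup Q},\rho_n(\mathfrak g^{\der}))$, and representability of the deformation functor (which rests on $H^0(\Gamma_{F,S'\cup Q},\overline{\rho}(\mathfrak g))=\mathfrak z_{\mathfrak g}$) is what justifies invoking the universal property of $\overline{R}$ in the first place.
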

\begin{proof}

    We first note that the direct limit on the right is well-defined. Since the local conditions $\mathcal P_{n,p}$ are the defined in terms of the K\"ahler differentials they interact nicely with the multiplication by $\varpi$ maps. In particular, the multiplication by $\varpi$ map sends $H^1_{\mathcal P_{n,p}}(\Gamma_{F,S' \cup Q},\rho_n(\mathfrak g^{\der}))$ to $H^1_{\mathcal P_{n+1,p}}(\Gamma_{F,S' \cup Q},\rho_{n+1}(\mathfrak g^{\der}))$, so the direct limit indeed makes sense. We will first show that for every positive integer $n$, we have
    
    $$\Hom(\mathfrak p/\mathfrak p^2,\mathcal O/\varpi^n) \simeq H^1_{\mathcal P_{n,p}}(\Gamma_{F,S' \cup Q},\rho_n(\mathfrak g^{\der}))$$

    \vspace{2 mm}

    As in Lemma \ref{5.1} we can identify $H^1_{\mathcal P_{n,p}}(\Gamma_{F,S' \cup Q},\rho_n(\mathfrak g^{\der}))$ with $\mathrm{Der}_{\mathcal O}(\overline{R},\mathcal O/\varpi^n)$. We explain this in more details. Let $\varphi_m:\overline{R} \to \mathcal O/\varpi^m$ be the map induced by the lift $\rho_m$. For $m \ge n$,  $\Hom_{\mathcal O}(\overline{R},\mathcal O/\varpi^{m+n})_{\varphi_m}$ is a $\mathrm{Der}_{\mathcal O}(\overline{R},\varpi^m\mathcal O/\varpi^{m+n}\mathcal O) = \mathrm{Der}_{\mathcal O}(\overline{R},\mathcal O/\varpi^n)$-torsor, where again the $\overline{R}$-module structure comes from the map $\varphi$ and in fact depends only on the reduction $\varphi_n$. On the other hand, $\Hom_{\mathcal O}(\overline{R},\mathcal O/\varpi^{m+n})_{\varphi_m}$ is naturally isomorphic to the subset of $\Def_{\rho_m}^\mu(\mathcal O/\varpi^{m+n})$ that corresponds to points of $\Spec(\overline{R})$. The latter is an $H^1_{\mathcal P_{n,p}}(\Gamma_{F,S' \cup Q},\rho_n(\mathfrak g^{\der}))$-torsor. Indeed, in general $\Def_{\rho_m}^\mu(\mathcal O/\varpi^{m+n})$ is a $H^1(\Gamma_{F,S' \cup Q},\rho_n(\mathfrak g^{\der}))$-torsor. By the definition of $\overline{R}$ its points will correspond to deformations whose representatives when restricted to $\Gamma_{F_\nu}$ are points in $\Spec(\overline{R_\nu^{\sqr}})$, for each $\nu \mid p$. This property is preserved by exactly the cocycles whose local restrictions land in $\mathcal P_{n,p}$ for each $\nu \mid p$.

    The restriction to $\mathfrak p$ gives us a bijection between $\mathrm{Der}_{\mathcal O}(\overline{R},\mathcal O/\varpi^n)$ and $\Hom_{\mathcal O}(\mathfrak p/\mathfrak p^2,\mathcal O/\varpi^n)$. Since the $\overline{R}$-module structure on $\mathcal O/\varpi^n$ is given using the map $\varphi_n$, which is trivial on $\mathfrak p$, the Leibnitz Rule tells us that any such derivation is trivial on $\mathfrak p^2$. The $\mathcal O$-linearity of the derivations guarantees that we indeed end up with a homomorphism $\mathfrak p/\mathfrak p^2 \to \mathcal O/\varpi^n$. As $\varphi$ is an $\mathcal O$-algebra homomorphism every element in $\overline{R}$ is of the form $a + x$, where $a \in \mathcal O$ and $x \in \mathfrak p$. Then, since $\mathcal O$-linear derivations are trivial on $\mathcal O$, they are completely determined by the values on $\mathfrak p$. Hence, the restriction to $\mathfrak p$ is injective. We can easily see that it is also surjective by simply extending every homomorphism $\mathfrak p/\mathfrak p^2 \to \mathcal O/\varpi^n$ to an $\mathcal O$-derivation on $\overline{R}$ by letting it vanish on elements of $\mathcal O$.

    Unraveling all these identifications we can make the isomorphism more explicit. Starting with $m \ge n$ and a cocycle $\alpha \in H^1_{\mathcal P_{n,p}}(\Gamma_{F,S' \cup Q},\rho_n(\mathfrak g^{\der}))$, we consider $\exp(\varpi^m\alpha)\rho_{m+n}$. By the universality of $\overline{R}$ we get a map $\varphi_{m+n}^\alpha:\overline{R} \to \mathcal O/\varpi^{m+n}$. As this is a lift of $\rho_m$ its reduction modulo $\varpi^m$ will be $\varphi_m$ and therefore under $\varphi_{m+n}^\alpha$, the ideal $\mathfrak p$ will have an image lying inside of $\varpi^{m}\mathcal O/\varpi^{m+n}\mathcal O$. As $m \ge n$, this map will vanish on $\mathfrak p^2$ and so restricting to $\mathfrak p$ we get an element of $\Hom_{\mathcal O}(\mathfrak p/\mathfrak p^2,\varpi^m\mathcal O/\varpi^{m+n}\mathcal O)$, which we can identify with $\Hom_{\mathcal O}(\mathfrak p/\mathfrak p^2,\mathcal O/\varpi^n)$. As remarked in Lemma \ref{5.1} this isomorphism will depend only on $\rho_n$ and not the integer $m$. This will allow us to conclude that the isomorphism is compatible with the multiplication by $\varpi$ maps. More precisely we have the following commuting diagram:

    \begin{center}
        \begin{tikzcd}
            {H^1_{\mathcal P_{n+1,p}}(\Gamma_{F,S' \cup Q},\rho_{n+1}(\mathfrak g^{\der}))} \arrow[rr, "\simeq"]       &  & {\Hom(\mathfrak p/\mathfrak p^2,\mathcal O/\varpi^{n+1})}       \\
            {H^1_{\mathcal P_{n,p}}(\Gamma_{F,S' \cup Q},\rho_{n}(\mathfrak g^{\der}))} \arrow[rr, "\simeq"] \arrow[u] &  & {\Hom(\mathfrak p/\mathfrak p^2,\mathcal O/\varpi^n)} \arrow[u]
        \end{tikzcd}
    \end{center}

    \vspace{2 mm}

    \noindent where the two vertical maps are induced by the multiplication by $\varpi$ map. Let $m \ge n+1$. The two horizontal isomorphism are independent of $m$, which allows us to use the same $m$ for both of them. Let $\alpha \in H^1_{\mathcal P_{n,p}}(\Gamma_{F,S' \cup Q},\rho_{n}(\mathfrak g^{\der}))$. The first vertical map will send it to $\varpi\alpha \in H^1_{\mathcal P_{n+1,p}}(\Gamma_{F,S' \cup Q},\rho_{n+1}(\mathfrak g^{\der}))$. Now, $\exp(\varpi^m(\varpi\alpha))\rho_{m+n+1}$ is a lift of $\rho_m$, so as above we get a map $\varphi_{m+n+1}^{\varpi\alpha}:\overline{R} \to \mathcal O/\varpi^{m+n+1}$, which gives us a homomorphism $\mathfrak p/\mathfrak p^2 \to \varpi^m\mathcal O/\varpi^{m+n+1}\mathcal O$, determining the image of $\varpi\alpha$ under the top isomorphism. On the other hand, $\exp(\varpi^{m+1}\alpha)\rho_{m+n+1}$ is a lift of $\rho_{m+1}$, so we get a map $\varphi_{m+n+1}^\alpha:\overline{R} \to \mathcal O/\varpi^{m+n+1}$, which gives us a homomorphism $\mathfrak p/\mathfrak p^2 \to \varpi^{m+1}\mathcal O/\varpi^{m+n+1}\mathcal O$, determining the image of $\alpha$ under the bottom isomorphism. Since they come from the same deformation $\varphi_{m+n+1}^{\varpi\alpha} = \varphi_{m+n+1}^\alpha$, so they determine the same homomorphism in $\Hom(\mathfrak p/\mathfrak p^2,\varpi^{m+1}\mathcal O/\varpi^{m+n+1}\mathcal O) \subseteq \Hom(\mathfrak p/\mathfrak p^2,\varpi^{m}\mathcal O/\varpi^{m+n+1}\mathcal O)$. After the identification of $\varpi^{m+1}\mathcal O/\varpi^{m+n+1}\mathcal O$ with $\mathcal O/\varpi^n$ and $\varpi^{m}\mathcal O/\varpi^{m+n+1}\mathcal O$ with $\mathcal O/\varpi^{n+1}$, this inclusion becomes exactly the second vertical map in the diagram. Therefore, the diagram commutes.

    Finally, we use this diagram to get an isomorphism on level of direct limits and we end with

    $$\Hom_{\mathcal O}(\mathfrak p/\mathfrak p^2,E/\mathcal O) = \Hom_{\mathcal O}(\mathfrak p/\mathfrak p^2,\varinjlim_n \mathcal O/\varpi^n) = \varinjlim_n \Hom_{\mathcal O}(\mathfrak p/\mathfrak p^2,\mathcal O/\varpi^n) \simeq \varinjlim_n H^1_{\mathcal P_{n,p}}(\Gamma_{F,S' \cup Q},\rho_{n}(\mathfrak g^{\der}))$$
     
\end{proof}

Having identified the dual of $\mathfrak p/\mathfrak p^2$ with the direct limit of Selmer groups we can use our explicit description of $\rho$ to show its finiteness. However, it turns out that working with the local conditions $\mathcal P_{n,p}$ is not quite the correct thing to do, so we will define new local conditions at primes over $p$. We recall that the local conditions 
$\mathcal P_{n,p}$ were defined using the cocycles in $\Hom_{\mathcal O}( \Omega_{\overline{R_\nu^{\sqr}}/\mathcal O} \otimes_{\overline{R_\nu^{\sqr}}} \mathcal O,\mathcal O/\varpi^n)$ under the identification of Lemma \ref{5.1}. We now define local conditions $\mathcal L_{n,p} = \{L_{n,\nu}\}_{\nu \mid p}$ using the cocycles which correspond to homomorphisms in $\Hom_{\mathcal O}( \Omega_{\overline{R_\nu^{\sqr}}/\mathcal O} \otimes_{\overline{R_\nu^{\sqr}}} \mathcal O,\mathcal O/\varpi^n)$ that are trivial on the torsion of $\Omega_{\overline{R_\nu^{\sqr}}/\mathcal O} \otimes_{\overline{R_\nu^{\sqr}}} \mathcal O$. By \cite[Lemma $4.5$]{FKP21} these cocycles will contain all the coboundaries, which means that the local conditions $\mathcal L_{n,p}$ are well-defined.

These local conditions will extend our previous local conditions $L_{n,\nu}$ which were defined earlier only for $n \le 2^dM$, hence using the same notation is justified. We extended the local conditions in Lemma \ref{4.5} in this manner and hence the same properties will be satisfied. Moreover, this is exactly how the local conditions $L_{n,\nu}$ for primes in $S'$ of the first two types, which we have left implicit in the lifting method are defined. (see \cite[Lemma $4.4$]{FKP21} for this explicit description). The only difference is the used charactersitic $0$ lift and hence the $\overline{R^{\sqr}_\nu}$-module structure of $\mathcal O$. As argued in Lemma \ref{4.5} these lifts will be equal modulo $\varpi^{2^dM}$, which by Remark \ref{5.2} is enough to conclude we can extend the old local conditions at $\nu \mid p$ to modulo any power of $\varpi$.

The next lemma tells us that the finiteness of $\varinjlim_n H^1_{\mathcal P_{n,p}}(\Gamma_{F,S' \cup Q},\rho_n(\mathfrak g^{\der}))$ will follow from the finiteness of $\varinjlim_n H^1_{\mathcal L_{n,p}}(\Gamma_{F,S' \cup Q},\rho_n(\mathfrak g^{\der}))$

\begin{lem}
\label{5.5}

    $\varinjlim_n \left(H^1_{\mathcal P_{n,p}}(\Gamma_{F,S' \cup Q},\rho_n(\mathfrak g^{\der}))\right)/\varinjlim_n \left(H^1_{\mathcal L_{n,p}}(\Gamma_{F,S' \cup Q},\rho_n(\mathfrak g^{\der}))\right)$ is a finite abelian group.

\end{lem}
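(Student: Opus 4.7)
The plan is to reduce the lemma to a finiteness statement about the torsion of a finitely generated $\mathcal O$-module, via the exactness of direct limits and the explicit description of the two local conditions at primes above $p$. Since $\mathcal L_{n,p}$ and $\mathcal P_{n,p}$ are conditions at primes above $p$ only, and both Selmer groups impose the same (unrestricted) behavior at primes outside $\{\nu \mid p\}$, the first step is to set up the short exact sequence
\[
0 \longrightarrow H^1_{\mathcal L_{n,p}}(\Gamma_{F,S' \cup Q},\rho_n(\mathfrak g^{\der})) \longrightarrow H^1_{\mathcal P_{n,p}}(\Gamma_{F,S' \cup Q},\rho_n(\mathfrak g^{\der})) \longrightarrow \bigoplus_{\nu \mid p} P_{n,\nu}/L_{n,\nu},
\]
where the last map is local restriction followed by projection. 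A global class in $H^1_{\mathcal P_{n,p}}$ whose local image vanishes in every $P_{n,\nu}/L_{n,\nu}$ has each local component in $L_{n,\nu}$, hence lies in $H^1_{\mathcal L_{n,p}}$, which gives exactness. Passing to the direct limit (which is exact in $\mathbf{Ab}$) yields an injection
\[
\varinjlim_n H^1_{\mathcal P_{n,p}}(\Gamma_{F,S' \cup Q},\rho_n(\mathfrak g^{\der}))\,\big/\, \varinjlim_n H^1_{\mathcal L_{n,p}}(\Gamma_{F,S' \cup Q},\rho_n(\mathfrak g^{\der})) \,\hookrightarrow\, \bigoplus_{\nu \mid p}\,\varinjlim_n\, P_{n,\nu}/L_{n,\nu}.
\]

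Next, I would identify each local quotient. Set $M_\nu \coloneqq \Omega_{\overline{R_\nu^{\sqr}}/\mathcal O} \otimes_{\overline{R_\nu^{\sqr}}} \mathcal O$ and let $T_\nu \subseteq M_\nu$ be its $\mathcal O$-torsion submodule. By Lemma \ref{5.1} and the definition preceding the lemma, $P_{n,\nu}$ is the image of $\Hom_\mathcal O(M_\nu,\mathcal O/\varpi^n)$ in $H^1(\Gamma_{F_\nu},\rho_n(\mathfrak g^{\der}))$ while $L_{n,\nu}$ is the image of the subgroup $\Hom_\mathcal O(M_\nu/T_\nu,\mathcal O/\varpi^n)$. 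Since both identifications contain all coboundaries, the quotient is canonically
\[
P_{n,\nu}/L_{n,\nu}\;\cong\;\Hom_\mathcal O(M_\nu,\mathcal O/\varpi^n)\,/\,\Hom_\mathcal O(M_\nu/T_\nu,\mathcal O/\varpi^n).
\]
Applying $\Hom_\mathcal O(-,\mathcal O/\varpi^n)$ to $0 \to T_\nu \to M_\nu \to M_\nu/T_\nu \to 0$ and using that $M_\nu/T_\nu$ is a finitely generated torsion-free, hence free, $\mathcal O$-module (so the relevant $\Ext^1_\mathcal O$ vanishes), this quotient is identified with $\Hom_\mathcal O(T_\nu,\mathcal O/\varpi^n)$.

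Finally, I would check the finiteness of $T_\nu$. Because $\overline{R_\nu^{\sqr}}$ is a complete local Noetherian $\mathcal O$-algebra, $\Omega_{\overline{R_\nu^{\sqr}}/\mathcal O}$ is a finitely generated $\overline{R_\nu^{\sqr}}$-module, and since $\mathcal O$ is a quotient of $\overline{R_\nu^{\sqr}}$ via the map induced by $\restr{\rho}{\Gamma_{F_\nu}}$, the module $M_\nu$ is finitely generated over $\mathcal O$. Its torsion submodule $T_\nu$ is therefore a finitely generated torsion module over the DVR $\mathcal O$, hence finite. This makes $\Hom_\mathcal O(T_\nu,E/\mathcal O)$ (the direct limit of the $\Hom_\mathcal O(T_\nu,\mathcal O/\varpi^n)$) finite of the same cardinality as $T_\nu$, and taking the finite direct sum over $\nu\mid p$ bounds the quotient by a finite abelian group, completing the proof.

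I do not expect any serious obstacle: the main delicate point is the compatibility of the Lemma \ref{5.1} identification with the inclusion $\Hom_\mathcal O(M_\nu/T_\nu,-) \hookrightarrow \Hom_\mathcal O(M_\nu,-)$, but this is automatic from $\mathcal O$-linearity. All other inputs are standard direct-limit and commutative-algebra facts about finitely generated modules over a DVR.
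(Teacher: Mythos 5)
Your proof is correct. The first half — the kernel-cokernel exact sequence
\[
0 \to H^1_{\mathcal L_{n,p}} \to H^1_{\mathcal P_{n,p}} \to \prod_{\nu \mid p} P_{n,\nu}/L_{n,\nu},
\]
followed by exactness of direct limits to reduce to the finiteness of each local limit $\varinjlim_n P_{n,\nu}/L_{n,\nu}$ — is exactly what the paper does. The second half, however, takes a genuinely cleaner route than the paper's Lemma \ref{5.6}. The paper argues by hand: it picks $r_\nu$ so that $\varpi^{r_\nu}$ annihilates the torsion $T_\nu$ of $M_\nu := \Omega_{\overline{R_\nu^{\sqr}}/\mathcal O} \otimes_{\overline{R_\nu^{\sqr}}} \mathcal O$ and shows the quotients stabilize for $n \ge r_\nu$ by an explicit injectivity/surjectivity check. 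You instead apply $\Hom_\mathcal O(-,\mathcal O/\varpi^n)$ to $0 \to T_\nu \to M_\nu \to M_\nu/T_\nu \to 0$, use that $M_\nu/T_\nu$ is free over the DVR $\mathcal O$ (so $\Ext^1$ vanishes), and identify the local quotient on the nose with $\Hom_\mathcal O(T_\nu,\mathcal O/\varpi^n)$. Since both $P_{n,\nu}$ and $L_{n,\nu}$ contain the coboundaries, passing between cohomology classes and cocycles is legitimate, and the remaining compatibility with multiplication by $\varpi$ is handled by Lemma \ref{5.1} and Remark \ref{5.2} (which ensure the identifications are functorial in $\mathcal O/\varpi^n$). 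Your conclusion $\varinjlim_n \Hom_\mathcal O(T_\nu,\mathcal O/\varpi^n) \simeq \Hom_\mathcal O(T_\nu,E/\mathcal O)$, using that $T_\nu$ is finitely presented, is correct, and its finiteness follows since $T_\nu$ is a finitely generated torsion module over a DVR. The tradeoff: the paper's stabilization argument makes the eventual isomorphism $\varinjlim_n P_{n,\nu}/L_{n,\nu} \simeq P_{r_\nu,\nu}/L_{r_\nu,\nu}$ explicit at a concrete level $r_\nu$ (useful if one wants bounds), while yours is shorter, more structural, and directly identifies the limit as the Pontryagin dual of the torsion of the K\"ahler differentials at the relevant point.
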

\begin{proof}

    We first note that by definition $\mathcal L_{n,p} \subseteq \mathcal P_{n,p}$, so all the quotients make sense. Moreover, as both local conditions are compatible with the multiplication by $\varpi$ map so does the direct limit. 
    
    We consider the composition:

    $$H^1(\Gamma_{F,S'\cup Q},\rho_n(\mathfrak g^{\der})) \longrightarrow \prod_{\nu \mid p} H^1(\Gamma_{F_\nu},\rho_n(\mathfrak g^{\der}))/L_{n,\nu} \longrightarrow \prod_{\nu \mid p} H^1(\Gamma_{F_\nu},\rho_n(\mathfrak g^{\der}))/P_{n,\nu}$$ 

    \vspace{2 mm}

    Using the kernel-cokernel sequence we get an exact sequence:

    $$0 \longrightarrow H^1_{\mathcal L_{n,p}}(\Gamma_{F,S' \cup Q},\rho_n(\mathfrak g^{\der})) \longrightarrow H^1_{\mathcal P_{n,p}}(\Gamma_{F,S' \cup Q},\rho_n(\mathfrak g^{\der})) \longrightarrow \prod_{\nu \mid p} P_{n,\nu}/L_{n,\nu}$$

    \vspace{2 mm}

    As direct limits are exact in the category of modules, and they commute with finite products we get the exact sequence:

    $$0 \longrightarrow \varinjlim_n H^1_{\mathcal L_{n,p}}(\Gamma_{F,S' \cup Q},\rho_n(\mathfrak g^{\der})) \longrightarrow \varinjlim_n H^1_{\mathcal P_{n,p}}(\Gamma_{F,S' \cup Q},\rho_n(\mathfrak g^{\der})) \longrightarrow \prod_{\nu \mid p} \varinjlim_n \left(P_{n,\nu}/L_{n,\nu}\right)$$

    \vspace{2 mm}

    Therefore, to prove the lemma it suffices to show that $\varinjlim_n P_{n,\nu}/L_{n,\nu}$ is finite for each $\nu \mid p$. This will follow directly from the next lemma.

\end{proof}

\begin{lem}

    \label{5.6}

    For $\nu \mid p$ the quotients $P_{n,\nu}/L_{n,\nu}$ eventually stabilize as $n \to \infty$. In particular, $\varinjlim_n P_{n,\nu}/L_{n,\nu}$ is finite.
    
\end{lem}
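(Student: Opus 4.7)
The plan is to reduce this to a structural computation on the finitely generated $\mathcal O$-module $\Omega_{\nu} \coloneqq \Omega_{\overline{R_\nu^{\sqr}}/\mathcal O} \otimes_{\overline{R_\nu^{\sqr}}} \mathcal O$. Since $\overline{R_\nu^{\sqr}}$ is a complete Noetherian local $\mathcal O$-algebra, $\Omega_\nu$ is finitely generated over $\mathcal O$, so by the structure theorem for finitely generated modules over a discrete valuation ring it splits (non-canonically) as $\Omega_{\nu} \simeq F_{\nu} \oplus T_{\nu}$, where $F_{\nu}$ is free of some finite rank and $T_{\nu}$ is the finite torsion submodule, say of exponent $\varpi^{k_\nu}$.

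First I would unravel the definitions at the level of cocycles. Under the identification of Lemma \ref{5.1}, the cocycles defining $P_{n,\nu}$ correspond to all of $\Hom_{\mathcal O}(\Omega_{\nu}, \mathcal O/\varpi^{n})$, while those defining $L_{n,\nu}$ correspond to the submodule of maps that vanish on $T_{\nu}$. Since $\Omega_{\nu}/T_{\nu}$ is free, the short exact sequence $0 \to T_\nu \to \Omega_\nu \to \Omega_\nu/T_\nu \to 0$ splits and the restriction
\[
  \Hom_{\mathcal O}(\Omega_{\nu}, \mathcal O/\varpi^{n}) \twoheadrightarrow \Hom_{\mathcal O}(T_{\nu}, \mathcal O/\varpi^{n})
\]
is surjective with kernel precisely the cocycles defining $L_{n,\nu}$. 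Both cocycle spaces contain all coboundaries (by Lemma \ref{5.1} and \cite[Lemma $4.5$]{FKP21}, respectively), so passing to cohomology images only quotients by a common submodule and produces a natural isomorphism $P_{n,\nu}/L_{n,\nu} \xrightarrow{\;\sim\;} \Hom_{\mathcal O}(T_{\nu}, \mathcal O/\varpi^{n})$.

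Next I would trace the behavior of the transition maps. As observed in the proof of Proposition \ref{5.4}, the identification of Lemma \ref{5.1} depends only on the reduction of $\restr{\rho}{\Gamma_{F_\nu}}$ modulo $\varpi^n$ and is compatible with multiplication by $\varpi$, so under the isomorphism above the map $P_{n,\nu}/L_{n,\nu} \to P_{n+1,\nu}/L_{n+1,\nu}$ is induced by the inclusion $\mathcal O/\varpi^{n} \hookrightarrow \mathcal O/\varpi^{n+1}$, $x \mapsto \varpi x$. Since $T_\nu$ has exponent $\varpi^{k_\nu}$, any $\mathcal O$-linear map $T_\nu \to \mathcal O/\varpi^n$ factors through $\varpi^{n-k_\nu}\mathcal O/\varpi^n$ once $n \ge k_\nu$, and multiplication by $\varpi$ restricts to an isomorphism $\varpi^{n-k_\nu}\mathcal O/\varpi^n \xrightarrow{\;\sim\;} \varpi^{n+1-k_\nu}\mathcal O/\varpi^{n+1}$. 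Hence for all $n \ge k_\nu$ the transition map is an isomorphism, the system stabilizes, and its common value is the Pontryagin dual $T_\nu^{\vee} = \Hom_{\mathcal O}(T_\nu, E/\mathcal O)$, a finite abelian group of the same cardinality as $T_\nu$.

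The only genuinely delicate point will be the verification that the transition on $P_{n,\nu}/L_{n,\nu}$ really corresponds to the inclusion $\mathcal O/\varpi^n \hookrightarrow \mathcal O/\varpi^{n+1}$ on the $\Hom$-side; this bookkeeping is essentially performed already in the proof of Proposition \ref{5.4} and amounts to a careful chase through the derivation--cocycle dictionary of Lemma \ref{5.1}. Everything else is purely structural commutative algebra.
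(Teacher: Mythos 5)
Your proof is correct and follows essentially the same route as the paper's: both arguments reduce to the finiteness of the $\varpi^{k_\nu}$-torsion of $\Omega_\nu$ and the observation that multiplication by $\varpi$ eventually induces isomorphisms on the relevant quotients of $\Hom$-spaces. The only difference is stylistic — you invoke the DVR structure theorem to split $\Omega_\nu \simeq F_\nu \oplus T_\nu$ and obtain the clean identification $P_{n,\nu}/L_{n,\nu} \simeq \Hom_{\mathcal O}(T_\nu,\mathcal O/\varpi^n)$ up front, whereas the paper works directly with the quotient of $\Hom(\Omega_\nu,\mathcal O/\varpi^n)$ by maps vanishing on torsion, proving injectivity and surjectivity of the transition map by hand; your packaging is marginally tidier but the content is the same.
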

\begin{proof}
    
    First we note that as both $P_{n,\nu}$ and $L_{n,\nu}$ contain the coboundaries it is enough to prove the claim on the level of cocycles. From the definition of $\mathcal L_{n,p}$ in order to understand these quotients we will need to study the torsion of $\Omega_{\overline{R_\nu^{\sqr}}/\mathcal O} \otimes_{\overline{R_\nu^{\sqr}}} \mathcal O$. 
    
    As $\overline{R^{\sqr}_\nu}$ is a Noetherian $\mathcal O$-algebra, $\Omega_{\overline{R_\nu^{\sqr}}/\mathcal O} \otimes_{\overline{R_\nu^{\sqr}}} \mathcal O$ is a finite $\mathcal O$-module. This is also true for its torsion so there exists an integer $r_\nu \ge 0$ such that the torsion of $\Omega_{\overline{R_\nu^{\sqr}}/\mathcal O} \otimes_{\overline{R_\nu^{\sqr}}} \mathcal O$ is annihilated by $\varpi^{r_\nu}$. In fact, by \cite[Lemma $4.1$]{FKP21} this integer $r_\nu$ doesn't depend on the lift $\rho$ we obtain via the lifting method, hence it doesn't depend on the $\overline{R^{\sqr}_\nu}$-module structure of $\mathcal O$. This means that for $n \ge r_\nu$ the image of the torsion of $\Omega_{\overline{R_\nu^{\sqr}}/\mathcal O} \otimes_{\overline{R_\nu^{\sqr}}} \mathcal O$ under any element in $\Hom_{\mathcal O}( \Omega_{\overline{R_\nu^{\sqr}}/\mathcal O} \otimes_{\overline{R_\nu^{\sqr}}} \mathcal O,\mathcal O/\varpi^n)$ will land in $\varpi^{n-r_\nu}\mathcal O/\varpi^n\mathcal O$. Therefore, for $n \ge r_\nu$, the map $\Hom_{\mathcal O}(\Omega_{\overline{R_\nu^{\sqr}}/\mathcal O} \otimes_{\overline{R_\nu^{\sqr}}} \mathcal O,\mathcal O/\varpi^n) \to \Hom_{\mathcal O}(\Omega_{\overline{R_\nu^{\sqr}}/\mathcal O} \otimes_{\overline{R_\nu^{\sqr}}} \mathcal O,\mathcal O/\varpi^{n+1})$ induced by the multiplication by $\varpi$ map yields an isomorphism modulo the homomorphisms which are trivial on the torsion of $\Omega_{\overline{R_\nu^{\sqr}}/\mathcal O} \otimes_{\overline{R_\nu^{\sqr}}} \mathcal O$. The injectivity follows directly from the injectivity of the multiplication by $\varpi$ map $\mathcal O/\varpi^n \to \mathcal O/\varpi^{n+1}$. For the surjectivity, let $f \in \Hom_{\mathcal O}(\Omega_{\overline{R_\nu^{\sqr}}/\mathcal O} \otimes_{\overline{R_\nu^{\sqr}}} \mathcal O,\mathcal O/\varpi^{n+1})$. As remarked above, the image of the torsion of $\Omega_{\overline{R_\nu^{\sqr}}/\mathcal O} \otimes_{\overline{R_\nu^{\sqr}}} \mathcal O$, under $f$ will land in $\varpi^{n+1-r_\nu}\mathcal O/\varpi^{n+1}\mathcal O$. Identifying this with $\varpi^{n-r_\nu}\mathcal O/\varpi^{n}\mathcal O$ we can define $f' \in \Hom_{\mathcal O}(\Omega_{\overline{R_\nu^{\sqr}}/\mathcal O} \otimes_{\overline{R_\nu^{\sqr}}} \mathcal O,\mathcal O/\varpi^n)$, which is trivial on the free part of $\Omega_{\overline{R_\nu^{\sqr}}/\mathcal O} \otimes_{\overline{R_\nu^{\sqr}}} \mathcal O$ and $\varpi f'$ acts the same as $f$ on its torsion. Thus, $\varpi f'$ and $f$ differ by an element that acts trivially on the torsion of $\Omega_{\overline{R_\nu^{\sqr}}/\mathcal O} \otimes_{\overline{R_\nu^{\sqr}}} \mathcal O$, which gives us the surjectivity. 

    From this we conclude that the limit $\varinjlim_n P_{n,\nu}/L_{n,\nu}$ eventually stabilizes and in particular is isomorphic to $P_{r_\nu,\nu}/L_{r_\nu,\nu}$. The representatives of this quotient when viewed as elements of $\Hom_{\mathcal O}(\Omega_{\overline{R_\nu^{\sqr}}/\mathcal O} \otimes_{\overline{R_\nu^{\sqr}}} \mathcal O,\mathcal O/\varpi^{r_\nu})$ are determined by the image on the torsion. As its torsion is annihilated by $\varpi^{r_\nu}$ the torsion is finite and so there are finitely many possibilities for its image in $\mathcal O/\varpi^{r_\nu}$. Thus, $\varinjlim_n P_{n,\nu}/L_{n,\nu} \simeq P_{r_\nu,\nu}/L_{r_\nu,\nu}$ is finite for each $\nu \mid p$.
    
\end{proof}

To show the finiteness of $\varinjlim_n H^1_{\mathcal L_{n,p}}(\Gamma_{F,S' \cup Q},\rho_n(\mathfrak g^{\der}))$ we will leverage the vanishing of the $2^dM$-th relative Selmer group we arranged during the forcing ramification argument. For this we will need to define local conditions $\mathcal L_{n,S' \cup Q}$ for any $n \ge 1$. Up to now we only defined them for $n \le 2^dM$, but they can be extended in an analogous way to Lemma \ref{4.5}. In fact, we've already did this for primes $\nu$ lying over $p$. In a similar way, using Remark \ref{5.2} we can extend the local conditions at primes in $S'$ of the first two types. At primes in $S'$ of the third type and primes in $Q$ the local conditions were defined using Lemma \ref{3.16}. At them we extend the local conditions by using reductions of $\rho$ modulo higher and higher powers of $\varpi$. Finally, the local conditions at primes in $S'$ of the fourth type were unramified and these can be easily defined for any $n$.

As $\overline{H^1_{\mathcal L_{2^dM,S' \cup Q}}(\Gamma_{F,S' \cup Q},\rho_{2^dM}(\mathfrak g^{\der}))} = 0$ we get that $\overline{H^1_{\mathcal L_{n,S' \cup Q}}(\Gamma_{F,S' \cup Q},\rho_{n}(\mathfrak g^{\der}))} = 0$ for any $n \ge 2^dM$. Therefore, by applying Lemma \ref{4.14} we get that for any $n \ge 2^dM$ the $(n,n-2^dM+1)$-relative Selmer group $H^1_{(n,n-2^dM+1)}(\Gamma_{F,S' \cup Q},\rho_n(\mathfrak g^{\der}))$ with local conditions $\mathcal L_{n,S \cup Q}$ is trivial. By the exact sequence from Lemma \ref{3.14} for any $n \ge 2^dM$ the map $H^1_{\mathcal L_{2^dM-1,S' \cup Q}}(\Gamma_{F,S' \cup Q},\rho_{2^dM-1}(\mathfrak g^{\der})) \to H^1_{\mathcal L_{n,S' \cup Q}}(\Gamma_{F,S' \cup Q},\rho_n(\mathfrak g^{\der}))$, which is induced by multiplication by $\varpi^{n-2^dM+1}$ is surjective. Thus, we can conclude that the direct limit $\varinjlim_n H^1_{\mathcal L_{n,S' \cup Q}}(\Gamma_{F,S' \cup Q},\rho_n(\mathfrak g^{\der}))$ can be identified with a quotient of $H^1_{\mathcal L_{2^dM-1,S' \cup Q}}(\Gamma_{F,S' \cup Q},\rho_{2^dM-1}(\mathfrak g^{\der}))$ and in particular is finite. 

Next, we consider the composition:

$$H^1(\Gamma_{F,S'\cup Q},\rho_n(\mathfrak g^{\der})) \longrightarrow \prod_{\nu \in S' \cup Q} H^1(\Gamma_{F_\nu},\rho_n(\mathfrak g^{\der}))/L_{n,\nu} \longrightarrow \prod_{\nu \mid p} H^1(\Gamma_{F_\nu},\rho_n(\mathfrak g^{\der}))/L_{n,\nu}$$

\vspace{2 mm}

As in Lemma \ref{5.5} using the kernel-cokernel sequence and taking direct limits we get the exact sequence:

$$0 \longrightarrow \varinjlim_n H^1_{\mathcal L_{n,S'\cup Q}}(\Gamma_{F,S'\cup Q},\rho_n(\mathfrak g^{\der})) \longrightarrow \varinjlim_n H^1_{\mathcal L_{n,p}}(\Gamma_{F,S'\cup Q},\rho_n(\mathfrak g^{\der})) \longrightarrow \prod_{\substack{v \in S' \cup Q \\ v \nmid p}} \varinjlim_n H^1(\Gamma_{F_\nu},\rho_n(\mathfrak g^{\der}))/L_{n,\nu}$$

\vspace{2 mm}

By the above discussion the first direct limit is finite, hence to show the finiteness of the the limit in the middle it is enough to show that $\varinjlim_n H^1(\Gamma_{F_\nu},\rho_n(\mathfrak g^{\der}))/L_{n,\nu}$ is finite for any $\nu \in S' \cup Q$ that doesn't divide $p$. This will follow from the formal smoothness of $\rho$ we have arranged at each such prime in \S $4$.

\begin{prop}
\label{5.7}

    For a prime $\nu \in S' \cup Q$ not dividing $p$ the size of the quotients $H^1(\Gamma_{F_\nu},\rho_n(\mathfrak g^{\der}))/L_{n,\nu}$ is uniformly bounded. In particular, $\varinjlim_n H^1(\Gamma_{F_\nu},\rho_n(\mathfrak g^{\der}))/L_{n,\nu}$ is a finite abelian group.
    
\end{prop}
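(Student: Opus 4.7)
The plan is to reduce the problem to uniformly bounding $|H^0(\Gamma_{F_\nu},\rho_n(\mathfrak g^{\der})^*)|$ in $n$, and then to exploit the formal smoothness of $\restr{\rho}{\Gamma_{F_\nu}}$ established in \S 4.

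First, since $\nu \nmid p$, the local Euler--Poincar\'e characteristic formula combined with local Tate duality yields
\[
|H^1(\Gamma_{F_\nu},\rho_n(\mathfrak g^{\der}))| \;=\; |H^0(\Gamma_{F_\nu},\rho_n(\mathfrak g^{\der}))| \cdot |H^0(\Gamma_{F_\nu},\rho_n(\mathfrak g^{\der})^*)|.
\]
On the other hand, across the different constructions of the local conditions used in the lifting method we have the uniform equality $|L_{n,\nu}| = |H^0(\Gamma_{F_\nu},\rho_n(\mathfrak g^{\der}))|$: for primes in $Q$ and for primes of the third type in $S'$ this is given by Lemma \ref{3.22}(2) combined with Remark \ref{3.18}; for primes of the fourth type in $S'$ the local conditions are the unramified classes, whose size equals that of the invariants by the standard cohomology computation for $\Gamma_{F_\nu}/I_{F_\nu} \cong \widehat{\mathbb Z}$; and for primes of the first two types in $S'$ this is built into \cite[Proposition $4.7$]{FKP21} and preserved by the extensions of Lemma \ref{4.5}. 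It follows that
\[
|H^1(\Gamma_{F_\nu},\rho_n(\mathfrak g^{\der}))/L_{n,\nu}| \;=\; |H^0(\Gamma_{F_\nu},\rho_n(\mathfrak g^{\der})^*)|,
\]
so the proposition reduces to a uniform bound on the right-hand side.

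Next I would use the formal smoothness. By Theorem \ref{4.21} the restriction $\restr{\rho}{\Gamma_{F_\nu}}$ corresponds to a formally smooth point of the generic fiber of $R^{\sqr,\mu}_{\restr{\overline{\rho}}{\Gamma_{F_\nu}}}[1/\varpi]$, so Proposition \ref{3.1} yields $H^0(\Gamma_{F_\nu},\rho(\mathfrak g^{\der})^*) = 0$. Let $M_\infty := \varinjlim_n \rho_n(\mathfrak g^{\der})^* \cong \rho(\mathfrak g^{\der})^* \otimes_{\mathcal O} E/\mathcal O$, with transition maps being the multiplication-by-$\varpi$ inclusions. Since taking $\Gamma_{F_\nu}$-invariants commutes with filtered colimits,
\[
H^0(\Gamma_{F_\nu},M_\infty) \;=\; \bigcup_n H^0(\Gamma_{F_\nu},\rho_n(\mathfrak g^{\der})^*).
\]
The short exact sequence $0 \to \rho(\mathfrak g^{\der})^* \to \rho(\mathfrak g^{\der})^* \otimes_{\mathcal O} E \to M_\infty \to 0$ produces, together with the vanishing just observed (which also forces $H^0(\Gamma_{F_\nu},\rho(\mathfrak g^{\der})^* \otimes_{\mathcal O} E) = H^0(\Gamma_{F_\nu},\rho(\mathfrak g^{\der})^*) \otimes_{\mathcal O} E = 0$), an injection
\[
H^0(\Gamma_{F_\nu},M_\infty) \hookrightarrow H^1(\Gamma_{F_\nu},\rho(\mathfrak g^{\der})^*).
\]
The target is a finitely generated $\mathcal O$-module by the standard finiteness theorem for $\ell$-adic local Galois cohomology, while the source is $\varpi$-power torsion and $\varpi$-divisible, so it must be finite. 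In particular, $|H^0(\Gamma_{F_\nu},\rho_n(\mathfrak g^{\der})^*)|$ is bounded above by $|H^0(\Gamma_{F_\nu},M_\infty)|$ for every $n$, which combined with the first step gives the proposition.

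The main obstacle is verifying the uniform equality $|L_{n,\nu}| = |H^0(\Gamma_{F_\nu},\rho_n(\mathfrak g^{\der}))|$ across all four types of primes, because the local conditions are produced by genuinely different constructions; once this bookkeeping is in place, the rest of the proof is a clean application of the long exact sequence together with the finiteness of $\ell$-adic local Galois cohomology.
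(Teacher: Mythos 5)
Your proposal is essentially correct and shares the paper's backbone, but implements the crucial finiteness step differently. Both arguments begin the same way: the local Euler characteristic formula together with Tate duality gives $|H^1(\Gamma_{F_\nu},\rho_n(\mathfrak g^{\der}))| = |H^0(\Gamma_{F_\nu},\rho_n(\mathfrak g^{\der}))|\cdot|H^0(\Gamma_{F_\nu},\rho_n(\mathfrak g^{\der})^*)|$, and since $|L_{n,\nu}| = |H^0(\Gamma_{F_\nu},\rho_n(\mathfrak g^{\der}))|$ (Lemma \ref{4.5}(3)), the problem reduces to a uniform bound on $|H^0(\Gamma_{F_\nu},\rho_n(\mathfrak g^{\der})^*)|$, and both invoke formal smoothness via Proposition \ref{3.1} to get the key vanishing. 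From there the paper stays at finite level: it extracts an integer $m$ for which $H^0(\Gamma_{F_\nu},\rho_m(\mathfrak g^{\der})^*)\to H^0(\Gamma_{F_\nu},\overline{\rho}(\mathfrak g^{\der})^*)$ is zero, and then runs a Lemma \ref{4.14}-style induction on the short exact sequences $0\to\rho_{n-1}(\mathfrak g^{\der})^*\to\rho_n(\mathfrak g^{\der})^*\to\overline{\rho}(\mathfrak g^{\der})^*\to 0$ to conclude that for $n\ge m$ the invariants live in the $\varpi^{n-m}$-multiples, hence are bounded by $|\rho_m(\mathfrak g^{\der})|$. You instead pass to the colimit $M_\infty$ and use the long exact sequence of $0\to V_{\mathcal O}\to V_E\to M_\infty\to 0$ together with the finiteness of continuous $\ell$-adic local Galois cohomology. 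Your version is more conceptual and avoids the bookkeeping with reduction maps; the paper's version is more explicit and produces a concrete bound that is exploited again in Remark \ref{5.8}, where the $m$ you can read off from the local lift feeds into an explicit size estimate. Both are legitimate.

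One small inaccuracy to fix: you assert that $H^0(\Gamma_{F_\nu},M_\infty)$ is $\varpi$-divisible. It is not in general --- invariants of a divisible module need not be divisible, and indeed if it \emph{were} a nonzero divisible submodule of the finitely generated $\mathcal O$-module $H^1(\Gamma_{F_\nu},V_{\mathcal O})$ you would be forced to conclude it is zero, which is not what the argument needs. Fortunately you do not need divisibility: being $\varpi$-power torsion and injecting into a finitely generated $\mathcal O$-module already forces finiteness, since the torsion submodule of a finitely generated module over a DVR is finite. The rest of the argument --- commuting $H^0$ with filtered colimits, deducing $H^0(\Gamma_{F_\nu},V_E)=H^0(\Gamma_{F_\nu},V_\mathcal O)\otimes_\mathcal O E=0$ from Proposition \ref{3.1}, and the injection into $H^1(\Gamma_{F_\nu},V_\mathcal O)$ --- is sound.
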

\begin{proof}

    Since $\restr{\rho}{\Gamma_{F_\nu}}$ is a formally smooth point of the generic fiber of the local lifting ring $R_{\nu}^{\sqr,\mu}[1/\varpi]$ by Proposition \ref{3.1} we have that $\varprojlim_n H^0(\Gamma_{F_\nu},\rho_n(\mathfrak g^{\der})^*) = 0$. This means that for every $X \in H^0(\Gamma_{F_\nu},\rho_n(\mathfrak g^{\der})^*)$ we can find $m \ge n$ such that that $X$ is not in the image of the reduction  map $H^0(\Gamma_{F_\nu},\rho_m(\mathfrak g^{\der})^*) \to H^0(\Gamma_{F_\nu},\rho_n(\mathfrak g^{\der})^*)$. Doing this for every element in $H^0(\Gamma_{F_\nu},\overline{\rho}(\mathfrak g^{\der})^*)$ and taking the maximal of those integers we can find a positive integer $m$ such that the map $H^0(\Gamma_{F_\nu},\rho_m(\mathfrak g^{\der})^*) \to H^0(\Gamma_{F_\nu},\overline{\rho}(\mathfrak g^{\der})^*)$ is zero. This in fact implies that for any $n$, the map $H^0(\Gamma_{F_\nu},\rho_{m+n}(\mathfrak g^{\der})^*) \to H^0(\Gamma_{F_\nu},\rho_{n+1}(\mathfrak g^{\der})^*)$ is zero. This can be proven using the same idea as in the proof of Lemma \ref{4.14}, working with the $\Gamma_{F_\nu}$-invariants instead of Selmer groups. Indeed, the proof of the lemma relies on the exact sequences coming from Lemma \ref{3.14}, and in this case the analogous exact sequence comes from the short exact sequence $ 0 \to \rho_{n-1}(\mathfrak g^{\der})^* \to \rho_{n}(\mathfrak g^{\der})^* \to \overline{\rho}(\mathfrak g^{\der})^* \to 0$ and using the fact that taking $\Gamma_{F_\nu}$-invariants is a left-exact functor. This means that for $n \ge m$, $H^0(\Gamma_{F_\nu},\rho_n(\mathfrak g^{\der})^*)$ is contained in the $\varpi^{n-m}$ multiples inside of $\rho_n(\mathfrak g^{\der})^*$. In particular, the size of $H^0(\Gamma_{F_\nu},\rho_n(\mathfrak g^{\der})^*)$ can be uniformly bounded, namely by $|\rho_m(\mathfrak g^{\der})|$. Using Tate's local duality we can also uniformly bound the size of $H^2(\Gamma_{F_\nu},\rho_n(\mathfrak g^{\der}))$ for $n \ge m$. Now, by \cite[Chapter $1$, Lemma $2.9$]{Mil06} we have the following equality:

    $$|\,H^1(\Gamma_{F_\nu},\rho_n(\mathfrak g^{\der}))\,| = |\,H^0(\Gamma_{F_\nu},\rho_n(\mathfrak g^{\der}))\,| \cdot |\,H^2(\Gamma_{F_\nu},\rho_n(\mathfrak g^{\der}))\,|$$

    \vspace{2 mm}

    As shown in Lemma \ref{4.5} the local conditions $L_{n,\nu}$ have the same size as $H^0(\Gamma_{F_\nu},\rho_n(\mathfrak g^{\der}))$ and therefore we have that the quotient $H^1(\Gamma_{F_\nu},\rho_n(\mathfrak g^{\der}))/L_{n,\nu}$ and $H^2(\Gamma_{F_\nu},\rho_n(\mathfrak g^{\der}))$ have the same size. Since, the size of the latter can be uniformly bounded for each $n$ we get that the direct limit $\varinjlim_n H^1(\Gamma_{F_\nu},\rho_n(\mathfrak g^{\der}))/L_{n,\nu}$ is finite. Indeed, it's an easy algebra exercise that the direct limit of groups of bounded size will also be bounded by the same bound, and in particular is finite.
    
\end{proof}

\begin{rem}
\label{5.8}

For primes $\nu$ for which the local conditions $L_{n,\nu}$ were defined using Lemma \ref{3.16} we can explicitly compute the bound on the size of the direct limit. This is due to the fact that we know how the lift $\restr{\rho}{\Gamma_{F_\nu}}$ will look. More precisely, the image of the Frobenius under it will lie in a torus and the representation will be tamely ramified in a subgroup associated to a root space. Then, we can take $m$ that appeared in the proof above to be the largest integer at which $\restr{\rho}{\Gamma_{F_\nu}} \pmod{\varpi^m}$ is not ramified. We can then compute the uniform bound in terms of this $m$ and the root datum of $\mathfrak g^{\der}$ using the analogous computations to \cite[Lemma $16$]{KR03}.
    
\end{rem}

Summarizing, we have the following result:

\begin{thm}
\label{5.9}

    $\varinjlim_n H^1_{\mathcal P_{n,p}}(\Gamma_{F,S' \cup Q},\rho_n(\mathfrak g^{\der}))$ is a finite abelian $p$-power group.
    
\end{thm}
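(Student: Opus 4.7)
The plan is to assemble the pieces that have already been established in Lemmas 5.5--5.6, Proposition 5.7, and the discussion on the $(n, n-2^dM+1)$-relative Selmer group. The theorem is essentially the terminal step: each of the intermediate groups has been shown finite, and all that remains is to splice them together via exact sequences compatible with direct limits.

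First, by Lemma \ref{5.5}, it suffices to prove that $\varinjlim_n H^1_{\mathcal L_{n,p}}(\Gamma_{F,S' \cup Q},\rho_n(\mathfrak g^{\der}))$ is finite, since the quotient of our direct limit by this one is already finite. Second, inserting into the kernel--cokernel exact sequence the reduction map through local conditions at primes above $p$, and taking direct limits (which preserve exactness), gives
$$0 \longrightarrow \varinjlim_n H^1_{\mathcal L_{n,S'\cup Q}}(\Gamma_{F,S'\cup Q},\rho_n(\mathfrak g^{\der})) \longrightarrow \varinjlim_n H^1_{\mathcal L_{n,p}}(\Gamma_{F,S'\cup Q},\rho_n(\mathfrak g^{\der})) \longrightarrow \prod_{\substack{\nu \in S' \cup Q \\ \nu \nmid p}} \varinjlim_n \frac{H^1(\Gamma_{F_\nu},\rho_n(\mathfrak g^{\der}))}{L_{n,\nu}}.$$
The product on the right is finite by Proposition \ref{5.7}, so it suffices to prove the finiteness of the leftmost direct limit.

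For that, I would invoke the vanishing $\overline{H^1_{\mathcal L_{2^dM,S'\cup Q}}(\Gamma_{F,S'\cup Q},\rho_{2^dM}(\mathfrak g^{\der}))} = 0$ arranged in Section 4.2. This vanishing persists modulo every $n \ge 2^dM$, so Lemma \ref{4.14} makes the $(n,n-2^dM+1)$-relative Selmer groups trivial; equivalently, by the exact sequence of Lemma \ref{3.14}, the multiplication by $\varpi^{n-2^dM+1}$ map
$$H^1_{\mathcal L_{2^dM-1,S'\cup Q}}(\Gamma_{F,S'\cup Q},\rho_{2^dM-1}(\mathfrak g^{\der})) \longrightarrow H^1_{\mathcal L_{n,S'\cup Q}}(\Gamma_{F,S'\cup Q},\rho_n(\mathfrak g^{\der}))$$
is surjective for all $n \ge 2^dM$. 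Hence the direct limit is a quotient of the finite group $H^1_{\mathcal L_{2^dM-1,S'\cup Q}}(\Gamma_{F,S'\cup Q},\rho_{2^dM-1}(\mathfrak g^{\der}))$, and is therefore finite.

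Combining the three reductions gives the finiteness of $\varinjlim_n H^1_{\mathcal P_{n,p}}(\Gamma_{F,S'\cup Q},\rho_n(\mathfrak g^{\der}))$. The abelian $p$-power structure is automatic: each $H^1_{\mathcal P_{n,p}}(\Gamma_{F,S'\cup Q},\rho_n(\mathfrak g^{\der}))$ is an $\mathcal O/\varpi^n$-module, hence $p$-power torsion, and this property is preserved under direct limits. I do not foresee any real obstacle in this argument beyond bookkeeping, since the substantive work has already been carried out: the vanishing of the $2^dM$-th relative Selmer group (the central output of the forcing ramification method of Section 4.2), the formal smoothness criterion at primes away from $p$ (Proposition \ref{3.1}, used in Proposition \ref{5.7}), and the $p$-adic Hodge-theoretic control of the torsion of $\Omega_{\overline{R^{\sqr}_\nu}/\mathcal O}$ at primes above $p$ (Lemma \ref{5.6}).
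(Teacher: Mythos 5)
Your proposal is correct and follows essentially the same route as the paper: reduce via Lemma \ref{5.5} from $\mathcal P_{n,p}$ to $\mathcal L_{n,p}$, split via the kernel--cokernel sequence into the global Selmer group with full conditions $\mathcal L_{n,S'\cup Q}$ and the local quotients away from $p$, then use the vanishing of the $2^dM$-th relative Selmer group together with Lemma \ref{4.14} for the former and Proposition \ref{5.7} for the latter. Your closing remark on the $p$-power torsion structure (each term being an $\mathcal O/\varpi^n$-module, with torsion preserved under direct limits) is a welcome explicit addition that the paper leaves implicit.
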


We now have the immediate corollary

\begin{cor}

\label{5.10}

The cotangent space $\mathfrak p/\mathfrak p^2$ is a finite $p$-abelian group.
    
\end{cor}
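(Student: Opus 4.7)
The plan is to combine Proposition \ref{5.4} and Theorem \ref{5.9} with Pontryagin-style duality for finitely generated modules over the DVR $\mathcal O$. First I would observe that $\mathfrak p/\mathfrak p^2$ is a finitely generated $\mathcal O$-module. Indeed, under Assumptions \ref{1.1} the residual representation $\overline{\rho}$ admits a universal deformation ring $R^\mu$, which is a complete local Noetherian $\mathcal O$-algebra. Since $\overline{R}$ is a quotient of $R^\mu$, it is again Noetherian, so $\mathfrak p$ is a finitely generated ideal; consequently $\mathfrak p/\mathfrak p^2$ is finitely generated over $\overline{R}/\mathfrak p = \mathcal O$. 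By the structure theorem for finitely generated modules over a DVR we may write
$$\mathfrak p/\mathfrak p^2 \;\simeq\; \mathcal O^{\,r} \oplus T,$$
where $T$ is a finite torsion $\mathcal O$-module and $r \ge 0$.

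Next I would dualize. Applying $\Hom_{\mathcal O}(-, E/\mathcal O)$ yields
$$\Hom_{\mathcal O}(\mathfrak p/\mathfrak p^2, E/\mathcal O) \;\simeq\; (E/\mathcal O)^{r} \oplus \Hom_{\mathcal O}(T, E/\mathcal O),$$
where the last factor is a finite abelian group of order $|T|$. By Proposition \ref{5.4} the left-hand side is isomorphic to $\varinjlim_n H^1_{\mathcal P_{n,p}}(\Gamma_{F,S' \cup Q},\rho_n(\mathfrak g^{\der}))$, which by Theorem \ref{5.9} is a finite abelian $p$-power group. Comparing cardinalities forces $(E/\mathcal O)^{r}$ to be finite, hence $r=0$, and therefore $\mathfrak p/\mathfrak p^2 \simeq T$ is finite.

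Finally, to conclude that $\mathfrak p/\mathfrak p^2$ is a $p$-group I would note that any finite $\mathcal O$-module admits a composition series whose factors are copies of the residue field $k = \mathcal O/\varpi$, which has characteristic $p$; so the order of $T$ is a power of $p$. This completes the argument. I do not foresee any serious obstacle: the crux is the finite generation of $\mathfrak p/\mathfrak p^2$ over $\mathcal O$, which follows from the Noetherianness of $\overline{R}$, and the sole other input is the identification of the Pontryagin dual in Proposition \ref{5.4} together with the finiteness established in Theorem \ref{5.9}.
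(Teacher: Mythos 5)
Your proposal is correct and follows essentially the same route as the paper, namely combining Proposition \ref{5.4} with Theorem \ref{5.9}; the paper simply asserts the conclusion directly, whereas you have usefully spelled out the intermediate step that $\mathfrak p/\mathfrak p^2$ is finitely generated over $\mathcal O$ (via Noetherianness of $\overline{R}$ and $\overline{R}/\mathfrak p \simeq \mathcal O$), which is what lets one pass from finiteness of the $E/\mathcal O$-dual to finiteness of the module itself.
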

\begin{proof}

    The claim follows directly by combining the isomorphism from Proposition \ref{5.4} and the finiteness from Theorem \ref{5.9}.
    
\end{proof}

We can now prove some more corollaries which follow rather formally and whose proofs in \cite[\S $4$]{KR03} will work more or less verbatim. For the sake of completeness we list some of the results and their proofs here.

\begin{cor}
\label{5.11}

The prime ideal $\mathfrak p$ is minimal in $\overline{R}$ and therefore $\Spec(\overline{R})$ has an irreducible component that is isomorphic to $\Spec(\mathcal O)$.

\end{cor}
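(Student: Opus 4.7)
The plan is to derive the minimality of $\mathfrak{p}$ in $\overline{R}$ from the finiteness of the cotangent space $\mathfrak{p}/\mathfrak{p}^2$ established in Corollary \ref{5.10}, via a short Nakayama argument. Since $\mathfrak{p}/\mathfrak{p}^2$ is a finite abelian $p$-group, every element is annihilated by some power of $\varpi$, so $\mathfrak{p}/\mathfrak{p}^2$ is a torsion $\mathcal{O}$-module.

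I would then localize at $\mathfrak{p}$. Because $\overline{R}/\mathfrak{p}\simeq\mathcal{O}$ is a domain, $\mathfrak{p}$ is indeed a prime, and the residue field of the local ring $\overline{R}_{\mathfrak{p}}$ is $E=\mathrm{Frac}(\mathcal{O})$. Tensoring a torsion $\mathcal{O}$-module with $E$ yields zero, so
$$\mathfrak{p}\overline{R}_{\mathfrak{p}}/\mathfrak{p}^2\overline{R}_{\mathfrak{p}}\simeq(\mathfrak{p}/\mathfrak{p}^2)\otimes_{\mathcal{O}}E=0.$$
Since $\overline{R}$ is Noetherian, $\mathfrak{p}\overline{R}_{\mathfrak{p}}$ is a finitely generated module over the local ring $\overline{R}_{\mathfrak{p}}$ equal to its product with the maximal ideal, so Nakayama's lemma forces $\mathfrak{p}\overline{R}_{\mathfrak{p}}=0$. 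Thus $\overline{R}_{\mathfrak{p}}$ is a field, $\dim\overline{R}_{\mathfrak{p}}=0$, and $\mathfrak{p}$ is a minimal prime of $\overline{R}$.

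For the second assertion, once minimality is established, $V(\mathfrak{p})\subseteq\Spec(\overline{R})$ is an irreducible component, and the quotient map $\overline{R}\twoheadrightarrow\overline{R}/\mathfrak{p}\simeq\mathcal{O}$ identifies it with $\Spec(\mathcal{O})$. I anticipate no real obstacle: all of the technical work is packaged into Corollary \ref{5.10}, after which the present corollary reduces to the observation that a torsion $\mathcal{O}$-module becomes trivial after tensoring with $E$, plus a direct application of Nakayama's lemma and the standard translation between minimal primes and irreducible components of the spectrum.
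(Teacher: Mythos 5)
Your argument is correct and follows essentially the same route as the paper's own proof: both localize at $\mathfrak{p}$, observe that the cotangent space $\mathfrak{p}\overline{R}_{\mathfrak{p}}/\mathfrak{p}^2\overline{R}_{\mathfrak{p}}$ vanishes because $\mathfrak{p}/\mathfrak{p}^2$ is $p$-power torsion while $p$ is invertible in $\overline{R}_{\mathfrak{p}}$ (you phrase this as tensoring a torsion $\mathcal{O}$-module with $E$; the paper says $p$ is a unit in $\overline{R}_{\mathfrak{p}}$ -- same fact), and then apply Nakayama to conclude $\mathfrak{p}\overline{R}_{\mathfrak{p}}=0$, whence $\mathfrak{p}$ is minimal. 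The derivation of the irreducible component statement is also identical.
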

\begin{proof}

    Let $\overline{R}_{\mathfrak p}$ be the localization of $\overline{R}$ at $\mathfrak p$. We know that $\overline{R}/\mathfrak p \simeq \mathcal O$ and hence it has characteristic $0$. This means that $p \notin \mathfrak p$ and hence it is invertible in $\overline{R}_{\mathfrak p}$. On the other hand, by Corollary \ref{5.10} we have that $\mathfrak p/\mathfrak p^2$ is a finite $\overline{R}_{\mathfrak p}$-module and in fact it is $p$-power torsion. As $p$ is invertible in $\overline{R}_{\mathfrak p}$ this means that $\mathfrak p/\mathfrak p^2 = 0$, i.e. $\mathfrak p\overline{R}_{\mathfrak p} = \mathfrak p^2\overline{R}_{\mathfrak p}$. Now, as $\overline{R}_{\mathfrak p}$ is Noetherian, $\mathfrak p\overline{R}_{\mathfrak p}$ is a finitely generated $\overline{R}_{\mathfrak p}$-module and hence by Nakayama's Lemma we get $\mathfrak p\overline{R}_{\mathfrak p} =0$. Therefore, $\mathfrak p$ doesn't contain any proper prime ideal of $\overline{R}$, i.e. it is a minimal prime ideal in $\overline{R}$. Therefore, $\Spec(\overline{R})$ has an irreducible component isomorphic to $\Spec(\overline{R}/\mathfrak p) \simeq \Spec(\mathcal O)$.
    
\end{proof}

\begin{cor}
\label{5.12}

The $\mathcal O$-algebra homomorphism $\varphi:\overline{R} \to \mathcal O$ can not be a non-trivial limit of $\mathcal O$-algebra homomorphisms  $f_i:\overline{R} \to \mathcal O$.    
    
\end{cor}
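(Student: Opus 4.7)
The plan is to show that $\varphi$ is isolated among all $\mathcal O$-algebra homomorphisms $\overline{R} \to \mathcal O$, in the natural topology of pointwise $\varpi$-adic convergence. The essential input is Corollary \ref{5.11}: $\mathfrak p$ is a minimal prime of $\overline R$ and the quotient $\overline{R}/\mathfrak p$ is identified with $\mathcal O$ via $\varphi$.

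First, I would separate $\mathfrak p$ from the other minimal primes by a single element. Since $\overline R$ is Noetherian, it has only finitely many minimal primes $\mathfrak p = \mathfrak p_1, \mathfrak p_2, \dots, \mathfrak p_k$. For each $i \ge 2$ the minimality of $\mathfrak p_i$ together with $\mathfrak p_i \neq \mathfrak p$ implies $\mathfrak p_i \not\subseteq \mathfrak p$, so we may pick $r_i \in \mathfrak p_i \setminus \mathfrak p$. Setting $r \coloneqq r_2 r_3 \cdots r_k$, we obtain an element lying in every minimal prime \emph{except} $\mathfrak p$; in particular $\varphi(r) \neq 0$.

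Next, I would prove the following rigidity statement: for any $\mathcal O$-algebra homomorphism $f:\overline R \to \mathcal O$, if $f(r) \neq 0$ then $f = \varphi$. Indeed, $f(r) \neq 0$ forces $r \notin \ker f$; since $\ker f$ is a prime of $\overline R$, it contains some minimal prime $\mathfrak p_j$, and by the choice of $r$ we must have $j = 1$. Thus $\mathfrak p \subseteq \ker f$, so $f$ factors through the isomorphism $\overline R/\mathfrak p \xrightarrow{\sim} \mathcal O$ induced by $\varphi$, and the resulting $\mathcal O$-algebra endomorphism of $\mathcal O$ is necessarily the identity. Hence $f = \varphi$.

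Finally, if $\varphi$ were the (pointwise) limit of a net of homomorphisms $f_i: \overline R \to \mathcal O$, then evaluating at $r$ yields $f_i(r) \to \varphi(r)$ in the $\varpi$-adic topology on $\mathcal O$. Because $\varphi(r) \neq 0$, the $\varpi$-adic valuation of $f_i(r)$ equals that of $\varphi(r)$ for all sufficiently large $i$; in particular $f_i(r) \neq 0$, and the rigidity statement forces $f_i = \varphi$ for all such $i$. This contradicts the non-triviality of the limit. The argument is essentially commutative algebra feeding on Corollary \ref{5.11}, and the only mild subtlety — not really an obstacle — is being careful that the only topology on the set of $\mathcal O$-algebra homomorphisms that makes the statement meaningful is the one induced by the $\varpi$-adic topology on $\mathcal O$ after evaluation at each element, which is exactly where our argument operates.
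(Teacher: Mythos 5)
Your proof is correct and takes a genuinely different route from the paper's. The paper argues directly from Corollary~\ref{5.10}, exploiting the finiteness (and explicit $p$-power exponent) of $\mathfrak p/\mathfrak p^2$: it shows that any $f_i$ agreeing with $\varphi$ modulo a specific power of $\varpi$ determined by that exponent must equal $\varphi$, via a surjection onto a cyclic group that would contradict the exponent bound. You instead take Corollary~\ref{5.11} as your input and argue geometrically: the minimality of $\mathfrak p$ and the isomorphism $\overline R/\mathfrak p \simeq \mathcal O$ let you separate $\mathfrak p$ from the other (finitely many) minimal primes by an element $r$ with $\varphi(r)\neq 0$, and then the rigidity observation that $\Hom_{\mathcal O\text{-alg}}(\mathcal O,\mathcal O)=\{\mathrm{id}\}$ forces any $f$ with $f(r)\neq 0$ to coincide with $\varphi$. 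Combined with the ultrametric inequality, this shows $\varphi$ is isolated. Both arguments are sound; yours is arguably cleaner (the rigidity statement itself is of independent interest), while the paper's gives an explicit quantitative threshold, namely agreement modulo $\varpi^{en+1}$, at which $f_i$ must collapse to $\varphi$. One small remark: the equivalence between your pointwise $\varpi$-adic convergence and the paper's uniform convergence of reductions modulo $\varpi^m$ holds here because $\overline R$ is Noetherian complete local, so homomorphisms to $\mathcal O$ are determined by their values on finitely many generators of the maximal ideal; it is worth making this explicit so the two formulations are seen to be interchangeable.
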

\begin{proof}

    Suppose that $\varphi = \lim f_i$. We want to show that this limit will eventually be constant, i.e. $f_i = \varphi$ for $i$ large enough. By Corollary \ref{5.10} we have that $\mathfrak p/\mathfrak p^2$ is a finite abelian $p$-power group. Let $p^n$ be its exponent. As $\varphi = \lim f_i$, for all large enough $i$ we have that $\varphi_{en+1} = f_{i,en+1}$, where as before we use the subscript notation to indicate the reductions modulo powers of $\varpi$. We claim this implies that $\varphi = f_i$, which will finish the proof. Fix such large enough $i$, so that $\varphi_{en+1} = f_{i,en+1}$. If $\varphi \neq f_i$, we can find $m \ge en+1$ such that $\varphi_m = f_{i,m}$, but $\varphi_{m+1} \neq f_{i,m+1}$. Then as $\varphi_{2m}$ and $f_{i,2m}$ are both lifts of $\varphi_m$, as seen in the proof of Proposition \ref{5.4} the difference $\varphi_{2m} - f_{i,2m}$ gives us an $\mathcal O$-module homomophism $\mathfrak p/\mathfrak p^2 \to \varpi^m\mathcal O/\varpi^{2m}\mathcal O$. Since $\varphi$ and $f_i$ are $\mathcal O$-algebra homomorphisms they agree on elements of $\mathcal O$. Thus, as $\overline{R}/\mathfrak p \simeq \mathcal O$ and $\varphi_{m+1} \neq f_{i,m+1}$ we can find $a \in \mathfrak p$ such that $(\varphi_{2m} - f_{i,2m})(a)$ is a multiple of $\varpi^m$, but not of $\varpi^{m+1}$. In particular, this means that this image generates $\varpi^m\mathcal O/\varpi^{2m}\mathcal O$ as an $\mathcal O$-module. Therefore, $\varphi_{2m} - f_{i,2m}:\mathfrak p/\mathfrak p^2 \to \varpi^m\mathcal O/\varpi^{2m}\mathcal O$ is surjective. This is an obvious contradiction, since the target has exponent greater than or equal to $p^{n+1}$, which is larger than the exponent of the domain. 
    
\end{proof}

Using Theorem \ref{5.9} we will show the vanishing of the geometric Bloch-Kato Selmer group $H^1_g(\Gamma_{F,S' \cup Q},\rho(\mathfrak g^{\der}))$ attached to the adjoint representation of $\rho:\Gamma_{F,S' \cup Q} \to G(E)$, which is a result predicted by the Bloch-Kato conjecture. We first state the definition of the geometric Bloch-Kato Selmer group:

$$H_g^1(\Gamma_{F,S'\cup Q},\rho(\mathfrak g^{\der})) \coloneqq \ker\bigg(H^1(\Gamma_{F,S'\cup Q},\rho(\mathfrak g^{\der})) \to \prod_{\nu \mid p} H^1(\Gamma_{F_\nu},\rho(\mathfrak g^{\der}) \otimes_{\mathbb Q_p} B_{\dR})\bigg)$$

\vspace{2 mm}

We then have the following result:

\begin{thm}
\label{5.13}

    The geometric Bloch-Kato Selmer group $H^1_{g}(\Gamma_{F,S' \cup Q},\rho(\mathfrak g^{\der}))$ is trivial.
    
\end{thm}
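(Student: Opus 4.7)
My plan is to identify the geometric Bloch-Kato Selmer group $H^1_g(\Gamma_{F,S'\cup Q},\rho(\mathfrak{g}^{\der}))$ with the $E$-tangent space of $\Spec \overline{R}[1/\varpi]$ at the point $\rho$, and then to invoke Corollary \ref{5.10} to conclude that this tangent space is zero.

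First, $\overline{R}$ is by construction the universal deformation ring for lifts of $\overline{\rho}$ with multiplier $\mu$, unramified outside $S'\cup Q$, whose restriction at each $\nu\mid p$ corresponds to a point of the chosen component $\Spec \overline{R_\nu^{\sqr}}$ of the fixed inertial-and-Hodge-type lifting ring. Standard deformation theory over $E[\epsilon]/(\epsilon^2)$ identifies the tangent space of $\Spec \overline{R}[1/\varpi]$ at the $E$-point $\rho$ with the Selmer group
$$H^1_{\mathcal{L}}(\Gamma_{F,S'\cup Q},\rho(\mathfrak{g}^{\der}))$$
whose local condition $\mathcal{L}_\nu$ at $\nu\mid p$ is the tangent space of $\Spec \overline{R_\nu^{\sqr}}[1/\varpi]$ at $\rho_\nu$ (modulo the framing action of $\mathfrak{g}$), and whose local condition at $\nu\nmid p$ is the full $H^1(\Gamma_{F_\nu},\rho(\mathfrak{g}^{\der}))$. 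On the other hand this tangent space equals $(\mathfrak{p}/\mathfrak{p}^2)\otimes_{\mathcal{O}} E$, where $\mathfrak{p}=\ker\varphi$.

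Next, I would match these local conditions with the Bloch-Kato $g$-conditions. At primes $\nu\nmid p$ the match is immediate, because the geometric $H^1_g$ in the statement places no restriction outside $p$. At primes $\nu\mid p$, Theorem \ref{4.21} guarantees that $\rho_\nu$ is a formally smooth point of $\overline{R_\nu^{\sqr}}[1/\varpi]$, so Kisin's theory of potentially semi-stable deformation rings identifies the tangent space at $\rho_\nu$ (modulo framing) with $H^1_g(\Gamma_{F_\nu},\rho(\mathfrak{g}^{\der}))$, since the $E[\epsilon]$-deformations of a de Rham representation that retain the fixed inertial type $\tau$ and Hodge type $\textbf{v}$ are exactly the classes cut out by the local Bloch-Kato $g$-group. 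Combining this with the previous step yields an isomorphism
$$H^1_g(\Gamma_{F,S'\cup Q},\rho(\mathfrak{g}^{\der}))\;\simeq\;(\mathfrak{p}/\mathfrak{p}^2)\otimes_{\mathcal{O}} E.$$

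To finish, Corollary \ref{5.10} says $\mathfrak{p}/\mathfrak{p}^2$ is a finite abelian $p$-power torsion group, so tensoring with the $\mathbb{Q}_p$-vector space $E$ annihilates it, yielding $H^1_g(\Gamma_{F,S'\cup Q},\rho(\mathfrak{g}^{\der}))=0$. The hardest step is the identification at primes above $p$ in the second paragraph: the containment of the tangent space inside $H^1_g$ is automatic from the definition of the fixed Hodge type lifting ring, but establishing equality is precisely where the formal smoothness of $\rho_\nu$ is indispensable, since at a singular point the tangent space would cut out only a proper subspace of $H^1_g$. This is what makes the careful work in \S 4 to arrange formal smoothness at every prime in $T$ essential: it is exactly the hypothesis needed to apply $p$-adic Hodge theoretic deformation theory in its sharpest form and thereby identify the abstract deformation-ring tangent space with the Bloch-Kato group.
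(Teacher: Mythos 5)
Your argument is essentially the paper's proof, packaged more directly in characteristic zero: the paper first shows $\varinjlim_n H^1_{\mathcal P_{n,p}}$ is finite (Theorem~\ref{5.9}) and then reinterprets this with $E$-coefficients, whereas you go straight to the $E$-tangent space of $\overline{R}[1/\varpi]$ and use Corollary~\ref{5.10}, and both approaches rest on the same matching of the local conditions at $\nu\mid p$. Your appeal to ``Kisin's theory'' at $\nu\mid p$ compresses two real steps that the paper spells out and that you should keep in mind: Lemma~\ref{A.5} (regularity at $x$, i.e.\ your formal smoothness, lets you replace the tangent space of the chosen component $\overline{R^\sqr_\nu}[1/\varpi]$ by that of the ambient fixed-type ring $R^{\sqr,\mu,\tau,\textbf{v}}_{\nu}[1/\varpi]$), and the transport through a faithful $\sigma\colon G\hookrightarrow\GL_n$ together with the exactness of $D_{\dR}$ on de Rham representations, needed to invoke the $\GL_n$-statement of Allen identifying that latter tangent space with $Z^1_g(\Gamma_{F_\nu},\rho(\mathfrak g^{\der}))$.
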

\begin{proof}

    We will first reinterpret the result of Theorem \ref{5.9}. We recall that in Definition \ref{5.3} for primes $\nu \mid p$ the local conditions $P_{n,\nu}$ were defined as the image of the cocycles $Z_{n,\nu}$ in $H^1(\Gamma_{F_\nu},\rho_n(\mathfrak g^{\der}))$. These cocycles are compatible with the reduction maps $\mathcal O/\varpi^{n+1} \to \mathcal O/\varpi^n$, so we define the cocycles $Z_\nu \coloneqq (\varprojlim_n Z_{n,\nu}) \otimes_{\mathcal O} E$ inside $Z^1(\Gamma_{F_\nu},\rho(\mathfrak g^{\der}))$. We then define local conditions $P_\nu$ inside $H^1(\Gamma_{F_\nu},\rho(\mathfrak g^{\der})) = \varprojlim_n H^1(\Gamma_{F_\nu},\rho_n(\mathfrak g^{\der})) \otimes_{\mathcal O} E$. Setting $\mathcal P_p \coloneqq \{P_\nu\}_{\nu \mid p}$, we immediately conclude from the finiteness in Theorem \ref{5.9} that the Selmer group $H^1_{\mathcal P_p}(\Gamma_{F,S'\cup Q},\rho(\mathfrak g^{\der}))$ vanishes. Hence, we will be done if we can show that $\mathcal P_p$ and the local conditions in the geometric Bloch-Kato Selmer group are the same. We will do this by showing that for any $\nu \mid p$ the cocycles used to define each of these local conditions are canonically isomorphic to the tangent space of $R_{\nu}^{\sqr,\mu,\tau,\textbf{v}}[1/\varpi]$ at the point $x$, which is the point corresponding to $\restr{\rho}{\Gamma_{F_\nu}}$ under the universal property of the lifting ring.
    
    Fix $\nu \mid p$. By Lemma \ref{5.1} the set of cocycles $Z_{n,\nu}$ are isomorphic to $\Hom_{\mathcal O}( \Omega_{\overline{R_\nu^{\sqr}}/\mathcal O} \otimes_{\overline{R_\nu^{\sqr}}} \mathcal O,\mathcal O/\varpi^n)$. We then have the following series of isomorphisms:

    \begin{align*}
        Z_{\nu} &= (\varprojlim_n Z_{n,\nu}) \otimes_{\mathcal O} E \simeq \varprojlim_n \Hom_{\mathcal O}( \Omega_{\overline{R_\nu^{\sqr}}/\mathcal O} \otimes_{\overline{R_\nu^{\sqr}}} \mathcal O,\mathcal O/\varpi^n) \otimes_{\mathcal O} E \simeq \Hom_{\mathcal O}( \Omega_{\overline{R_\nu^{\sqr}}/\mathcal O} \otimes_{\overline{R_\nu^{\sqr}}} \mathcal O,\mathcal O) \otimes_{\mathcal O} E \\
        &\simeq \Hom_{\mathcal O}( \Omega_{\overline{R_\nu^{\sqr}}/\mathcal O} \otimes_{\overline{R_\nu^{\sqr}}} \mathcal O,E) \simeq \Hom_{\overline{R_\nu^{\sqr}}}(\Omega_{\overline{R_\nu^{\sqr}}/\mathcal O},E) \simeq \mathrm{Der}_\mathcal O(\overline{R_\nu^{\sqr}},E) \simeq \Hom_{\mathcal O-\mathrm{alg}}(\overline{R_\nu^{\sqr}},E[\varepsilon])_x \\[1.4ex]
        &\simeq \Hom_{E-\mathrm{alg}}(\overline{R_\nu^{\sqr}}[1/\varpi],E[\varepsilon])_x \simeq \Hom_{\mathrm{local} \, E-\mathrm{alg}}((\overline{R_\nu^{\sqr}}[1/\varpi])_{\mathfrak m_x},E[\varepsilon])
    \end{align*}

    \vspace{2 mm}

    All these isomorphisms follow directly from the universal properties of inverse limits, K\"ahler differentials and localizations, but we take our time to explain some of the notation. $E[\varepsilon]$ are the dual numbers over $E$, i.e. the quotient $E[\varepsilon]/(\varepsilon^2)$. On the other hand, the point $x$ will give us an $E$-algebra homomorphism $x:R_{\nu}^{\sqr,\mu,\tau,\textbf{v}}[1/\varpi] \to E$ with maximal ideal $\mathfrak m_x$. This homomorphism will factor through $\overline{R_\nu^{\sqr}}[1/\varpi]$, which corresponds to the irreducible component of $R_{\nu}^{\sqr,\mu,\tau,\textbf{v}}[1/\varpi]$ we chose at the beginning. We use this to give $E$ an $\overline{R_\nu^{\sqr}}$-module structure. Then, the subscript $x$ stands for homomorphisms which agree with $x$ modulo $\varepsilon$.

    The last object in the series of isomorphisms is canonically isomorphic to the tangent space of $\overline{R_\nu^{\sqr}}[1/\varpi]$ at $x$. As mentioned $\overline{R_\nu^{\sqr}}[1/\varpi]$ is an irreducible component of $R_{\nu}^{\sqr,\mu,\tau,\textbf{v}}[1/\varpi]$. By Theorem \ref{4.21} the point $x$ is a formally smooth point of $R_{\nu}^{\sqr,\mu,\tau,\textbf{v}}[1/\varpi]$. In particular $x$ is a regular point of $R_{\nu}^{\sqr,\mu,\tau,\textbf{v}}[1/\varpi]$, so by Lemma \ref{A.5} the cocycles $Z_{\nu}$ are canonically isomorphic to the tangent spaces of $R_{\nu}^{\sqr,\mu,\tau,\textbf{v}}[1/\varpi]$ at $x$.

    On the other hand, the local conditions in the geometric Bloch-Kato Selmer group are defined by the cocycles

    $$Z_g^1(\Gamma_{F,S'\cup Q},\rho(\mathfrak g^{\der})) \coloneqq \ker\bigg(Z^1(\Gamma_{F_\nu},\rho(\mathfrak g^{\der})) \to \prod_{\nu \mid p} H^1(\Gamma_{F_\nu},\rho(\mathfrak g^{\der}) \otimes_{\mathbb Q_p} B_{\dR})\bigg)$$

    \vspace{2 mm}

    To show that this set of cocycles is canonically isomorphic to the tangent space of $R_{\nu}^{\sqr,\mu,\tau,\textbf{v}}[1/\varpi]$ at $x$ we first note that $\rho$ is a de Rham representation of inertial type $\tau$ and Hodge type $\textbf{v}$, where we write $\rho$ instead of $\restr{\rho}{\Gamma_{F_\nu}}$, leaving the restriction implicit in order to ease off the notation. We now fix a faithful finite-dimensional representation $\sigma: G \hookrightarrow \GL_n$. We then have that $\sigma \circ \rho: \Gamma_{F_\nu} \to \GL_n(E)$ is a de Rham representation of inertial type $\sigma \circ \tau$ and Hodge type $\sigma \circ \textbf{v}$ (see \cite[\S 6]{Bel16} for the exact interpretation of these). For $f \in Z^1(\Gamma_{F_\nu},\rho(\mathfrak g^{\der}))$ we consider the lift $\rho_f \coloneqq \exp(\varpi f)\rho : \Gamma_{F_\nu} \to G(E[\varepsilon])$ of $\rho$. We get a representation $\sigma \circ \rho_f: \Gamma_{F_\nu} \to \GL_n(E[\varepsilon])$, which will be a lift of $\sigma \circ \rho$ and will be equal to $(1+\varpi(\sigma f))(\sigma \circ \rho)$. Here $\sigma f$ is the image of $f$ under the map $Z^1(\Gamma_{F_\nu},\rho(\mathfrak g^{\der})) \to Z^1(\Gamma_{F_\nu},\ad^0(\sigma \circ \rho))$, which is induced by the map on Lie algebras coming from $\sigma$. In fact, from the definition of the exponential map given in Remark $\ref{3.6}$ the map on Lie algebras is exactly the map $\sigma$ on $E[\varepsilon]$-points which are trivial modulo $\varepsilon$. Then, by \cite[Lemma $1.2.5$]{All16} $\sigma f$ lies in $Z_g^1(\Gamma_{F_\nu},\ad^0(\sigma \circ \rho))$ if and only if $\sigma \circ \rho_f$ is a de Rham representation with inertial type $\sigma \circ \tau$ and Hodge type $\sigma \circ \textbf{v}$. This in turn translates to $\rho_f$ being a de Rham representation, which is necessarily with inertial type $\tau$ and Hodge type $\textbf{v}$, as the inertial type and the Hodge type are locally constant and $\rho_f$ is a deformation of $\rho$. Now, the commuting diagram below tells us that $\sigma f \in Z_g^1(\Gamma_{F_\nu},\ad^0(\sigma \circ \rho))$ if and only if $f \in Z_g^1(\Gamma_{F_\nu},\rho(\mathfrak g^{\der}))$.

    \begin{center}
        \begin{tikzcd}
            {Z^1(\Gamma_{F_\nu},\rho(\mathfrak g^{\der}))} \arrow[rr] \arrow[d, hook] &  & {H^1(\Gamma_{F_\nu},\rho(\mathfrak g^{\der}) \otimes_{\mathbb Q_p} B_{\dR})} \arrow[d, hook] \\
            {Z^1(\Gamma_{F_\nu},\ad^0(\sigma \circ \rho))} \arrow[rr]                 &  & {H^1(\Gamma_{F_\nu},\ad^0(\sigma \circ \rho) \otimes_{\mathbb Q_p} B_{\dR})}                
         \end{tikzcd}
    \end{center}

    \vspace{2 mm}

    The claim follows immediately as the cocycles $Z_g^1$ are just the kernels of the horizontal maps, but we still need to justify the injectivity of the vertical maps. As the map on Lie algebras comes from the map $\sigma$ on $E[\varepsilon]$-points it is injective and hence the map on $1$-cocycles is injective, as well. The injectivity of the second vertical maps follows from the fact that $\rho$ is de Rham. Indeed, this implies that $\ad^0(\sigma \circ \rho)$ is de Rham, too. As the functor $D_{\dR}$ is exact on the category of de Rham representations starting with an exact sequence $0 \to \rho(\mathfrak g^{\der}) \to \ad^0(\sigma \circ \rho) \to V \to 0$ of $\mathbb Q_p[\Gamma_{F_\nu}]$-modules we get an exact sequence 
    
    $$0 \to (\rho(\mathfrak g^{\der}) \otimes_{\mathbb Q_p} B_{\dR})^{\Gamma_{F_\nu}} \to (\ad^0(\sigma \circ \rho) \otimes_{\mathbb Q_p} B_{\dR})^{\Gamma_{F_\nu}} \to (V \otimes_{\mathbb Q_p} B_{\dR})^{\Gamma_{F_\nu}} \to 0$$

    \vspace{2 mm}

    \noindent which from the long exact sequence on cohomology implies that the second vertical map in the commuting diagram is injective. Finally, by \cite[Proposition $2.3.5$]{Kis09b} the ring $(R_{\nu}^{\sqr,\mu,\tau,\textbf{v}}[1/\varpi])\widehat{{}_x}$ represents lifts of $\rho$ which are with multiplier $\mu$ and de Rham with inertial type $\tau$ and Hodge type $\textbf{v}$. Therefore, from the equivalence we proved above $Z_g^1(\Gamma_{F,S'\cup Q},\rho(\mathfrak g^{\der}))$ is canonically isomorphic to the tangent space of $R_{\nu}^{\sqr,\mu,\tau,\textbf{v}}[1/\varpi]$ at $x$. From all this we conclude that 

    $$H^1_{g}(\Gamma_{F,S' \cup Q},\rho(\mathfrak g^{\der})) = H^1_{\mathcal P_p}(\Gamma_{F,S'\cup Q},\rho(\mathfrak g^{\der})) = 0$$

\end{proof}

\pagebreak

\appendix

\section{Lemmas}

\begin{lem}[{\cite[Lemma $2.1$]{FKP22}}]

    \label{A.1}

    Let $L/F$ be a finite Galois extension of fields, and let $M/F$ be an abelian extension. Then $\Gamma_F$ acts trivially on $\Gal(LM/L)$ via conjugation.
    
\end{lem}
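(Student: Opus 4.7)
The plan is to produce a $\Gamma_F$-equivariant embedding of $\Gal(LM/L)$ into an abelian group on which $\Gamma_F$ acts by conjugation, and then observe that the action on the target is forced to be trivial. The natural candidate for the target is $\Gal(M/F)$, which is abelian by assumption.

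First I would set $G = \Gal(LM/F)$ and note that because $L/F$ is Galois, $L$ is stable under $G$, so the restriction-to-$L$ map has kernel $H := \Gal(LM/L)$, which is therefore normal in $G$. Similarly, because $M/F$ is Galois, $M$ is stable under $G$, so restriction to $M$ gives a well-defined homomorphism $r : G \to \Gal(M/F)$. I would then check that $r|_H : H \to \Gal(M/F)$ is injective: any $\tau \in H$ in its kernel fixes both $L$ and $M$, hence all of $LM$, and so is trivial.

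Next I would verify equivariance. The conjugation action of $\Gamma_F$ on $H$ factors through $G/H \cong \Gal(L/F)$, because $\Gamma_{LM}$ acts trivially and the full action is transported through the quotient $\Gamma_F \twoheadrightarrow G$. For $\sigma \in G$ and $\tau \in H$, the identity
\[
r(\sigma \tau \sigma^{-1}) \;=\; (\sigma \tau \sigma^{-1})\big|_M \;=\; \sigma|_M \cdot \tau|_M \cdot \sigma|_M^{-1}
\]
is immediate from how restriction interacts with composition, so $r|_H$ is $G$-equivariant (and hence $\Gamma_F$-equivariant) with respect to the conjugation actions.

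Finally, since $\Gal(M/F)$ is abelian, its conjugation action on itself is trivial, so $\Gamma_F$ acts trivially on $r(H)$; injectivity of $r|_H$ then transports this back to conclude that $\Gamma_F$ acts trivially on $H = \Gal(LM/L)$. There is no real obstacle here—the lemma is essentially a bookkeeping exercise in Galois theory—so the only point requiring a moment of care is confirming that the restriction map $G \to \Gal(M/F)$ really is well-defined on all of $G$ (not just on $H$), which is exactly the content of the hypothesis that $M/F$ is Galois.
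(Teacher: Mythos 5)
Your proof is correct and takes essentially the same route as the paper's: both embed $\Gal(LM/L)$ into $\Gal(M/F)$ via restriction to $M$, note that restriction is injective there because an element of $\Gal(LM/L)$ fixing $M$ fixes all of $LM$, and then use the abelianness of $\Gal(M/F)$ to kill the conjugation action before pulling back along the injection. The extra bookkeeping you do (factoring the action through $G/H$, checking well-definedness of $r$ on all of $G$) is implicit in the paper's shorter version.
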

\begin{proof}

    Let $\sigma \in \Gamma_F$ and $h \in \Gal(LM/L)$. We first recall that the action is given by $\sigma \cdot h = (\restr{\sigma}{LM}) h (\restr{\sigma^{-1}}{LM})$, where the conjugation happens inside $\Gal(LM/F)$. As $L/F$ is a Galois extension, $\sigma \cdot h$ is trivial on $L$ and this is a well-defined action. Now, we consider the injection $\Gal(LM/L) \hookrightarrow \Gal(M/F)$ given by restriction to $M$. Indeed this map is injective, since if $\restr{h}{M} = \restr{h'}{M}$, then as $\restr{h}{L} = \restr{h'}{L} = \mathrm{id}$ they must be the same on the composite $LM$. However, $\restr{(\sigma \cdot h)}{M} = (\restr{\sigma}{M})(\restr{h}{M})(\restr{\sigma^{-1}}{M}) = \restr{h}{M}$, since these are all elements of the abelian group $\Gal(M/F)$. Using the injectivity of the map we deduce that $\sigma \cdot h = h$, i.e. the $\Gamma_F$-action on $\Gal(LM/L)$ is trivial.
    
\end{proof}

We suppose that $G$ satisfies all the conditions mentioned in \S 2.

\begin{lem}

    \label{A.2}

    Suppose that $\mathfrak g \otimes_{\mathcal O} k$ is a simple $\mathbb F_p[G(k)]$-module under the adjoint action. For $m \ge n \ge 1$, let $G_{m,n}$ denote the kernel of the modulo $\varpi^n$ reduction $G(\mathcal O/\varpi^m) \to G(\mathcal O/\varpi^n)$. We then have:

    $$G_{m,n}^{\mathrm{ab}} \simeq \begin{cases} G_{m,n} &; \text{if } m \le 2n \\ G_{2n,n} &; \text{if } m > 2n \end{cases}$$
    
\end{lem}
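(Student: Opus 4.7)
The plan is to use the exponential isomorphism of Remark \ref{3.6} to convert the computation of $G_{m,n}^{\mathrm{ab}}$ into a Lie-theoretic statement. After fixing a faithful finite-dimensional representation $G \hookrightarrow \GL_N$, for each integer $i$ with $n \le i \le m-1$ the exponential identifies the successive quotient $G_{m,i}/G_{m,i+1}$ with $\mathfrak{g} \otimes_{\mathcal O} k$, and a straightforward matrix computation shows that for any $a, b \ge n$ with $a+b \le m$ one has the containment $[G_{m,a}, G_{m,b}] \subseteq G_{m,a+b}$, with the induced graded commutator
$$G_{m,a}/G_{m,a+1} \times G_{m,b}/G_{m,b+1} \longrightarrow G_{m,a+b}/G_{m,a+b+1}$$
corresponding to the Lie bracket on $\mathfrak{g} \otimes_{\mathcal O} k$.

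The case $m \le 2n$ will then be immediate: applying the inclusion with $a = b = n$ gives $[G_{m,n}, G_{m,n}] \subseteq G_{m,2n} = 1$, so $G_{m,n}$ is itself abelian.

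For the case $m > 2n$, the smoothness of $G$ yields a short exact sequence $1 \to G_{m,2n} \to G_{m,n} \to G_{2n,n} \to 1$, so a surjection $G_{m,n}^{\mathrm{ab}} \twoheadrightarrow G_{2n,n}$ is automatic. It will then remain to prove $[G_{m,n}, G_{m,n}] = G_{m,2n}$. For this I will exploit the simplicity hypothesis: the Lie bracket on $\mathfrak{g} \otimes_{\mathcal O} k$ is $G(k)$-equivariant, so its image $[\mathfrak{g} \otimes_{\mathcal O} k, \mathfrak{g} \otimes_{\mathcal O} k]$ is a $G(k)$-subrepresentation and therefore equals either $0$ or all of $\mathfrak{g} \otimes_{\mathcal O} k$; the abelian case is excluded in the settings of interest (where $\mathfrak{g}^{\der}$ is semisimple), so $[\mathfrak{g} \otimes_{\mathcal O} k, \mathfrak{g} \otimes_{\mathcal O} k] = \mathfrak{g} \otimes_{\mathcal O} k$. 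For each index $i$ with $2n \le i \le m-1$, taking $a = n$ and $b = i - n \ge n$ will then give a surjective graded commutator map at level $i$.

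A standard downward induction on $i$ starting from $G_{m,m} = 1 \subseteq [G_{m,n}, G_{m,n}]$ will conclude that $[G_{m,n}, G_{m,n}] \supseteq G_{m,i}$ for every $i \ge 2n$, yielding the desired equality $[G_{m,n}, G_{m,n}] = G_{m,2n}$ and hence $G_{m,n}^{\mathrm{ab}} \cong G_{2n,n}$. The main technical hurdle will be justifying the graded commutator surjectivity, which is precisely the step where the simplicity hypothesis plays its crucial role in converting a Lie-theoretic input into a group-theoretic output.
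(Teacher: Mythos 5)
Your proof is correct and takes a genuinely different route from the paper in one key place. Both arguments use the exponential isomorphism of Remark~\ref{3.6} to identify graded pieces of the congruence filtration with $\mathfrak{g}\otimes_{\mathcal O}k$, both dispose of the case $m\le 2n$ trivially, and both for $m>2n$ need to show that commutators of elements in $G_{m,n}$ fill out the deeper layers $G_{m,i}$ for $i\ge 2n$. The paper runs an upward induction on $m$ (for fixed $n$) and, at the crucial step, proves that $G_{m,m-1}\cap[G_{m,n},G_{m,n}]$ is a nonzero $\mathbb F_p[G(k)]$-submodule of $G_{m,m-1}\simeq\mathfrak g\otimes_{\mathcal O}k$ by exhibiting an explicit nontrivial commutator $[t,u_\alpha(\varpi^n Y_\alpha)]$ of a carefully chosen torus element and root-subgroup element; simplicity then forces the intersection to be everything. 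You instead observe that the image of the graded commutator
$$G_{m,n}/G_{m,n+1}\times G_{m,i-n}/G_{m,i-n+1}\longrightarrow G_{m,i}/G_{m,i+1}$$
is the $\mathbb F_p$-subgroup generated by $\{[X,Y]\}$, which is $\mathrm{Ad}(G(k))$-stable (since $\mathrm{Ad}(g)[X,Y]=[\mathrm{Ad}(g)X,\mathrm{Ad}(g)Y]$), hence by simplicity is either $0$ or all of $\mathfrak g\otimes_{\mathcal O}k$; since the Lie algebra is not abelian in the settings of interest, it is everything, and a downward induction on the filtration index $i$ finishes. Your version replaces the paper's explicit element construction with a structural Lie-theoretic observation, which is cleaner and makes more transparent exactly where the simplicity hypothesis enters. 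The paper's version has the mild virtue of not separately needing the observation that $[\mathfrak g,\mathfrak g]\ne 0$—that fact is built into the explicit element—and of producing a concrete generator, which in principle could be useful for effectivity. One small point worth making precise when you write this up: the subgroup of $\mathfrak g\otimes_{\mathcal O}k$ generated by the set of brackets $\{[X,Y]\}$ is a priori only an $\mathbb F_p$-subspace, but since that set is closed under $k$-scalar multiplication (as $\lambda[X,Y]=[\lambda X,Y]$), the $\mathbb F_p$-span is automatically a $k$-subspace; alternatively, the $\mathrm{Ad}(G(k))$-stability you already invoke renders the distinction harmless.
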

\begin{proof}

    Let $m \le 2n$. Using the exponential map from Remark \ref{3.6} we have that $\mathfrak g \otimes_\mathcal O \varpi^n\mathcal O/\varpi^m\mathcal O \simeq G_{m,n}$. As this is an isomorphism of group we have that $G_{m,n}$ is abelian and the claim follows for $m \le 2n$.

    For $m \ge 2n$, as $G$ is smooth from the kernel-cokernel sequence we have that $G_{m,n}/G_{m,2n} \simeq G_{2n,n}$, which we know is abelian. Thus, $[G_{m,n},G_{m,n}] \subseteq G_{m,2n}$. It remains to prove the reverse inclusion. For a fixed $n$ we will prove this by induction on $m$. The base case $m = 2n$ is already proven. Now, let $m > 2n$ and suppose that $[G_{m-1,n},G_{m-1,n}] = G_{m-1,2n}$. For $g \in G_{m,2n}$ by the inductive hypothesis $g \pmod{\varpi^{m-1}}$ is a commutator in $G_{m-1,n}$. Hence, using the smoothness of $G$ we can find $g_1,g_2 \in G_{m,n}$ such that $g[g_1,g_2]\equiv 1 \pmod{\varpi^{m-1}}$. This means that $g[g_1,g_2] \in G_{m,m-1}$ and therefore it is enough to show that $G_{m,m-1} \subseteq [G_{m-1,n},G_{m-1,n}]$. We now consider $G_{m,m-1} \cap [G_{m-1,n},G_{m-1,n}]$. We claim this is an $\mathbb F_p[G(k)]$-submodule of $G_{m,m-1}$. By the exponential map $G_{m,m-1} \simeq \mathfrak g \otimes_{\mathcal O} \varpi^{m-1}\mathcal O/\varpi^m\mathcal O \simeq \mathfrak g \otimes_{\mathcal O} k$, which is an abelian group of exponent $p$. Thus, by being a subgroup of it $G_{m,m-1} \cap [G_{m-1,n},G_{m-1,n}]$ is an $\mathbb F_p$-vector space. The $G(k)$ action on $G_{m,m-1}$ is given by lifting to $G(\mathcal O/\varpi^m)$ and conjugating. Since $z[x,y]z^{-1} = [zxz^{-1},zyz^{-1}]$ we get that $G_{m,m-1} \cap [G_{m-1,n},G_{m-1,n}]$ is closed under conjugation by $G(k)$ and hence it is an $\mathbb F_p[G(k)]$-module.

    As mentioned in Remark \ref{3.6} the exponential isomorphism will be $G(k)$-equivariant. Therefore, the only $\mathbb F_p[G(k)]$-submodule of $G_{m,m-1}$ are the trivial one and $G_{m,m-1}$, itself. To show that $G_{m,m-1} \cap [G_{m-1,n},G_{m-1,n}]$ is equal to the latter it suffices to find a non-trivial element of it. Let $T$ be a maximal split torus in $G$ and $\alpha \in \Phi(G^0,T)$. Then, as in Lemma \ref{3.7} we can find $t \in T(\mathcal O/\varpi^m)$, which is trivial modulo $\varpi^{m-n-1}$, but $\alpha(t) \not\equiv 1 \pmod{\varpi^{m-n}}$. Now, let $Y_\alpha$ be an $\mathcal O$-basis element of the root space $\mathfrak g_\alpha$. We then have

    $$[t,u_\alpha(\varpi^nY_\alpha)] = tu_\alpha(\varpi^nY_\alpha)t^{-1}u_\alpha(-\varpi^nY_\alpha) = u_\alpha(\mathrm{Ad}_t(\varpi^nY_\alpha) - \varpi^nY_\alpha) = u_\alpha((\alpha(t)-1)\varpi^nY_\alpha)$$

    \vspace{2 mm}

    As $\alpha(t)-1$ is divisible by $\varpi^{m-n-1}$, but not by $\varpi^{m-n}$ this is a non-trivial element of $G_{m,m-1}$. $u_\alpha(\varpi^nY_\alpha)$ is trivial modulo $\varpi^n$ and as $m-n-1 \ge n$ the same is true about $t$. Therefore, $[t,u_\alpha(\varpi^nY_\alpha)]$ is a non-trivial element of $G_{m,m-1} \cap [G_{m-1,n},G_{m-1,n}]$. Therefore, as explained above this group is equal to $G_{m,m-1}$, which implies that $G_{m,m-1} \subseteq [G_{m-1,n},G_{m-1,n}]$.
     
\end{proof}

\begin{lem}
    \label{A.3}

    Suppose that $G$ is simple and connected. Let $T$ be a maximal torus in $G$ and $\alpha \in \Phi(G,T)$. Given non-zero $A_1,\dots,A_r \in \mathfrak g^{\der} \otimes_ \mathcal O k$ and $B_1,\dots,B_s \in (\mathfrak g^{\der} \otimes_ \mathcal O k)^*$, for $p \gg_G 0$ (depending on $r$ and $s$, as well) there exists $g \in G(k)$ such that $\Ad(g)^{-1}A_i \notin \ker(\restr{\alpha}{\mathfrak t}) \oplus \bigoplus_\beta \mathfrak g_\beta$ and $\Ad(g)^{-1}B_j \notin \mathfrak g_\alpha^\perp$ for all $i \le r$ and $j \le s$.
    
\end{lem}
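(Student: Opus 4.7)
The plan is to rephrase both conditions as the non-vanishing of explicit regular functions on $G$, then show non-vanishing via simplicity, and finally find a $k$-point outside the bad loci by counting points on $G(k)$.

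First I would set up the functions. Let $\pi_{\mathfrak t}:\mathfrak g^{\der} \to \mathfrak t^{\der}$ be the projection along the root space decomposition, and define $\phi_{A_i}:G \to \mathbb A^1_k$ by $\phi_{A_i}(g) = \alpha\bigl(\pi_{\mathfrak t}(\Ad(g)^{-1}A_i)\bigr)$. Fixing a generator $X_\alpha$ of $\mathfrak g_\alpha$, define $\psi_{B_j}(g) = B_j(\Ad(g)X_\alpha)$. Then $\Ad(g)^{-1}A_i \notin \ker(\restr{\alpha}{\mathfrak t^{\der}}) \oplus \bigoplus_\beta \mathfrak g_\beta$ is equivalent to $\phi_{A_i}(g) \ne 0$, and $\Ad(g)^{-1}B_j \notin \mathfrak g_\alpha^\perp$ is equivalent to $\psi_{B_j}(g) \ne 0$, so the required conditions are exactly $\phi_{A_i}(g)\neq 0$ and $\psi_{B_j}(g) \neq 0$ for all $i,j$.

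Next I would show that each $\phi_{A_i}$ and $\psi_{B_j}$ is a nonzero regular function. Since $G$ is simple and $p \gg_G 0$, $\mathfrak g^{\der}$ is a simple $\mathbb F_p[G(k)]$-module (the same fact used in Lemma \ref{3.8}), and through the non-degenerate $G$-invariant trace pairing the same holds for $(\mathfrak g^{\der})^*$. Hence the linear span of the $\Ad(G)$-orbit of any nonzero $v \in \mathfrak g^{\der}$ is all of $\mathfrak g^{\der}$, and similarly for $(\mathfrak g^{\der})^*$. Consequently, the orbit of $A_i$ cannot be contained in the proper hyperplane $\ker(\restr{\alpha}{\mathfrak t^{\der}}) \oplus \bigoplus_\beta \mathfrak g_\beta$, and $B_j$ cannot vanish on the entire orbit of $X_\alpha$. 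So the zero loci $Z_{A_i} = \{\phi_{A_i} = 0\}$ and $Z_{B_j} = \{\psi_{B_j} = 0\}$ are proper closed subvarieties of $G$.

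The final step is a point count on $G(k)$. Fixing a faithful embedding $G \hookrightarrow \GL_N$, the morphism $\Ad$ has polynomial entries of degree bounded by a constant $d_G$ depending only on $G$. Since $\phi_{A_i}$ and $\psi_{B_j}$ are linear in $A_i,B_j$ with coefficients pulled back from these matrix coordinates, each has degree at most $d_G$. Applying the Schwartz--Zippel/Lang--Weil bound gives $|Z_{A_i}(k)|,|Z_{B_j}(k)| \le d_G |k|^{\dim G - 1}$. On the other hand, the split-reductive point-count formula $|G^0(\mathbb F_q)| = q^{\dim G^0}\prod_i(1 - q^{-e_i})$ (applied to the identity component, whose index is coprime to $p$) yields $|G(k)| \ge c_G |k|^{\dim G}$ for some positive constant $c_G$ as long as $|k|$ is not too small. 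Thus for $p$ large enough compared to $(r+s)d_G/c_G$ we have $|G(k)| > (r+s)\,d_G\,|k|^{\dim G - 1}$, and a $k$-point outside $\bigcup_i Z_{A_i} \cup \bigcup_j Z_{B_j}$ exists. The main obstacle is securing uniformity of $d_G$ and $c_G$ (i.e.\ independence from the particular $A_i,B_j$), which rests on the routine but essential observation that $\phi_{A_i},\psi_{B_j}$ are linear in their data and that the Lang--Weil constants for $G$ are intrinsic; once these are made uniform, the required lower bound on $p$ depends only on $G$, $r$, and $s$, as required.
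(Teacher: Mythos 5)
Your proposal takes the same broad strategy as the paper---reduce to point counting and show that the bad loci occupy at most $O(q^{\dim G - 1})$ points---but carries out the count by a genuinely different route, and along the way supplies a justification the paper leaves implicit. The paper restricts to the big Bruhat cell $X \cong \mathbb G_m^{\dim T} \times \mathbb A^{|\Phi(G,T)|}$, which is open in $\mathbb A^{\dim G}$, so the elementary Schwartz--Zippel estimate (at most $d\,q^{\dim G - 1}$ zeros for a nonzero polynomial of degree $d$ in $\dim G$ variables) applies directly with explicit constants, and $|X(k)| \ge (q-1)^{\dim T}\,q^{|\Phi(G,T)|}$ supplies the complementary lower bound---no Lang--Weil required. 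You instead count on all of $G(k)$ and invoke Lang--Weil. Be aware that the stated bound $|Z_{A_i}(k)| \le d_G\,q^{\dim G - 1}$ does not follow cleanly from either named tool as written: applying Schwartz--Zippel in the coordinates of $\GL_N \subseteq \mathbb A^{N^2}$ gives the wrong exponent ($N^2-1$ rather than $\dim G - 1$), while Lang--Weil produces the right exponent but with a constant governed by the degree of $G$ in the embedding, not simply by the degree $d_G$ of the defining equation. Since only the shape $C_G\,q^{\dim G - 1}$ is needed, the argument survives, but you should either label the constant as some $C_G$ or, more cleanly, pass to an affine cell as the paper does. A genuine contribution of your write-up is the verification---via simplicity of $\mathfrak g^{\der}$ as a $G(k)$-module, and of its dual through the invariant trace pairing---that the loci $Z_{A_i}$ and $Z_{B_j}$ are proper closed subvarieties; the paper asserts that $\Phi_{A_i}$ and $\Phi_{B_j}$ are proper without proof, and your argument fills that in.
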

\begin{proof}

    Let $\Phi_{A_i}$ be the proper closed subscheme of $G$ defined by the condition $\Ad(g)^{-1}A_i \in \ker(\restr{\alpha}{\mathfrak t}) \oplus \bigoplus_\beta \mathfrak g_\beta$. Similarly, we define $\Phi_{B_j}$. Let $Y$ be the union of all these subschemes. We want to show that the set of $k$-point of the complement $G \setminus Y$ is non-empty. By the Bruhat's decomposition (\cite[Corollary $14.14$]{Bor91}) $G$ contains an open subset $X$ that is isomorphic to the open subset $\mathbb G_m^{\dim(T)} \times \mathbb A^{|\Phi(G,T)|}$ of an affine space $\mathbb A^{d_G}$, where $d_G = \dim(G)$. Thus:
    
    $$|X(k)| \ge (q-1)^{\dim(T)} \cdot q^{|\Phi(G,T)|}$$

    \vspace{2 mm}

    From the explicit description of $\Phi_{A_i}$ and $\Phi_{B_j}$ we get that $\Phi_{A_i} \cap X$ and $\Phi_{B_j} \cap X$ are cut out in $\mathbb A^{d_G}$ by equations of degree at most $d$, depening on the only on the root datum of $G$. We remark that the coordinates that are coming from the $\mathbb G_m$ factors might have negative powers, but after clearing the denominators we can make sense of them over $\mathbb A^{d_G}$. Now, any polynomial of degree $d$ in $a$ variables over $k$ will have at most $dq^{a-1}$ solutions. Indeed, we are free to choose any values for $a-1$ of the coordinates, reducing it to a polynomial in $1$ variable, which we know has no more solutions that its degree. From this we deduce that:

    $$|(X \cap Y)(k)| \le (r+s)dq^{d_G-1}$$

    \vspace{2 mm}

    Combining these two bounds we get:

    $$|(G \setminus Y)(k)| \ge (q-1)^{\dim(T)} \cdot q^{|\Phi(G,T)|} - (r+s)dq^{d_G-1} \ge \frac{1}{2^{\dim(T)}}q^{d_G} - (r+s)dq^{d_G-1}$$ 

    \vspace{2 mm}
    
    As the first term has the highest degree for $p \gg_{G} 0$ the right hand side will be positive. Hence the $k$-points of the complement are non-empty. 
    
\end{proof}

\begin{lem}
    \label{A.4}

    Let $R_1,R_2,\dots,R_n$ be prime characteristic rings such that $\mathrm{char}(R_i) > n$ for all $i$. For each $i$ let $M_i$ be an $R_i$-module and $M_i'$ an $R_i$-submodule of $M_i$. Suppose that for every $i$ there exists $x_i \in M_i$ such that $x_i \notin M_i'$. Then for any given $y_i \in M_i$ there exists an integer $a$ such that $a \cdot x_i + y_i \notin M_i'$ for all $i$.
    
\end{lem}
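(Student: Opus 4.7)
The plan is to reduce the statement to a pigeonhole argument on the integers $\{0, 1, \ldots, n\}$, exploiting the fact that each $R_i$ has characteristic larger than $n$.

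First, I would fix an index $i$ and ask how many integers $a \in \{0, 1, \ldots, n\}$ can possibly be "bad" for $i$, meaning $a \cdot x_i + y_i \in M_i'$. Suppose $a, b \in \{0, 1, \ldots, n\}$ are both bad for $i$ with $a \neq b$. Subtracting the two containments gives $(a-b) \cdot x_i \in M_i'$. Here $|a-b| \le n$, and by hypothesis the characteristic $p_i$ of $R_i$ satisfies $p_i > n$, so $a - b$ is coprime to $p_i$ and its image under the canonical map $\mathbb{Z} \to \mathbb{F}_{p_i} \hookrightarrow R_i$ is a unit. Consequently $a - b$ acts invertibly on $M_i$, which forces $x_i \in M_i'$, contradicting our hypothesis on $x_i$.

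Second, this shows that for each fixed $i$ there is \emph{at most one} bad value of $a$ in $\{0, 1, \ldots, n\}$. Taking the union over the $n$ indices, the total number of bad values in this set is at most $n$. Since $|\{0, 1, \ldots, n\}| = n + 1$, there exists at least one $a \in \{0, 1, \ldots, n\}$ that is good for every $i$ simultaneously; this $a$ satisfies $a \cdot x_i + y_i \notin M_i'$ for all $i$, as required.

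The only subtle point is justifying that the image of a nonzero integer of absolute value less than $p_i$ is genuinely a unit in $R_i$, but this is immediate from the factorization $\mathbb{Z} \to \mathbb{F}_{p_i} \hookrightarrow R_i$ through the prime subring, since $\mathbb{F}_{p_i}$ is a field and units are preserved by unital ring maps. No further ingredients are needed, and no real obstacle arises beyond this bookkeeping.
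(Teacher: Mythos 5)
Your proof is correct and takes essentially the same route as the paper: both fix an index $i$, observe via the difference trick that at most one $a \in \{0,1,\dots,n\}$ can fail for that $i$ because $a-a'$ is a unit when $0 < |a-a'| \le n < \mathrm{char}(R_i)$, and then conclude by pigeonhole on the $n+1$ available values of $a$.
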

\begin{proof} 

    For a fixed $i$ we consider the sum $a \cdot x_i + y_i$ for $a \in \{0,1,\dots,n\}$. We note that at most one of them can be an element of $M_i'$. Indeed, if $a \cdot x_i + y_i$, $a' \cdot y_i + x_i \in M_i'$ for distinct $a$ and $a'$ in $\{0,1,\dots,n\}$ we have that $(a-a') \cdot x_i \in M_i'$. But, $0 < |a-a'| \le n < \mathrm{char}(R_i)$. As the characteristic of $R_i$ is prime $a-a'$ is invertible in $R_i$, meaning that $x_i \in M_i'$, which is a contradiction. Therefore, as we have $n+1$ options for $a$ and $n$ rings $R_i$ for at least one value of $a \in \{0,1,\dots,n\}$ we have that $a \cdot x_i + y_i \notin M_i'$ for all $i$.
    
\end{proof}

\begin{lem}
\label{A.5}

Let $R$ be a Noetherian ring and the closed point $x \in \Spec(R)$ be regular. Let $I$ a minimal ideal in $R$ so that the irreducible component $\Spec(R/I)$ contains the point $x$. Then, the tangent space to $\Spec(R)$ at $x$ and the tangent space to $\Spec(R/I)$ at $x$ are canonically isomorphic.

\end{lem}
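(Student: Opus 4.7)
The plan is to reduce the statement to a comparison of local rings at $x$ and then invoke the fact that regular local rings are integral domains. Write $\mathfrak m_x \subset R$ for the maximal ideal corresponding to $x$. The tangent space to $\Spec(R)$ at $x$ is $\Hom_{k(x)}(\mathfrak m_x R_{\mathfrak m_x}/(\mathfrak m_x R_{\mathfrak m_x})^2, k(x))$, and this depends only on the local ring $R_{\mathfrak m_x}$. The analogous statement holds for $\Spec(R/I)$ at $x$. Thus it is enough to show that the natural surjection $R_{\mathfrak m_x} \twoheadrightarrow (R/I)_{\overline{\mathfrak m_x}}$ is in fact an isomorphism.

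First I would use regularity: the hypothesis that $x$ is a regular point of $\Spec(R)$ says exactly that $R_{\mathfrak m_x}$ is a regular local ring. By the standard structural fact (see, e.g., Matsumura's commutative algebra), every regular local ring is an integral domain, so $R_{\mathfrak m_x}$ is a domain. In particular $(0)$ is the unique minimal prime of $R_{\mathfrak m_x}$. Since minimal primes of $R_{\mathfrak m_x}$ correspond bijectively to minimal primes of $R$ contained in $\mathfrak m_x$, and $I$ is such a minimal prime (by assumption $I$ is a minimal prime of $R$ and $x \in \Spec(R/I)$ means $I \subseteq \mathfrak m_x$), we conclude that the image of $I$ in $R_{\mathfrak m_x}$ is zero, i.e.\ $IR_{\mathfrak m_x} = 0$.

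Consequently the surjection $R_{\mathfrak m_x} \to (R/I)_{\overline{\mathfrak m_x}}$ has trivial kernel and is an isomorphism of local rings. Passing to $\mathfrak m / \mathfrak m^2$ and then to $k(x)$-linear duals yields the canonical isomorphism of tangent spaces. There is no serious obstacle here; the only thing worth emphasizing is that the canonicity is automatic because every step — localization, the isomorphism $R_{\mathfrak m_x} \cong (R/I)_{\overline{\mathfrak m_x}}$, and the formation of the cotangent space — is a functor of the local ring at $x$.
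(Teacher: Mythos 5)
Your proposal is correct and uses the same key idea as the paper: since $x$ is a regular point, $R_{\mathfrak m_x}$ is a regular local ring and hence an integral domain, which forces the minimal prime $I R_{\mathfrak m_x}$ to be zero and gives $R_{\mathfrak m_x} \cong (R/I)_{\mathfrak m_x}$. Your version is in fact slightly more streamlined than the paper's, which, having established this local ring isomorphism, additionally runs a redundant dimension count ($\dim_k \mathfrak m/\mathfrak m^2 = \dim R_{\mathfrak m} = \dim (R/I)_{\mathfrak m} \le \dim_k$ of the quotient cotangent space) to conclude that the natural surjection on cotangent spaces is injective, whereas you correctly observe this is automatic once the local rings themselves agree.
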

\begin{proof}

    Let $\mathfrak m$ be the maximal ideal of $R$ corresponding to the closed point $x$ and write $k = \kappa(x)$ for the residue field at $x$. Since $x \in \Spec(R)$ is a regular point we have that $R_\mathfrak m$ is a regular local ring and therefore $\dim_k \mathfrak m/\mathfrak m = \dim(R_{\mathfrak m})$. On the other hand, as $x \in \Spec(R/I)$ we have that $I \subseteq \mathfrak m$. Thus, $IR_{\mathfrak m}$ is also a minimal ideal in $R_\mathfrak m$. However, regular local rings are integral domains, so $IR_\mathfrak m = (0)$. Therefore, $R_{\mathfrak m} = R_{\mathfrak m}/IR_{\mathfrak m} \simeq (R/I)_\mathfrak m$. From all this we get:
    
    $$\dim_k \mathfrak m/\mathfrak m^2 = \dim(R_\mathfrak m) = \dim((R/I)_\mathfrak m) \le \dim_k (\mathfrak m/I)/((\mathfrak m^2 + I)/I)$$

    \vspace{2 mm}

    However, quotienting with $I$ we have a surjection $\mathfrak m/\mathfrak m^2 \to (\mathfrak m/I)/((\mathfrak m^2 + I)/I)$, which by the inequality above has to be an isomorphism of $k$-vector spaces. We then conclude that the tangent spaces to $\Spec(R)$ and $\Spec(R/I)$ at $x$ are canonically isomorphic.

\end{proof}

\bibliographystyle{alpha}
\bibliography{biblio}

\end{document}